\newcommand{\mylabel}[2]{#2\def\@currentlabel{#2}\label{#1}}
\let\csname ver@amsthm.sty\endcsname\relax
\numberwithin{equation}{section}
\newtheorem{thm}{Theorem}[section]
\newtheorem{lemma}[thm]{Lemma}
\newtheorem{cor}[thm]{Corollary}
\newtheorem{Example}[thm]{Example}
\newtheorem{Remark}[thm]{Remark}
\crefname{thm}{Theorem}{Theorems}
\crefname{lemma}{Lemma}{Lemmas}
\crefname{cor}{Corollary}{Corollaries}
\crefname{prop}{Proposition}{Propositions}
\crefname{example}{Example}{Examples}
\crefname{remark}{Remark}{Remarks}
\newcommand{\emailhref}[1]{\email{\href{#1}{#1}}}
\newcommand{\Z}{\mathbb{Z}}
\newcommand{\M}{\operatorname{M}}
\newcommand{\w}{\operatorname{w}}
\newcommand{\ka}{\operatorname{k}}
\newcommand{\h}{\operatorname{h}}
\newcommand{\al}{\alpha}
\newcommand{\be}{\beta}
\title[Propp's benzels and Lai's nearly symmetric hexagons with holes]{Propp's benzels and Lai's nearly symmetric\\ hexagons with holes}
\author[Seok Hyun Byun]{Seok Hyun Byun}\emailhref{sbyun@amherst.edu}
\address{Department of Mathematics, Amherst College, Amherst, Massachusetts 01002, U.S.A.}
\author[Mihai Ciucu]{Mihai Ciucu}\emailhref{mciucu@indiana.edu}
\address{Department of Mathematics, Indiana University, Bloomington, Indiana 47405, U.S.A.}
\author[Yi-Lin Lee]{Yi-Lin Lee}\emailhref{yillee@ntnu.edu.tw}
\address{Department of Mathematics, National Taiwan Normal University, Taipei 116059, Taiwan}
\thanks{S.H.B. was supported in part by the AMS-Simons Travel Grant.}
\thanks{M.C. was supported in part by Simons Foundation Collaboration Grant 710477.}
\begin{document}
\begin{abstract}
  In this paper we present a new version of the second author's factorization theorem for perfect matchings of symmetric graphs.
We then use our result to solve four open problems of Propp on the enumeration of trimer tilings on the hexagonal lattice.

As another application, we obtain a semi-factorization result for the number of lozenge tilings of a large class of hexagonal regions with holes (obtained by starting with an arbitrary symmetric hexagon with holes, and translating all the holes one unit lattice segment in the same direction). This in turn leads to the solution of two open problems posed by Lai and to an extension of a result due to Fulmek and Krattenthaler, which results in exact enumeration formulas for some new families of hexagonal regions with holes.

Our result also allows us to find new, simpler proofs (and in one case, a new, simpler form) of some formulas due to Krattenthaler for the number of perfect matchings of Aztec rectangles with unit holes along a lattice diagonal.


\end{abstract}

\maketitle

\section{Introduction}

The second author's factorization theorem for perfect matchings of symmetric graphs
\cite[Theorem 1.2]{CiucuMatchingFactorization} opens up the possibility of simple proofs for results stating that the number of perfect matchings of a given family of symmetric planar graphs is expressed by an explicit product formula (see e.g. \cite{ByunIntrusion}\cite{ByunGeneralIntrusion}\cite{CiucuPP1}\cite{CiucuAxialShamrock}\cite{CiucuPP2}\cite{FulmekKrattenthaler1}\cite{LaiAxialShamrock}).

Two particular classes of results of this kind concern honeycomb graphs with holes along a symmetry axis (for some illustrative examples, see \cite{CiucuPP1}), and Aztec rectangles with holes along the axis of symmetry (see e.g.\ \cite{CiucuMatchingFactorization}).
As it turns out, in both these cases interesting results hold also when the collection of holes is translated some distance away from the symmetry axis. Indeed, Krattenthaler proved such formulas for Aztec rectangles with unit holes (see \cite{KrattenthalerAztecRectangle}). Also, Lai conjectured (see \cite{LaiOpenProblems}) that simple product formulas exist for the number of lozenge tilings of two families of regions, both obtained from symmetric hexagons with certain types of holes along the symmetry axis, by translating all the holes one unit lattice segment in the same direction\footnote{By identifying lozenge tilings of a region on the triangular lattice with perfect matchings of its planar dual, this is equivalent to enumerating perfect matchings of a honeycomb graph with holes.}: one family corresponds to the holes being an arbitrary collection of like-oriented triangles of side-length two along the symmetry axis (the untranslated version of this is one of the families of regions for which a simple product formula is proved in \cite{CiucuPP1}), while in the other there is a single hole in the shape of a shamrock, symmetric about the symmetry axis (the original, untranslated case is treated in \cite{CiucuAxialShamrock} and \cite{LaiAxialShamrock}, where a simple product formula is given for it).
However, in both these situations the factorization theorem cannot be applied directly,
because the dual graphs of the involved regions are not quite symmetric.

In this paper we present a new version (see Theorem \ref{tba}) of \cite[Theorem 1.2]{CiucuMatchingFactorization}, which allows us to handle situations like the ones described above.
In fact, it turns out that this leads to a quite general semi-factorization result\footnote{As opposed to a factorization result, which expresses the quantity of interest as a product of two quantities of the same kind, pertaining to two ``halves'' of the original graph, a semi-factorization result expresses the quantity of interest as a {\it sum} of two such products.} for the number of lozenge tilings of a large class of hexagonal regions with holes, obtained by starting with an arbitrary symmetric hexagon with holes, and then translating all the holes one unit lattice segment in the same direction (see Theorem \ref{tbb}). 

In Section 3 we use Theorem \ref{tba} to solve four open problems of Propp (Problems 8 through 11 in \cite{propp2022trimer}) on the enumeration of trimer tilings\footnote{ More precisely, these are tilings by triples of contiguous unit hexagons, which in the dual picture become trimer coverings. For ease or reference we call them simply trimer tilings.} on the hexagonal lattice.

Section 4 shows how Theorem \ref{tbb} leads to the solution of the first open problems of Lai mentioned above. In Section 5 we discuss the second open problem of Lai (as well as some related problems), and then describe how to obtain explicit product formulas for the number of lozenge tilings of the regions appearing in those problems. In Section 6 we make further use of Theorem \ref{tbb} to obtain an extension of a result due to Fulmek and Krattenthaler on the number of lozenge tilings of a symmetric hexagon containing a fixed lozenge just off the symmetry axis.

Theorem \ref{tba} also allows us to find new, simpler proofs (and in one case, a simpler form) of some formulas due to Krattenthaler for the number of perfect matchings of Aztec rectangles with unit holes along a lattice diagonal. We present this in Section 7.

\section{Two general results}

All the graphs in this section will be assumed to be bipartite. The two classes of vertices will be called white and black. 

A {\it perfect matching} of a graph is a collection of vertex-disjoint edges
that are collectively incident to all vertices. Given a graph $G$, we denote by $\M(G)$ the number of its perfect matchings. If $v$ is a vertex of $G$, a {\it $v$-near-perfect matching} (or $v$-matching, for short) of $G$ is a perfect matching of $G\setminus v$. We denote the number of $v$-matchings of $G$ by $\M_v(G)$. If $v$ is not specified, we call a collection of disjoint edges of $G$ that covers all but a single vertex of $G$ a {\it near-perfect matching}.

Let $G$ be a plane graph. We say that $G$ is {\it symmetric} if it is invariant
under the reflection across some straight line. The picture on the left in Figure \ref{fba}  shows an example of
a symmetric graph. Clearly, a symmetric graph has 
no perfect (resp., near-perfect) matching unless the axis of symmetry contains an even (resp., odd) number of vertices (otherwise, the total number of vertices of $G$ has the wrong parity). 

A {\it weighted} symmetric graph is a symmetric graph equipped with a weight function on the edges that is constant on the orbits of the reflection. The {\it width} of a symmetric graph~$G$, denoted $\w(G)$, is defined to be the integer part of half the number of vertices of $G$ lying on the symmetry axis. 

\begin{figure}[t]
\vskip0.2in
\centerline{
\includegraphics[width=1\textwidth]{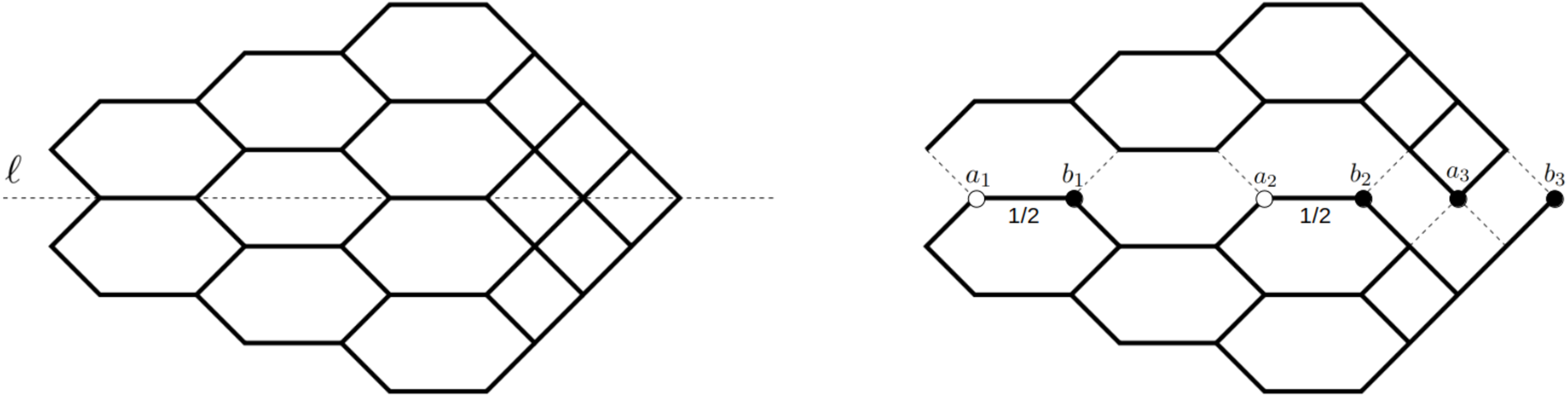}
}
\caption{\label{fba}  Illustrating Theorem \ref{tba}(a): A plane bipartite symmetric graph $G$ (left), and the resulting graphs $G^+$ and $G^-$ (right).}
\end{figure}

\begin{figure}[t]
\centerline{
\hfill
{\includegraphics[width=0.20\textwidth]{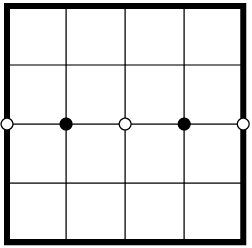}}
\hfill
{\includegraphics[width=0.20\textwidth]{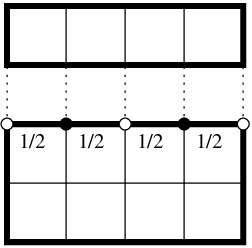}}
\hfill
}
\vskip0.4in
\centerline{
\hfill
{\includegraphics[width=0.20\textwidth]{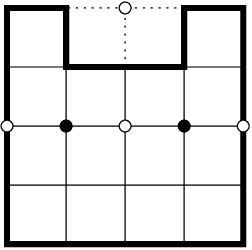}}
\hfill
{\includegraphics[width=0.20\textwidth]{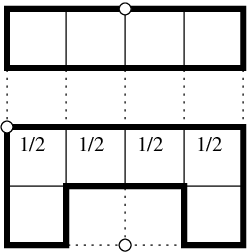}}
\hfill
}
\caption{\label{fbxa} Illustrating the case of Theorem \ref{tba}(b) when the removed boundary vertex $v$ is white. A plane, bipartite symmetric graph $G$ with an odd number of vertices on the symmetry axis, with the number of white vertices being one more than the number of black vertices (top left); the corresponding graphs $G^+$ and $G^-$ (top right). When a white vertex $v$ is removed from the boundary (bottom left), the factorization involves the graphs $G^+$ and $G^-\setminus v'$, where $v'$ is the mirror image of $v$ (bottom right).
}
\vskip-0.1in
\end{figure}

\begin{figure}[t]
\centerline{
\hfill
{\includegraphics[width=0.20\textwidth]{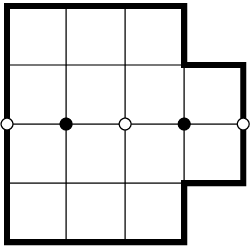}}
\hfill
{\includegraphics[width=0.20\textwidth]{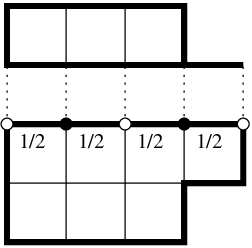}}
\hfill
}
\vskip0.4in
\centerline{
\hfill
{\includegraphics[width=0.20\textwidth]{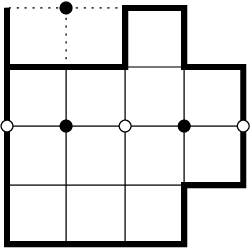}}
\hfill
{\includegraphics[width=0.20\textwidth]{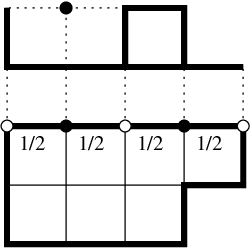}}
\hfill
}
\caption{\label{fbxb} Illustrating the case of Theorem \ref{tba}(b) when the removed boundary vertex $v$ is black. A plane, bipartite symmetric graph $G$ with an odd number of vertices on the symmetry axis, with one more black than white vertices (top left); the corresponding graphs $G^+$ and $G^-$ (top right). When a black vertex $v$ is removed from the boundary (bottom left), the factorization involves the graphs $G^+\setminus v$ and $G^-$ (bottom right).
}
\vskip-0.1in
\end{figure}

Let $G$ be a weighted symmetric graph with symmetry axis $\ell$, which we consider to be horizontal. Let $a_{1},b_{1},a_{2},b_{2},\dotsc$ be the vertices lying on $\ell$, as they occur from left to right (if $G$ has an even number of vertices, this sequence ends with $b_{\w(G)}$, while if $G$ has an odd number of vertices, it ends with $a_{\w(G)+1}$).

The weight of a matching $\mu$ is defined to be the product of the weights of the edges contained in $\mu$. The matching generating function of a weighted graph $G$, also denoted\footnote{ When all weights are 1, this becomes just the number of perfect matchings of $G$.} by $\M(G)$, is the sum of the weights of all perfect matchings\footnote{ For weighted graphs, $\M_v(G)$ denotes the sum of the weights of all $v$-near-perfect matchings of $G$.} of $G$. The matching generating function is clearly multiplicative with respect to disjoint unions of graphs. We will henceforth assume that all graphs under consideration are connected. 

Let $G$ be a weighted symmetric graph that is also bipartite. For definiteness, choose the leftmost vertex on the symmetry axis $\ell$ to be white. We also assume that $G$ has at least one vertex on the symmetry axis $\ell$ (note that this implies that the symmetry is color-preserving; see the fifth paragraph of the proof of Theorem \ref{tba}).
We define two subgraphs $G^{+}$ and $G^{-}$ as follows.

Given a vertex $u$ of $G$ on the symmetry axis, we call the operation of deleting all edges above~$\ell$ which are incident to $u$ {\it cutting above $u$}; similarly, we call deleting all edges below $\ell$ incident to $u$ {\it cutting below~$u$}.
Perform cutting operations above all white $a_{i}$'s and black $b_{i}$'s and below all black $a_{i}$'s and white~$b_{i}$'s. Note that this procedure yields cuts of the same kind at the endpoints of each edge lying on $\ell$ (see the fourth paragraph in the proof of Theorem \ref{tba} for a justification). Reduce the weight of each such edge by half; leave all other weights unchanged.

As shown in the proof of Theorem \ref{tba}(a), the graph $G_0$ produced by the above procedure is disconnected into one part lying above and one lying below $\ell$.
Denote the portion above $\ell$ by $G^{+}$, and the one below $\ell$ by $G^{-}$; see the picture on the right in Figure \ref{fba} for an illustration of this procedure (the edges whose weight has been reduced by half are marked by 1/2). 

Part (a) of the following result is a slight strengthening of the original factorization theorem of \cite[Theorem 1.2]{CiucuMatchingFactorization} (namely, we show that the assumption made there that the graph is separated by its symmetry axis can be dropped). The new version is contained in part (b); its two cases are illustrated in Figures \ref{fbxa} and \ref{fbxb}.

\begin{thm}
\label{tba}
Let $G$ be a plane bipartite weighted symmetric graph. 

\parindent0pt
$($a$)$. If $G$ has an even number of vertices, then
\begin{equation}
\M(G)=2^{\w(G)}\M(G^{+})\M(G^{-}).
\label{eba}
\end{equation}
$($b$)$. Suppose $G$ has an odd number of vertices and $v$ is a vertex on the unbounded face of $G$ lying off the symmetry axis. Assume $($without loss of generality, as $G$ is symmetric$)$ that $v$ is above the symmetry axis.
Then
\begin{equation}
\M_v(G)=
\begin{cases}
2^{\w(G)}\M(G^{+})\M_{v'}(G^{-}), & \text{\rm if $v$ is white,}\\
2^{\w(G)}\M_v(G^{+})\M(G^{-}), & \text{\rm if $v$ is black,}
\end{cases}
\label{ebb}
\end{equation}
where $v'$ is the mirror image of $v$.

\end{thm}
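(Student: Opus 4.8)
The plan is to deduce both parts from a single combinatorial weight-preserving argument, following the template of the original factorization theorem but tracking the near-perfect matching constraint in part (b). For part (a), I would first justify the two structural claims made in the construction: that the prescribed cutting operations produce cuts of the same kind at both endpoints of each axis edge, and that the resulting graph is genuinely disconnected along $\ell$. The first claim follows because the two endpoints of an edge on $\ell$ have opposite colors (the graph is bipartite) and are consecutive in the list $a_1, b_1, a_2, \dots$, so the ``above/below'' rule assigns them the same side; the second claim follows because after cutting, every vertex on $\ell$ has all its surviving incident edges on one fixed side, so no edge crosses $\ell$. Then I would set up the standard bijection: given a perfect matching $\mu$ of $G$, reflect it to get $\bar\mu$, and form the symmetric-difference-type superposition $\mu \cup \bar\mu$, which decomposes into edges fixed by the reflection together with pairs of reflected cycles and doubled edges; the key point is that the portion of $\mu$ above $\ell$ together with the reflected portion below gives a perfect matching of $G^+$ (and symmetrically $G^-$), and conversely any pair of matchings of $G^\pm$ can be recombined, with the $2^{\w(G)}$ factor accounting for the independent binary choices at the $\w(G)$ axis edges (each such edge is either used in $\mu$ or not, contributing the halved weights that square back to the original weight). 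This is essentially verbatim the argument of \cite[Theorem 1.2]{CiucuMatchingFactorization}, with the observation that separation by the axis was never actually needed — only the local structure at $\ell$ matters.

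For part (b), the graph has an odd number of axis vertices, so the list ends with $a_{\w(G)+1}$, which is white. The removed vertex $v$ lies above $\ell$ on the outer face. I would again take a $v$-near-perfect matching $\mu$ of $G$, reflect it to $\bar\mu$ (a $v'$-matching), and analyze the superposition $\mu \cup \bar\mu$: now it covers all vertices except $v$ and $v'$, and since $v \ne v'$, the superposition contains a single open path $P$ from $v$ to $v'$ in addition to the fixed edges, doubled edges, and reflected cycle pairs. Because $v$ is on the unbounded face and off the axis, this path must cross $\ell$ an odd number of times, and — this is the crux — it must cross $\ell$ exactly at a vertex where the coloring forces the crossing edge to be one that was \emph{not} cut. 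The parity argument (path from above to below crosses $\ell$ an odd number of times, each crossing at an axis vertex, and the cut structure permits passage only of a specific color) pins down that $P$ meets $\ell$ at exactly one vertex, namely $a_{\w(G)+1}$. Splitting $P$ at that vertex gives a near-perfect matching on one side and a perfect matching on the other: if $v$ is white, the white defect $v$ on top pairs with the white axis vertex $a_{\w(G)+1}$ being ``used going down,'' leaving $G^+$ perfectly matched and $G^-$ missing $v'$; if $v$ is black, the roles swap. Recombining as in part (a), with the same $2^{\w(G)}$ count over the $\w(G)$ genuine axis edges (the vertex $a_{\w(G)+1}$ is handled by the path, not by a free binary choice), yields \eqref{ebb}.

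The main obstacle I anticipate is the crossing-parity argument in part (b): one must verify carefully that the open path $P$ from $v$ to $v'$ meets the symmetry axis in exactly one vertex (not merely an odd number), and that this vertex is forced to be $a_{\w(G)+1}$ with the correct color. The topological input — $v$ lies on the unbounded face, so $P$ can be chosen/shown to behave well relative to $\ell$ — together with the bipartite cut structure should force this, but making it rigorous (especially ruling out the path weaving back and forth across $\ell$ through multiple axis vertices, which a priori the reflection symmetry of $P \cup \bar P = P$ might seem to allow) requires the careful observation that after the cutting operations, \emph{no} axis vertex retains edges on both sides, so $P$ simply cannot cross except by passing through the one special vertex $a_{\w(G)+1}$ whose incident surviving edges straddle $\ell$ in the right way. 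Once this localization is established, the rest is the bookkeeping of weights and the $2^{\w(G)}$ factor, which is routine and parallel to part (a).
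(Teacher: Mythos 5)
Your part (a) has a gap precisely at the point that constitutes the new content of this theorem relative to the original factorization theorem. You justify the disconnection of the cut graph by noting that after cutting, every axis vertex has all its surviving edges on one side of $\ell$. That only rules out crossings \emph{at axis vertices}. It does not rule out an edge $\{u,v\}$ of $G$ with $u$ strictly above and $v$ strictly below $\ell$: such an edge is incident to no vertex of $\ell$, so no cutting operation touches it, and its possible presence is exactly what the (now dropped) hypothesis that $G$ is ``separated by its symmetry axis'' was there to exclude. The paper's proof of (a) consists almost entirely of ruling out such edges: if $u$ and $v$ are not mirror images, the reflected edge $\{u',v'\}$ would have to cross $\{u,v\}$, contradicting planarity; if they are mirror images, comparing a path from $u$ to $a_1$ with its reflection shows $u$ and $v$ have the same color, contradicting bipartiteness. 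Once separation is established the paper simply invokes the original factorization theorem for the count, which is in effect what your superposition sketch also does.

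Part (b) is where your route genuinely diverges, and the crux you yourself flag is not established --- moreover, your proposed fix does not work. The path $P$ from $v$ to $v'$ in the superposition $\mu\cup\bar{\mu}$ lives in $G$, not in the cut graph $G_0$; the matchings are free to use edges on either side of every axis vertex, so the observation that ``after the cutting operations no axis vertex retains edges on both sides'' says nothing about $P$. A priori $P$ can pass through any odd number of axis vertices (at each axis vertex on $P$ the two incident $P$-edges are a $\mu$-edge and its mirror $\bar{\mu}$-edge, one on each side of $\ell$), and there is no reason the crossing should occur at $a_{\w(G)+1}$, nor is $a_{\w(G)+1}$ necessarily white (only $a_1$ is declared white). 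The paper avoids all of this by a short reduction to part (a): adjoin a new vertex $w$ on $\ell$ in the unbounded face, joined to $v$ and $v'$ by weight-$1$ edges; the resulting graph $\overline{G}$ is plane, bipartite, weighted symmetric with an even number of vertices and $\w(\overline{G})=\w(G)+1$, so part (a) applies, and one identifies $\overline{G}^{+}$ with $G^{+}$ and $\overline{G}^{-}$ (after the forced edge at $w$) with $G^{-}\setminus v'$; combining with $\M(\overline{G})=2\M_v(G)$ yields \eqref{ebb}. I recommend adopting that reduction; if you insist on the superposition route you must both localize the axis crossing rigorously and dispose of the degenerate cases (color counts differing by more than one, or $v$ of the minority color), which the paper handles at the outset.
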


\parindent12pt

\begin{proof} (a) This part is almost exactly the same as the second author's factorization theorem \cite[Theorem 1.2]{CiucuMatchingFactorization}): the only difference is that one of the assumptions in the latter --- namely, that the graph is separated by its symmetry axis (the meaning of this will be recalled below) --- is not present in the statement of the current theorem. We show that this condition is in fact a consequence of the other assumptions (a fact overlooked at the time of writing of \cite{CiucuMatchingFactorization}). Then part (a) will follow directly from \cite[Theorem~1.2]{CiucuMatchingFactorization}.

The needed separation condition is that the graph $G_0$ produced by the cutting procedure described before the statement of the theorem is disconnected into one part lying above and one lying below $\ell$. We show below that this is a consequence of the current assumptions.


What we need to show is that
there exists a Jordan arc $J$ in the plane connecting some point $A$ on $\ell$ to some point $B$ on $\ell$ --- where $A$ is to the left of $a_1$, and $B$ is to the right of $b_{\w(G)}$ --- so that the portion $H$ of $G_0$ which is above or on $\ell$ is above $J$, while the portion $K$ of $G_0$ which is below or on $\ell$ is below~$J$.

  To see this, note first that, as mentioned above, the cuts at the endpoints of each edge lying on $\ell$ are of the same kind --- both above, or both below $\ell$; this is because we cut above white $a_i$'s and black $b_i$'s (and below black $a_i$'s and white $b_i$'s), and if an edge lies along $\ell$, its endpoints have on the one hand opposite color, and on the other opposite $a_i$- or $b_i$-type.

  Secondly, note that $G_0$ cannot have any edge $\{u,v\}$ with $u$ above $\ell$ and $v$ below $\ell$. Indeed, if $u$ and $v$ are not mirror images with respect to $\ell$, the symmetry of $G$ implies that the mirror images $u'$ and $v'$ of $u$ and $v$ form another edge $\{u',v'\}$ of $G_0$; but then the edges $\{u,v\}$ and $\{u',v'\}$ would cross, which is a contradiction, since $G$ is a plane graph (see the picture on the left in Figure \ref{fbb}). On the other hand, if $u$ and $v$ are mirror images of each other, let $P$ be a path in $G$ connecting $u$ to $a_1$ (as $G$ is assumed to be connected, such a path exists). Then the mirror image $P'$ of $P$ across $\ell$ is a path in $G$ connecting $v$ to $a_1$. Since $P$ and $P'$ have the same length and $G$ is bipartite, it follows that $u$ and $v$ have the same color, so the edge $\{u,v\}$ has endpoints of the same color; this contradicts the fact that $G$ is bipartite.

  The connected components of $G_0\cap\ell$ are either single vertices or paths with at least one edge. By the definition of $G_0$, all edges incident to a single-vertex component are confined to one side of $\ell$. Furthermore, by the second to last paragraph, all edges of $G_0$ incident to the vertices of a path-component are also contained in one of the two half-planes bounding $\ell$. Therefore we can connect $A$ to $B$ by a Jordan arc $J$ that stays close to $\ell$ and meanders between the connected components of $G_0\cap\ell$ so that it does not cross any edge of $G_0$ incident to a vertex on $\ell$.
  In fact, this arc $J$ does not cross any other edge of $G_0$ either. Indeed, as $J$ can stay arbitrarily close to $\ell$, such an edge of $G_0$ would need to have its endpoints on opposite sides of $\ell$, which we showed in the previous paragraph that is impossible. This proves our claim, and completes the proof of part (a).

\begin{figure}[t]
\vskip0.4in
\centerline{
\hfill
{\includegraphics[width=0.25\textwidth]{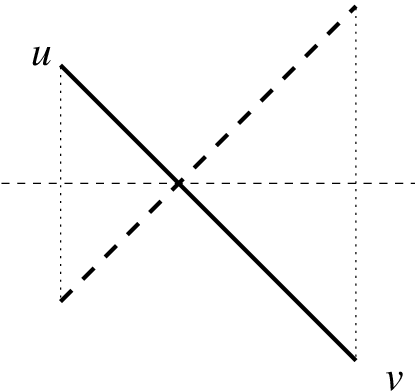}}
\hfill
{\includegraphics[width=0.55\textwidth]{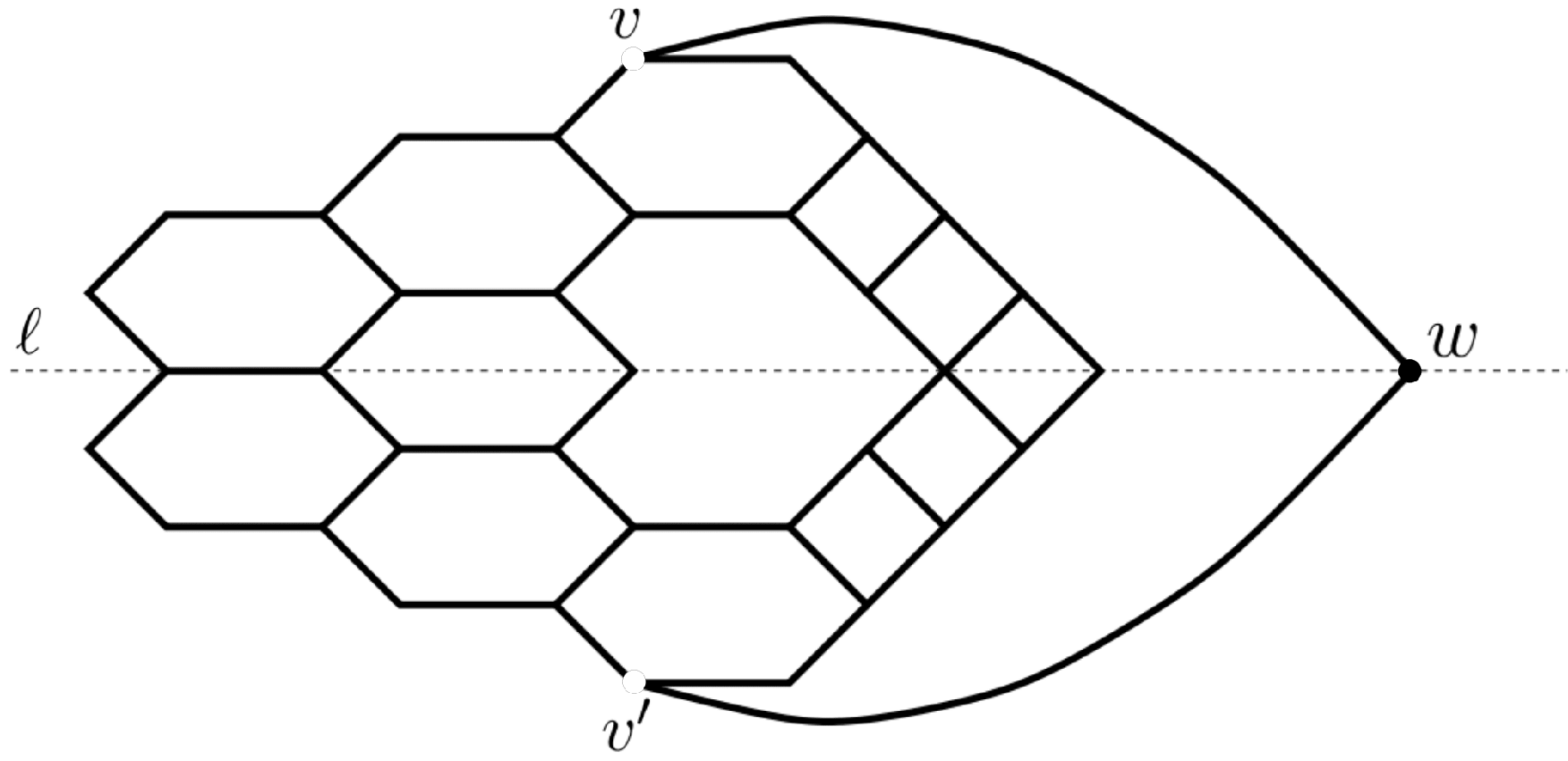}}
\hfill
}
\caption{\label{fbb} {\it Left.} Illustrating the argument in the fifth paragraph of the proof of Theorem \ref{tba}. {\it Right.} The graph $\overline{G}$ corresponding to the graph $G$ shown in Figure \ref{fba} with one vertex removed (since the graph in Figure \ref{fba} has the same number of white and black vertices, we have to delete from it a vertex on $\ell$ of the color opposite to the color of $v$ in order to obtain a graph meeting the hypotheses of part (b) of Theorem~\ref{tba}).
}
\vskip-0.1in
\end{figure}


(b)  To prove the second part, note first that unless the difference between the number of white and black vertices of $G$ is $\pm1$, and in addition $v$ has the color of the majority of vertices, equation \eqref{ebb} holds trivially, as both sides equal zero. Therefore, without loss of generality we may assume that these conditions hold.

  Next, define $\overline{G}$ to be the graph obtained from $G$ by including a new vertex $w$ on $\ell$ contained in the unbounded face of $G$, and the two new edges $\{w,v\}$ and $\{w,v'\}$ (both weighted by 1), where $v'$ is the mirror image of $v$ (see the picture on the right in Figure \ref{fbb}). Then $\overline{G}$ is a plane, bipartite, weighted symmetric graph, to which
part (a) of the theorem can be applied.

If $v$ is white, it follows that $w$ is black, and (since $w$ is also a $b_i$-type vertex) the cutting operation at $w$ prescribed by the factorization theorem occurs above $w$. Therefore, $\overline{G}^+$ is precisely $G^+$. Furthermore, in $\overline{G}^-$ vertex $w$ has degree one, and after removing the edge $\{u,v'\}$ that is forced to match it in every perfect matching, the resulting graph (which has the same matching generating function as $\overline{G}^-$, as the removed edge has weight 1) is precisely $G^-\setminus v'$. Thus part (a) of the theorem applied to $\overline{G}$ yields
\begin{equation}
  \M(\overline{G})=2^{\w(G)+1}\M(G^+)\M(G^-\setminus v')
  =2^{\w(G)+1}\M(G^+)\M_{v'}(G^-).
\label{ebc}
\end{equation}  
On the other hand, since in $\overline{G}$ the vertex $w$ can only be matched to $v$ or $v'$, and since $G\setminus v$ and $G\setminus v'$ are isomorphic, it follows that $\M(\overline{G})=2\M(G\setminus v)=2\M_v(G)$. Combined with \eqref{ebc}, this proves \eqref{ebb} when $v$ is white. The case when $v$ is black follows by the same argument.
\end{proof}

The second result in this section is more specific, but is general enough to have several interesting consequences, including solutions to two open problems posed by Lai (see Sections 4 and~5), closed formulas for the number of lozenge tilings of some new families of hexagonal regions with holes (see Section 5), and an extension of a result by Fulmek and Krattenthaler (see Section~6).

Draw the triangular lattice so that one family of lattice lines is horizontal, and let $H$ be a non-degenerate (i.e., with all sides of positive length) hexagonal region on this lattice, symmetric about a vertical symmetry axis\footnote{ We will be interested in symmetric (or nearly-symmetric) regions obtained from $H$ by making in it some holes, and since the symmetry of such regions seems to be more readily appreciated visually when the symmetry axis is vertical, we adopt this point of view for the second result in this section.} $\ell$. Let $R_0$ be the region obtained from $H$ by removing an arbitrary collection ${\mathcal C}$ of unit triangles, so that ${\mathcal C}$ is symmetric about $\ell$ and contains no unit triangles touching the southern or southeastern edge of $H$. Define $R$ to be the region obtained from $R_0$ by translating all unit triangles in ${\mathcal C}$ one unit in the southeastern lattice direction. We call the region $R$ a {\it nearly symmetric hexagon with holes} (an example is shown in the picture on the left in Figure \ref{fbc}). Therefore, while the outer, hexagonal boundary of the region $R$ is symmetric with respect to the original symmetry axis $\ell$, the holes in $R$ are symmetric about the line $\ell'$, the translation of $\ell$ half a unit to the right.

Let $G$ be the planar dual graph of $R$. Even though $G$ is not symmetric about $\ell'$, let us consider the graphs $G^+$ and $G^-$ obtained from it by rotating it 90 degrees counterclockwise and performing cutting operations around the vertices on $\ell'$ as prescribed by the factorization theorem (assuming, in particular, that after the rotation the leftmost vertex on $\ell'$ is white). In addition to these two graphs, consider also the graphs $\widehat{G}^+$ and $\widehat{G}^-$ obtained from $G$ by performing cutting operations around the vertices on $\ell'$ as prescribed by the factorization theorem, {\it but assuming this time that after the rotation the leftmost vertex on $\ell'$ is black}. Define $R^+$, $R^-$, $\widehat{R}^+$ and $\widehat{R}^-$ to be the lattice regions whose planar duals are the graphs $G^+$, $G^-$, $\widehat{G}^+$ and $\widehat{G}^-$, respectively\footnote{ If $R$ is the region on the triangular lattice whose planar dual is the graph $G$, and $G$ has weights on its edges, the lozenge position in $R$ corresponding to any given edge of $G$ comes weighted with the weight of that edge.}.

We can now state our second result in this section.

\begin{thm}
\label{tbb}
Let $R$ be a nearly symmetric hexagon with holes, and assume that the symmetric region $R_0$ from which $R$ was constructed\footnote{ I.e.\ the region obtained from $R$ by translating all the holes one unit lattice segment in the northwest direction.} admits at least one lozenge tiling. Then the number of lozenge tilings\footnote{ We denote the matching generating function of the lozenge tilings of a region $R$ (in which lozenge positions may carry weights) by $\M(R)$, because these lozenge tilings can be identified with perfect matchings of the (weighted) planar dual graph of the region $R$.} of $R$ is given by
\begin{equation}
\M(R)=2^{\w_{\ell'}(R)-1}\left(\M(R^+)\M(R^-)+\M(\widehat{R}^+)\M(\widehat{R}^-)\right),
\label{ebd}
\end{equation}
where $\w_{\ell'}(R)$ is half the number of unit triangles in $R$ crossed by $\ell'$.

\end{thm}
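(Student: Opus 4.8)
The plan is to derive \eqref{ebd} from the factorization theorem (Theorem~\ref{tba}) by first exhibiting $\M(R)$ as a \emph{sum} of two quantities, each of which counts lozenge tilings of an honestly symmetric region, and then applying part~(a) of Theorem~\ref{tba} to each of these. To begin, I would pass to graph language: let $G$ be the planar dual of $R$, rotated $90^{\circ}$ counterclockwise so that $\ell'$ is horizontal, and two-colour it so that the leftmost vertex on $\ell'$ is white. Since $R_0$ is tileable by hypothesis, it has equally many up- and down-pointing unit triangles, and because translation by one full lattice step preserves the up/down type of a unit triangle, the same is true of $R$; hence $G$ has equally many white and black vertices and, in particular, an even number of vertices — which is what is needed to invoke Theorem~\ref{tba}(a). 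The graph $G$ is plane and bipartite but not symmetric about $\ell'$; the point is that it is symmetric about $\ell'$ \emph{outside a thin vertical strip of width one unit}, this strip being exactly where the half-unit shift of the holes manifests itself.

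The heart of the argument is to resolve the asymmetry in that thin strip. Inside the strip the region is essentially a width-one ``ribbon'', most of whose lozenge positions are forced in any tiling of $R$; after removing these forced lozenges, I expect the remaining freedom to collapse to a single binary choice of the lozenge covering one distinguished unit triangle $\Delta$ near one end of the strip (i.e., near one of the two points where $\ell'$ meets the outer boundary of $R$). I would show that precisely two local configurations for this lozenge are possible, and that — this is the key geometric point — deleting from $R$ the one up-triangle and one down-triangle covered by that lozenge in each of the two cases produces a region $R_1$ (respectively $R_2$) that \emph{is} symmetric about $\ell'$, with $\w_{\ell'}(R_i)=\w_{\ell'}(R)-1$. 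Since every tiling of $R$ falls into exactly one of the two cases, this yields $\M(R)=\M(R_1)+\M(R_2)$.

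Now Theorem~\ref{tba}(a) applies to each $R_i$: its dual is plane, bipartite, symmetric about $\ell'$, and has an even number of vertices by the parity remark above, so $\M(R_i)=2^{\w_{\ell'}(R_i)}\M(R_i^{+})\M(R_i^{-})=2^{\w_{\ell'}(R)-1}\M(R_i^{+})\M(R_i^{-})$. It remains to check that the two cases correspond, respectively, to the two colour conventions built into the definitions of $R^{\pm}$ and $\widehat{R}^{\pm}$: the binary choice above flips the colour of the leftmost vertex on $\ell'$ once the strip has been resolved, so the cutting prescribed by the factorization theorem for the dual of $R_1$ reproduces $G^{+}$ and $G^{-}$ (up to forced edges), while that for the dual of $R_2$ reproduces $\widehat{G}^{+}$ and $\widehat{G}^{-}$; hence $\M(R_1^{+})\M(R_1^{-})=\M(R^{+})\M(R^{-})$ and $\M(R_2^{+})\M(R_2^{-})=\M(\widehat{R}^{+})\M(\widehat{R}^{-})$. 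Substituting into $\M(R)=\M(R_1)+\M(R_2)$ gives \eqref{ebd}.

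I expect the main obstacle to be exactly the combinatorial bookkeeping in the second and third paragraphs: verifying that the asymmetry in the width-one strip really does collapse to a single binary alternative (in particular that the forced lozenges there behave as claimed, and that the defects at \emph{both} ends of the strip are accounted for by this one choice), that each resolved case turns the \emph{whole} nearly symmetric region into a genuinely symmetric one, and that the resulting four half-regions $R_i^{\pm}$ match $R^{\pm}$ and $\widehat{R}^{\pm}$ with exactly the power $2^{\w_{\ell'}(R)-1}$. A possible cleaner route, patterned on the proof of Theorem~\ref{tba}(b), is to introduce an auxiliary symmetric graph $\overline{G}$ by adjoining a single new vertex on $\ell'$ in the unbounded face, joined to two suitable vertices near the distinguished triangle, so that $\M(\overline{G})$ both factorizes via Theorem~\ref{tba}(a) and, by analysing how the new vertex must be matched, equals a simple combination of $\M(R)$-type quantities; the ``sum of two products'' would then be seen to originate from the two vertices to which the new vertex can be matched.
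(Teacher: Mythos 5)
Your proposal has a genuine gap at its central step. The claim that $G$ ``is symmetric about $\ell'$ outside a thin vertical strip of width one unit'' is not correct: the reflection $\sigma$ across $\ell'$ fixes the holes but carries the outer hexagon $H$ to its translate $\sigma(H)$ by one full lattice unit, so $R$ and its mirror image differ along two long ribbons running down the entire western and eastern (non-horizontal) sides of $H$ --- the asymmetry is not localized near $\ell'$ at all. There are no forced lozenges in those ribbons (a tiling may cross freely between a boundary ribbon and the rest of the region), so the tilings of $R$ do not split according to a single binary choice of one lozenge; and deleting one lozenge (plus whatever it forces) cannot produce a region symmetric about $\ell'$, since any $\ell'$-symmetric subregion of $R$ must lie inside $H\cap\sigma(H)$, which differs from $H$ by a macroscopic ribbon rather than by two unit triangles. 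Hence the identity $\M(R)=\M(R_1)+\M(R_2)$ with both $R_i$ symmetric about $\ell'$ is not established, and in this form is not available even for the hole-free hexagon. A further warning sign is that your argument uses the hypothesis that $R_0$ is tileable only to deduce the white/black balance of $G$, whereas that hypothesis must play a nondegeneracy role.

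The mechanism that actually produces the ``sum of two products'' is different. The paper extends $R$ by one layer of unit triangles to a region $\hat R$ and applies Kuo's graphical condensation (Theorem~\ref{tbc}) with four boundary triangles $a,b,c,d$ chosen so that $\hat R\setminus\{a,d\}$ reduces to $R$ after removing forced lozenges, while the other five regions in Kuo's identity are symmetric about $\ell'$ or symmetric with a single boundary defect; Theorem~\ref{tba}(a) or (b) is applied to each of those five, and the common factors are cancelled --- this is exactly where $\M(R_0)\neq 0$ is needed, to divide by $\M(\hat R\setminus\{b,c\})=\M(R_0)$. The two products $\M(R^+)\M(R^-)$ and $\M(\widehat R^+)\M(\widehat R^-)$ arise as the two terms on the right-hand side of Kuo's identity, not from a partition of the tilings of $R$. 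Your closing alternative (adjoining an extra vertex on $\ell'$ as in the proof of Theorem~\ref{tba}(b)) also cannot work as stated: the added vertex's two possible partners are mirror images of each other, so that device yields a factor of $2$ times a single product rather than a sum of two distinct products.
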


{\it Remark $1$.} Formula \eqref{ebd} has the following amusing interpretation. The nearly symmetric hexagon with holes $R$ is not quite symmetric about $\ell'$, but its holes are, so we might expect that if we form the expression on the right hand side of the factorization theorem (see equation~\eqref{eba}), the resulting number will be a reasonable estimate for $\M(R)$. But note: there are two ways to form this expression, depending on whether we consider the vertex $a_1$ in the factorization theorem to be white or black! (For symmetric graphs, these two ways are equivalent, because they result in isomorphic pairs of graphs, due to the symmetry.) Form then both these expressions, and average them, to get an even more reasonable estimate. Then Theorem \ref{tbb} states that this latter ``estimate'' is actually the exact value of $\M(R)$.

This phenomenon turns out to be very sensitive to changing the shape of the outer boundary of the region. For it to hold, it seems essential that the outer boundary is a symmetric hexagon, with symmetry axis half a unit away from the symmetry axis of the holes.

Our proof of the above theorem employs Kuo's graphical condensation \cite{kuo2004applications}. Let $G$ be a plane bipartite graph, $V_{1}$ and $V_{2}$ the vertex sets of the graph consisting of the two color classes, and $E$ the edge set of the graph. For any subset $W\subset V_{1} \cup V_{2}$ of vertices of $G$, let $G\setminus W$ be the subgraph obtained from $G$ by deleting all vertices in $W$ and all their incident edges.

\begin{thm}[Theorem 5.2 in \cite{kuo2004applications}]
\label{tbc}
Let $G=(V_1,V_2,E)$ be a weighted plane bipartite graph in which $|V_1|=|V_2|$. Let vertices $a,b,c,$ and $d$ appear in a cyclic order on the same face of $G$. If $a,b\in V_1$ and $c,d\in V_2$, then
\begin{equation*}
    \M(G\setminus\{a,d\})\M(G\setminus\{b,c\})=\M(G)\M(G\setminus\{a,b,c,d\})+\M(G\setminus\{a,c\})\M(G\setminus\{b,d\}).
\end{equation*}
\end{thm}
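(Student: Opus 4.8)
The plan is to prove this condensation identity by the standard superposition-of-matchings argument. First I would read each of the three products as a weighted count of ordered pairs of matchings. The left-hand side counts pairs $(M_1,M_2)$ in which $M_1$ is a perfect matching of $G\setminus\{a,d\}$ and $M_2$ is a perfect matching of $G\setminus\{b,c\}$; the first product on the right counts pairs $(N_1,N_2)$ with $N_1$ a perfect matching of $G$ and $N_2$ a perfect matching of $G\setminus\{a,b,c,d\}$; and the second product on the right counts pairs $(P_1,P_2)$ with $P_1$ a perfect matching of $G\setminus\{a,c\}$ and $P_2$ a perfect matching of $G\setminus\{b,d\}$. In each case I would form the superposition of the two matchings, recorded as a spanning subgraph $S$ of $G$ with edge multiplicities. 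Because $|V_1|=|V_2|$ and, in all three cases, the two deleted vertex sets are disjoint, every vertex of $G$ is covered by at least one of the two matchings, so $S$ has minimum degree one and maximum degree two. Hence $S$ decomposes into doubled edges, cycles, and paths, and its degree-one vertices --- which are exactly $a,b,c,d$ --- are precisely the endpoints of the paths. Since $G$ is bipartite, all cycles are even, and there are exactly two paths, whose four endpoints are $a,b,c,d$.

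The two paths pair the four vertices in one of three ways: $\{a,b\},\{c,d\}$, or $\{a,d\},\{b,c\}$, or $\{a,c\},\{b,d\}$. At this point I would use the hypothesis that $a,b,c,d$ occur in this cyclic order on a common face. The two paths are vertex-disjoint, so they are disjoint arcs in the plane; their endpoints interleave on the boundary of that face exactly in the pairing $\{a,c\},\{b,d\}$, and a Jordan curve argument shows that two disjoint arcs cannot realize an interleaving pairing of four boundary points of a disk. This excludes $\{a,c\},\{b,d\}$, leaving only the two non-crossing pairings. It then remains to determine, for each product, which of these two pairings its superpositions can realize, and this is dictated entirely by bipartite parity. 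At a path endpoint the incident edge is forced into whichever of the two matchings covers that vertex; and in a bipartite graph a path between two equally colored vertices has even length while a path between two oppositely colored vertices has odd length. Carrying out the four-case check, I expect the left-hand pairs $(M_1,M_2)$ to realize both surviving pairings, the pairs $(N_1,N_2)$ to realize only $\{a,d\},\{b,c\}$, and the pairs $(P_1,P_2)$ to realize only $\{a,b\},\{c,d\}$.

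To assemble the identity I would argue that all three products are built from the \emph{same} collection of spanning subgraphs $S$ with the same weights. Writing $w(S)$ for the product of the edge weights of $S$ taken with multiplicity, every ordered pair of matchings superimposing to $S$ has weight exactly $w(S)$, independently of how the edges of $S$ are split between the two matchings. Moreover the number of splittings that realize a given admissible product is precisely $2^{c(S)}$, where $c(S)$ is the number of cycles of $S$ (doubled edges excluded): each doubled edge lies in both matchings and is forced, each even cycle can be partitioned into its two alternating edge-classes in the two possible ways, and on each path the alternating coloring is forced by the endpoint conditions. Hence each product equals $\sum_S 2^{c(S)}w(S)$, summed over the subgraphs $S$ whose pairing type the product realizes. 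Let $\Sigma_1$ and $\Sigma_2$ denote these sums restricted to the pairings $\{a,b\},\{c,d\}$ and $\{a,d\},\{b,c\}$ respectively. Then the left-hand side equals $\Sigma_1+\Sigma_2$, the first right-hand product equals $\Sigma_2$, and the second equals $\Sigma_1$, which gives the asserted identity.

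The step I expect to be the main obstacle is the planarity argument that excludes the interleaving pairing $\{a,c\},\{b,d\}$, since this is the only point at which the cyclic-order hypothesis is used and the only genuinely topological ingredient. Care is needed because the two paths are honest subgraphs of $G$ rather than abstract arcs, and their endpoints lie on the boundary of the prescribed common face; making the non-crossing conclusion rigorous --- for instance by retracting each path to a simple arc inside a closed disk bounded by that face and invoking the Jordan curve theorem on the four interleaving boundary points --- is the crux. Once this is settled, the parity case analysis and the weight-and-multiplicity bookkeeping are routine.
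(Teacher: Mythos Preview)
Your argument is correct and is essentially Kuo's original superposition proof. Note, however, that the paper does not supply its own proof of this statement: Theorem~\ref{tbc} is quoted from \cite{kuo2004applications} and used as a black box (in the proof of Theorem~\ref{tbb}), so there is no ``paper's proof'' to compare against. Your parity analysis is accurate --- in particular, for the pairs $(N_1,N_2)$ both endpoints of each path carry an $N_1$-edge, which forces the paths to have odd length and hence to join opposite colour classes, ruling out $\{a,b\},\{c,d\}$; the crossing pairing $\{a,c\},\{b,d\}$ is then eliminated by the Jordan-curve argument you describe. The analogous check for $(P_1,P_2)$ and $(M_1,M_2)$ goes through exactly as you state, and the $2^{c(S)}$ bookkeeping (with doubled edges forced) is the standard way to finish.
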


\begin{proof}[Proof of Theorem $\ref{tbb}$]
The details are slightly different depending on the parity of the length of the top and the bottom side of the region $R$ (there are $2\times2=4$ cases). We present here the detailed arguments for the case when the top side is odd and the bottom side even; the other cases follow similarly.

\begin{figure}
\vskip0.2in
\centerline{
\includegraphics[width=0.75\textwidth]{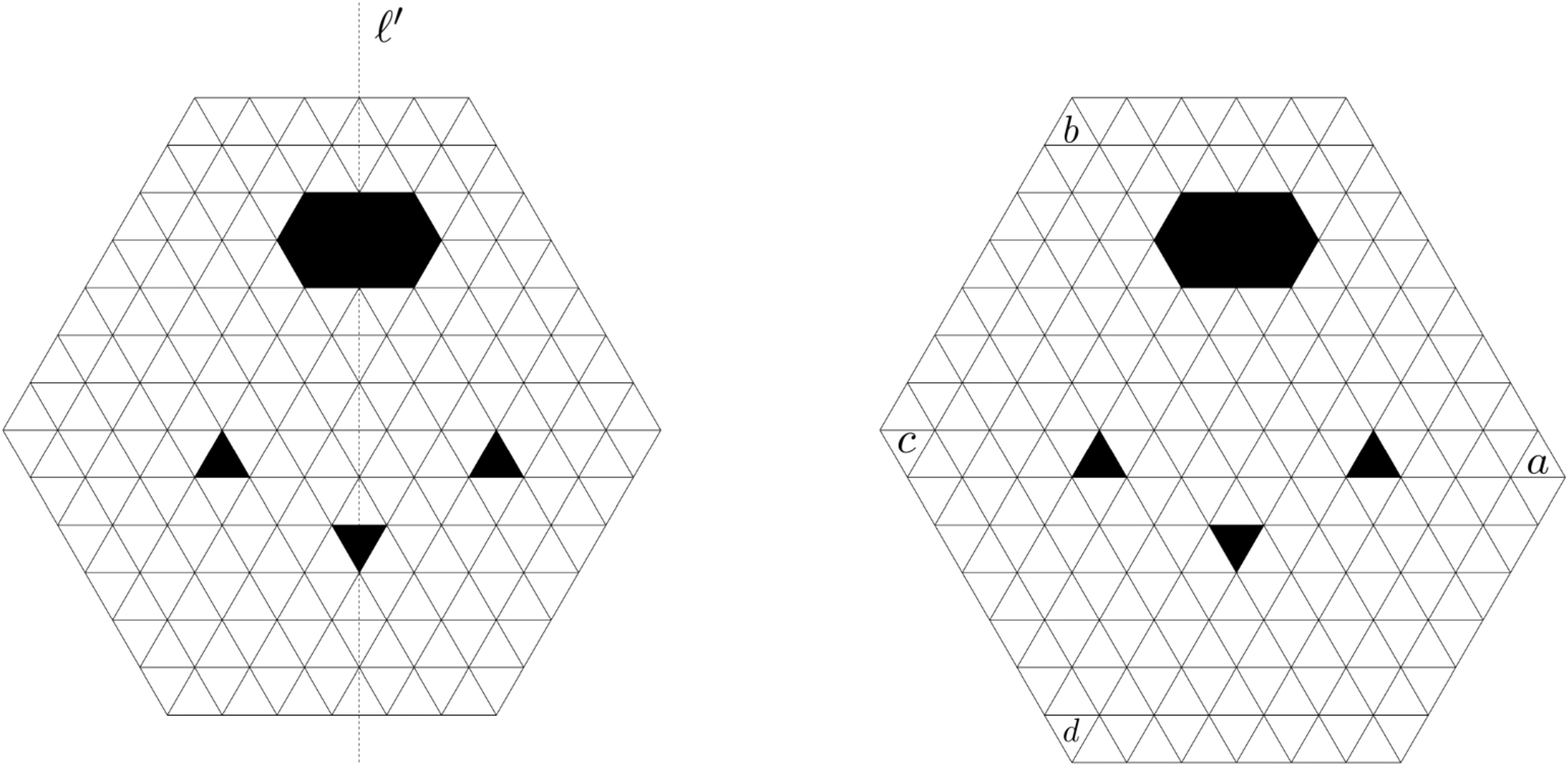}
}
\caption{\label{fbc} A nearly symmetric hexagon with holes $R$ (left), and the extended region $\widehat{R}$ (right). The positions of four unit triangles $a,b,c,$ and $d$ are marked on $\widehat{R}$.}
\end{figure}

\begin{figure}
\vskip0.2in
\centerline{
\includegraphics[width=1\textwidth]{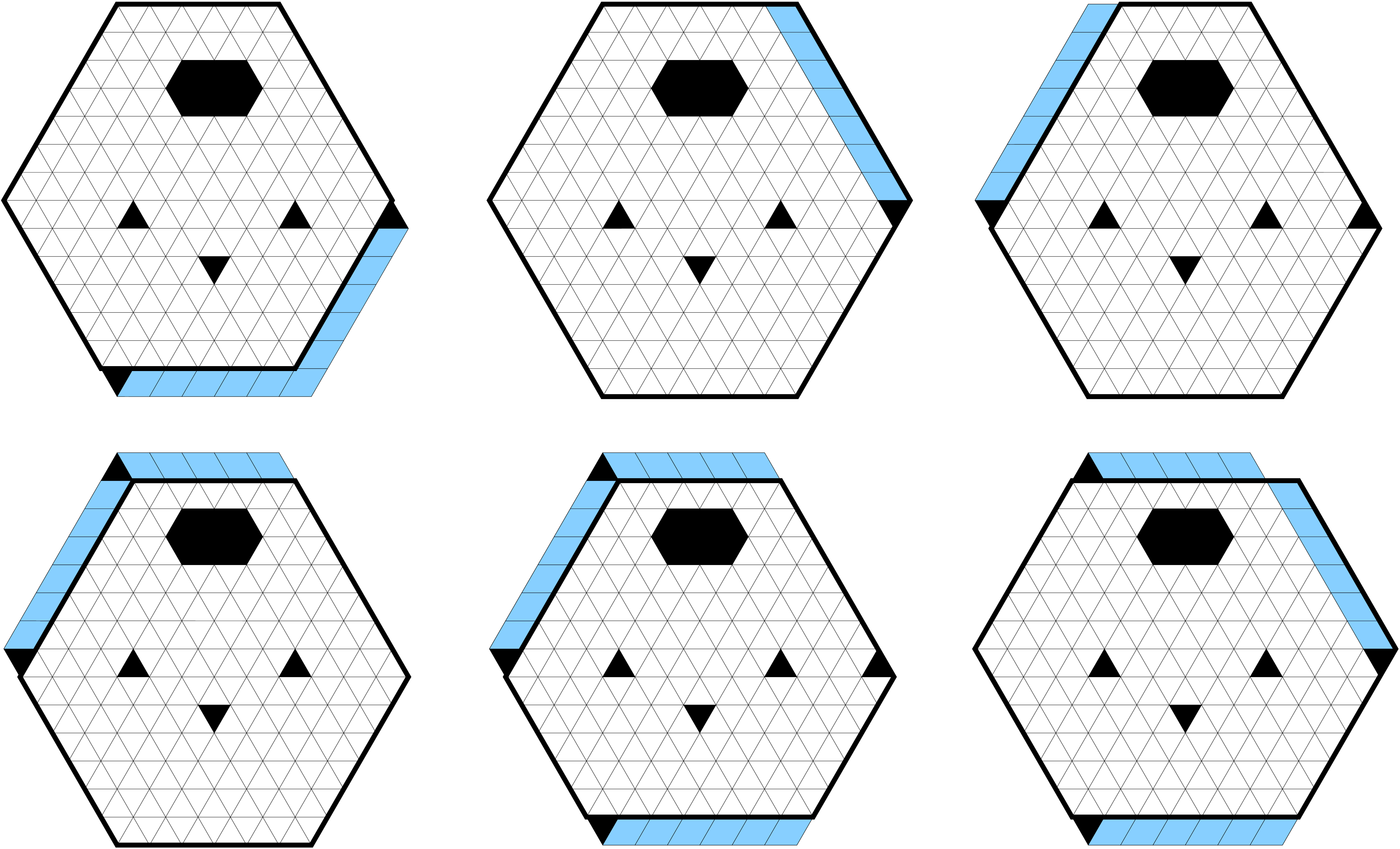}
}
\caption{\label{fbd} The six regions appearing in the recurrence obtained by applying Kuo condensation to the extended region $\widehat{R}$, with the indicated choice of the unit triangles $a,b,c,$ and $d$.}
\end{figure}

We first add one layer of unit triangles on the bottom and bottom right side of $R$, as illustrated in Figure \ref{fbc}; denote the resulting region by $\hat{R}$, and call it {\it the extended region}. Choose the unit triangles $a,b,c$ and $d$ along the boundary of $\hat{R}$ as indicated in the  picture on the right in Figure~\ref{fbc} (the assumption that the outer boundary of $R$ is a non-degenerate hexagon guarantees that $a$, $b$, $c$ and $d$ can be chosen as indicated). The dual graph of $\hat{R}$ and the four vertices of it that correspond to $a,b,c,$ and $d$ satisfy the conditions in Kuo's graphical condensation theorem.
Applying the latter
and taking the duals of the resulting graphs, one gets the following recurrence relation (the six regions appearing in the recurrence are illustrated in Figure \ref{fbd}):
\begin{equation}
    \M(\hat{R}\setminus\{a,d\})\M(\hat{R}\setminus\{b,c\})=\M(\hat{R})\M(\hat{R}\setminus\{a,b,c,d\})+\M(\hat{R}\setminus\{a,c\})\M(\hat{R}\setminus\{b,d\}).
\label{Kuorec}
\end{equation}
Note that after removing some forced lozenges, $\hat{R}\setminus\{a,d\}$ is precisely the region~$R$ whose tilings we want to enumerate (this is in fact what guided our construction of the extended region $\hat{R}$), and $\hat{R}\setminus\{b,c\}$ is a symmetric region (see the bottom left picture in Figure \ref{fbd}). Furthermore, each of the two regions $\hat{R}\setminus\{a,b,c,d\}$ and $\hat{R}\setminus\{a,c\}$ can be viewed as being obtained from a symmetric region with a unit dent on the boundary, by removing some forced lozenges (see the bottom center picture and top right picture, respectively, in Figure \ref{fbd}). The remaining two regions $\hat{R}$ and $\hat{R}\setminus\{b,d\}$ are also equivalent to symmetric regions with a unit defect on the boundary, but we first need to enlarge them to see this. In the case of $\hat{R}$, we add one layer of unit triangles on the top right side of $\hat{R}$, and remove the bottom-most of these (see the top center picture in Figure \ref{fbd}). The region $\hat{R}\setminus\{b,d\}$ can be handled similarly, but we first remove from it the forced lozenges along the top and bottom sides
(see the bottom right picture in Figure \ref{fbd}).


Therefore, viewing the regions in \eqref{Kuorec} as described above, we can apply Theorem \ref{tba}(a) to $\hat{R}\setminus\{b,c\}$, and Theorem \ref{tba}(b) to the four regions $\hat{R}$, $\hat{R}\setminus\{a,b,c,d\}$, $\hat{R}\setminus\{a,c\}$, and $\hat{R}\setminus\{b,d\}$. When applying\footnote{Strictly speaking, we apply Theorem \ref{tba} to their planar dual graphs, and then consider the regions whose planar duals are the resulting graphs.}
Theorem \ref{tba}, the powers of two in the factorizations of $\M(\hat{R}\setminus\{b,c\})$, $\M(\hat{R})$, and $\M(\hat{R}\setminus\{a,c\})$ are $2^{w_{l'}(R)}$, while those in the factorizations of $\M(\hat{R}\setminus\{a,b,c,d\})$ and $\M(\hat{R}\setminus\{b,d\})$ are $2^{w_{l'}(R)-1}$. Figure \ref{fbe} shows how the five regions are split via Theorem \ref{tba} (the shaded ellipses indicate lozenge positions weighted by $1/2$); note that the first branch of Theorem \ref{tba}(b) is applied when the unit boundary dent (which corresponds to $v$) has the same orientation as the top unit triangle along the symmetry axis (which corresponds to $a_1$), and the second branch when the orientations are opposite. One readily sees that the resulting ``half''-subregions --- we call them {\it parts} --- have the following properties:

\begin{figure}
\vskip0.2in
\centerline{
\includegraphics[width=1\textwidth]{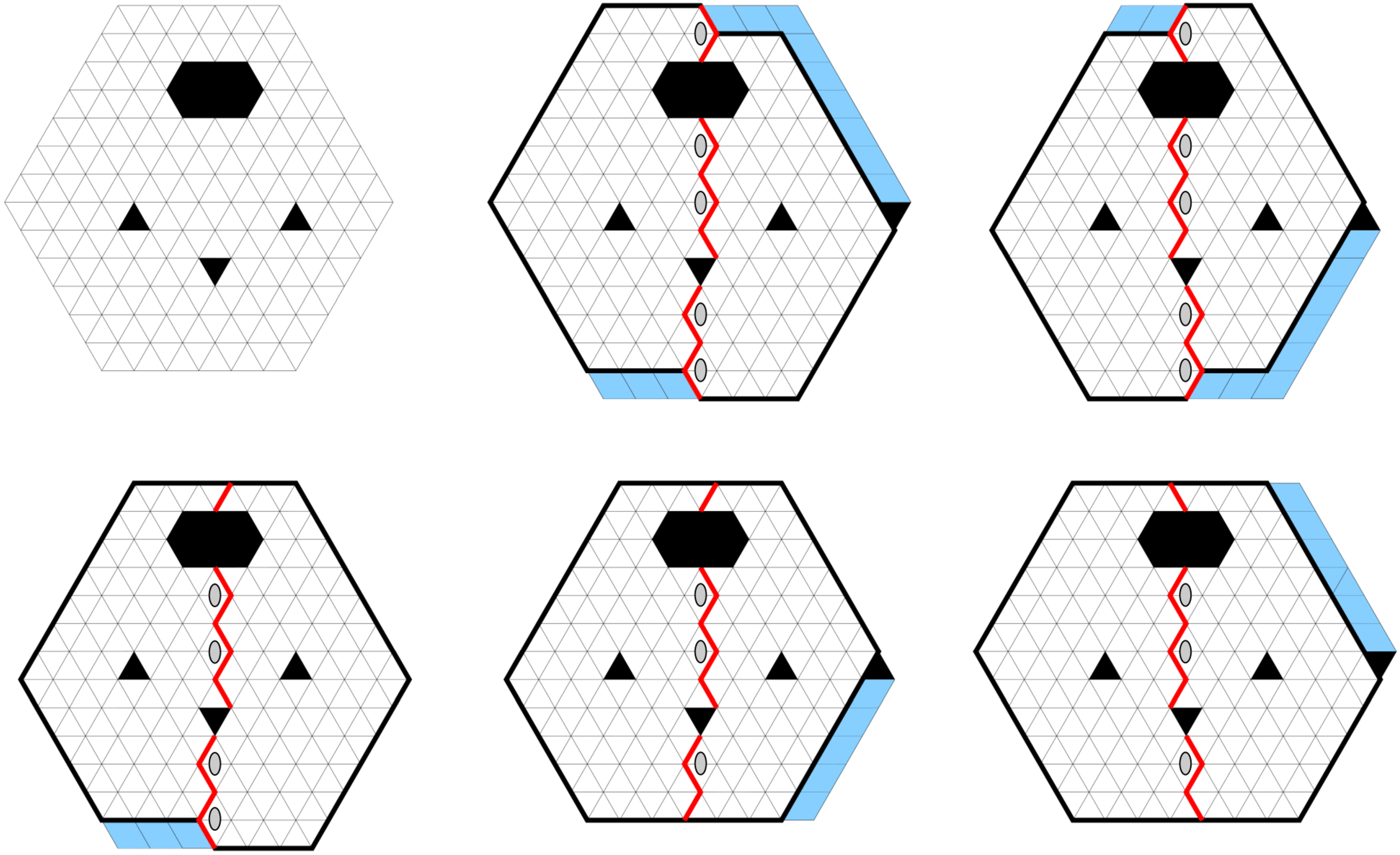}
}
\caption{\label{fbe} Applying Theorem 2.1 (a) and (b) to the five regions.}
\end{figure}

\begin{figure}
\vskip0.2in
\centerline{
\includegraphics[width=0.75\textwidth]{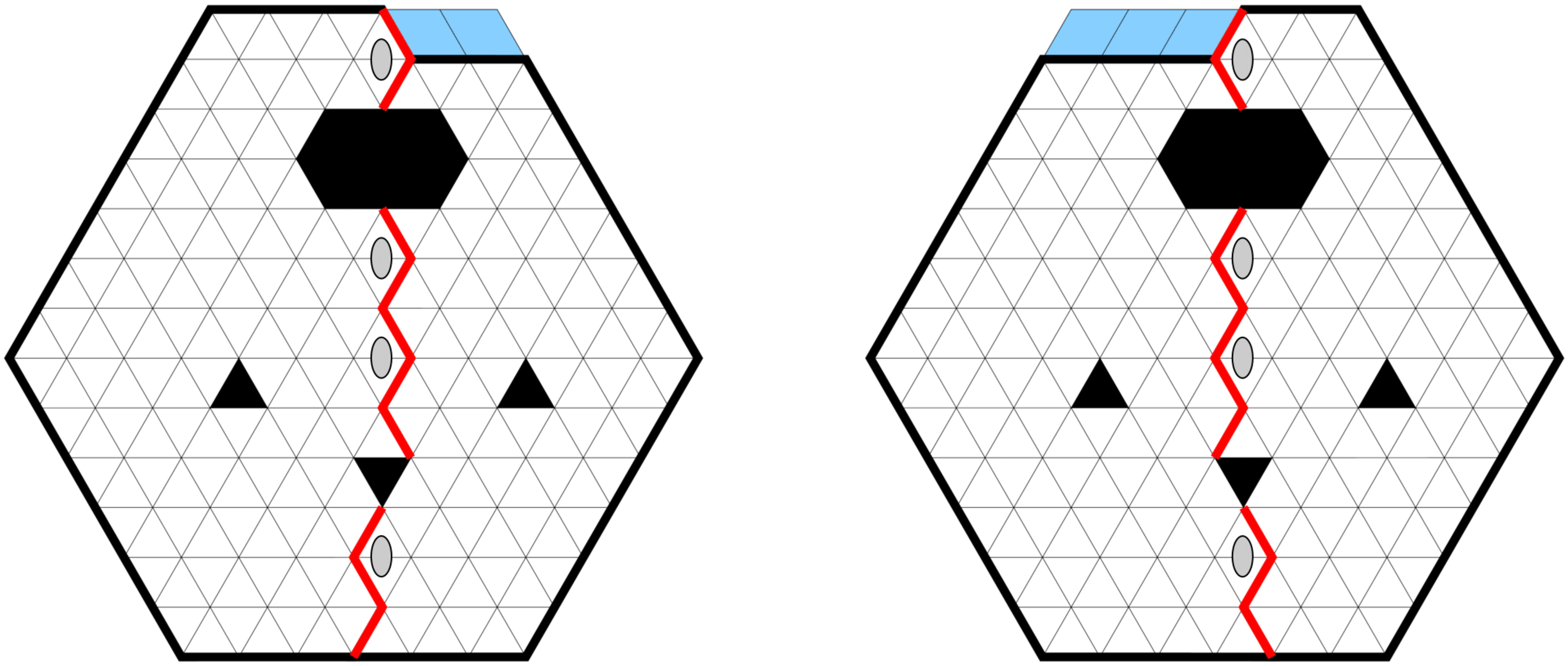}
}
\caption{\label{fbf} The two different ways of decomposing $R$ into parts,
 using zigzag lines along $\ell'$ as prescribed by the factorization theorem.}
\end{figure}

\medskip
$(i)$ The left part of $\hat{R}\setminus\{b,c\}$ is the same (up to forced lozenges) as the left part of $\hat{R}\setminus\{a,b,c,d\}$, and the right part of $\hat{R}\setminus\{b,c\}$ is the same (up to some forced lozenges) as the right part of~$\hat{R}$.

\vskip0.05in
$(ii)$ The left part of $\hat{R}\setminus\{b,c\}$ is the same as the right part of~$\hat{R}\setminus\{b,d\}$, and the right part of $\hat{R}\setminus\{b,c\}$ is the same (up to forced lozenges) as the left part of $\hat{R}\setminus\{a,c\}$.

\vskip0.05in
$(iii)$ The remaining two parts of $\hat{R}$ and $\hat{R}\setminus\{a,b,c,d\}$ (the ones which are not the same as any of the parts of $\hat{R}\setminus\{b,c\}$) can be identified with the two parts of $R$ obtained when we split~$R$ using one of the two zigzag lines along $\ell'$, as prescribed by the factorization theorem (compare the two picture at the center in Figure \ref{fbe} and the left picture in Figure \ref{fbf}).

\vskip0.05in
$(iv)$ The remaining two parts of $\hat{R}\setminus\{a,c\}$ and $\hat{R}\setminus\{b,d\}$ (the ones which cannot be identified with any part of $\hat{R}\setminus\{b,c\}$), can be identified (up to forced lozenges) with the two parts of $R$ when we split $R$ using the other zigzag line along $\ell'$, as prescribed by the factorization theorem (compare the two pictures on the right in Figure \ref{fbe} and the right picture in Figure~\ref{fbf}).


By applying Theorem \ref{tba} to each of the five regions in \eqref{Kuorec} besides $\hat{R}\setminus\{a,d\}$ (which, as we pointed out, after removing the forced lozenges becomes precisely the region $R$ whose tilings we want to enumerate), equation \eqref{Kuorec} becomes an equality in which the left hand side is the product of three factors, while the right hand side is the sum of two products, each having four factors.

By observations $(i)$ and $(ii)$ above, both factors on the left hand side besides $\M(\hat{R}\setminus\{a,d\})$ also appear in each of the two terms on the right hand side. By assumption, $\hat{R}\setminus\{b,c\}$ (which is the same as the region $R_0$) is tileable; hence, by Theorem \ref{tba}, both these factors are non-zero. Dividing by them, the left hand side of the equation discussed in the previous paragraph becomes just $\M(R)$. Furthermore, by $(iii)$, the two remaining factors in the first term on the right hand side are precisely $\M(R^+)$ and $\M(R^-)$. Finally, by $(iv)$, the two remaining factors in the second term on the right hand side are precisely $\M(\widehat{R}^+)$ and $\M(\widehat{R}^-)$. Thus, the equation at which we have arrived is precisely \eqref{ebd}, and the proof is complete.
\end{proof}

\section{Solution of Four Open Problems of Propp}

\begin{figure}
\vskip0.2in
\centerline{
\hfill
{\includegraphics[width=0.4\textwidth]{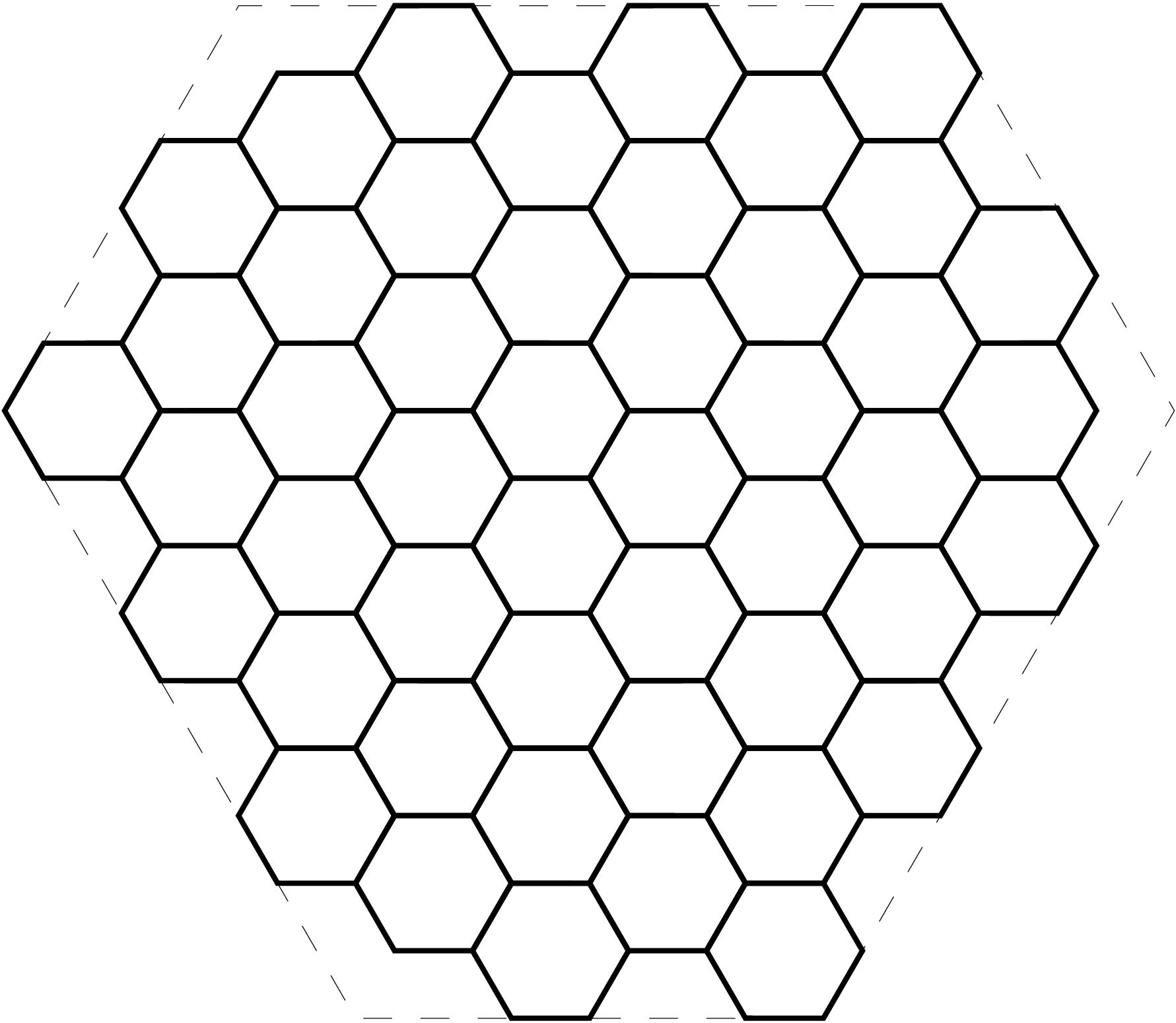}}
\hfill
}
\caption{\label{fca} The $(a,b)$-benzel for $a=7$, $b=8$.}
\end{figure}

In this section we solve four open problems posed by Propp \cite{propp2022trimer} on the enumeration of trimer coverings of regions called \textit{benzels}. To state the problems, we first define the benzel regions. For any positive integers $a$ and $b$ such that $2\leq a\leq 2b$ and $2\leq b\leq 2a$, we defined the $(a,b)$-benzel as follows. Consider a hexagonal grid on the plane consisting of regular hexagons of side-length $1$, each having two horizontal sides (see Figure \ref{fca}; this can be thought of as a tessellation of the plane by unit hexagons). We call these unit hexagons \textit{cells}. Regard the plane as the complex plane so that the origin is the left vertex of one of the cells. Let $H$ be the hexagon whose vertices are $\omega^{j}(a\omega+b)$ and $-\omega^{j}(a+b\omega)$ for $j\in\{0,1,2\}$, where $\omega=e^{2\pi i/3}$. The {\it $(a,b)$-benzel} is defined to be the union of all the cells that are contained in the hexagon $H$; Figure \ref{fca} shows the $(7,8)$-benzel.

\begin{figure}
\vskip0.2in
\centerline{
\includegraphics[width=0.7\textwidth]{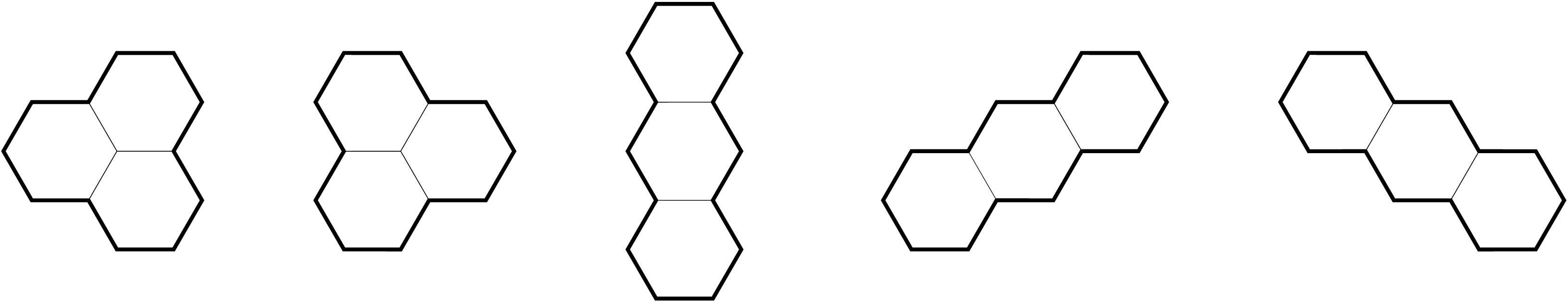}
}
\caption{\label{fcb} Left stone, right stone, vertical bone, rising bone, and falling bone (left to right).}
\end{figure}

Propp considered tilings of the benzel using the following two types of tiles. A \textit{stone} is the union of three cells that are pairwise adjacent. A \textit{bone} is the union of three contiguous cells whose centers are collinear. Taking orientation into account, there are five different types of tiles: the \textit{left stone}, the \textit{right stone}, the \textit{vertical bone}, the \textit{rising bone}, and the \textit{falling bone} (see Figure~\ref{fcb}). Given a benzel, a \textit{trimer cover} of the region is a collection of stones and bones that covers the region without gaps or overlaps.
By considering various restrictions on what tiles are allowed to be used, and the fact that the detailed shape of the $(a,b)$-benzel depends on the residues of $a$ and $b$ modulo~3, Propp was led to twenty open problems, stated in  \cite{propp2022trimer}. 
Some of these conjecture either explicit formulas or explicit recurrences for the number of trimer covers, while others concern combinatorial, respectively number theoretic properties of these numbers.

One  example is: ``How many trimer covers does the $(a,b)$-benzel have, in which all the three types of bones, but not both types of stones are allowed?" (Problem 3 in \cite{propp2022trimer}). See \cite{defant2024tilings}\cite{defant2023tilings}\cite{kim2023pentagonal} for recent progress on some of these problems. The open problems that we solve in this section are about the enumeration of trimer covers of benzels that can use both kinds of stones, the rising bone and the falling bone (in other words, the only restriction is that they cannot use any vertical bone). In this paper, we call such tilings
\textit{vertical-bone-free tilings}.
Based on numerical data, Propp conjectured that the number of such trimer covers of the $(3n,3n)$-benzel (Problem 8), the $(3n+1,3n+1)$-benzel (Problem 9), the $(3n+1,3n+2)$-benzel (Problem 10), and the $(3n-1,3n)$-benzel (Problem~11) satisfy explicit recurrence relations.
Proving such statements is in principle very hard, as, in contrast to the situation of dimer coverings, there are not many enumeration techniques available for trimer covers.

Recently, Defant et al.\ \cite{defant2024tilings} found a very useful bijection, which they call {\it compression}, allowing one to convert trimer enumeration problems into dimer enumeration problems (see Section~3 in \cite{defant2024tilings} for more details about this bijection). This opens up the possibility of proving some of Propp's conjectures using dimer enumeration techniques. We now introduce four families of regions whose domino tiling enumeration (equivalently, perfect matching enumeration of their planar dual graphs) are equivalent, thanks to the results in  \cite{defant2024tilings}, to Problems 8--11 from Propp's list.

Recall that for a positive integer $n$, the \textit{Aztec diamond of order $n$}, denoted $AD_n$, is the region consisting of all the unit squares on $\Z^2$ whose centers $(x,y)$ satisfy the inequality $|x|+|y|\leq n$ (Figure \ref{fcc} illustrates the Aztec diamond of order $7$).


\begin{figure}
\vskip0.2in
\centerline{
\includegraphics[width=0.4\textwidth]{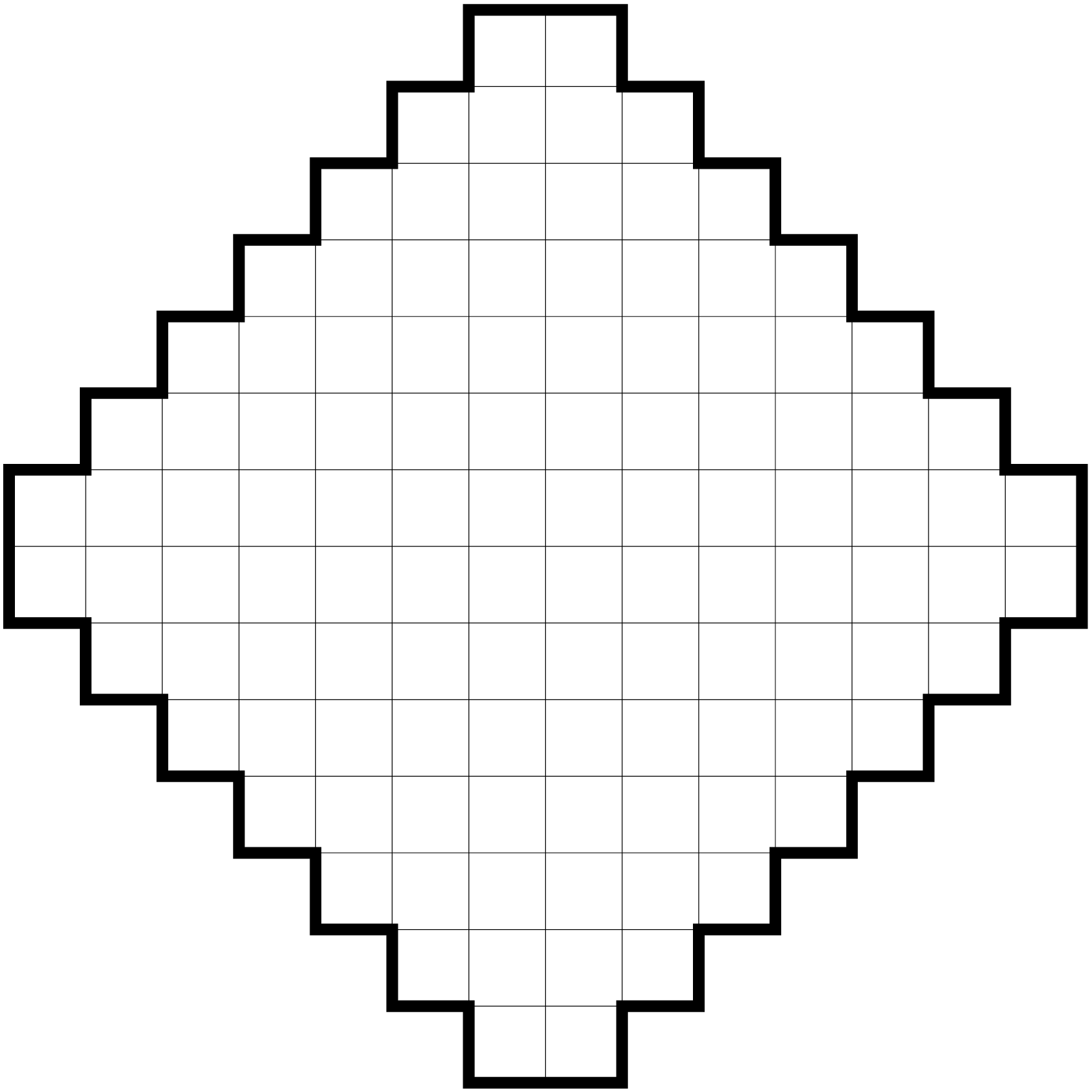}
}
\caption{\label{fcc} Aztec diamond $AD_n$ for $n=7$.}
\end{figure}

\begin{figure}
\vskip0.2in
\centerline{
\includegraphics[width=0.6\textwidth]{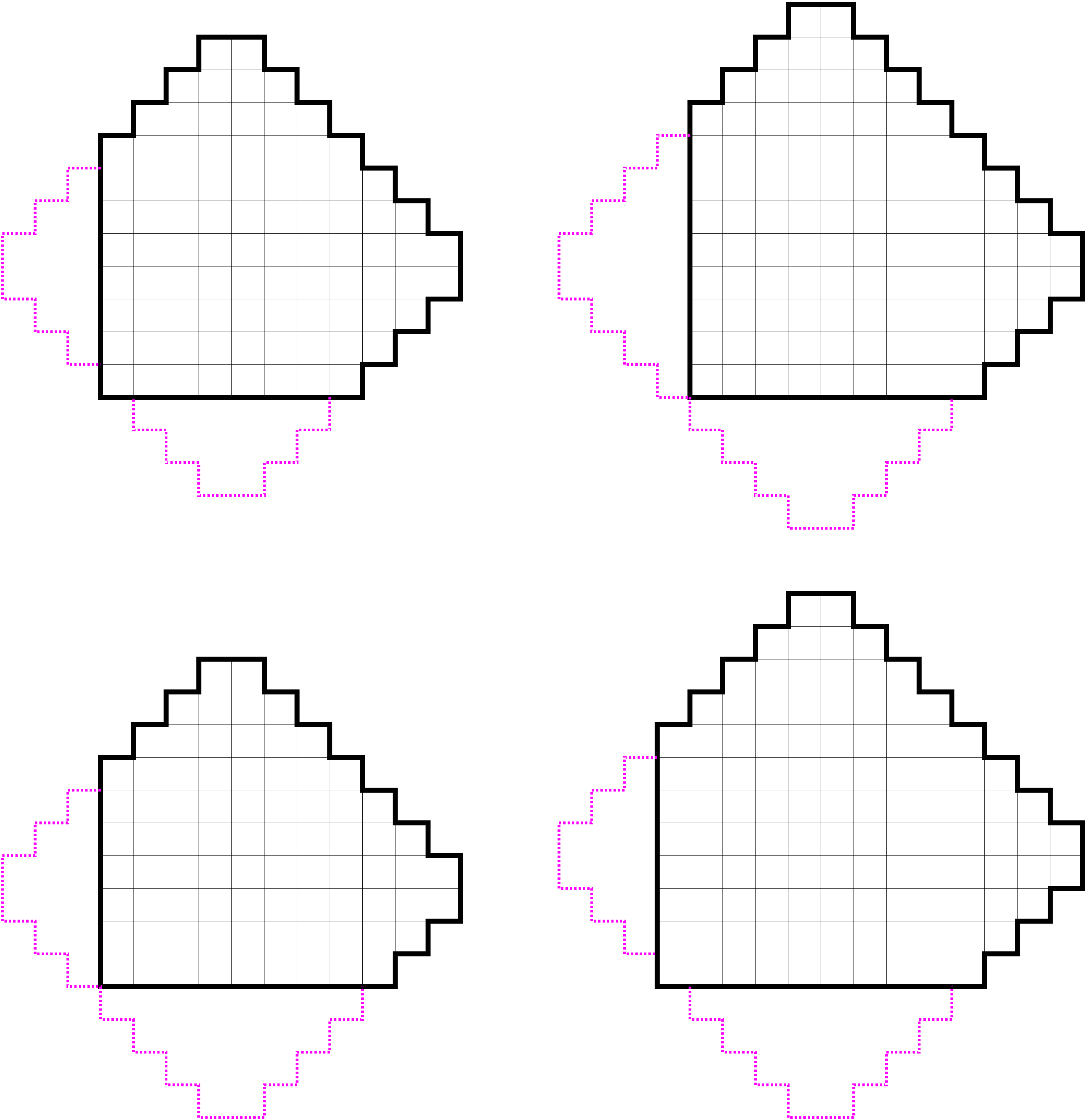}}
\caption{\label{fcd} $TAD_{7}$ (top left), $TAD_{8}$ (top right), $TAD'_{7}$ (bottom left), and $TAD'_{8}$ (bottom right).}
\end{figure}

The four families of regions mentioned above, which we denote by $TAD_{2n-1}$, $TAD_{2n}$, $TAD'_{2n-1}$, $TAD'_{2n}$, are simple truncations of the Aztec diamonds, obtained as follows.

To obtain the region $TAD_{2n-1}$, start with the Aztec diamond $AD_{2n-1}$, and consider the lattice point $P$ on its southwestern boundary which lies on its southwest-to-northeast going symmetry axis. The portion of $AD_{2n-1}$ contained in the ``northeast quadrant'' centered at $P$ is the region $TAD_{2n-1}$ (the top left picture in Figure \ref{fcd} shows $TAD_7$). The region $TAD_{2n}$ is defined in the same way, starting with $AD_{2n}$ ($TAD_8$ is shown on the top right of Figure \ref{fcd}); the reason we consider separately the cases of even and odd indices is because the details of their boundaries are slightly different, and this will be reflected in their tiling enumeration formulas.

The region $TAD_{2n-1}'$ is defined almost exactly like  $TAD_{2n-1}$, with the one difference that the boundary lattice point from which the above-described truncation is performed is not the point $P$ that lies on the southwest-to-northeast going symmetry axis, but is chosen instead to be the boundary point $P'$ which is one unit step north of $P$ (the bottom left picture in Figure \ref{fcd} illustrates $TAD_7'$). Similarly, the only difference between $TAD'_{2n}$ and $TAD_{2n}$ is that for the former the truncation of $AD_{2n}$ is not performed from $P$, but from the lattice point which is one unit step west of $P$; the bottom right picture in Figure \ref{fcd} illustrates $TAD'_8$.

In \cite{defant2024tilings} Defant et al.\ proved that the trimer enumeration questions in Problems 8--11 on Propp's list can be restated in terms of domino tilings of truncated Aztec diamonds as follows\footnote{ For a lattice region $R$ on the square lattice, $\M(R)$ denotes the number of its domino tilings.} (see Conjectures 5.1--5.3 and Question 5.4 in \cite{defant2024tilings}).

\begin{thm}[Defant et al.\ \cite{defant2024tilings}]
\label{tcaa}
Let $n$ be a positive integer.

$(${\rm a}$)$.\ The number of vertical-bone-free tilings of the $(3n,3n)$-benzel is $\M(TAD_{2n-1})$.  

$(${\rm b}$)$.\ The number of vertical-bone-free tilings of the $(3n+1,3n+1)$-benzel is $\M(TAD_{2n})$.

$(${\rm c}$)$.\ The number of vertical-bone-free tilings of the $(3n-1,3n)$-benzel is $\M(TAD'_{2n-1})$.

$(${\rm d}$)$.\ The number of vertical-bone-free tilings of the $(3n+1,3n+2)$-benzel is $\M(TAD'_{2n})$.
  
\end{thm}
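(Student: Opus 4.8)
The plan is to apply the \emph{compression} bijection of Defant et al.\ mentioned above. In \cite[Section~3]{defant2024tilings} it is shown that compression converts a trimer cover of a region on the hexagonal grid of cells into a domino tiling of an associated region on the square lattice, and that when the trimer cover uses no vertical bone the associated region is obtained from the original by a simple combinatorial operation (roughly, collapsing the grid along the vertical-bone direction). The first step is therefore to invoke this: compression gives a bijection between vertical-bone-free trimer covers of the $(a,b)$-benzel and domino tilings of a certain square-lattice region $\widetilde{B}_{a,b}$, so that the quantity we want to count equals $\M(\widetilde{B}_{a,b})$.

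The second step is to identify $\widetilde{B}_{a,b}$ explicitly in the four cases at hand. Because the fine shape of the $(a,b)$-benzel --- in particular, exactly which cells survive near the vertices of the defining hexagon $H$ --- depends on the residues of $a$ and $b$ modulo $3$, there are four relevant patterns, matching Problems 8--11 in Propp's list: $(a,b)=(3n,3n)$, $(3n+1,3n+1)$, $(3n-1,3n)$, and $(3n+1,3n+2)$. In each case one traces through the compression operation, reads off the column-height profile of $\widetilde{B}_{a,b}$, and checks that it coincides --- possibly after deleting a border of forced dominoes --- with the profile of a quadrant-truncated Aztec diamond. One then matches this with the definitions of $TAD_{2n-1}$, $TAD_{2n}$, $TAD'_{2n-1}$, $TAD'_{2n}$, whose only differences are the parity of the relevant side lengths and whether the truncation of the ambient Aztec diamond is made from the symmetry-axis lattice point $P$ or from its neighbor $P'$. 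A short check that the two color classes of the planar dual of $\widetilde{B}_{a,b}$ have equal size (so the matching count is not trivially zero) completes the argument.

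The main obstacle is the boundary bookkeeping in the second step, carried out uniformly across the four residue cases. Near the vertices of $H$ the set of surviving cells is genuinely sensitive to $a$ and $b$ modulo $3$, and one must verify in each case that compression --- followed by removal of forced dominoes --- lands exactly on $TAD_{2n-1}$ (respectively $TAD_{2n}$, $TAD'_{2n-1}$, $TAD'_{2n}$), rather than on a region shifted by a unit or carrying an extra unit dent or bump: an error of even one cell would change which of the four truncation types appears. Once the correct region is pinned down in each case, the remainder is the routine verification of the bijection recorded in \cite{defant2024tilings}.
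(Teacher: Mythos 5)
The paper does not actually prove this statement: Theorem~\ref{tcaa} is imported verbatim from Defant et al.\ \cite{defant2024tilings} (the authors point to Conjectures 5.1--5.3 and Question 5.4 there), so there is no in-paper argument to compare yours against. Your outline --- apply the compression bijection to reduce vertical-bone-free trimer covers of the $(a,b)$-benzel to domino tilings of a square-lattice region $\widetilde{B}_{a,b}$, then identify $\widetilde{B}_{a,b}$ with the appropriate truncated Aztec diamond in each of the four residue classes --- is the strategy of the cited source, so you are in effect reconstructing their proof rather than proposing an alternative to anything in this paper.

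That said, as a standalone proof your write-up has a real gap: the entire mathematical content of the statement lives in the second step, and you only describe it (``one traces through the compression operation, reads off the column-height profile, \dots and checks that it coincides''). None of the four verifications is actually carried out, and you yourself identify the boundary bookkeeping near the vertices of $H$ as the place where an off-by-one error would silently swap $TAD$ for $TAD'$ or shift the index. Until those four case checks are written down --- including the removal of forced dominoes and the confirmation that the truncation point is $P$ versus $P'$ in each case --- this is a proof plan, not a proof. If your goal is only to use the theorem (as this paper does), the honest course is to cite \cite{defant2024tilings} rather than to gesture at its argument.
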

%


Thus, to prove Propp's original conjectures, it suffices to enumerate the  domino tilings of $TAD_{2n-1}$, $TAD_{2n}$, $TAD'_{2n-1}$, and $TAD'_{2n}$. The main result of this section, Theorem \ref{tca} below, provides explicit, simple product formulas for the number of domino tilings of these regions.


\begin{thm}
\label{tca}
For positive integers $n$, the numbers of domino tilings of the regions $TAD_{2n-1}$, $TAD_{2n}$, $TAD'_{2n-1}$, and $TAD'_{2n}$ are given by the following product formulas\footnote{ Throughout this paper, an empty product is defined to be $1$ and $(2m-1)!!\coloneqq(2m-1)(2m-3)\cdots1$ for postive integers $m$.}:
\begin{equation}
\label{eca}
    \M(TAD_{2n-1})=2^{n^2}\cdot\frac{(2n-1)!!}{n!}\cdot\prod_{i=1}^{2n-1}\frac{(2i)!}{(n+i)!},
\end{equation}
\begin{equation}
\label{ecb}
    \M(TAD_{2n})=2^{n(n+1)}\cdot\prod_{i=0}^{n-1}\frac{(4i+2)!(4i+3)!}{(n+2i+1)!(n+2i+2)!},
\end{equation}
\begin{equation}
\label{ecc}
    \M(TAD'_{2n-1})=2^{n^2-1}\cdot\prod_{i=0}^{n-1}\frac{(4i+2)!}{(n+2i+1)!}\cdot\prod_{i=0}^{n-2}\frac{(4i+3)!}{(n+2i+1)!},
\end{equation}
and
\begin{equation}
\label{ecd}    
    \M(TAD'_{2n})=2^{n(n+1)}\cdot\frac{(2n+1)!!}{(n+1)!}\cdot\prod_{i=0}^{n-1}\frac{(4i+2)!(4i+4)!}{(n+2i+1)!(n+2i+3)!}.
\end{equation}
\end{thm}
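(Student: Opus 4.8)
The plan is to combine the reduction provided by Theorem~\ref{tcaa} with the factorization theorem, Theorem~\ref{tba}: by Theorem~\ref{tcaa} it suffices to establish the four product formulas \eqref{eca}--\eqref{ecd}. The key structural observation is that the truncation points defining the four regions were chosen precisely so that $TAD_{2n-1}$ and $TAD_{2n}$ are \emph{symmetric} about the lattice diagonal $\lambda$ through the truncation point $P$, while $TAD'_{2n-1}$ and $TAD'_{2n}$, although not symmetric, become --- after the removal of some forced dominoes --- of the form $S\setminus\{v\}$, where $S$ is a region symmetric about $\lambda$ with an odd number of unit squares and $v$ is a boundary unit square of $S$ lying off $\lambda$. (One checks directly that this $S$ has the color imbalance $\pm1$ and that $v$ carries the majority color, so that Theorem~\ref{tba}(b) is applicable in the non-trivial regime.) Thus Theorem~\ref{tba}(a) applies to $TAD_{2n-1}$ and $TAD_{2n}$, and Theorem~\ref{tba}(b), in the guise $\M(TAD')=\M_v(S)$, applies to $TAD'_{2n-1}$ and $TAD'_{2n}$.

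The first step is to identify the two ``halves'' produced in each of the four cases. After rotating the dual graphs so that $\lambda$ is horizontal and carrying out the cuts prescribed by the factorization theorem, one obtains
\[
  \M(TAD)=2^{\w}\,\M(U)\,\M(L)\qquad\text{and}\qquad \M(TAD')=2^{\w}\,\M(U')\,\M_{v'}(L')
\]
(or the companion identity with the two factors on the right of the second equation interchanged, depending on the color of $v$), where $\w$ is half the number of unit squares crossed by $\lambda$ --- a quantity one reads off immediately --- and $U,L,U',L'$ are the pieces on the two sides of $\lambda$, with a few of their boundary edges carrying weight $1/2$. After deleting the forced dominoes, each such piece turns out to belong to a single well-understood family of staircase regions (an Aztec rectangle carrying a run of unit holes along a lattice line, closely related to the regions of Section~7; in the $TAD'$ cases, with one additional boundary unit square removed). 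The main bookkeeping in this step is to match each of the (at most) eight pieces with the right member of the family and to decide which branch of Theorem~\ref{tba}(b) is in force; this is delicate, and is exactly what will produce the ``extra'' factor $(2n-1)!!/n!$ in \eqref{eca}, the prefactor $(2n+1)!!/(n+1)!$ in \eqref{ecd}, and the differing powers of $2$ among \eqref{eca}--\eqref{ecd}.

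The second step is to enumerate the pieces. Each is handled either by the product formula for the corresponding Aztec rectangle with a diagonal run of unit holes --- which, through the Lindström--Gessel--Viennot lemma, comes down to evaluating a determinant of binomial coefficients of Cauchy/Vandermonde type --- or, when it is more convenient, by a short induction on $n$ via Kuo's condensation (Theorem~\ref{tbc}). Substituting the resulting products into the two displayed identities, collecting the accumulated powers of $2$, and simplifying the product of ratios of factorials (using the conventions on $(2m-1)!!$ and standard double-factorial identities) then yields \eqref{eca}, \eqref{ecb}, \eqref{ecc} and \eqref{ecd} in turn.

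I expect the principal obstacle to lie in the first step: correctly pinning down the eight half-regions --- in particular, tracking the forced dominoes, the exact boundary shapes and $1/2$-weights created by the cuts, and, for $TAD'_{2n-1}$ and $TAD'_{2n}$, which branch of Theorem~\ref{tba}(b) applies. Once the halves are identified, the remaining work (the determinant evaluations and the final hypergeometric-type simplifications) is lengthy but routine.
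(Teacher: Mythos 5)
Your overall architecture is exactly the paper's: apply Theorem~\ref{tba}(a) to the symmetric regions $TAD_{2n-1}$, $TAD_{2n}$, and for $TAD'_{2n-1}$, $TAD'_{2n}$ first symmetrize about the diagonal to get an odd region $S$ with $TAD'=S\setminus v$ up to forced dominoes (with $v$ the rightmost square of the top row), then apply Theorem~\ref{tba}(b). The powers of $2$ and the branch bookkeeping you describe are also as in the paper.

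The genuine gap is in your identification of the half-regions and, consequently, in your second step. The pieces produced by the cuts are \emph{not} Aztec rectangles with a run of unit holes along a lattice line (the Section~7 family); they are the Aztec triangles ${\mathcal T}_n$, ${\mathcal T}'_n$, ${\mathcal T}''_n$, ${\mathcal T}'''_n$ --- hybrids obtained by gluing the top half of an Aztec diamond to a triangular half of a square grid (Di Francesco's Aztec triangle and three variants). Their tiling counts, e.g.\ $\M({\mathcal T}_n)=2^{n(n-1)/2}\prod_{i=0}^{n-1}(4i+2)!/(n+2i+1)!$, are specializations of Theorem~1.2 of Corteel--Huang--Krattenthaler and are a substantial recent result, not a routine Lindstr\"om--Gessel--Viennot determinant of Cauchy/Vandermonde type. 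If you carried out your plan with the Aztec-rectangle-with-holes formulas (which are simple products of differences $\prod(t_j-t_i)$), you would not recover the $(4i+2)!/(n+2i+1)!$-type products appearing in \eqref{eca}--\eqref{ecd}. The paper's proof is precisely: the four factorizations $\M(TAD_{2n-1})=2^{n}\M({\mathcal T}_n)\M({\mathcal T}'''_n)$, $\M(TAD_{2n})=2^{n}\M({\mathcal T}'_n)\M({\mathcal T}''_n)$, $\M(TAD'_{2n-1})=2^{n-1}\M({\mathcal T}''_{n-1})\M({\mathcal T}'_n)$, $\M(TAD'_{2n})=2^{n}\M({\mathcal T}_n)\M({\mathcal T}'''_{n+1})$, combined with the quoted Aztec-triangle formulas. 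So your skeleton is right, but the proof cannot be completed without correctly recognizing the halves and importing the Aztec-triangle enumeration as an external input.
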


\begin{cor}
\label{ccb}
    Let $n$ be a positive integer.

$(${\rm a}$)$.\ The number of vertical-bone-free tilings of the $(3n,3n)$-benzel is given by \eqref{eca}.

$(${\rm b}$)$.\ The number of vertical-bone-free tilings of the $(3n+1,3n+1)$-benzel is given by \eqref{ecb}.

$(${\rm c}$)$.\ The number of vertical-bone-free tilings of the $(3n-1,3n)$-benzel is given by \eqref{ecc}.

$(${\rm d}$)$.\ The number of vertical-bone-free tilings of the $(3n+1,3n+2)$-benzel is given by \eqref{ecd}.
\end{cor}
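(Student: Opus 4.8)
Corollary \ref{ccb} is immediate once Theorem \ref{tca} is established, since Theorem \ref{tcaa} of Defant et al.\ identifies the four benzel counts with $\M(TAD_{2n-1})$, $\M(TAD_{2n})$, $\M(TAD'_{2n-1})$, $\M(TAD'_{2n})$. So the whole task is to prove the product formulas \eqref{eca}--\eqref{ecd} for the truncated Aztec diamonds. The plan is to treat each $TAD$-region as a square-lattice analogue of the nearly symmetric regions of Section 2: none of them is symmetric (for instance $\M(TAD_3)=48$ is not of the form $2^{k}m^{2}$ that Theorem \ref{tba}(a) would force for a tileable symmetric bipartite region), but in each case the truncating staircase boundary lies half a unit off a genuine symmetry axis $\ell'$ of the ambient Aztec diamond. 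I would therefore run the argument used to prove Theorem \ref{tbb}, but on the square lattice and with no holes.

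Concretely, for each of the four families I would: (1) enlarge the region by one layer of unit cells along two of its boundary edges to an extended region $\widehat{R}$, and pick boundary cells $a,b,c,d$ in cyclic order so that the dual graph of $\widehat{R}$ satisfies the hypotheses of Theorem \ref{tbc}; (2) apply Kuo condensation to $\widehat{R}$ and pass to dual regions, obtaining a six-term identity in which, after deleting forced dominoes, one region is the target $TAD$, one is genuinely symmetric, and the remaining four are symmetric up to a single unit boundary dent off $\ell'$; (3) apply Theorem \ref{tba}(a) to the symmetric region and Theorem \ref{tba}(b) to the four nearly symmetric ones, carefully recording which branch of part (b) is used (this depends on whether the dent and the top cell on $\ell'$ have the same orientation) and the resulting powers of $2$; (4) observe, exactly as in items (i)--(iv) of the proof of Theorem \ref{tbb}, that two of the ``half'' parts are common to all five factorizations and can be divided out (they are nonzero because the relevant symmetric region is tileable), leaving an identity whose remaining factors are $\M$ of the target region and $\M$ of certain ``half truncated Aztec diamonds''. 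After removing forced dominoes, these halves should be either again members of our four families of smaller size, or plain Aztec rectangles (and the closely related staircase regions of Section 7), whose domino tilings are given by the classical product formula; substituting those evaluations collapses the Kuo identity into a closed recurrence in $n$ for the quantities in Theorem \ref{tca}.

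It then remains to solve this recurrence. I would verify \eqref{eca}--\eqref{ecd} by induction on $n$: the base case $n=1$ is a direct enumeration ($\M(TAD_1)=2$, $\M(TAD_2)=4$, $\M(TAD'_1)=1$, $\M(TAD'_2)=6$), and the inductive step is the routine but lengthy check that the stated right-hand sides, together with the closed forms of the half pieces, satisfy the recurrence from steps (3)--(4). This reduces to an identity among products of factorials and double factorials, provable by elementary cancellation (or by Gosper/Zeilberger-type telescoping if one wants it automated).

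I expect the main obstacle to be steps (2)--(4): arranging the extended regions $\widehat{R}$ and the cells $a,b,c,d$ so that, in all four Kuo identities simultaneously, every one of the six regions reduces after forced-domino removal to a member of our four families or to an Aztec rectangle, and then matching up the left and right parts across the five factorizations as in (i)--(iv). A secondary but error-prone point is the bookkeeping of the powers of $2$ arising from the several $\w$'s in the applications of Theorem \ref{tba}(a) and (b); an off-by-one there would corrupt the exponents $2^{n^2}$, $2^{n(n+1)}$, and $2^{n^2-1}$ appearing in \eqref{eca}--\eqref{ecd}.
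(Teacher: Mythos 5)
Your reduction of the corollary to Theorem \ref{tca} via Theorem \ref{tcaa} is exactly what the paper does, but your plan for proving Theorem \ref{tca} itself goes wrong at the outset and is far more elaborate than what is needed. First, your claim that none of the four regions is symmetric is false: $TAD_{2n-1}$ and $TAD_{2n}$ \emph{are} symmetric about the southwest-to-northeast diagonal, and the paper applies Theorem \ref{tba}(a) to their planar duals directly. Your ``counterexample'' ($\M(TAD_3)=48$ is not of the form $2^k m^2$) rests on a misreading of the factorization theorem: equation \eqref{eba} does not force $\M(G^+)=\M(G^-)$, because the two halves produced by the cutting procedure are in general not congruent (here they are the distinct Aztec triangles ${\mathcal T}_n$ and ${\mathcal T}'''_n$). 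Second, for the genuinely non-symmetric regions $TAD'_{2n-1}$ and $TAD'_{2n}$ the paper needs neither Kuo condensation nor a square-lattice analogue of Theorem \ref{tbb}: it symmetrizes each region about its diagonal (producing a region with an odd number of cells), removes the boundary cell $v$ whose deletion recovers $TAD'$ after forced dominoes, and applies Theorem \ref{tba}(b). This yields a single product in each case, e.g.\ $\M(TAD'_{2n-1})=2^{n-1}\M({\mathcal T}''_{n-1})\M({\mathcal T}'_n)$, rather than the sum of two products that your semi-factorization route would produce; the whole point of the new part (b) of Theorem \ref{tba} is that it handles exactly this one-defect-off-the-axis situation without condensation.

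The more serious gap is your treatment of the half-regions. They are not ``plain Aztec rectangles \dots given by the classical product formula'': they are the Aztec triangles ${\mathcal T}_n$, ${\mathcal T}'_n$, ${\mathcal T}''_n$, ${\mathcal T}'''_n$, hybrids of a half Aztec diamond and a half grid square, and their enumeration (Lemma \ref{lcc}, obtained by specializing Theorem 1.2 of Corteel et al.) is a substantial recent result, not a classical one. Without this external input your recurrence cannot be closed, and your proposed induction on $n$ would still require these evaluations in order to identify the remaining factors. So the two essential missing ingredients are: (i) recognizing the symmetry of $TAD_{2n-1}$ and $TAD_{2n}$, and the symmetric-up-to-one-boundary-vertex structure of the primed regions; and (ii) invoking the Corteel et al.\ formulas for the four families of Aztec triangles. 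Once those are in place, no Kuo identity, no recurrence, and no telescoping are needed.
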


\begin{figure}[t]
\centerline{
\hfill
{\includegraphics[width=0.20\textwidth]{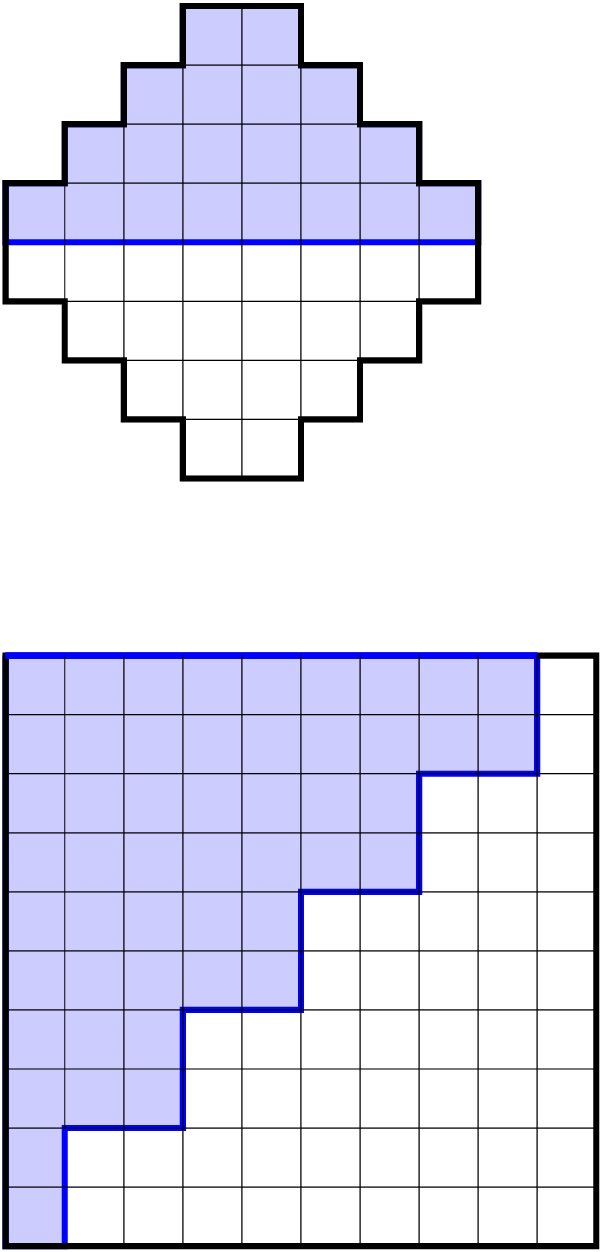}}
\hfill
{\includegraphics[width=0.18\textwidth]{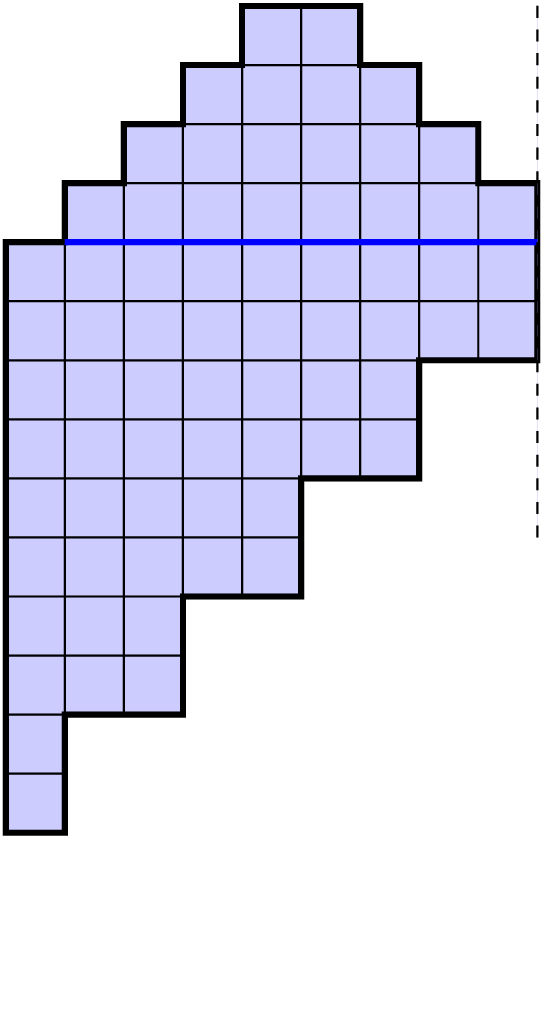}}
\hfill
{\includegraphics[width=0.18\textwidth]{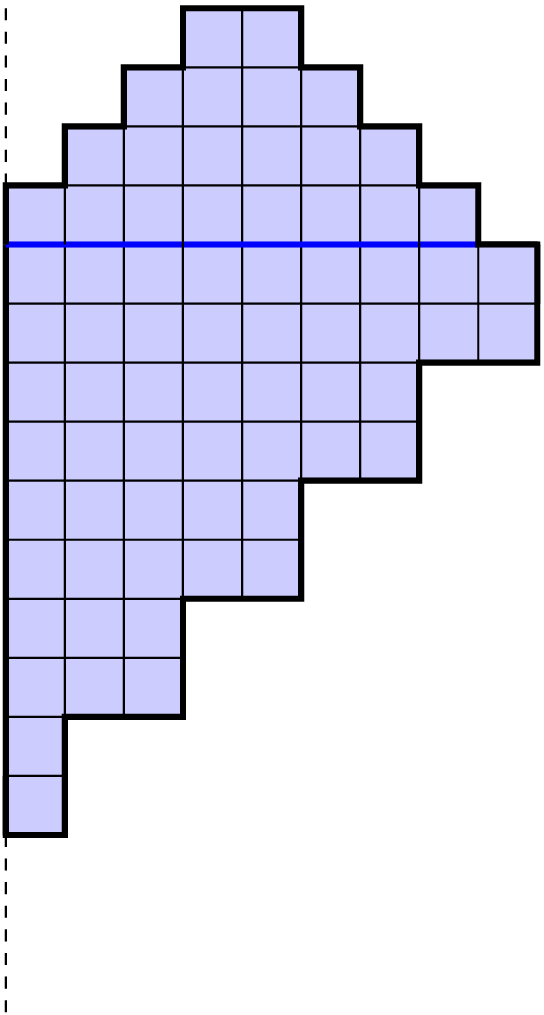}}
\hfill
}
\vskip0.1in
\caption{\label{fce} Using half of $AD_{n-1}$ and half of the square $S_{2n}$ (left) to get the Aztec triangles ${\mathcal T}_n$ (center) and ${\mathcal T}'_n$ (right), for $n=5$.}
\centerline{
\hfill
{\includegraphics[width=0.22\textwidth]{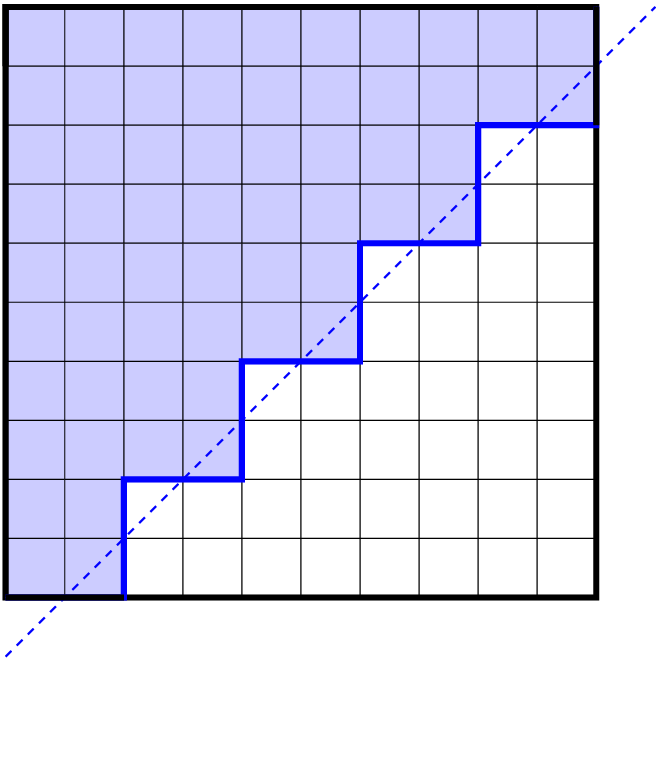}}
\hfill
{\includegraphics[width=0.20\textwidth]{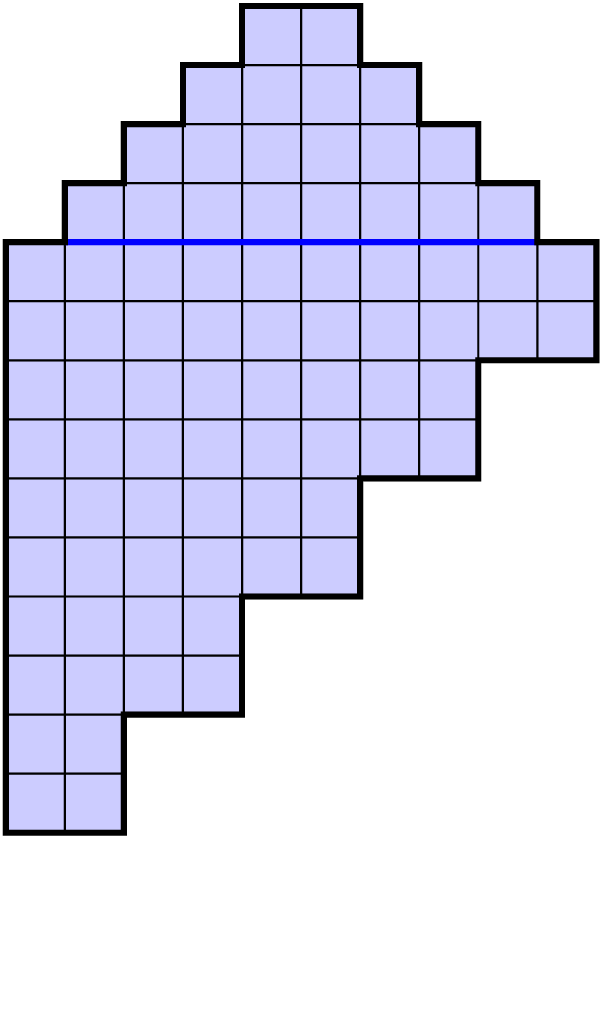}}
\hfill
{\includegraphics[width=0.20\textwidth]{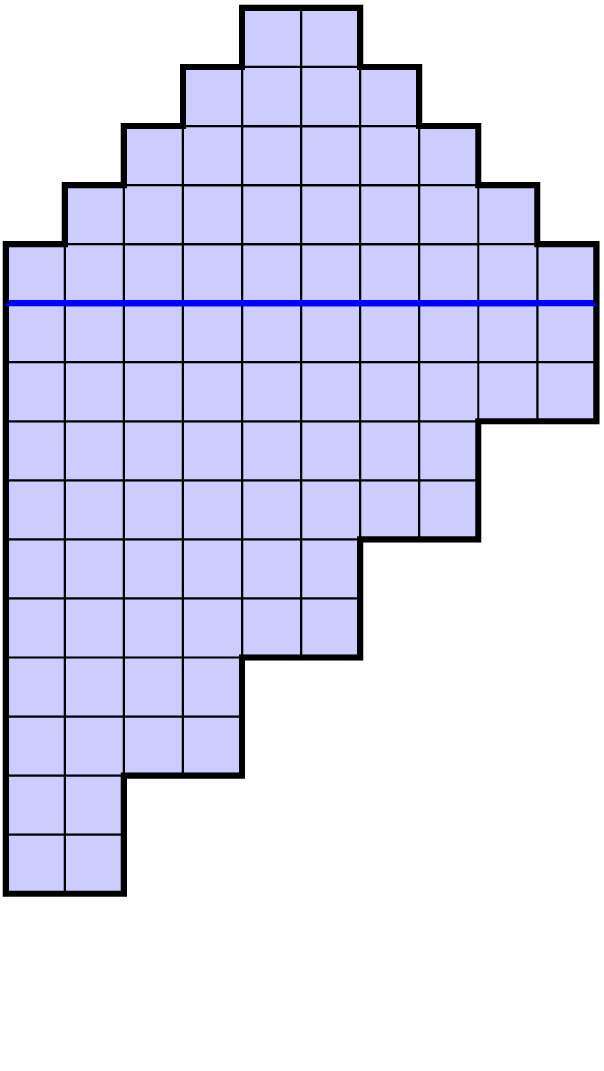}}
\hfill
}
\vskip-0.3in
\caption{\label{fcf} Using an augmented half of the square $S_{2n}$ (left) and half of $AD_{n-1}$ to get the Aztec triangle ${\mathcal T}''_n$ (center), and the same augmented half of the square $S_{2n}$, but together with half of $AD_{n}$, to obtain the Aztec triangle ${\mathcal T}'''_{n+1}$ (right), for $n=5$.}
\vskip-0.1in

\end{figure}



\medskip
The proof of Theorem \ref{tca} is based on Theorem \ref{tba}(a) (for \eqref{eca} and \eqref{ecb}), Theorem \ref{tba}(b) (for \eqref{ecc} and \eqref{ecd}), and some enumeration results of Corteel et al.\ presented in \cite{corteel2023domino}. As a result of applying Theorem \ref{tba}, four new families of regions arise.


One of these families consists of the Aztec triangles introduced by Di Francesco \cite{di2021twenty}, and the remaining three are variants of it. All of them can be regarded as hybrids between Aztec diamonds and squares. Namely, let $S_{2n}$ be the square of side-length $2n$ drawn on the square grid. Cut it in two congruent parts\footnote{ This way of cutting up the square appears also in the second author's paper\cite{CiucuMatchingFactorization} and in Pachter's work \cite{Pachter}.} by a zig-zag lattice path of step length 2, as shown in Figure \ref{fce}, and glue together the top half with the top half of $AD_{n-1}$. If this is done in a right-justified manner, one obtains Di Francesco's Aztec triangle ${\mathcal T}_n$ (see the picture in the center in Figure \ref{fce}); the left-justified fashion results in the Aztec triangle  ${\mathcal T}'_n$ (see the picture on the right in Figure \ref{fce}).

To obtain the remaining two variants, use the ``augmented half'' of the square $S_{2n}$ obtained by cutting $S_{2n}$ with the translation of the above zig-zag cut one unit square diagonal in the southeast direction (see the picture on the left in Figure \ref{fcf}). Gluing the top half of $AD_{n-1}$ to this augmented half results in the Aztec triangle ${\mathcal T}''_n$ (see the picture in the center in Figure \ref{fcf}), while gluing the top half of $AD_{n}$ to it produces the Aztec triangle ${\mathcal T}'''_{n+1}$ (see the picture on the right in Figure \ref{fcf}). Note that for all the four variants, the order of the Aztec triangle is one more than the order of the Aztec diamond whose top half was used to construct it.

We will find it convenient to have the indexing of the regions in the last family shifted by one unit, as in the above definition. But due to this, we need to define ${\mathcal T}'''_1$ separately: we set ${\mathcal T}'''_1$ to be the empty region, which has one domino tiling (the empty set of dominos).

These four families of
regions turn out to be special cases of regions considered by Corteel et al. in \cite{corteel2023domino}.  In that paper, the authors consider a family of regions parametrized by positive integers $l$ and $k$. The region ${\mathcal T}_n$ represents the special case $l=2n$ and $k=n$.
Furthermore, ${\mathcal T}'_n$ is the special case $l=2n+1$ and $k=n-1$, ${\mathcal T}''_n$ the special case $l=2n+1$ and $k=n$, and ${\mathcal T}'''_n$ is obtained by setting $l=2n$ and $k=n-1$.

By specializing the values of $l$ and $k$ in Theorem 1.2 of \cite{corteel2023domino} as indicated in the previous paragraph, we obtain the following enumeration results for the number of domino tilings of these four families of regions.

 \begin{lemma}
 \label{lcc}
For positive integers $n$, the number of domino tilings of the
four types of Aztec triangles
is given by
    \begin{equation}
    \label{ece}
        \M({\mathcal T}_n)=\M({\mathcal T}'_n)=2^{n(n-1)/2}\cdot\prod_{i=0}^{n-1}\frac{(4i+2)!}{(n+2i+1)!},
    \end{equation}
    \begin{equation}
    \label{ecf}
        \M({\mathcal T}''_n)=2^{n(n+1)/2}\cdot\prod_{i=0}^{n-1}\frac{(4i+3)!}{(n+2i+2)!},
    \end{equation}
    and
\begin{equation}
\label{ecg}
\M({\mathcal T}'''_n)=2^{n(n-1)/2}\cdot\frac{(2n-1)!!}{n!}\cdot\prod_{i=1}^{n-1}\frac{(4i)!}{(n+2i)!}.
\end{equation}

 \end{lemma}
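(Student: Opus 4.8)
The plan is to deduce Lemma~\ref{lcc} directly from Theorem~1.2 of \cite{corteel2023domino}, which gives a product formula for the number of domino tilings of the two-parameter family of regions considered there, by substituting the particular values of $l$ and $k$ identified in the paragraph preceding the lemma: $(l,k)=(2n,n)$ for ${\mathcal T}_n$, $(l,k)=(2n+1,n-1)$ for ${\mathcal T}'_n$, $(l,k)=(2n+1,n)$ for ${\mathcal T}''_n$, and $(l,k)=(2n,n-1)$ for ${\mathcal T}'''_n$. So first I would recall the exact statement of that theorem (the general $(l,k)$-formula, which is a product of factorials together with a power of $2$), and then for each of the four cases perform the substitution and simplify the resulting factorial product into the closed forms \eqref{ece}, \eqref{ecf}, \eqref{ecg}.

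Next I would carry out the four simplifications one at a time. In each case the work is purely routine manipulation of products of factorials and double factorials: one reindexes the products, pulls out or absorbs factors to line up the index ranges, and collects the powers of $2$ (using $(2m)!=2^m m!\,(2m-1)!!$ when a double factorial needs to be introduced, as in the ${\mathcal T}'''_n$ formula \eqref{ecg} where the factor $(2n-1)!!/n!$ appears). The identity $\M({\mathcal T}_n)=\M({\mathcal T}'_n)$ asserted in \eqref{ece} should fall out automatically: the substitutions $(2n,n)$ and $(2n+1,n-1)$ into the general formula of \cite{corteel2023domino} must yield the same product, so I would verify this by checking that the two specializations collapse to the same reindexed factorial expression. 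I would also check the degenerate boundary cases, in particular that the formula for $\M({\mathcal T}'''_n)$ at $n=1$ gives $1$, consistent with the convention ${\mathcal T}'''_1=\varnothing$ fixed just before the lemma (here the product $\prod_{i=0}^{0}(4i)!/(n+2i)!$ is $0!/1!=1$, the prefactor $(2\cdot1-1)!!/1!=1$, and $2^{1\cdot0/2}=1$).

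There is essentially no serious obstacle here: the lemma is a bookkeeping corollary of an already-established general enumeration result, and the only thing that could go wrong is an off-by-one error in matching the indexing convention of \cite{corteel2023domino} to the geometric description of ${\mathcal T}_n,{\mathcal T}'_n,{\mathcal T}''_n,{\mathcal T}'''_n$ given above (recall the deliberate shift by one unit in the indexing of the last family, ``the order of the Aztec triangle is one more than the order of the Aztec diamond whose top half was used to construct it''). Accordingly, the one step I would be most careful about is confirming that the stated correspondences $(l,k)\leftrightarrow$ triangle are exactly the ones that appear when one cuts $S_{2n}$ (respectively its ``augmented half'') and glues on the top half of $AD_{n-1}$ or $AD_n$ as described --- i.e.\ matching a small case, say $n=2$ or the $n=5$ pictures in Figures~\ref{fce} and~\ref{fcf}, against the regions of \cite{corteel2023domino} with those parameters. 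Once that correspondence is pinned down, each of \eqref{ece}--\eqref{ecg} follows by a short and elementary computation.
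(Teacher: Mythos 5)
Your proposal matches the paper's treatment exactly: the lemma is obtained there precisely by specializing Theorem 1.2 of Corteel et al.\ with the parameter choices $(l,k)=(2n,n)$, $(2n+1,n-1)$, $(2n+1,n)$, $(2n,n-1)$ for ${\mathcal T}_n$, ${\mathcal T}'_n$, ${\mathcal T}''_n$, ${\mathcal T}'''_n$ respectively, and then rewriting the resulting factorial products (the paper likewise notes that $\M({\mathcal T}_n)=\M({\mathcal T}'_n)$ falls out because the two specializations give the same formula). Your added care about the indexing shift for ${\mathcal T}'''_n$ and the $n=1$ convention is sensible but does not change the argument.
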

The equality $\M({\mathcal T}_n)=\M({\mathcal T}'_n)$ was proved by Corteel et al. in \cite{corteel2023domino} by showing that the two numbers are given by the same formula. Recently, a bijective proof was given in \cite{ByunCiucu2024squarish} by the first two authors of the current paper.

\begin{proof}[Proof of Theorem \ref{tca}] The first two equalities \eqref{eca} and \eqref{ecb} turn out to be direct consequences of the factorization theorem for perfect matchings (Theorem \ref{tba}(a)) and the specializations of \cite[Theorem 1.2]{corteel2023domino} stated in Lemma \ref{lcc}. Indeed, note that the regions $TAD_{2n-1}$ and $TAD_{2n}$ are symmetric.
If we apply the factorization theorem to their planar duals, we get, after removing some forced dominos (see Figure \ref{fcg})

\begin{figure}
\vskip0.2in
\centerline{
\includegraphics[width=0.7\textwidth]{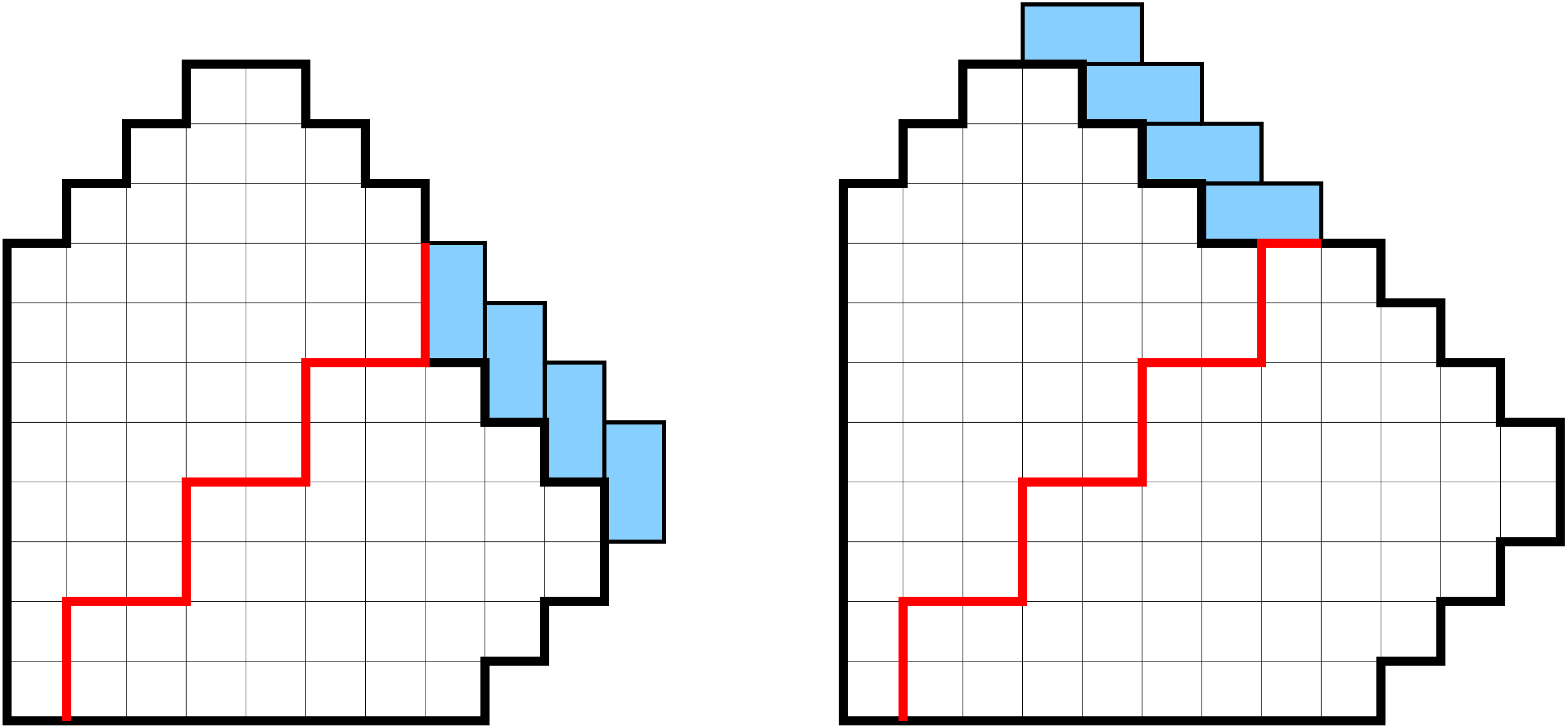}
}
\caption{\label{fcg} {\it Left.} Factorization of $TAD_{7}$ into ${\mathcal T}_4$ and ${\mathcal T}'''_4$. {\it Right.} Factorization of $TAD_{8}$ into ${\mathcal T}'_4$ and ${\mathcal T}''_4$.}
\end{figure}

\begin{equation}
\label{ech}
    \M(TAD_{2n-1})=2^{n} \M({\mathcal T}_n)  \M({\mathcal T}'''_n),
\end{equation}
and
\begin{equation}
\label{eci}
    \M(TAD_{2n})=2^{n} \M({\mathcal T}'_n) \M({\mathcal T}''_n).    
\end{equation}

The situation is different for
the regions $TAD'_{2n-1}$ and $TAD'_{2n}$, as they are not symmetric. To apply Theorem \ref{tba}(b), we first ``symmetrize'' these regions about their diagonals; denote by $\overline{TAD'}_{2n-1}$ and $\overline{TAD'}_{2n}$, respectively, the resulting symmetric regions (in Figure \ref{fch}, these are the regions enclosed by the outermost contours). These new regions have no domino tilings, as they have an odd number of vertices.
Let $v$ be the rightmost unit square in the top row of these regions ($v$ is colored black in Figure \ref{fch}). Since after removing the forced dominos from $\overline{TAD'}_{2n-1}\setminus v$ one obtains the region $TAD'_{2n-1}$, these two regions have the same number of tilings. The same is true for the regions $\overline{TAD'}_{2n}\setminus v$ and $TAD'_{2n}$. Note that the dual graphs of $\overline{TAD'}_{2n-1}\setminus v$ and $\overline{TAD'}_{2n}\setminus v$ are symmetric graphs with a boundary defect, so we can apply Theorem \ref{tba}(b) to them. Applying it we get (after removing the forced dominos; see Figure \ref{fch})

\begin{figure}
\vskip0.2in
\centerline{
\includegraphics[width=0.7\textwidth]{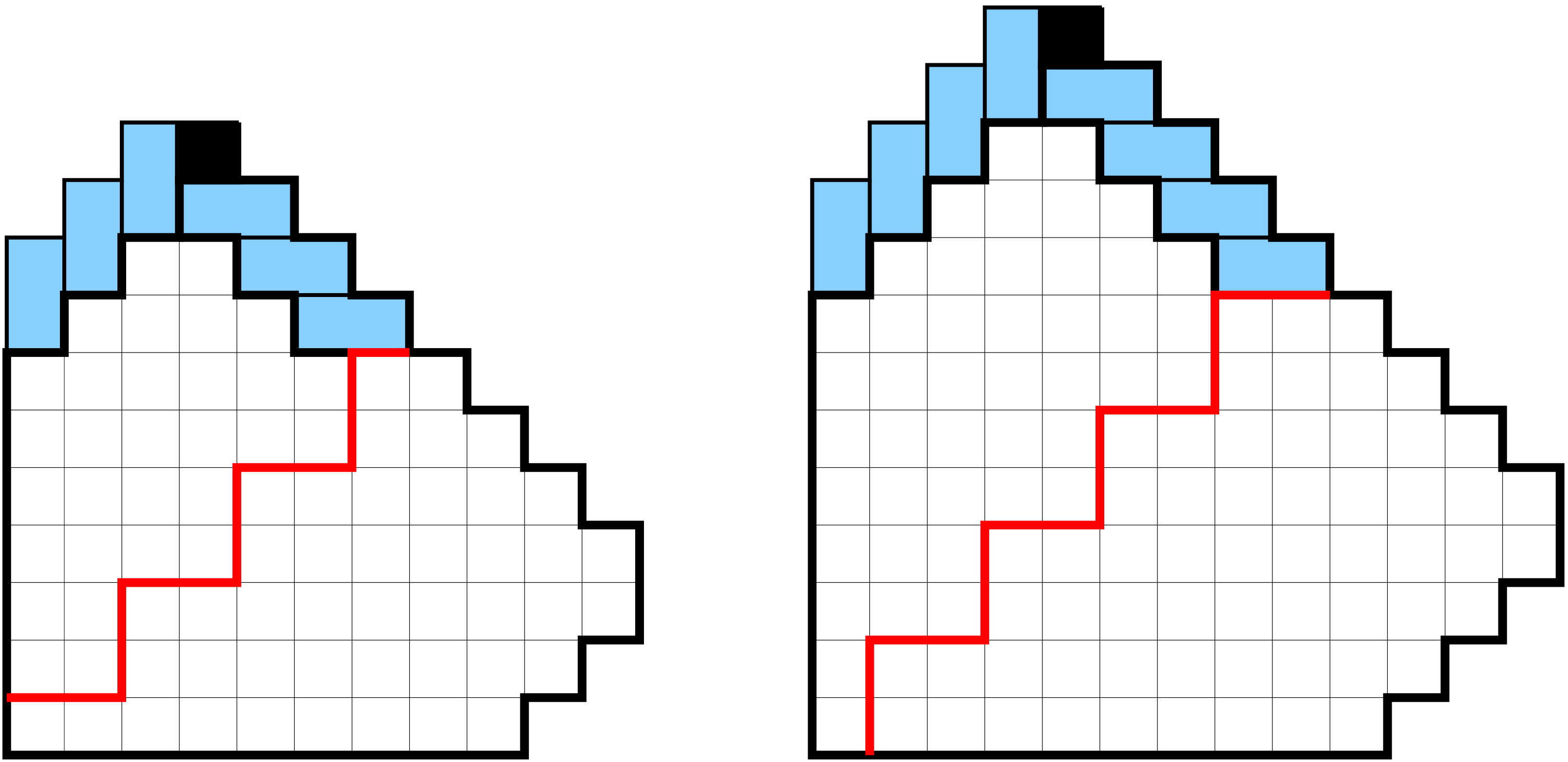}
}
\caption{\label{fch} Factorization of $\overline{TAD'}_{7}\setminus v$ into ${\mathcal T}''_3$ and  ${\mathcal T}'_4$ (left) and that of $\overline{TAD'}_{8}\setminus v$ into ${\mathcal T}_4$ and ${\mathcal T}'''_5$ (right); $v$ is indicated by a black unit square.}
\end{figure}

\begin{equation}
\label{ecj}
    \M(TAD'_{2n-1})=\M(\overline{TAD'}_{2n-1}\setminus v)=2^{n-1} \M({\mathcal T}''_{n-1}) \M({\mathcal T}'_n),
\end{equation}
and
\begin{equation}
\label{eck}
    \M(TAD'_{2n})=\M(\overline{TAD'}_{2n}\setminus v)=2^{n} \M({\mathcal T}_n) \M({\mathcal T}'''_{n+1}).    
\end{equation}
It is straightforward to check that one gets expressions \eqref{eca}--\eqref{ecd} by combining equations \eqref{ece}--\eqref{ecg} and \eqref{ech}--\eqref{eck}. This completes the proof.
\end{proof}

{\it Remark $2$.}
The original conjectures of Propp did not state explicit formulas for the number of vertical-bone-free tilings of benzels. Instead, the conjectures stated that those numbers satisfy certain recurrence relations. More precisely, via the compression bijection of \cite{defant2024tilings}, Propp's conjectures (open Problems 8--11 in \cite{propp2022trimer}) are equivalent to the following equalities (\eqref{ecl} and \eqref{eco} were given in \cite{propp2022trimer} and the other two in \cite{defant2024tilings}):
\begin{equation}
\label{ecl}
    \frac{\M(TAD_{2n-1})\M(TAD_{2n+3})}{\M(TAD_{2n+1})^2}=\frac{256(2n+3)^{2}(4n+1)(4n+3)^{2}(4n+5)}{27(3n+1)(3n+2)^{2}(3n+4)^{2}(3n+5)}
\end{equation}
for all $n\geq1$,
\begin{equation}
\label{ecm}
    \frac{\M(TAD_{2n})}{\M(TAD_{2n-2})}=\frac{2^{2n}(4n-1)!(4n-2)!n!}{(3n)!(3n-1)!(3n-2)!}
\end{equation}
for all $n\geq2$,
\begin{equation}
\label{ecn}
    \frac{\M(TAD'_{2n-1})}{\M(TAD'_{2n-3})}=\frac{2^{2n-3}(4n-2)!(4n-4)!(n-1)!!(n-3)!!}{(3n-1)!!(3n-2)!(3n-3)!(3n-5)!!}
\end{equation}
for all $n\geq2$, and
\begin{equation}
\label{eco}
\begin{aligned}
    &\frac{\M(TAD'_{2n})\M(TAD'_{2n+6})}{\M(TAD'_{2n+2})\M(TAD'_{2n+4})}\\
    =&\frac{65536(2n+3)(2n+5)^{2}(2n+7)(4n+3)(4n+5)^{2}(4n+7)^{2}(4n+9)^{2}(4n+11)}{729(3n+2)(3n+4)^{2}(3n+5)^{2}(3n+7)^{2}(3n+8)^{2}(3n+10)}
\end{aligned}
\end{equation}
for all $n\geq1$. It is straightforward to check that formulas \eqref{eca}-\eqref{ecd} satisfy the above equalities \eqref{ecl}-\eqref{eco}; thus Corollary \ref{ccb} indeed solves the four open problems of Propp.

Note also that deducing the simple product formulas \eqref{eca}--\eqref{ecd} directly from the above recurrences would not be an easy task.

\section{Nearly symmetric hexagons with collinear holes}






In this section we solve an open problem posed by Lai --- namely, Problem 28 in \cite{LaiOpenProblems} --- by providing an explicit product formula for the number of lozenge tilings of a certain family of hexagonal regions with holes on the triangular lattice (the regions $H'_{\mathbf{l}}(a,b,k)$; see subsection \ref{subsection4.1} and Theorems \ref{tdb}(a) and \ref{tdc}(a)).
This family is closely related to one of three families of symmetric hexagonal regions with collinear triangular holes (the regions $H_{\mathbf{l}}(a,b,k)$, also described in subsection \ref{subsection4.1}) whose number of lozenge tilings was determined in \cite{CiucuPP1} by the second author of the current paper: it is obtained from the latter by translating all the holes one unit in the southeast lattice direction.
In addition to solving Lai's open problem, we also provide explicit product formulas for two more families of regions, obtained from the two remaining families from \cite{CiucuPP1} by the same procedure;
see Theorems \ref{tdb}(b)(c) and \ref{tdc}(b)(c).

\subsection{Six families of hexagonal regions with collinear holes}
\label{subsection4.1}

\begin{figure}[t]
\centerline{
\hfill
{\includegraphics[width=0.40\textwidth]{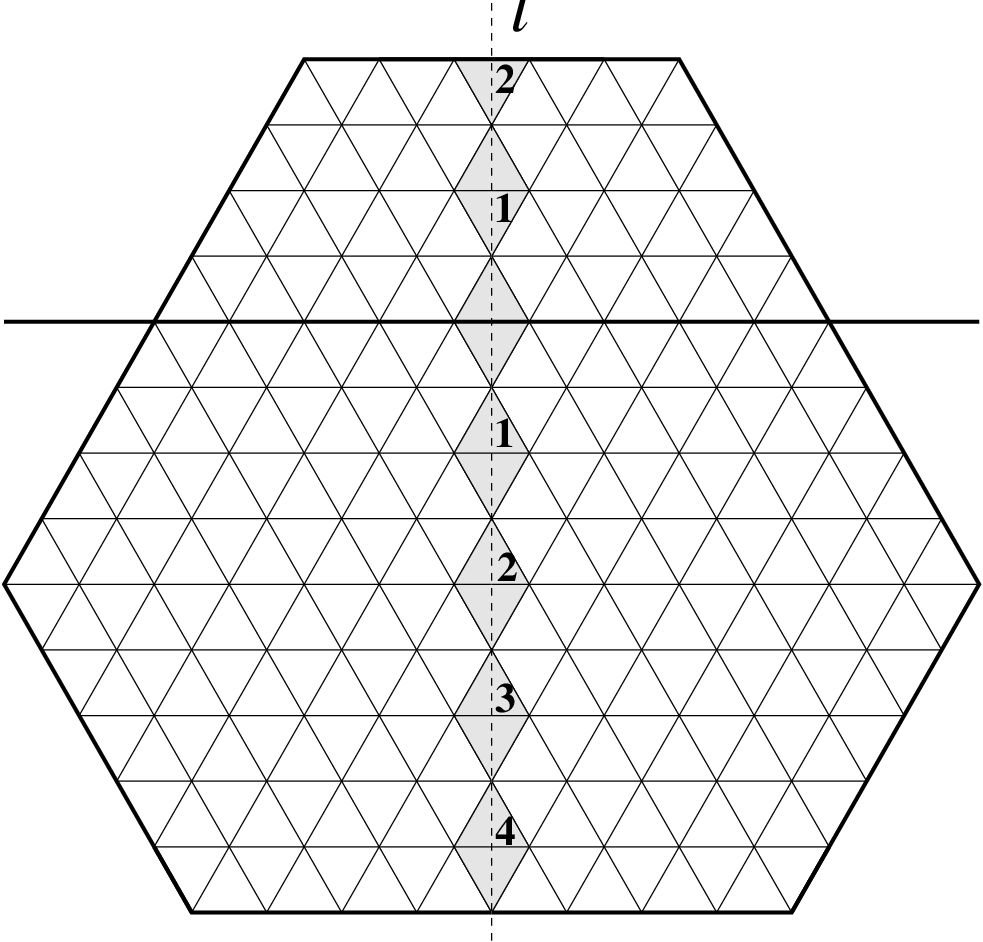}}
\hfill
}
\vskip0.1in
\caption{The labeling of the slots along a vertical line, starting from a reference horizontal lattice line; slots that are only partially contained in $H$ are also labeled.}
\vskip-0.1in
\label{fda}
\end{figure}

We now recall the definitions of the three families of regions $H_{\mathbf{l}}(a,b,k)$, $H_{{\mathbf{l}},{\mathbf{q}}}(a,b,k)$ and $\bar{H}_{{\mathbf{l}},{\mathbf{q}}}(a,b,k)$ introduced in \cite{CiucuPP1}; the other three families, $H'_{\mathbf{l}}(a,b,k)$, $H'_{{\mathbf{l}},{\mathbf{q}}}(a,b,k)$ and $\bar{H}'_{{\mathbf{l}},{\mathbf{q}}}(a,b,k)$ are defined by a simple variation of these.

Draw the triangular lattice so that one of the families of lattice lines is horizontal, and let $H$ be a hexagon on this lattice. Let $\ell$ be a vertical line containing lattice points and crossing the interior of $H$.
A {\it slot} is the union of two unit triangles crossed by $\ell$ which share an edge.
Essential in the definition of our regions is the concept of labeling the slots contained in $H$, starting from a reference horizontal lattice line $L$. This is very simple: label the slots on both sides of $L$ successively by $1, 2, 3,\dotsc$ (the two closest ones by 1, the next two by 2, and so on), including also any slots that are only partially contained in $H$; this is illustrated in Figure \ref{fda}.

\begin{figure}[t]
\centerline{
\hfill
{\includegraphics[width=0.40\textwidth]{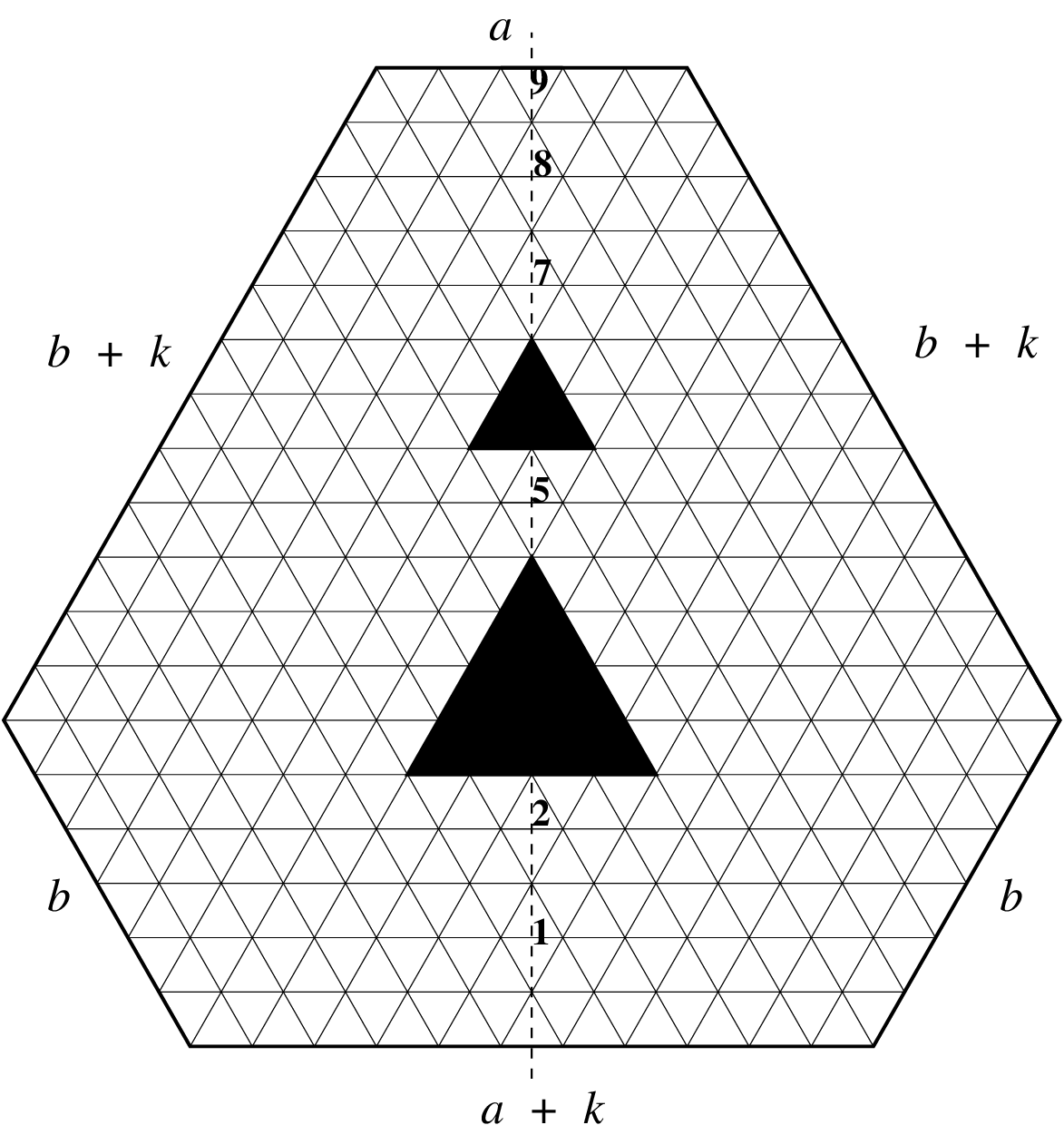}}
\hfill
{\includegraphics[width=0.40\textwidth]{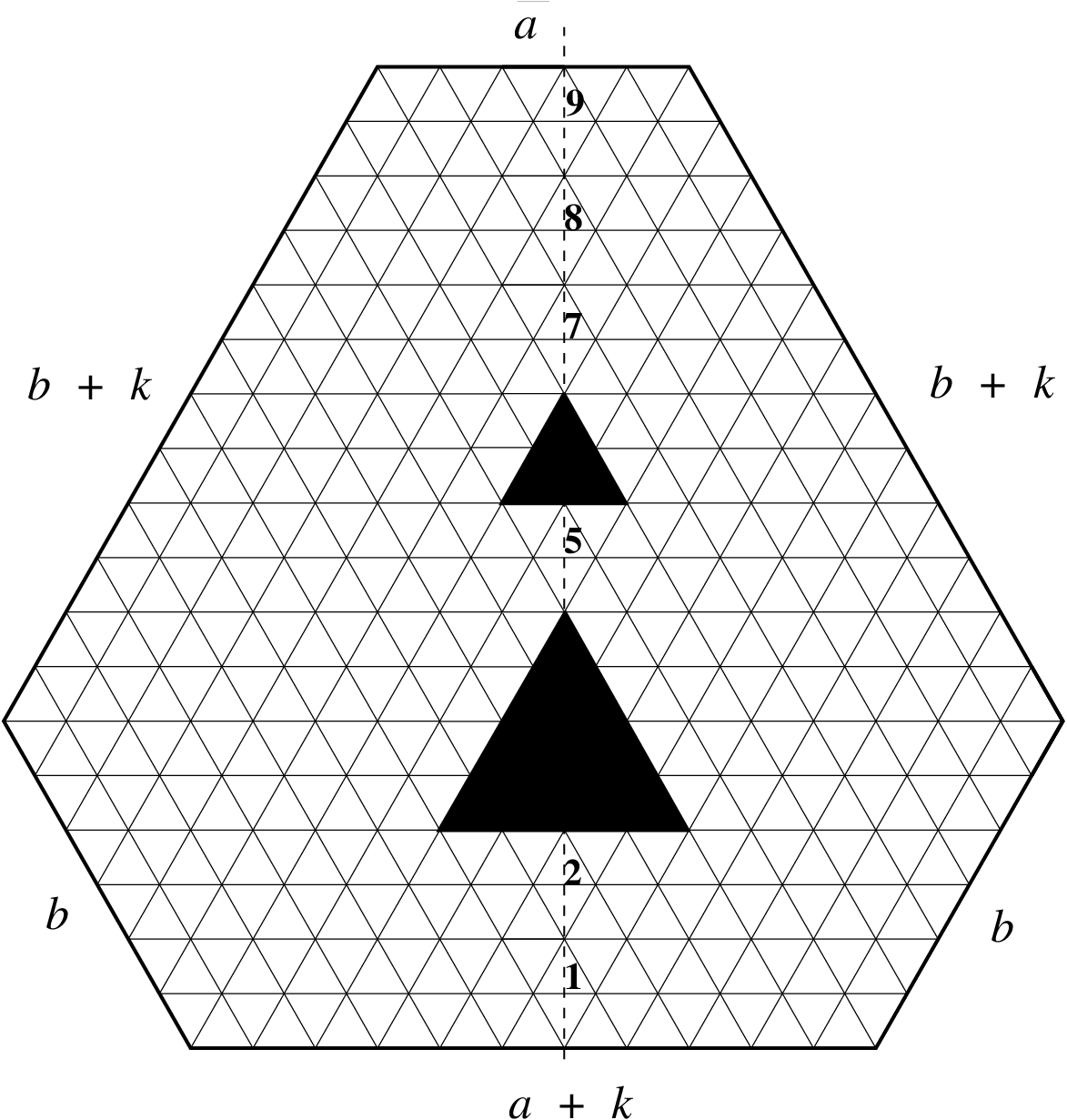}}
\hfill
}
\vskip0.1in
\caption{{\it Left.} The symmetric hexagon with holes $H_{\mathbf{l}}(a,b,k)$ for $a=5$, $b=6$, $k=6$, and ${\mathbf{l}}=(1,2,5,7,8,9)$. {\it Right.} The corresponding nearly symmetric hexagon with holes $H'_{\mathbf{l}}(a,b,k)$ for $a=5$, $b=6$, $k=6$, and ${\mathbf{l}}=(1,2,5,7,8,9)$. }
\vskip-0.1in
\label{fdb}
\end{figure}

Given non-negative integers $a$, $b$ and $k$, denote by $H(a,b,k)$ the hexagonal region on the triangular lattice whose side-lengths are $a$, $b+k$, $b$, $a+k$, $b$, $b+k$, clockwise from top. Thus $H(a,b,k)$ has $k$ more up-pointing unit triangles than down-pointing unit triangles. Our regions will be obtained from $H(a,b,k)$ by making some holes in it, so that the union of the holes has $k$ more up- than down-pointing triangles; this way, the resulting regions will have the same number of unit triangles of the two kinds, a necessary condition for the existence of lozenge tilings.

Let $\ell$ be the vertical symmetry axis of $H(a,b,k)$, and label the slots along $\ell$ choosing the reference line $L$ to be the base of $H(a,b,k)$ (see the picture on the left in Figure \ref{fdb} for an example). Choose an arbitrary subset of the labeled slots along $\ell$ (avoiding the possible partial slot at the top), and for each chosen slot remove the up-pointing lattice triangle of side-length two containing that slot. If the resulting region has forced lozenges, remove them\footnote{ By this, if a run of $k$ triangular holes of side-length two are contiguous, this gets replaced by a single triangular hole of side-length $2k$. In addition, it follows that the top triangular hole in $H_{\mathbf{l}}(a,b,k)$ does not touch the top side of the boundary.}. Let ${\mathbf{l}}$ be the list of the labels of slots not covered by removed triangles in the resulting region. Then we denote the resulting region by $H_{\mathbf{l}}(a,b,k)$; an example is shown on the left in Figure \ref{fdb}.

The regions $H'_{\mathbf{l}}(a,b,k)$
featured in Lai's open problem that we solve in this section 
are defined very similarly. Indeed, the only difference is that instead of the symmetry axis $\ell$, we consider its translation $\ell'$ half a unit to the right, and we label the slots along $\ell'$, with the reference line still being the base of $H(a,b,k)$. Letting ${\mathbf{l}}$ be the list of slot labels not covered by the removed triangles, the resulting region is defined to be $H'_{\mathbf{l}}(a,b,k)$ (see the picture on the right in Figure \ref{fdb} for an example). Lai noticed, based on data, that the number of lozenge tilings of these regions always seems to factor into relatively small prime factors, a fact that points to the existence of a simple product formula. In \cite[Problem 28]{LaiOpenProblems} he posed the open problem of finding and proving such a formula. We provide this in Theorem \ref{tdb}(a). Furthermore, in Theorem \ref{tdb}(b) we give simple product formulas for the number of tilings of two more families of regions, namely the regions $H'_{{\mathbf{l}},{\mathbf{q}}}(a,b,k)$ and $\bar{H}'_{{\mathbf{l}},{\mathbf{q}}}(a,b,k)$ defined below.

The main new ingredient in the last two pairs of families of regions is that their definition also involves making one triangular hole of odd side-length. More precisely, let the reference line $L$ be a horizontal lattice line that meets the region $H(a,b,k)$, and consider again the symmetry axis~$\ell$. Label the slots along $\ell$ with $L$ as the reference line, and remove an up-pointing triangle of odd size, symmetrically about $\ell$, whose base is along $L$. Then, as for the previous family, choose arbitrarily some labeled slots --- now generating two subsets of labels, one for slots below $L$ and one for those above $L$. Then remove triangles of side two straddling the chosen slots, with an important difference: for the slots above $L$, choose them to be up-pointing, but for the slots below $L$ make them {\it down-pointing}. Use the same convention about removing any forced lozenges. If in the resulting region the leftover labels below $L$ form the list ${\mathbf{l}}$, and those above $L$ the list  ${\mathbf{q}}$, we define the resulting region to be $H_{{\mathbf{l}},{\mathbf{q}}}(a,b,k)$; an example is illustrated on the left in Figure \ref{fdc}. We define the region $H'_{{\mathbf{l}},{\mathbf{q}}}(a,b,k)$ by applying exactly the same procedure, but with $\ell$ replaced by its half-unit translation to the right, $\ell'$; see the picture on the right in Figure \ref{fdc} for an example.

\begin{figure}[t]
\centerline{
\hfill
{\includegraphics[width=0.40\textwidth]{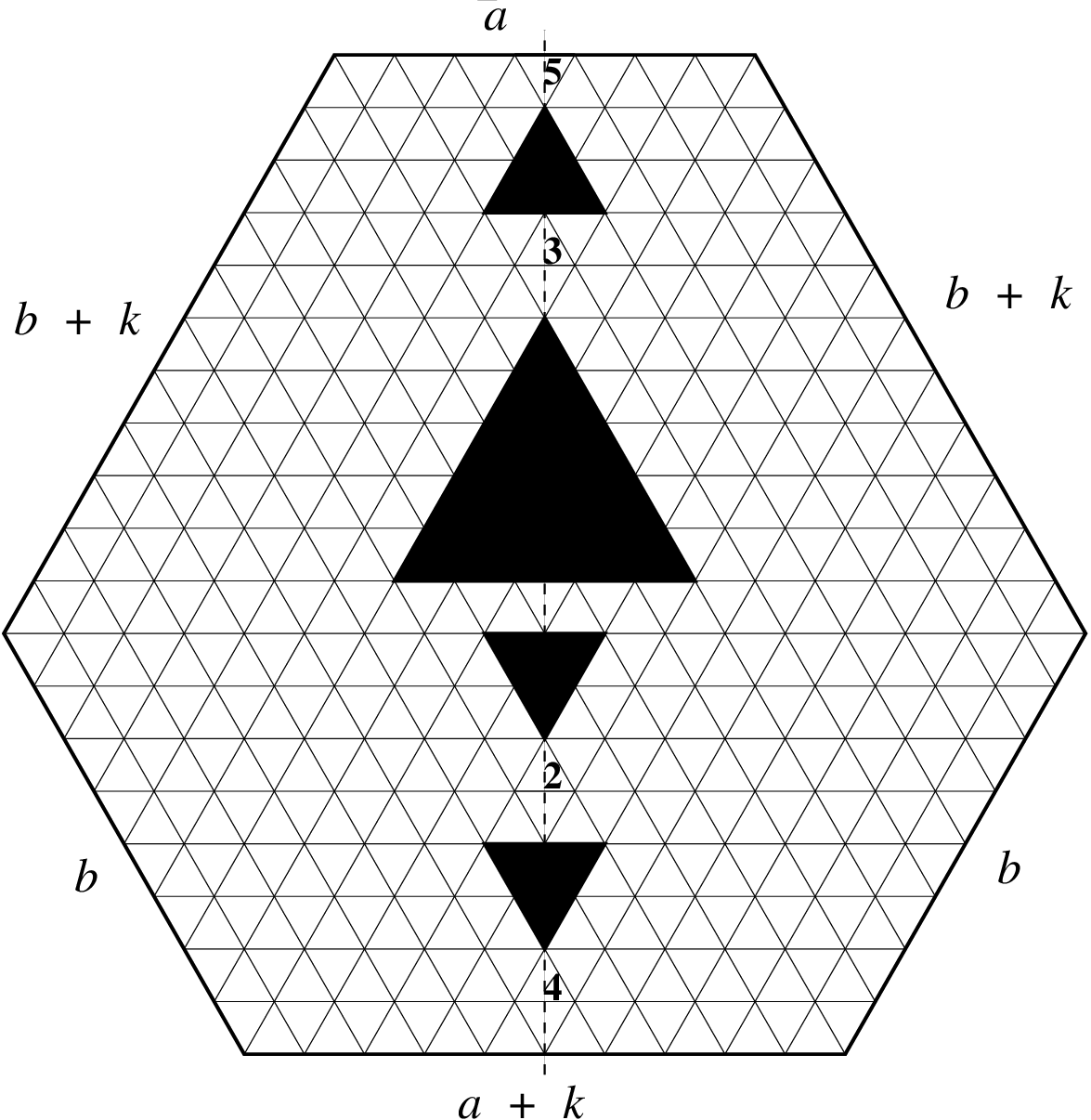}}
\hfill
{\includegraphics[width=0.40\textwidth]{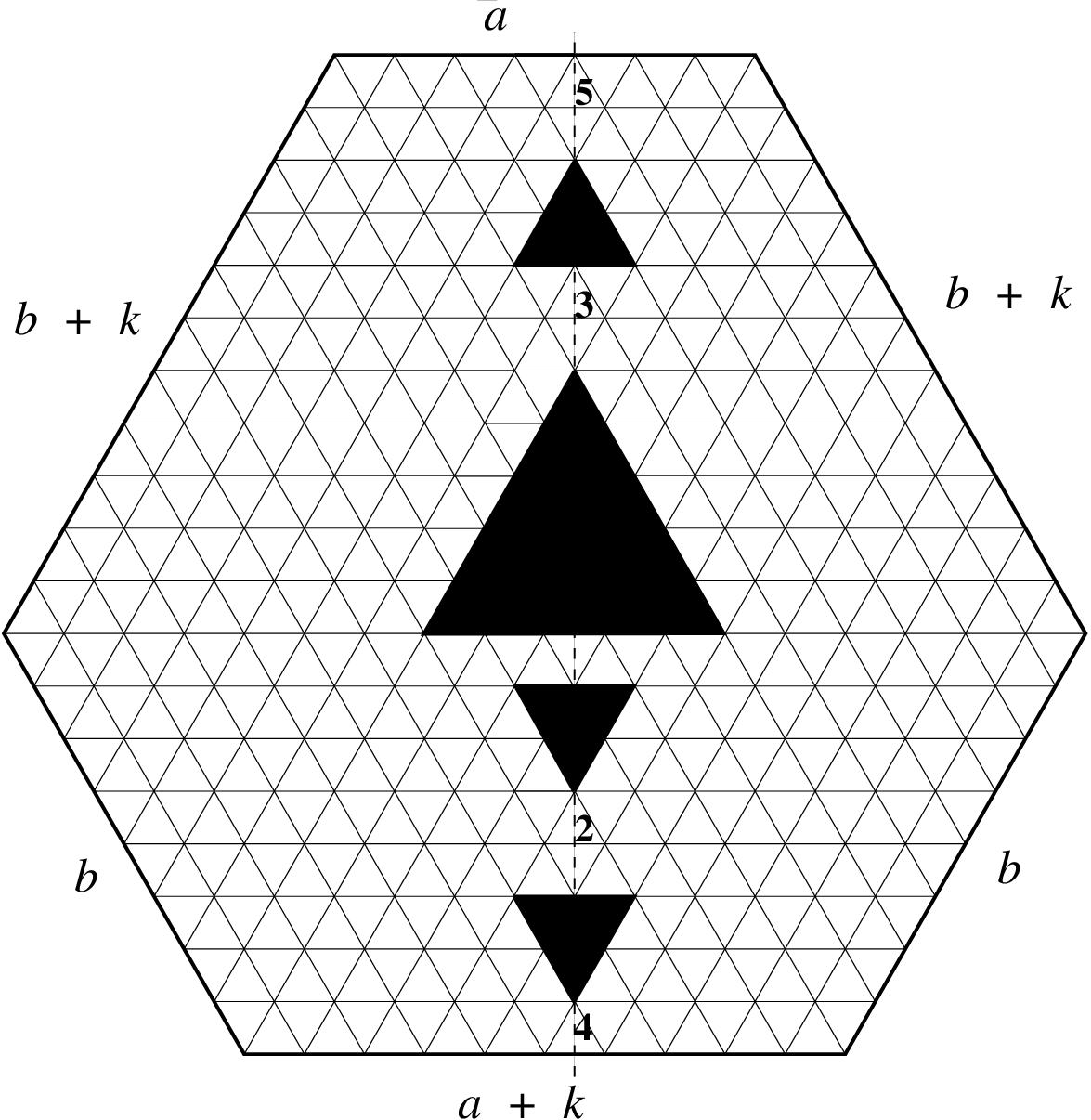}}
\hfill
}
\vskip0.1in
\caption{{\it Left.} The symmetric hexagon with holes $H_{{\mathbf{l}},{\mathbf{q}}}(a,b,k)$ for $a=7$, $b=8$, $k=3$, ${\mathbf{l}}=(2,4)$ and ${\mathbf{q}}=(3,5)$. {\it Right.} The corresponding nearly symmetric hexagon with holes $H'_{{\mathbf{l}},{\mathbf{q}}}(a,b,k)$ for $a=7$, $b=8$, $k=3$,  ${\mathbf{l}}=(2,4)$ and ${\mathbf{q}}=(3,5)$.}
\vskip-0.1in
\label{fdc}
\end{figure}

The last pair of families of regions is very similar to the previous one; the only difference is that the triangular hole of odd side-length is chosen to point {\it downward}\footnote{ Note that such a region cannot be obtained by rotating an $H_{{\mathbf{l}},{\mathbf{q}}}(a,b,k)$ by $180^{\circ}$, because we are assuming that $k$, which is the difference between the number of up-pointing and down-pointing unit triangles in $H(a,b,k)$, is non-negative.}. We denote the resulting regions by $\bar{H}_{{\mathbf{l}},{\mathbf{q}}}(a,b,k)$ and $\bar{H}'_{{\mathbf{l}},{\mathbf{q}}}(a,b,k)$, respectively (these are illustrated in Figure \ref{fdd}).

\begin{figure}[t]
\centerline{
\hfill
{\includegraphics[width=0.40\textwidth]{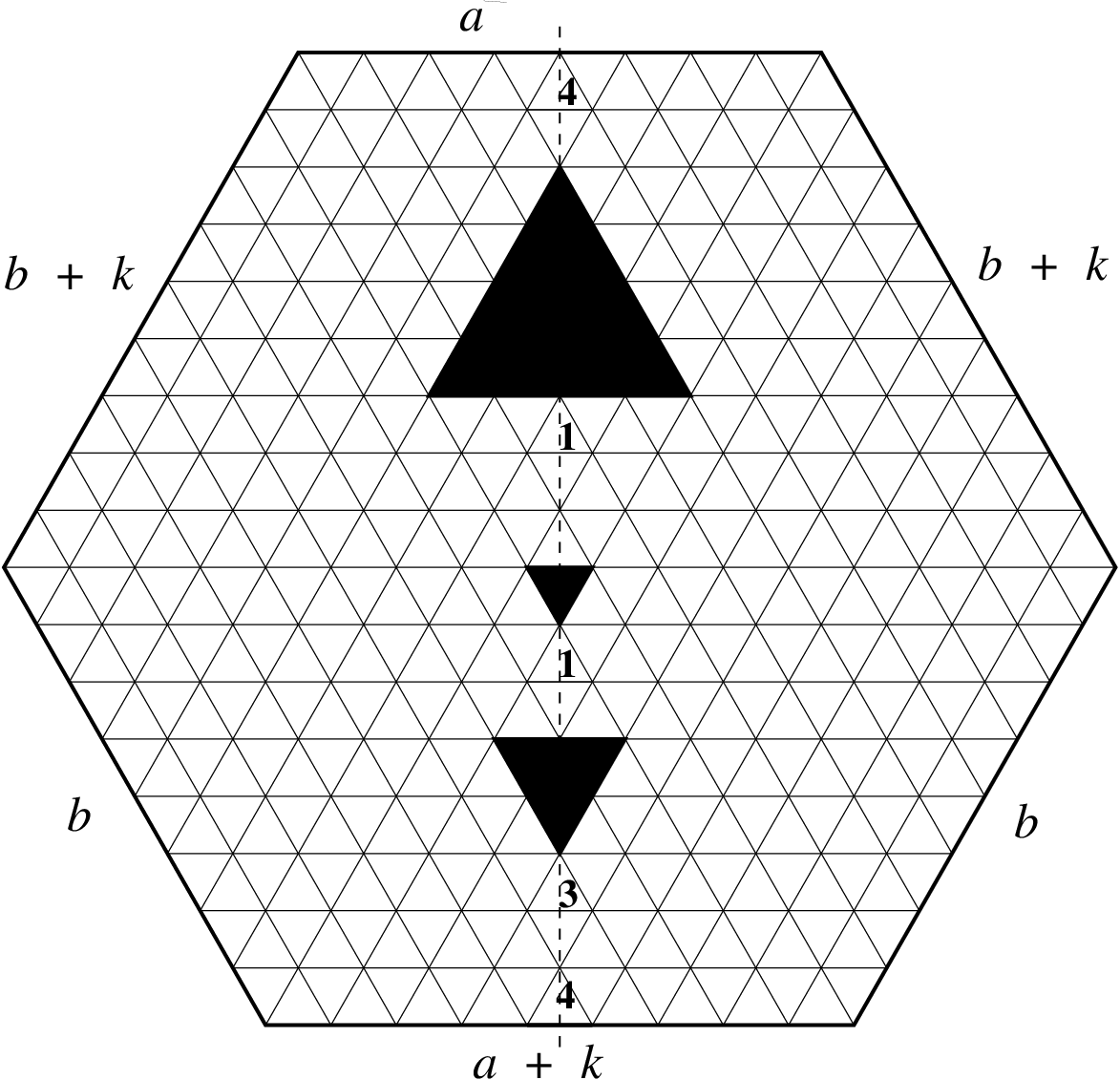}}
\hfill
{\includegraphics[width=0.40\textwidth]{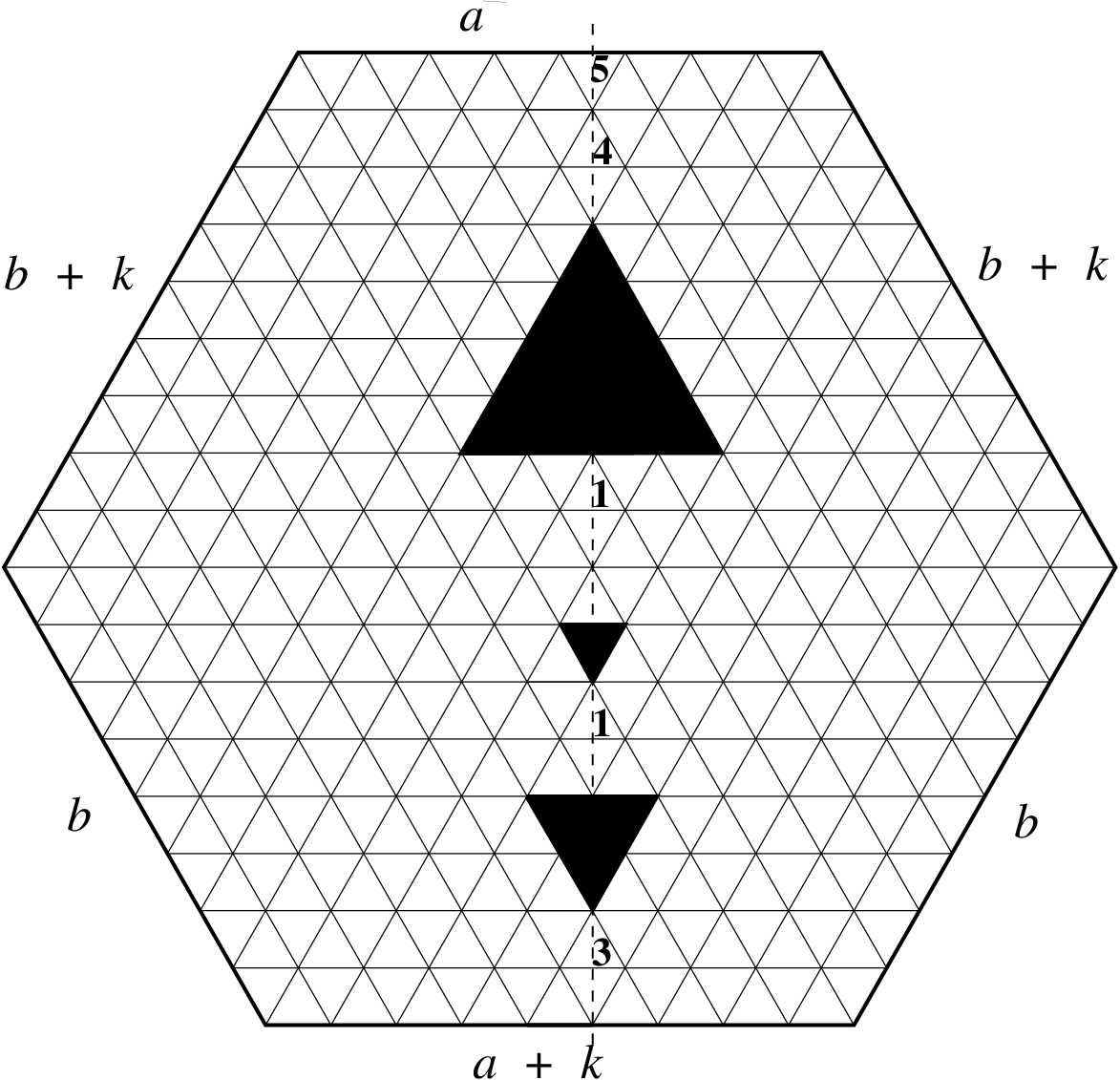}}
\hfill
}
\vskip0.1in
\caption{{\it Left.} The symmetric hexagon with holes $\bar{H}_{{\mathbf{l}},{\mathbf{q}}}(a,b,k)$ for $a=b=8$, $k=1$, ${\mathbf{l}}=(1,3,4)$ and ${\mathbf{q}}=(1,4)$. {\it Right.} The corresponding nearly symmetric hexagon with holes $\bar{H}'_{{\mathbf{l}},{\mathbf{q}}}(a,b,k)$ for $a=b=8$, $k=1$,  ${\mathbf{l}}=(1,3)$ and ${\mathbf{q}}=(1,4,5)$.}
\vskip-0.1in
\label{fdd}
\end{figure}

\subsection{Two families of dented regions}
\label{subsection4.2}
We recall here the definitions of two more families of regions whose lozenge tilings were enumerated in \cite{CiucuPP1}; they will be essential for solving Lai's open problem.
The detailed definitions are given in \cite{CiucuPP1}; we rely here on Figures \ref{fde}--\ref{fdh} to define our regions.

\begin{figure}[t]
\centerline{
\hfill
{\includegraphics[width=0.25\textwidth]{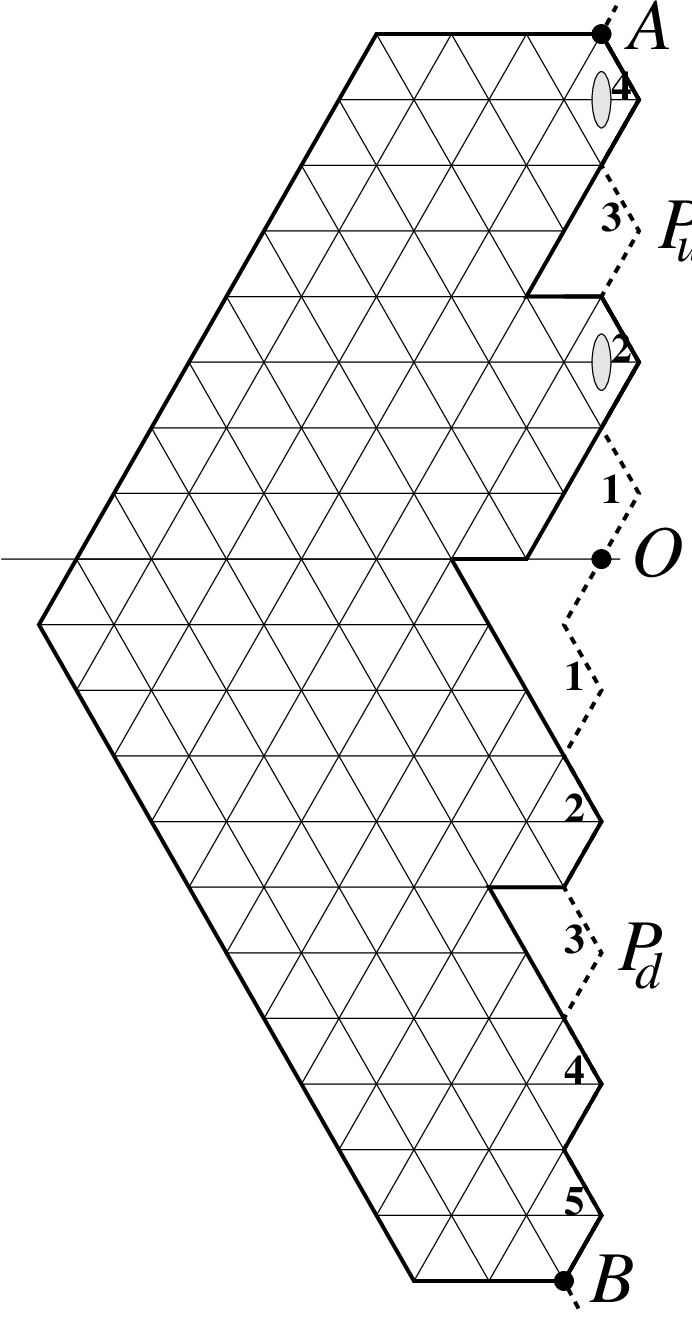}}
\hfill
}
\vskip0.1in
\caption{$R_{(2,4,5),(2,4)}(2)$}
\vskip-0.1in
\label{fde}
%
\centerline{
\hfill
{\includegraphics[width=0.20\textwidth]{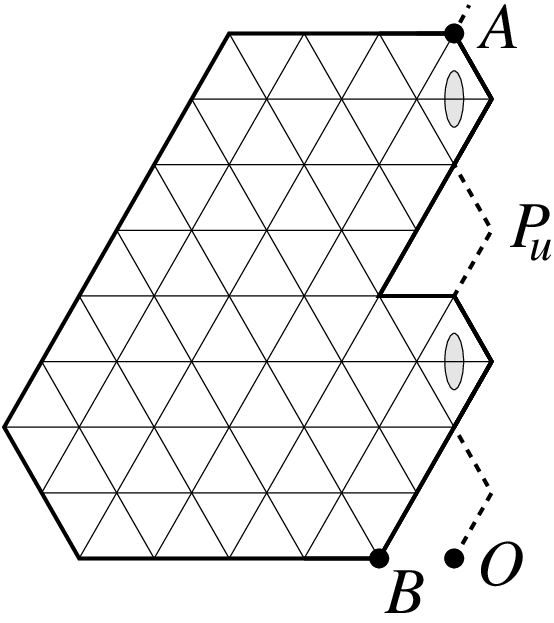}}
\hfill
{\includegraphics[width=0.20\textwidth]{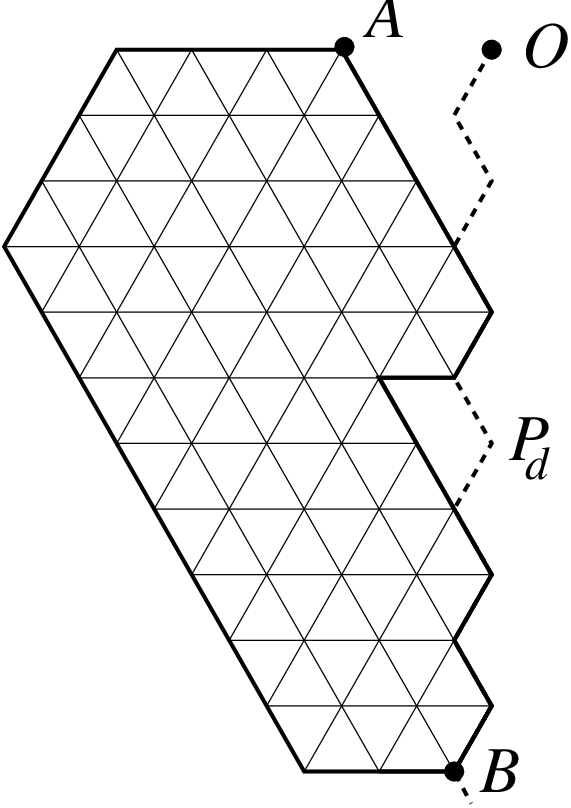}}
\hfill
}
\vskip0.1in
\caption{$R_{\emptyset,(2,4)}(4)$ (left) and $R_{(2,4,5),\emptyset}(2)$ (right).}
\vskip-0.1in
\label{fdf}
\end{figure}

Consider two semi-infinite vertical zigzag paths $P_u$ and $P_d$, meeting at the reference point $O$ as shown in Figure \ref{fde} (in that figure, $P_u$ connects $O$ to $A$, while $P_b$ connects $O$ to $B$). Label the slots that fit in their folds (call these bumps) as shown in the figure. The tile positions corresponding to the bumps above $O$ are weighted by $1/2$ (this is indicated by shaded ellipses in the figures).

Given non-empty lists of strictly increasing positive integers ${\mathbf{l}}=(l_1,\dotsc,l_m)$ and ${\mathbf{q}}=(q_1,\dotsc,q_n)$ and a non-negative integer $x$, the region $R_{{\mathbf{l}},{\mathbf{q}}}(x)$ is defined as indicated in Figure \ref{fde}. The lists ${\mathbf{l}}$ and ${\mathbf{q}}$ specify which bumps on the zigzag lines are kept. The kept bumps on $P_u$ are joined together by creating up-pointing dents, and those on $P_d$ by down-pointing dents; these two portions are then joined together via a horizontal ray left of $O$, as shown; $x$ is the length of the base.
Note that this information, together with the fact that we want a region which can be tiled by lozenges, determines the rest of the boundary of $R_{{\mathbf{l}},{\mathbf{q}}}(x)$\footnote{ Indeed, if a lozenge tiling exists, due to a family of paths of lozenges that it determines, the length of the southwest side must be equal to the number of unit segments facing northeast on the right boundary.}.

In case one of the lists ${\mathbf{l}}$ or ${\mathbf{q}}$ is empty, the definition is slightly different, as shown in Figure \ref{fdf}; note in particular that if ${\mathbf{l}}=\emptyset$, $x$ is not the length of the base, but one unit less than the distance from the left end of the base to the reference point $O$. Define $R_{\emptyset,\emptyset}(x)=\emptyset$ for all $x$.

\begin{figure}[t]
\centerline{
\hfill
{\includegraphics[width=0.25\textwidth]{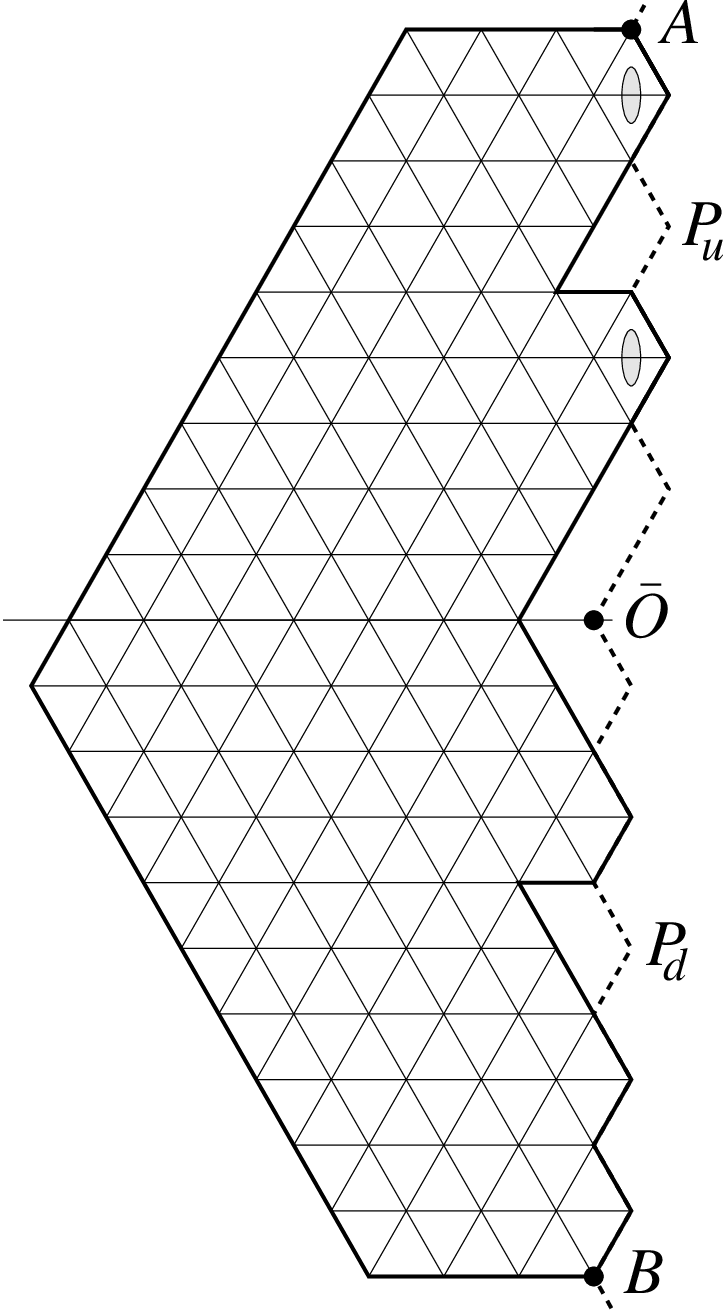}}
\hfill
}
\vskip0.1in
\caption{$\bar{R}_{(2,4,5),(2,4)}(3)$}
\vskip-0.1in
\label{fdg}
%
\centerline{
\hfill
{\includegraphics[width=0.20\textwidth]{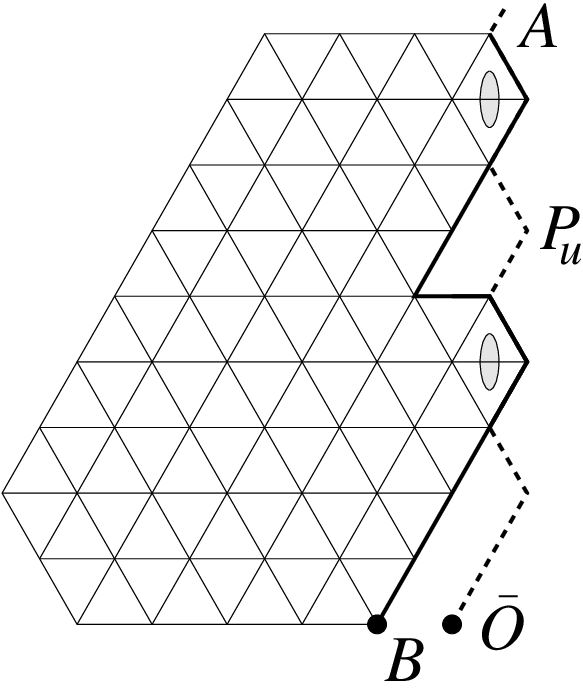}}
\hfill
{\includegraphics[width=0.20\textwidth]{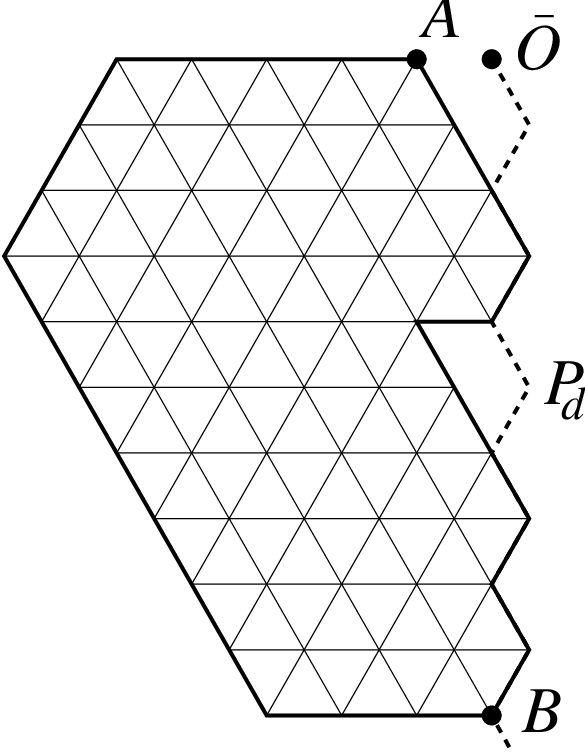}}
\hfill
}
\vskip0.1in
\caption{$\bar{R}_{\emptyset,(2,4)}(5)$ (left) and $\bar{R}_{(2,4,5),\emptyset}(3)$ (right).}
\vskip-0.1in
\label{fdh}
\end{figure}

Our second family of regions, denoted $\bar{R}_{{\mathbf{l}},{\mathbf{q}}}(x)$, is defined
almost identically (see Figures \ref{fdg} and \ref{fdh}). The only difference is that, when connecting the upper and lower boundary portions determined by the selected bumps,
instead of the horizontal ray starting from $O$, we use the
one starting from the lattice point $\bar{O}$, one step southwest of $O$ (and, in
case ${\mathbf{l}}$ is empty, when defining the bottom side of 
$\bar{R}_{\emptyset,{\mathbf{q}}}(x)$, we move left until we are $x$ units away from 
$\bar{O}$).
Again, we define $\bar{R}_{\emptyset,\emptyset}(x)=\emptyset$ for all $x$.

\subsection{Two families of polynomials and their connection to the dented regions}
\label{subsection4.3}
Given integers $m,n\geq0$ and lists ${\mathbf l}=(l_1,\ldots,l_m)$ and ${\mathbf q}=(q_1,\ldots,q_n)$ of strictly increasing positive integers, we define the polynomials $P_{\mathbf{l},\mathbf{q}}(x)$ and $\overline{P}_{\mathbf{l},\mathbf{q}}(x)$ as follows.

We first define the monic polynomials $B_{m,n}(x)$ and $\bar{B}_{m,n}(x)$.
Recall that for any
real number~$a$ and non-negative integer $k$, the  Pochhammer symbol
$(a)_k$ is defined by  setting $(a)_{0}\coloneqq1$, and $(a)_k\coloneqq \prod_{i=1}^{k}(a+i-1)$ for $k>0$. In addition, for $a$ and $k$ like above, we will find it convenient to use the notation

\begin{equation*}
\begin{aligned}
    \langle a,a+k\rangle&\coloneqq
      \displaystyle\prod_{i=0}^{k}(a+i)^{\text{min}(i+1,k+1-i)}
      \\
   &=a(a+1)^{2}\cdots(a+k-1)^{2}(a+k)\\
   &=\prod_{i=0}^{\lfloor\frac{k}{2}\rfloor}(a+i)_{k+1-2i},\ \ \ k\geq0,
\end{aligned}
\end{equation*}
and  $\langle a,a+k\rangle:=1$ for $k<0$.

With this notation, for non-negative integers $m$ and $n$, $B_{m,n}(x)$ and $\bar{B}_{m,n}(x)$ are defined to be the monic polynomials given by
\begin{equation*}
\begin{aligned}
    B_{m,n}(x)= ~&2^{-mn-m(m-1)/2}(x+n+1)_{m}(x+n+2)_{m}\langle x+2,x+n\rangle\left\langle x+\frac{3}{2},x+\frac{2n+1}{2}\right\rangle\\
    &\ \ \ \ \ \ \ \ \ \ \ \ \ \ \ \ \times\prod_{i=1}^{n}\frac{(x+i)_{m}}{(x+i+1/2)_{m}}\prod_{i=1}^{m}(2x+n+i+2)_{n+i-1}
\end{aligned}
\end{equation*}
and
\begin{equation*}
\begin{aligned}
    \bar{B}_{m,n}(x)= ~&2^{-mn-n(n+1)/2}(x+m+1)_{n}\langle x+1,x+m\rangle\left\langle x+\frac{3}{2},x+\frac{2m-1}{2}\right\rangle\\
    &\ \ \ \ \ \ \ \ \ \ \ \ \ \ \ \ \times\prod_{i=1}^{m}\frac{(x+i)_{n}}{(x+i+1/2)_{n}}\prod_{i=1}^{n}(2x+m+i+1)_{m+i}.
\end{aligned}
\end{equation*}
Next, given lists of strictly increasing positive integers ${\mathbf l}=(l_1,\ldots,l_m)$ and ${\mathbf q}=(q_1,\ldots,q_n)$, we define the constants $c_{\mathbf{l},\mathbf{q}}$ and $\bar{c}_{\mathbf{l},\mathbf{q}}$ by
\begin{equation*}
    c_{\mathbf{l},\mathbf{q}}=2^{\binom{n-m}{2}-m}\prod_{i=1}^{m}\frac{1}{(2l_{i})!}\prod_{i=1}^{n}\frac{1}{(2q_{i}-1)!}\frac{\prod_{1\leq i<j\leq m}(l_{j}-l_{i})\prod_{1\leq i<j\leq n}(q_{j}-q_{i})}{\prod_{i=1}^{m}\prod_{j=1}^{n}(l_{i}+q_{j})}
\end{equation*}
and
\begin{equation*}
    \bar{c}_{\mathbf{l},\mathbf{q}}=2^{\binom{n-m}{2}-m}\prod_{i=1}^{m}\frac{1}{(2l_{i}-1)!}\prod_{i=1}^{n}\frac{1}{(2q_{i})!}\frac{\prod_{1\leq i<j\leq m}(l_{j}-l_{i})\prod_{1\leq i<j\leq n}(q_{j}-q_{i})}{\prod_{i=1}^{m}\prod_{j=1}^{n}(l_{i}+q_{j})}.
\end{equation*}
We are now ready to define our polynomials $P_{\mathbf{l},\mathbf{q}}(x)$ and $\bar{P}_{\mathbf{l},\mathbf{q}}(x)$.
For lists of strictly increasing positive integers ${\mathbf l}=(l_1,\ldots,l_m)$ and ${\mathbf q}=(q_1,\ldots,q_n)$,
$P_{\mathbf{l},\mathbf{q}}(x)$ and $\bar{P}_{\mathbf{l},\mathbf{q}}(x)$ are given by
\footnote{These are equations (5.1) and (5.2) in \cite[Part B]{CiucuPP1}.}\footnote{ For $m=0$, one needs to take $l_m=0$ in equations \eqref{eda} and \eqref{edb}; also, when $m$ or $n$ is zero at the upper limit of an outside product, that double product is defined to be 1.}
\begin{equation}
\label{eda}
\begin{aligned}
    P_{\mathbf{l},\mathbf{q}}(x)=c_{\mathbf{l},\mathbf{q}}B_{m,n}(x+l_{m}-m)&\prod_{i=1}^{m}\prod_{j=i}^{l_{i}-1}(x+l_{m}-j)(x+l_{m}-m+n+j+2)\\
    \times&\prod_{i=1}^{n}\prod_{j=i}^{q_{i}-1}(x+l_{m}-m+n-j+1)(x+l_{m}+j+1)
\end{aligned}
\end{equation}
and
\begin{equation}
\label{edb}
\begin{aligned}
    \bar{P}_{\mathbf{l},\mathbf{q}}(x)=\bar{c}_{\mathbf{l},\mathbf{q}}\bar{B}_{m,n}(x+l_{m}-m)&\prod_{i=1}^{m}\prod_{j=i}^{l_{i}-1}(x+l_{m}-j)(x+l_{m}-m+n+j+1)\\
    \times&\prod_{i=1}^{n}\prod_{j=i}^{q_{i}-1}(x+l_{m}-m+n-j)(x+l_{m}+j+1).
\end{aligned}    
\end{equation}

In \cite{CiucuPP1}, the second author showed that the tiling generating functions of the regions $R_{{\mathbf{l}},{\mathbf{q}}}(x)$ and $\bar{R}_{{\mathbf{l}},{\mathbf{q}}}(x)$ introduced in subsection \ref{subsection4.2} are given by the above polynomials $P_{\mathbf{l},\mathbf{q}}(x)$ and $\bar{P}_{\mathbf{l},\mathbf{q}}(x)$.

\begin{thm} [Ciucu \cite{CiucuPP1}]
\label{propositionpp1} We have
\begin{equation}
\label{edc}
    \M(R_{{\mathbf{l}},{\mathbf{q}}}(x))=P_{\mathbf{l},\mathbf{q}}(x)
\end{equation}
and
\begin{equation}
\label{edd}
    \M(\bar{R}_{{\mathbf{l}},{\mathbf{q}}}(x))=\bar{P}_{\mathbf{l},\mathbf{q}}(x).
\end{equation}

\end{thm}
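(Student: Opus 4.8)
The statement to prove is \cref{propositionpp1}, which asserts that the tiling generating functions of the dented regions $R_{\mathbf{l},\mathbf{q}}(x)$ and $\bar{R}_{\mathbf{l},\mathbf{q}}(x)$ are given by the explicit polynomials $P_{\mathbf{l},\mathbf{q}}(x)$ and $\bar{P}_{\mathbf{l},\mathbf{q}}(x)$. Since this result is attributed to \cite{CiucuPP1}, the natural plan is to reduce it to the statements proved there rather than to reprove everything from scratch. First I would recall from \cite{CiucuPP1} that the regions $R_{\mathbf{l},\mathbf{q}}(x)$ and $\bar{R}_{\mathbf{l},\mathbf{q}}(x)$ defined in \cref{subsection4.2} coincide (up to relabeling and up to removing forced lozenges) with the dented regions whose tiling generating functions are computed there; the main point is to match the geometric data --- the lists $\mathbf{l}$, $\mathbf{q}$ determining which bumps on the zigzag paths $P_u$, $P_d$ are kept, the base length parameter $x$, and the weighting of the bump positions above $O$ by $1/2$ --- with the parametrization used in \cite{CiucuPP1}.

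The core of the argument in \cite{CiucuPP1} is an induction. The plan is to set up a recurrence for $\M(R_{\mathbf{l},\mathbf{q}}(x))$ (and similarly for the barred version) by applying a Kuo-type condensation identity (\cref{tbc}, or one of its variants) to a suitably chosen face of the dual graph, with the four vertices $a,b,c,d$ placed along the boundary so that the six resulting regions are again of the form $R_{\mathbf{l}',\mathbf{q}'}(x')$ for smaller or shifted parameters. One then checks that the closed-form expression $P_{\mathbf{l},\mathbf{q}}(x)$ --- built from the monic polynomials $B_{m,n}(x)$, the constants $c_{\mathbf{l},\mathbf{q}}$, and the explicit double products in \eqref{eda} --- satisfies the same recurrence, and verifies the base cases (e.g.\ $m=0$ or $n=0$, where one of the lists is empty and the region degenerates as in \cref{fdf}, \cref{fdh}, and ultimately $R_{\emptyset,\emptyset}(x)=\emptyset$ with a single tiling). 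Matching the recurrences is largely a matter of verifying a polynomial identity among Pochhammer symbols and the $\langle a,a+k\rangle$ brackets; the degree count and leading-coefficient check (monicity of $B_{m,n}$, the power of $2$ in $c_{\mathbf{l},\mathbf{q}}$) pin down the normalization.

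The main obstacle I anticipate is bookkeeping rather than conceptual difficulty: one must track how the lists $\mathbf{l}$ and $\mathbf{q}$ transform under the condensation step (deleting a bump, shrinking the base, or shifting $x$), how forced lozenges are created and removed at each stage, and how the half-weights on the bumps above $O$ propagate through the identity --- these half-weights are exactly what produce the powers of $2$ and the half-integer Pochhammer factors $(x+i+1/2)_m$ appearing in $B_{m,n}(x)$. Keeping the even/odd side-length conventions consistent between $R$ and $\bar R$ (the only difference being the horizontal ray emanating from $O$ versus from $\bar O$, one step southwest) is the other delicate point, and it is responsible for the systematic differences between $P_{\mathbf{l},\mathbf{q}}$ and $\bar P_{\mathbf{l},\mathbf{q}}$ (e.g.\ $(x+m+1)_n$ versus $(x+n+1)_m(x+n+2)_m$, and the shifts by $+1$ versus $+2$ in the double products). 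Once the recurrences and base cases are aligned, \eqref{edc} and \eqref{edd} follow by a straightforward simultaneous induction on $l_m$ (or on $m+n$), and I would present only the key condensation picture and the polynomial identity to be verified, relegating the routine manipulations of Pochhammer products to a remark or to \cite{CiucuPP1}.
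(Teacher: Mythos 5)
The paper gives no proof of this theorem at all: it is imported verbatim from \cite{CiucuPP1} (the displayed formulas \eqref{eda}--\eqref{edb} are literally equations (5.1) and (5.2) of that reference), so the ``paper's approach'' consists exactly of matching the definitions of $R_{{\mathbf{l}},{\mathbf{q}}}(x)$, $\bar{R}_{{\mathbf{l}},{\mathbf{q}}}(x)$, $P_{\mathbf{l},\mathbf{q}}$, $\bar{P}_{\mathbf{l},\mathbf{q}}$ to those in \cite{CiucuPP1} and citing the result, which is precisely the first step of your plan. Your further sketch of a condensation-based induction is a plausible reconstruction of how \cite{CiucuPP1} might argue, but it is neither carried out nor needed for the purposes of this paper, so the citation-plus-identification route you open with is the right (and the paper's) approach.
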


\subsection{Lozenge tilings of symmetric hexagons with holes along the symmetry axis.}
\label{subsection4.4}


The number of lozenge tilings of the regions $H_{\mathbf{l}}(a,b,k)$, $H_{\mathbf{l},\mathbf{q}}(a,b,k)$, and $\bar{H}_{\mathbf{l},\mathbf{q}}(a,b,k)$ (three of the six families of regions introduced in subsection \ref{subsection4.1}) was determined by the second author in  \cite{CiucuPP1}.
The proof is based on the factorization theorem for matchings \cite[Theorem 1.2]{CiucuMatchingFactorization} and equalities \eqref{edc}--\eqref{edd}. The result (stated in a slightly different form) is the following.

\begin{thm}[Theorem 1.1 in \cite{CiucuPP1}]
\label{tda}
Let $H$ be one of the regions $H_{\mathbf{l}}(a,b,k)$, $H_{{\mathbf{l}},{\mathbf{q}}}(a,b,k)$ or $\bar{H}_{{\mathbf{l}},{\mathbf{q}}}(a,b,k)$. Then we have\footnote{ Here $H^+$ and $H^-$ are the regions obtained from the symmetric region $H$ by the procedure described before the statement of Theorem \ref{tba}.}
\begin{equation}
\M(H)=2^{\w_\ell(H)}\M(H^+)\M(H^-),
\label{ede}
\end{equation}
and all of the regions $H^+$ and $H^-$ are $R$- or $\bar{R}$-regions, as follows\footnote{ Given two regions $R$ and $Q$, $[R,Q]$ is the (ordered) list consisting of these regions.}:

$(${\rm a}$)$. For $k$ even, we have
\begin{equation}
[H_{\mathbf{l}}(a,b,k)^+,H_{\mathbf{l}}(a,b,k)^-]=
\begin{cases}
    [\bar{R}_{{\mathbf{l}}-1,\emptyset}(a/2), R_{\emptyset,{\mathbf{l}}}((a+k-2)/2)], & \text{$a$ even, $l_1=1$,} \\
    [{R}_{{\mathbf{l}}-1,\emptyset}(a/2), R_{\emptyset,{\mathbf{l}}}((a+k-2)/2)], & \text{$a$ even, $l_1>1$,} \\
    [\bar{R}_{{\mathbf{l}},\emptyset}((a-1)/2), R_{\emptyset,{\mathbf{l}}^{(m)}}((a+k-1)/2)], & \text{$a$ odd,}\
\label{edf}
\end{cases}
\end{equation}
where ${\mathbf l}-1$ denotes the list obtained from ${\mathbf l}$ by decrementing each of its elements by one unit $($discarding the first element if $l_1=1$$)$.

$(${\rm b}$)$. For $k$ odd, we have

\begin{equation}
[H_{{\mathbf{l}},{\mathbf{q}}}(a,b,k)^+,H_{{\mathbf{l}},{\mathbf{q}}}(a,b,k)^-]=
\begin{dcases}
    [{R}_{{\mathbf{q}},{\mathbf{l}}^{(m)}}(a/2), \bar{R}_{{\mathbf{l}},{\mathbf{q}}}((a+k-1)/2)], & \text{$a$ even,} \\
    [{R}_{{\mathbf{q}},{\mathbf{l}}}((a-1)/2), \bar{R}_{{\mathbf{l}},{\mathbf{q}}^{(n)}}((a+k)/2)], & \text{$a$ odd,}\
\end{dcases}
\label{edg}
\end{equation}
\ \ \ and
\begin{equation}
[\bar{H}_{{\mathbf{l}},{\mathbf{q}}}(a,b,k)^+,\bar{H}_{{\mathbf{l}},{\mathbf{q}}}(a,b,k)^-]=
\begin{dcases}
    [\bar{R}_{{\mathbf{q}},{\mathbf{l}}^{(m)}}(a/2), {R}_{{\mathbf{l}},{\mathbf{q}}}((a+k-1)/2)], & \text{$a$ even,} \\
    [\bar{R}_{{\mathbf{q}},{\mathbf{l}}}((a-1)/2), {R}_{{\mathbf{l}},{\mathbf{q}}^{(n)}}((a+k)/2)], & \text{$a$ odd,}\
\end{dcases}
\label{edh}
\end{equation}
where ${\mathbf l}^{(k)}$ denotes the list obtained from ${\mathbf l}^{(k)}$ by discarding its $k$th element.

\end{thm}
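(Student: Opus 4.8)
The plan is to prove Theorem~\ref{tda} by applying the factorization theorem (Theorem~\ref{tba}(a)) to the planar dual graph of each of the three families $H_{\mathbf{l}}(a,b,k)$, $H_{{\mathbf{l}},{\mathbf{q}}}(a,b,k)$, $\bar{H}_{{\mathbf{l}},{\mathbf{q}}}(a,b,k)$, and then identifying the two halves $H^+$ and $H^-$ that come out of the cutting procedure with the dented regions $R_{{\mathbf{l}},{\mathbf{q}}}(x)$ and $\bar{R}_{{\mathbf{l}},{\mathbf{q}}}(x)$ of subsection~\ref{subsection4.2}, up to removal of forced lozenges. Since each of these regions is symmetric about the vertical axis $\ell$ and has equally many up- and down-pointing unit triangles (by the construction in subsection~\ref{subsection4.1}), its dual graph is bipartite, planar, and symmetric with an even number of vertices, so Theorem~\ref{tba}(a) applies and immediately yields the factorization \eqref{ede} with the stated power $2^{\w_\ell(H)}$; the only real content is the combinatorial identification of $H^+$ and $H^-$.

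First I would fix the color convention: declare the leftmost vertex on $\ell$ to be white and trace through the cutting rule (cut above white $a_i$'s and black $b_i$'s, below the others) to see precisely which edges of the dual graph crossing $\ell$ get deleted. Translated back to the triangular lattice, this says that along the axis $\ell$ the cuts alternate sides in a zigzag pattern, and the positions of the triangular holes (all of side two, straddling slots labelled by $\mathbf{l}$, and — in the $H_{{\mathbf{l}},{\mathbf{q}}}$ and $\bar H_{{\mathbf{l}},{\mathbf{q}}}$ cases — the central odd-sized hole plus the down-pointing side-two holes at the slots $\mathbf{q}$) interact with this zigzag in a way that depends on the parities of $a$ (and $k$). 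Casing on the parity of $a$ (and, for the second and third families, using that $k$ is odd) I would cut $H$ along the zigzag, discard the forced lozenges that the cut creates near the boundary and near the holes, and check that the upper piece is exactly an $R$- or $\bar R$-region with the bump lists dictated by $\mathbf{l}$ and $\mathbf{q}$ and the base parameter $x$ equal to the indicated half-integer expression ($a/2$, $(a-1)/2$, $(a+k-2)/2$, etc.), and likewise for the lower piece. The appearance of $\mathbf{l}-1$ in \eqref{edf} reflects that the side-two holes, once cut, contribute dents one slot lower on the upper half; the appearance of $\mathbf{l}^{(m)}$ or $\mathbf{q}^{(n)}$ in the ``$a$ odd'' rows reflects that the top (resp.\ relevant) hole is forced to merge with or be absorbed into the boundary after cutting, so its label drops out; and the switch between $R$ and $\bar R$ tracks whether the horizontal connecting ray emanates from $O$ or from $\bar O$, which is governed by the parity of $a$ through the position of the bottom corner of the zigzag relative to the base of $H$.

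The routine-but-careful bookkeeping is: (i) verifying the power of $2$ equals $\w_\ell(H)$ — this is immediate from Theorem~\ref{tba}(a) once one notes $\w_\ell(H)$ is by definition the width of the dual graph; (ii) confirming in each case that the edges cut along $\ell$ really do carry the weight-halving, and that after removing forced lozenges the half-weighted tile positions in the $R$/$\bar R$ regions (the shaded ellipses above $O$ in Figures~\ref{fde}--\ref{fdh}) match up; and (iii) the parity case analysis, which has $2$ cases for family one ($a$ even with $l_1=1$ or $l_1>1$, $a$ odd) and $2$ cases each for families two and three. These are best handled by drawing the generic picture and reading off the boundary, exactly as the excerpt does for Theorem~\ref{tbb} via Figures~\ref{fbc}--\ref{fbf}.

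I expect the main obstacle to be (iii): tracking the forced lozenges and the label shifts ($\mathbf{l}\mapsto\mathbf{l}-1$, the deletion of the $m$th or $n$th element) through the cut so that the half-regions land \emph{exactly} on the normalized shapes $R_{{\mathbf{l}},{\mathbf{q}}}(x)$ and $\bar R_{{\mathbf{l}},{\mathbf{q}}}(x)$ with the correct $x$. In particular the special conventions in subsection~\ref{subsection4.2} for when a list is empty (where $x$ is measured to $O$ rather than being the base length) and the footnoted convention that a run of contiguous side-two holes collapses to one big hole must be reconciled with what the zigzag cut produces; getting the edge cases ($l_1=1$, the topmost hole touching the boundary, $m=0$ or $n=0$) to agree with \eqref{eda}--\eqref{edb} is where the argument is most delicate. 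Once the identification of $H^\pm$ is established, the theorem follows with no further computation, since \eqref{ede} is just \eqref{eba} applied to the dual graph.
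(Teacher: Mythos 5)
Your plan matches the approach behind the paper's statement: the paper does not prove Theorem \ref{tda} itself but quotes it from \cite{CiucuPP1}, describing that proof in one sentence as being ``based on the factorization theorem for matchings and equalities \eqref{edc}--\eqref{edd}'' --- i.e., exactly the factorization-plus-identification-of-halves argument you outline, with the case analysis on the parities of $a$ and $k$ and the bookkeeping of forced lozenges and label shifts. Your description of the zigzag cut and the resulting identification of $H^+$ and $H^-$ with $R$- and $\bar R$-regions is also consistent with the analogous identifications the paper carries out pictorially for the primed regions in the proofs of Theorems \ref{tdb} and \ref{tdc}.
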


\subsection{Shifting the holes one unit southeast. 
}
\label{subsection4.5}

The first part of the following result, together with equations \eqref{edc} and \eqref{edd}, provides a simple product formula for the number of lozenge tilings of the region $H'_{\mathbf{l}}(a,b,k)$ --- thus solving Lai's open problem mentioned in subsection \ref{subsection4.1} --- while the second part gives analogous formulas for the regions $H'_{\mathbf{l},\mathbf{q}}(a,b,k)$ and $\bar{H}'_{\mathbf{l},\mathbf{q}}(a,b,k)$.

\begin{thm}
\label{tdb}
Let $H$ be one of the regions $H'_{\mathbf{l}}(a,b,k)$, $H'_{{\mathbf{l}},{\mathbf{q}}}(a,b,k)$ or $\bar{H}'_{{\mathbf{l}},{\mathbf{q}}}(a,b,k)$, and let $\ell'$ be the axis with respect to which the holes in $H$ are symmetric. Then we have\footnote{ Here ${H}^+$ and ${H}^-$ are 
the ``half-regions'' obtained from the nearly symmetric hexagon with holes $H$ by the procedure described before the statement of Theorem \ref{tbb} (see the middle picture in Figure \ref{fdi} for an example).}
\begin{equation}
\M(H)=
\begin{cases}
\dfrac{a+b+k}{a+2b+k}\cdot 2^{\w_{\ell'}(H)}\M(H^+)\M(H^-), & \text{$a$ odd},\\
\\[-10pt]
\dfrac{a+b+k}{a}\cdot 2^{\w_{\ell'}(H)}\M(H^+)\M(H^-), & \text{$a$ even},\
\end{cases}
\label{edi}
\end{equation}
and all of the regions $H^+$ and $H^-$ are $R$- or $\bar{R}$-regions, as follows:

$(${\rm a}$)$. For $k$ even, we have

\begin{equation}
[H'_{\mathbf{l}}(a,b,k)^+,H'_{\mathbf{l}}(a,b,k)^-]=
\begin{cases}
    [\bar{R}_{{\mathbf{l}}-1,\emptyset}((a-1)/2), R_{\emptyset,{\mathbf{l}}}((a+k-1)/2)], & \text{$a$ odd, $l_1=1$,} \\
    [R_{{\mathbf{l}}-1,\emptyset}((a-1)/2), R_{\emptyset,{\mathbf{l}}}((a+k-1)/2)], & \text{$a$ odd, $l_1>1$,} \\
    [\bar{R}_{{\mathbf{l}},\emptyset}((a-2)/2), R_{\emptyset,{\mathbf{l}}^{(m)}}((a+k)/2)], & \text{$a$ even.}\
\label{edj}
\end{cases}
\end{equation}

$(${\rm b}$)$. For $k$ odd, we have

\begin{equation}
[H'_{{\mathbf{l}},{\mathbf{q}}}(a,b,k)^+,H'_{{\mathbf{l}},{\mathbf{q}}}(a,b,k)^-]=
\begin{dcases}
    [{R}_{{\mathbf{q}},{\mathbf{l}}^{(m)}}((a-1)/2), \bar{R}_{{\mathbf{l}},{\mathbf{q}}}((a+k)/2)], & \text{$a$ odd,} \\
    [{R}_{{\mathbf{q}},{\mathbf{l}}}((a-2)/2), \bar{R}_{{\mathbf{l}},{\mathbf{q}}^{(n)}}((a+k+1)/2)], & \text{$a$ even,}\
\end{dcases}
\label{edk}
\end{equation}
\ \ \ and
\begin{equation}
[\bar{H}'_{{\mathbf{l}},{\mathbf{q}}}(a,b,k)^+,\bar{H}'_{{\mathbf{l}},{\mathbf{q}}}(a,b,k)^-]=
\begin{dcases}
    [\bar{R}_{{\mathbf{q}},{\mathbf{l}}^{(m)}}((a-1)/2), {R}_{{\mathbf{l}},{\mathbf{q}}}((a+k)/2)], & \text{$a$ odd,} \\
    [\bar{R}_{{\mathbf{q}},{\mathbf{l}}}((a-2)/2), {R}_{{\mathbf{l}},{\mathbf{q}}^{(n)}}((a+k+1)/2)], & \text{$a$ even.}\
\end{dcases}
\label{edl}
\end{equation}

\end{thm}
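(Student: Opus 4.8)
The plan is to derive \eqref{edi} from the semi-factorization formula \eqref{ebd} of Theorem~\ref{tbb}, and to read off the descriptions in parts~(a)--(c) from an analysis of the two zigzag cuts of $H$ along $\ell'$ together with the lozenges they force. First I would check that each of the three primed families $H'_{\mathbf l}(a,b,k)$, $H'_{\mathbf l,\mathbf q}(a,b,k)$, $\bar H'_{\mathbf l,\mathbf q}(a,b,k)$ is a nearly symmetric hexagon with holes in the sense of Section~2, with $\ell'$ as the axis of its holes, and that the symmetric region $R_0$ obtained from it by translating the holes one unit back to the northwest is precisely the corresponding unprimed region $H_{\mathbf l}(a,b,k)$, $H_{\mathbf l,\mathbf q}(a,b,k)$, or $\bar H_{\mathbf l,\mathbf q}(a,b,k)$, which is tileable by the results of \cite{CiucuPP1}. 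Theorem~\ref{tbb} then applies and yields
\begin{equation*}
\M(H)=2^{\,\w_{\ell'}(H)-1}\bigl(\M(H^+)\M(H^-)+\M(\widehat{H}^+)\M(\widehat{H}^-)\bigr).
\end{equation*}

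Next I would identify the six regions appearing in this equation. Cutting the (rotated) planar dual of $H$ along $\ell'$ under the convention that the leftmost vertex on $\ell'$ is white produces, after deleting forced lozenges, the pair $(H^+,H^-)$, while the opposite convention produces $(\widehat{H}^+,\widehat{H}^-)$. Because the holes are symmetric about $\ell'$ and the outer boundary is a non-degenerate hexagon, each resulting half-region is, up to forced lozenges, a dented region of $R$- or $\bar R$-type, and the identifications in (a)--(c) are read off directly from the resulting figures. The relevant bookkeeping is that of the proof of Theorem~\ref{tda} carried out with $\ell'$ in place of $\ell$; the one-unit southeast shift of the holes is what produces the shifted base arguments (compare $(a-1)/2$ in \eqref{edj} with $a/2$ in \eqref{edf}). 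Running the same analysis under the ``leftmost black'' convention yields the descriptions of $\widehat{H}^+$ and $\widehat{H}^-$, which differ from those of $H^+,H^-$ because the two conventions distribute the triangles crossed by $\ell'$ between the two halves in different ways.

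Finally, with all six regions expressed through the polynomials $P_{\mathbf l,\mathbf q}$ and $\bar P_{\mathbf l,\mathbf q}$ by Theorem~\ref{propositionpp1}, it remains to show that
\begin{equation*}
\frac{\M(\widehat{H}^+)\,\M(\widehat{H}^-)}{\M(H^+)\,\M(H^-)}=
\begin{cases}
\dfrac{a+k}{a+2b+k}, & \text{$a$ odd},\\
\dfrac{a+2b+2k}{a}, & \text{$a$ even};
\end{cases}
\end{equation*}
adding $1$ and multiplying through by $2^{\,\w_{\ell'}(H)-1}\M(H^+)\M(H^-)$ then turns the displayed identity into \eqref{edi}. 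In this ratio the constants $c_{\mathbf l,\mathbf q}$, $\bar c_{\mathbf l,\mathbf q}$ and most of the common factors of the double products in \eqref{eda}--\eqref{edb} cancel, and the quotient simplifies, via standard manipulations of Pochhammer symbols and the $\langle\cdot,\cdot\rangle$-products, to the stated rational function; this step is also the source of the genuine $b$-dependence of $\M(H)$, since for the unprimed regions \eqref{ede} expresses $\M(H)$ as a power of $2$ times a $b$-free quantity. I expect this simplification to be the main obstacle: it must be run separately according to the parity of $a$, whether $l_1=1$, and which of the three families $H$ lies in, and it is sensitive to the preceding step being exactly right --- which half carries the shifted parameter, and in which direction. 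A useful consistency check is that the prefactor in \eqref{edi} depends only on $a,b,k$ and the parity of $a$, not on $\mathbf l$, $\mathbf q$, or the family; any surviving dependence would signal a bookkeeping error. Verifying that the $\w_{\ell'}(H)$ produced by Theorem~\ref{tbb} matches the one in \eqref{edi}, and that the forced-lozenge reductions are as claimed, is routine.
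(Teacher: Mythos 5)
Your proposal is correct and follows essentially the same route as the paper: apply Theorem \ref{tbb} (after checking $R_0$ is the tileable unprimed region), identify the four half-regions as $R$- and $\bar R$-regions, and reduce \eqref{edi} to showing that $\M(\widehat{H}^+)\M(\widehat{H}^-)/(\M(H^+)\M(H^-))$ equals $\tfrac{a+k}{a+2b+k}$ or $\tfrac{a+2b+2k}{a}$ — and your target values for that ratio are the right ones. The paper packages the last step as Lemma \ref{ldd} (the quotients $P_{\mathbf l,\mathbf q}(x+1)/P_{\mathbf l,\mathbf q}(x)$ and $\bar P_{\mathbf l,\mathbf q}(x+1)/\bar P_{\mathbf l,\mathbf q}(x)$), exploiting exactly the structural fact you anticipate, namely that $H^+$ and $\widehat H^-$ (and likewise $H^-$ and $\widehat H^+$) are the same $R$/$\bar R$-region with $x$-argument shifted by one, and then uses the balance identities (e.g.\ $m=b$, $l_m=b+k/2$) to make the residual $\mathbf l$-dependence cancel.
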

%


One may wonder why the prefactors in \eqref{edi} have the particular fraction form indicated.
Our proof will show that we can
in fact rephrase the theorem in an alternate form as follows.

\begin{thm}
\label{tdc}
Let $H$ be one of the regions $H'_{\mathbf{l}}(a,b,k)$, $H'_{{\mathbf{l}},{\mathbf{q}}}(a,b,k)$ or $\bar{H}'_{{\mathbf{l}},{\mathbf{q}}}(a,b,k)$, and let $\ell'$ be the axis with respect to which the holes in $H$ are symmetric.
Then we have\footnote{ Here $\widehat{H}^+$ and $\widehat{H}^-$ are 
the ``half-regions'' obtained from the nearly symmetric hexagon with holes $H$ by the procedure described before the statement of Theorem \ref{tbb} (see the middle picture in Figure \ref{fdj} for an example).}
\begin{equation}
\M(H)=
\begin{cases}
\dfrac{a+b+k}{a+k}\cdot 2^{\w_{\ell'}(H)}\M(\widehat{H}^+)\M(\widehat{H}^-), & \text{$a$ odd},\\
\\[-10pt]
\dfrac{a+b+k}{a+2b+2k}\cdot 2^{\w_{\ell'}(H)}\M(\widehat{H}^+)\M(\widehat{H}^-), & \text{$a$ even},\
\end{cases}
\label{edm}
\end{equation}
and all of the regions $\widehat{H}_{\ell'}^+$ and $\widehat{H}_{\ell'}^-$ are $R$- or $\bar{R}$-regions, as follows:

\vskip0.1in
$(${\rm a}$)$. For $k$ even, we have

\begin{equation}
[\widehat{H'}_{\mathbf{l}}(a,b,k)^+,\widehat{H'}_{\mathbf{l}}(a,b,k)^-]=
\begin{cases}
    [R_{\emptyset,{\mathbf{l}}}((a+k-3)/2), \bar{R}_{{\mathbf{l}}-1,\emptyset}((a+1)/2)], & \text{$a$ odd, $l_1=1$,} \\
    [R_{\emptyset,{\mathbf{l}}}((a+k-3)/2), R_{{\mathbf{l}}-1,\emptyset}((a+1)/2)], & \text{$a$ odd, $l_1>1$,} \\
    [R_{\emptyset,{\mathbf{l}}^{(m)}}((a+k-2)/2), \bar{R}_{{\mathbf{l}},\emptyset}(a/2)], & \text{$a$ even.}\
\label{edn}
\end{cases}
\end{equation}

$(${\rm b}$)$. For $k$ odd, we have

\begin{equation}
[\widehat{H'}_{{\mathbf{l}},{\mathbf{q}}}(a,b,k)^+,\widehat{H'}_{{\mathbf{l}},{\mathbf{q}}}(a,b,k)^-]=
\begin{dcases}
    [\bar{R}_{{\mathbf{l}},{\mathbf{q}}}((a+k-2)/2), {R}_{{\mathbf{q}},{\mathbf{l}}^{(m)}}((a+1)/2)], & \text{$a$ odd,} \\
    [\bar{R}_{{\mathbf{l}},{\mathbf{q}}^{(n)}}((a+k-1)/2), {R}_{{\mathbf{q}},{\mathbf{l}}}(a/2)], & \text{$a$ even,}\
\end{dcases}
\label{edo}
\end{equation}
\ \ \ and
\begin{equation}
[\widehat{\bar{H}'}_{{\mathbf{l}},{\mathbf{q}}}(a,b,k)^+,\widehat{\bar{H}'}_{{\mathbf{l}},{\mathbf{q}}}(a,b,k)^-]=
\begin{dcases}
    [{R}_{{\mathbf{l}},{\mathbf{q}}}((a+k-2)/2), \bar{R}_{{\mathbf{q}},{\mathbf{l}}^{(m)}}((a+1)/2)], & \text{$a$ odd,} \\
    [{R}_{{\mathbf{l}},{\mathbf{q}}^{(n)}}((a+k-1)/2), \bar{R}_{{\mathbf{q}},{\mathbf{l}}}(a/2)], & \text{$a$ even.}\
\end{dcases}
\label{edp}
\end{equation}

\end{thm}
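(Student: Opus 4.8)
The plan is to derive Theorem~\ref{tdc} as an immediate corollary of Theorem~\ref{tdb}, exploiting the ``amusing interpretation'' spelled out in Remark~1: Theorem~\ref{tbb} produces the quantity $\M(H)$ as the \emph{average} of the two products obtained by applying the factorization theorem with the two possible color assignments of $a_1$. Theorem~\ref{tdb} records one of these two products (with its prefactor), namely $2^{\w_{\ell'}(H)}\M(H^+)\M(H^-)$ scaled by $\tfrac{a+b+k}{a+2b+k}$ (for $a$ odd) or $\tfrac{a+b+k}{a}$ (for $a$ even); so what remains is to identify the \emph{other} product, $2^{\w_{\ell'}(H)}\M(\widehat H^+)\M(\widehat H^-)$, together with its own prefactor, and to check that formula~\eqref{ebd} of Theorem~\ref{tbb} is consistent with both expressions being equal to $\M(H)$. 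Thus the two statements~\eqref{edi} and~\eqref{edm} are equivalent, and the content of Theorem~\ref{tdc} is really just: (i) the identification of $\widehat H^+$, $\widehat H^-$ as the specific $R$- or $\bar R$-regions listed in \eqref{edn}--\eqref{edp}, and (ii) the specific form of the prefactor.

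First I would carry out the combinatorial identification step. By definition (see the discussion before Theorem~\ref{tbb}), $\widehat H^{\pm}$ are obtained from the planar dual $G$ of $H$ by the cutting operations along $\ell'$ under the \emph{opposite} color convention from the one giving $H^{\pm}$. Concretely, the zigzag cut along $\ell'$ used for $\widehat H^{\pm}$ is the ``other'' zigzag line in Figure~\ref{fbf} (the one not used for $H^{\pm}$). I would go through the same boundary bookkeeping that underlies Theorem~\ref{tda} and the $a$-odd/$a$-even case split in~\eqref{edj}--\eqref{edl}: reading off the side lengths of the two halves after the shifted zigzag cut, noting which bumps along the fold become up-pointing versus down-pointing dents, and removing forced lozenges. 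The shift of the cut by one zigzag unit compared with the $H^{\pm}$ case has the effect of swapping the roles of the two halves (left $\leftrightarrow$ right) and shifting the base-length parameter $x$ by one in each half; matching this against the definitions of $R_{\mathbf l,\mathbf q}(x)$, $\bar R_{\mathbf l,\mathbf q}(x)$ in subsection~\ref{subsection4.2} yields exactly the lists in \eqref{edn}--\eqref{edp}. This step is routine figure-chasing, entirely parallel to what is already done for Theorem~\ref{tdb}, so I would present it briefly with reference to an illustrative figure (analogous to Figure~\ref{fdj}).

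Next I would pin down the prefactor. Write $X := 2^{\w_{\ell'}(H)}\M(H^+)\M(H^-)$ and $\widehat X := 2^{\w_{\ell'}(H)}\M(\widehat H^+)\M(\widehat H^-)$. Theorem~\ref{tbb} gives $\M(H) = \tfrac12(X + \widehat X)$, while Theorem~\ref{tdb} gives $\M(H) = \lambda X$ with $\lambda = \tfrac{a+b+k}{a+2b+k}$ (for $a$ odd) or $\tfrac{a+b+k}{a}$ (for $a$ even). Eliminating, $\widehat X = (2/\lambda - 1)X = \tfrac{2-\lambda}{\lambda}X$; combined with $\M(H)=\tfrac12(X+\widehat X)=\tfrac1\lambda X$ one gets $\M(H) = \mu\,\widehat X$ with $\mu = \tfrac{1}{2/\lambda - 1} = \tfrac{\lambda}{2-\lambda}$. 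For $a$ odd, $\lambda = \tfrac{a+b+k}{a+2b+k}$ gives $2-\lambda = \tfrac{a+3b+k}{a+2b+k}$ \dots{} which does \emph{not} immediately match $\tfrac{a+b+k}{a+k}$, so in fact the cleaner route is to prove \eqref{edm} directly rather than by algebra from \eqref{edi}. I would instead substitute the explicit product formulas for the $R$- and $\bar R$-regions (Theorem~\ref{propositionpp1}, i.e.\ \eqref{eda}--\eqref{edb}) into $\widehat X$, likewise into the known closed form for $\M(H)$ coming from Theorem~\ref{tdb} plus \eqref{edc}--\eqref{edd}, and compute the ratio $\M(H)/\widehat X$ as an explicit ratio of Pochhammer/$\langle\cdot,\cdot\rangle$ products; after cancellation this collapses to the single linear-fraction prefactor in~\eqref{edm}. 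Because $\widehat H^\pm$ differ from $H^\pm$ only by the one-unit shift of the base parameter and a left-right swap, the ratio of the two products $\M(H^+)\M(H^-)$ and $\M(\widehat H^+)\M(\widehat H^-)$ is a short telescoping quotient, which is exactly where the factor $\tfrac{a+k}{a+2b+k}$ (odd case) or $\tfrac{a}{a+2b+2k}$ (even case) comes from.

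The main obstacle I anticipate is the bookkeeping in the combinatorial identification (i): keeping track, across all three region families $H'_{\mathbf l}$, $H'_{\mathbf l,\mathbf q}$, $\bar H'_{\mathbf l,\mathbf q}$ and both parities of $a$, of exactly how the shifted zigzag cut reassigns up-/down-pointing dents, which label gets dropped (the ${}^{(m)}$ or ${}^{(n)}$ truncations), and which half inherits which base length. The prefactor computation (ii), by contrast, is a mechanical Pochhammer-ratio simplification once the regions are correctly identified, and the equivalence with Theorem~\ref{tdb} via Theorem~\ref{tbb} is a one-line consistency check. I would therefore organize the proof as: (1) state that $\widehat H^\pm$ come from the second zigzag line and identify them case by case (figure + short verification); (2) invoke \eqref{edc}--\eqref{edd} to write both $\M(\widehat H^+)\M(\widehat H^-)$ and the already-established $\M(H)$ as explicit products; (3) take the ratio, simplify, and read off the prefactor; (4) remark that, alternatively, \eqref{edm} follows from \eqref{edi} and Theorem~\ref{tbb} together, which also re-derives formula~\eqref{ebd} in this case and explains the ``particular fraction form'' alluded to just before the statement.
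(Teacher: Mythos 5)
Your overall route is essentially the paper's: the paper proves Theorems \ref{tdb} and \ref{tdc} simultaneously by applying Theorem \ref{tbb} to get $\M(H)=2^{\w_{\ell'}(H)-1}\bigl(\M(H^+)\M(H^-)+\M(\widehat H^+)\M(\widehat H^-)\bigr)$, identifying all four half-regions as $R$- or $\bar R$-regions (done in detail only for $k$ even, $a$ odd, $l_1=1$), and then computing the ratio of the two products via the quotient formulas for $P_{\mathbf l,\mathbf q}$ and $\bar P_{\mathbf l,\mathbf q}$ (Lemma \ref{ldd}) — your ``telescoping quotient'' is exactly that lemma — after which factoring out either product yields \eqref{edi} or \eqref{edm}. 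So your steps (1)--(3) reproduce the paper's argument, with the cosmetic difference that you take \eqref{edi} as already proved rather than deriving both forms from the sum at once.

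One concrete correction: your dismissal of the ``one-line consistency check'' rests on an algebra slip. From $\M(H)=\tfrac12(X+\widehat X)$ and $\M(H)=\lambda X$ one gets $\widehat X=(2\lambda-1)X$, not $(2/\lambda-1)X$; hence $\mu=\lambda/(2\lambda-1)$. For $a$ odd, $\lambda=\tfrac{a+b+k}{a+2b+k}$ gives $2\lambda-1=\tfrac{a+k}{a+2b+k}$ and $\mu=\tfrac{a+b+k}{a+k}$; for $a$ even, $\lambda=\tfrac{a+b+k}{a}$ gives $2\lambda-1=\tfrac{a+2b+2k}{a}$ and $\mu=\tfrac{a+b+k}{a+2b+2k}$ — exactly the prefactors in \eqref{edm}. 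So once the regions $\widehat H^{\pm}$ in \eqref{edn}--\eqref{edp} are identified (which is genuinely the only nontrivial work, and where the case-by-case bookkeeping you flag really lives), formula \eqref{edm} does follow immediately from \eqref{edi} together with Theorem \ref{tbb}, with no further Pochhammer computation needed; the explicit-product route you fall back on is not wrong, just redundant given Theorem \ref{tdb}.
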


{\it Remark $3$.} The most striking part of Theorem \ref{tdb} is that the number of lozenge tilings is {\it almost} as if given by the factorization theorem --- although the factorization theorem does not apply, as the regions are not symmetric! More precisely, the number of lozenge tilings is obtained by multiplying the quantity on the right hand side of the factorization theorem (which makes sense, with the definition of the regions $H^+$ and  $H^-$ given before the statement of Theorem \ref{tba}) by one of the simple fractions $\frac{a+b+k}{a+2b+k}$ or $\frac{a+b+k}{a}$, according as $a$ is odd or even, respectively; a similar statement holds for Theorem \ref{tdc}. This phenomenon seems to be essentially dependent on the structure of our holes. Even for a very simple change in their definition (for instance, having just two oppositely oriented triangular holes of side two), numerical data strongly indicates that such a simple relationship does not hold anymore. This also invites one to find a more direct proof.

\vskip0.1in
It turns out that equations \eqref{edi} and \eqref{edm} can be combined to obtained the following result, which expresses the number of tilings of the regions $H'_{\mathbf{l}}(a,b,k)$, $H'_{{\mathbf{l}},{\mathbf{q}}}(a,b,k)$ or $\bar{H}'_{{\mathbf{l}},{\mathbf{q}}}(a,b,k)$ directly
in terms 
of the number of tilings of two very closely related regions of the corresponding unprimed type.

\begin{thm}
\label{tdd}
Consider one of the regions $H'_{\mathbf{l}}(a,b,k)$, $H'_{{\mathbf{l}},{\mathbf{q}}}(a,b,k)$ or $\bar{H}'_{{\mathbf{l}},{\mathbf{q}}}(a,b,k)$, and denote it for short by $H'(a)$. Denote by $H(a)$ the corresponding unprimed region of the same parameters $($the number of lozenge tilings of $H(a)$ is given by Theorem $\ref{tda}$ and the formulas in Theorem $\ref{propositionpp1}$$)$. Then we have
\begin{equation}
\M(H'(a))=
\begin{dcases}
\frac{a+b+k}{\sqrt{(a+k)(a+2b+k)}}\sqrt{\M(H(a-1))\M(H(a+1))},  & \text{$a$ odd,} \\
\frac{a+b+k}{\sqrt{a(a+2b+2k)}}\sqrt{\M(H(a-1))\M(H(a+1))},  & \text{$a$ even.}
\end{dcases}
\label{edpp}
\end{equation}
\end{thm}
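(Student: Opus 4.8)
The plan is to derive \eqref{edpp} by combining the two semi-factorization identities \eqref{edi} and \eqref{edm} together with the fact (implicit in Theorem~\ref{tda} and to be verified from the explicit $R$- and $\bar R$-region assignments) that the ``half-regions'' $H^+,H^-$ appearing in Theorem~\ref{tdb} and the ``half-regions'' $\widehat H^+,\widehat H^-$ appearing in Theorem~\ref{tdc} are, up to forced lozenges, precisely the building blocks of $H(a-1)$ and $H(a+1)$ under the ordinary factorization theorem of Theorem~\ref{tda}. Concretely, I would first treat the case $a$ odd. Comparing the lists in \eqref{edj} with those in \eqref{edf} (and similarly \eqref{edk} with \eqref{edg}, and \eqref{edl} with \eqref{edh}), one reads off that $H'_{\mathbf l}(a,b,k)^+$ and $H'_{\mathbf l}(a,b,k)^-$ coincide, as $R$- or $\bar R$-regions, with $H_{\mathbf l}(a-1,b,k)^+$ and $H_{\mathbf l}(a-1,b,k)^-$ respectively; likewise, comparing \eqref{edn}–\eqref{edp} with \eqref{edf}–\eqref{edh}, one sees that $\widehat{H'}_{\mathbf l}(a,b,k)^+$ and $\widehat{H'}_{\mathbf l}(a,b,k)^-$ coincide (after reordering and absorbing forced lozenges) with $H_{\mathbf l}(a+1,b,k)^-$ and $H_{\mathbf l}(a+1,b,k)^+$. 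In both cases the relevant change in the parameter $a$ by $\pm1$ shifts the arguments $x$ of the $R$/$\bar R$-regions by the half-integer amounts visible in the formulas, which is exactly the discrepancy between the displays in Theorem~\ref{tda} and those in Theorems~\ref{tdb}–\ref{tdc}.

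Granting these identifications, the argument is short. By \eqref{ede} applied to $H(a-1)$ we have $\M(H(a-1))=2^{\w_\ell(H(a-1))}\M(H^+)\M(H^-)$, with $H^\pm$ the half-regions above; by \eqref{ede} applied to $H(a+1)$ we get $\M(H(a+1))=2^{\w_\ell(H(a+1))}\M(\widehat H^+)\M(\widehat H^-)$. Note that $\w_{\ell'}(H)=\w_\ell(H(a-1))=\w_\ell(H(a+1))$ — each equals the common number of slots straddled by the axis, which is unaffected by incrementing $a$ in this controlled way — so writing $w$ for this common value and multiplying the two displays gives
\[
\M(H(a-1))\M(H(a+1))=2^{2w}\,\M(H^+)\M(H^-)\M(\widehat H^+)\M(\widehat H^-).
\]
On the other hand, multiplying \eqref{edi} by \eqref{edm} (in the $a$-odd branch) yields
\[
\M(H'(a))^2=\frac{(a+b+k)^2}{(a+2b+k)(a+k)}\,2^{2w}\,\M(H^+)\M(H^-)\M(\widehat H^+)\M(\widehat H^-).
\]
Dividing the second of these by the first and taking the (positive) square root gives exactly the $a$-odd case of \eqref{edpp}. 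The $a$-even case is completely analogous, using the second branches of \eqref{edi} and \eqref{edm}, the second branches of \eqref{edj}–\eqref{edl} and \eqref{edn}–\eqref{edp}, and the prefactor product $\frac{(a+b+k)^2}{a(a+2b+2k)}$.

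The only nontrivial point — and the step I expect to be the main obstacle — is establishing the precise region identifications in the previous paragraph: that $H'(a)^\pm$ really are the factorization halves of $H(a-1)$, and $\widehat{H'}(a)^\pm$ really are those of $H(a+1)$, up to forced lozenges and possibly up to swapping the roles of the ``$+$'' and ``$-$'' halves. This is a matter of carefully tracking the cutting procedure of Theorem~\ref{tbb} applied along $\ell'$ against the cutting procedure of Theorem~\ref{tda} applied along $\ell$ (the vertical symmetry axis of $H(a\mp1)$), for each of the four parity cases of $a$, and for each of the three region families $H'_{\mathbf l}$, $H'_{\mathbf l,\mathbf q}$, $\bar H'_{\mathbf l,\mathbf q}$. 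Since both Theorems~\ref{tdb} and \ref{tdc} already record the $R$/$\bar R$-identities explicitly in \eqref{edj}–\eqref{edl} and \eqref{edn}–\eqref{edp}, and Theorem~\ref{tda} records the matching identities in \eqref{edf}–\eqref{edh}, this reduces to a direct comparison of parameter lists and of the shift in the variable $x$; the shifts are consistent because replacing $a$ by $a\pm1$ in $H(a,b,k)$ moves the base line by one lattice unit, which is exactly the discrepancy between the two families of half-region formulas. Once this bookkeeping is done, the rest of the proof is the one-line manipulation above.
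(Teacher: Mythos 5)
Your overall route is the same as the paper's: multiply \eqref{edi} by \eqref{edm} and reassemble the four half-region factors into $\M(H(a-1))\M(H(a+1))$ by running the factorization theorem in reverse. The final manipulation is fine. However, the region identifications you propose --- and correctly flag as the crux --- are not the right ones, and as stated they are false. You claim that $H^+$ and $H^-$ (the two halves produced by a \emph{single} zigzag cut of $H'(a)$) are the factorization halves of $H(a-1)$, and that $\widehat{H}^+$ and $\widehat{H}^-$ are those of $H(a+1)$. Checking this against the displayed lists in the case $k$ even, $a$ odd, $l_1=1$: by \eqref{edj} one has $H^-=R_{\emptyset,{\mathbf{l}}}((a+k-1)/2)$, whereas by \eqref{edf} (with $a$ replaced by $a-1$, which is even) one has $H(a-1)^-=R_{\emptyset,{\mathbf{l}}}((a+k-3)/2)$; the $x$-arguments differ by $1$, so $2^{\w}\M(H^+)\M(H^-)\neq\M(H(a-1))$ in general (indeed $2^{\w}\M(H^+)\M(H^-)$ is the ``naive factorization estimate'' of $\M(H'(a))$ discussed in Remark 3, not $\M(H(a-1))$). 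Geometrically the point is that each single zigzag cut of $H'(a)$ produces one \emph{thin} half and one \emph{thick} half, and gluing a thin half to a thick half does not yield a symmetric region. The correct reassembly takes one half from \emph{each} of the two cuts: the congruences (up to a unit shift in the $x$-argument) are $H^+\leftrightarrow\widehat{H}^-$ and $H^-\leftrightarrow\widehat{H}^+$, and the pairs that glue into symmetric regions are $(H^+,\widehat{H}^+)$, giving $H(a-1)$, and $(H^-,\widehat{H}^-)$, giving $H(a+1)$ (compare $\M(H^+)\M(\widehat{H}^+)=\M(\bar{R}_{{\mathbf{l}}-1,\emptyset}((a-1)/2))\M(R_{\emptyset,{\mathbf{l}}}((a+k-3)/2))$ with \eqref{edf} at $a-1$).

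That said, the error is localized and does not propagate: since only the product of all four factors $\M(H^+)\M(H^-)\M(\widehat{H}^+)\M(\widehat{H}^-)$ enters your computation, your displayed identity $\M(H(a-1))\M(H(a+1))=2^{2\w}\M(H^+)\M(H^-)\M(\widehat{H}^+)\M(\widehat{H}^-)$ is still true --- just for a different grouping of the factors than the one you assert --- and the division and square root go through verbatim. So the proof is repaired simply by replacing your pairing with the crossed one; with that correction your argument coincides with the paper's.
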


{\it Remark $4$.} The above result has the following pleasing geometrical interpretation. Suppose we place a two-sided mirror along the axis $\ell'$ with respect to which the holes in $H'(a)$ are symmetric. Consider the region seen in the mirror from its left (this is precisely $H(a+1)$), and also the region seen in the mirror from its right (which is precisely $H(a-1)$). Then, up to a simple factor which depends on the parity of $a$, $\M(H'(a))$ is just the geometric mean of the numbers of tilings of these two regions.

\vskip0.1in
Our proof of Theorems \ref{tdb} and \ref{tdc} consists of two ingredients: (1) the lemma below, which can be obtained directly from the definitions of the polynomials $P_{\mathbf{l},\mathbf{q}}(x)$ and $\bar{P}_{\mathbf{l},\mathbf{q}}(x)$, and (2) Theorem~\ref{tbb}.

\begin{lemma}
\label{ldd}
    For lists of strictly increasing positive integers ${\mathbf l}=(l_1,\ldots,l_m)$ and ${\mathbf q}=(q_1,\ldots,q_n)$, we have\footnote{ When $m=0$ or $n=0$, the same conventions apply as for equations \eqref{eda} and \eqref{edb}.}
    \begin{equation}
    \label{edq}
    \begin{aligned}
        \frac{P_{\mathbf{l},\mathbf{q}}(x+1)}{P_{\mathbf{l},\mathbf{q}}(x)}=~&\frac{(2x+2l_{m}+2)!(2x+2l_{m}-2m+2n+3)!}{(2x+2l_{m}-m+n+2)!(2x+2l_{m}-m+n+3)!}\\
        &\cdot\prod_{i=1}^{m}\Bigg[\frac{x+l_{m}+l_{i}-m+n+2}{x+l_{m}-l_{i}+1}\Bigg]\prod_{i=1}^{n}\Bigg[\frac{x+l_{m}+q_{i}+1}{x+{l_{m}-q_{i}-m+n+2}}\Bigg]
    \end{aligned}
    \end{equation}
    and
    \begin{equation}
    \label{edr}
    \begin{aligned}
        \frac{\bar{P}_{\mathbf{l},\mathbf{q}}(x+1)}{\bar{P}_{\mathbf{l},\mathbf{q}}(x)}=~&\frac{(2x+2l_{m}+1)!(2x+2l_{m}-2m+2n+2)!}{(2x+2l_{m}-m+n+1)!(2x+2l_{m}-m+n+2)!}\\
        &\cdot\prod_{i=1}^{m}\Bigg[\frac{x+l_{m}+l_{i}-m+n+1}{x+l_{m}-l_{i}+1}\Bigg]\prod_{i=1}^{n}\Bigg[\frac{x+l_{m}+q_{i}+1}{x+l_{m}-q_{i}-m+n+1}\Bigg].
    \end{aligned}
    \end{equation}
\end{lemma}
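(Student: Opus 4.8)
The plan is to establish \eqref{edq} and \eqref{edr} by a direct computation from the defining formulas \eqref{eda} and \eqref{edb}. Since $P_{\mathbf{l},\mathbf{q}}(x)$ is the product of the constant $c_{\mathbf{l},\mathbf{q}}$, the value $B_{m,n}(x+l_m-m)$ of the monic polynomial $B_{m,n}$, and the two double products indexed by $\mathbf{l}$ and by $\mathbf{q}$, the ratio $P_{\mathbf{l},\mathbf{q}}(x+1)/P_{\mathbf{l},\mathbf{q}}(x)$ equals the product of the corresponding ratios of these three pieces. The constant $c_{\mathbf{l},\mathbf{q}}$ cancels outright, and so does the power-of-two prefactor of $B_{m,n}$, so it suffices to compute the three remaining ratios and multiply. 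The proof of \eqref{edr} is identical, with $B_{m,n}$, $c_{\mathbf{l},\mathbf{q}}$ and the double products of \eqref{eda} replaced by $\bar B_{m,n}$, $\bar c_{\mathbf{l},\mathbf{q}}$ and those of \eqref{edb}, so below I discuss only \eqref{edq}.

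First I would handle the two double products. Replacing $x$ by $x+1$ shifts the inner index $j$ by $-1$ in each linear factor of the form $(x+\cdots-j)$ and by $+1$ in each factor of the form $(x+\cdots+j)$; re-indexing the inner product over $j$ accordingly makes the ratio of inner products telescope, leaving for each fixed $i$ only two boundary factors. The outcome is that the $\mathbf{l}$-double product contributes $\prod_{i=1}^{m}\frac{(x+l_m-i+1)(x+l_m+l_i-m+n+2)}{(x+l_m-l_i+1)(x+l_m-m+n+i+2)}$ and the $\mathbf{q}$-double product contributes $\prod_{i=1}^{n}\frac{(x+l_m-m+n-i+2)(x+l_m+q_i+1)}{(x+l_m-q_i-m+n+2)(x+l_m+i+1)}$. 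The factors here that depend on $l_i$ and $q_i$ are precisely the two products on the right-hand side of \eqref{edq}; the remaining ``spurious'' factors $\prod_{i=1}^{m}\frac{x+l_m-i+1}{x+l_m-m+n+i+2}\prod_{i=1}^{n}\frac{x+l_m-m+n-i+2}{x+l_m+i+1}$ must therefore be cancelled by the $B_{m,n}$-ratio.

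Next I would compute the $B_{m,n}$-ratio. Writing $y=x+l_m-m$, this is $B_{m,n}(y+1)/B_{m,n}(y)$, and I would evaluate it factor by factor, using that (apart from the power of two) every ingredient of $B_{m,n}$ is a product of Pochhammer symbols: the factors $(y+n+1)_m$, $(y+n+2)_m$, $\prod_i (y+i)_m/(y+i+\tfrac12)_m$ and $\prod_i (2y+n+i+2)_{n+i-1}$ visibly, and the two angle brackets $\langle y+2,y+n\rangle$ and $\langle y+\tfrac32,y+\tfrac{2n+1}{2}\rangle$ by the identity $\langle a,a+k\rangle=\prod_{i=0}^{\lfloor k/2\rfloor}(a+i)_{k+1-2i}$. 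Since $(\alpha+1)_\beta/(\alpha)_\beta=(\alpha+\beta)/\alpha$ — and $(\alpha+2)_\beta/(\alpha)_\beta=(\alpha+\beta)(\alpha+\beta+1)/(\alpha(\alpha+1))$ for the symbols $(2y+n+i+2)_{n+i-1}$, whose argument has $y$-coefficient $2$ — each of the resulting ratios is an explicit product of linear factors in $y$ (with index ranges depending on the parity of $n$ for the two angle brackets). Collecting everything, I expect the $B_{m,n}$-ratio to equal $\frac{(2y+2m+2)!\,(2y+2n+3)!}{(2y+m+n+2)!\,(2y+m+n+3)!}$ times the reciprocal of the spurious product above; after substituting $y=x+l_m-m$ back and multiplying by the two double-product ratios, this is exactly \eqref{edq}. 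The consistency check is that, when the four factorials are written as products of factors $(y+\tfrac{j}{2})$, the half-integer factors should match those produced by $\prod_i(y+i+\tfrac12)_m$ and by the half-integer angle bracket, while the integer factors should match, after cancelling the spurious product, those produced by the remaining Pochhammer symbols.

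The main obstacle is not conceptual — the whole argument is bookkeeping — but lies in this last matching: verifying that the numerous products of consecutive integers produced by the angle-bracket ratios and by the $(2y+\cdots)$-Pochhammer ratios reassemble, together with the spurious factors from the double products, into exactly the four factorials of \eqref{edq}, and that the half-integer pieces fit correctly. I would organize this by splitting the factors of $B_{m,n}$ into an ``integer-shift'' family and a ``half-integer-shift'' family, taking ratios within each family separately, and treating $n$ even and $n$ odd in parallel for the two angle brackets; the same split handles $\bar B_{m,n}$ for \eqref{edr}.
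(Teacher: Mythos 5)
Your proposal is correct and follows essentially the same route as the paper's proof: cancel $c_{\mathbf{l},\mathbf{q}}$, telescope the two double products (your boundary factors match the paper's intermediate expression exactly), and evaluate the $B_{m,n}$-ratio factor by factor via Pochhammer identities, including the angle brackets through $\langle a,a+k\rangle=\prod_{i=0}^{\lfloor k/2\rfloor}(a+i)_{k+1-2i}$. The paper likewise leaves the final reassembly into the four factorials as ``some algebraic manipulation,'' and your stated form of the $B_{m,n}$-ratio (with $y=x+l_m-m$) is consistent with \eqref{edq}.
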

\begin{proof}
    We present the proof for \eqref{edq}; the proof of \eqref{edr} is completely analogous. From the definition of $P_{\mathbf{l},\mathbf{q}}(x)$,
    \begin{equation}
    \label{eds}
    \begin{aligned}
        \frac{P_{\mathbf{l},\mathbf{q}}(x+1)}{P_{\mathbf{l},\mathbf{q}}(x)}=~&\frac{c_{\mathbf{l},\mathbf{q}}B_{m,n}(x+l_{m}-m+1)}{c_{\mathbf{l},\mathbf{q}}B_{m,n}(x+l_{m}-m)}\cdot\frac{\prod_{i=1}^{m}\prod_{j=i}^{l_{i}-1}(x+l_{m}-j+1)(x+l_{m}-m+n+j+3)}{\prod_{i=1}^{m}\prod_{j=i}^{l_{i}-1}(x+l_{m}-j)(x+l_{m}-m+n+j+2)}\\
        &\cdot\frac{\prod_{i=1}^{n}\prod_{j=i}^{q_{i}-1}(x+l_{m}-m+n-j+2)(x+l_{m}+j+2)}{\prod_{i=1}^{n}\prod_{j=i}^{q_{i}-1}(x+l_{m}-m+n-j+1)(x+l_{m}+j+1)}\\
        =~&\frac{B_{m,n}(x+l_{m}-m+1)}{B_{m,n}(x+l_{m}-m)}\cdot\prod_{i=1}^{m}\Bigg[\frac{(x+l_{m}-i+1)(x+l_{m}-m+n+l_{i}+2)}{(x+l_{m}-l_{i}+1)(x+l_{m}-m+n+i+2)}\Bigg]\\
        &\cdot\prod_{i=1}^{n}\Bigg[\frac{(x+l_{m}-m+n-i+2)(x+l_{m}+q_{i}+1)}{(x+l_{m}-q_{i}-m+n+2)(x+l_{m}+i+1)}\Bigg]\\
        =~&\frac{B_{m,n}(x+l_{m}-m+1)}{B_{m,n}(x+l_{m}-m)}\cdot\frac{(x+l_{m}-m+1)_{m}(x+l_{m}-m+2)_{n}}{(x+l_{m}-m+n+3)_{m}(x+l_{m}+2)_{n}}\\
        &\cdot\prod_{i=1}^{m}\Bigg[\frac{x+l_{m}+l_{i}-m+n+2}{x+l_{m}-l_{i}+1}\Bigg]\prod_{i=1}^{n}\Bigg[\frac{x+l_{m}+q_{i}+1}{x+{l_{m}-q_{i}-m+n+2}}\Bigg].
    \end{aligned}
    \end{equation}
On the other hand, by the definition of $B_{m,n}(x)$,
\begin{equation}
\label{edt}
\begin{aligned}
    \frac{B_{m,n}(x+1)}{B_{m,n}(x)}= ~&\frac{2^{-mn-m(m-1)/2}(x+n+2)_{m}(x+n+3)_{m}\langle x+3,x+n+1\rangle\langle x+\frac{5}{2},x+\frac{2n+3}{2}\rangle}{2^{-mn-m(m-1)/2}(x+n+1)_{m}(x+n+2)_{m}\langle x+2,x+n\rangle\langle x+\frac{3}{2},x+\frac{2n+1}{2}\rangle}\\
    &\cdot\frac{\prod_{i=1}^{n}\frac{(x+i+1)_{m}}{(x+i+3/2)_{m}}\prod_{i=1}^{m}(2x+n+i+4)_{n+i-1}}{\prod_{i=1}^{n}\frac{(x+i)_{m}}{(x+i+1/2)_{m}}\prod_{i=1}^{m}(2x+n+i+2)_{n+i-1}}\\
    = ~&\frac{(x+n+3)_{m}}{(x+n+1)_{m}}\cdot\frac{\langle x+3,x+n+1\rangle\langle x+\frac{5}{2},x+\frac{2n+3}{2}\rangle}{\langle x+2,x+n\rangle\langle x+\frac{3}{2},x+\frac{2n+1}{2}\rangle}\\
    &\cdot\frac{(x+n+1)_{m}(x+3/2)_{m}}{(x+1)_{m}(x+n+3/2)_{m}}\cdot\frac{(2x+2n+3)_{2m}}{(2x+n+3)_{m}(2x+n+4)_{m}}.
\end{aligned}
\end{equation}
Furthermore\footnote{In the last equality, we use the fact that for any integer $n$, $(\lfloor\frac{n-2}{2}\rfloor+1)+(\lfloor\frac{n-1}{2}\rfloor+1)=n$.},
\begin{equation}
\label{edu}
\begin{aligned}
    \frac{\langle x+3,x+n+1\rangle\langle x+\frac{5}{2},x+\frac{2n+3}{2}\rangle}{\langle x+2,x+n\rangle\langle x+\frac{3}{2},x+\frac{2n+1}{2}\rangle}=~&\frac{\displaystyle\prod_{i=0}^{\lfloor\frac{n-2}{2}\rfloor}(x+3+i)_{n-1-2i}\prod_{i=0}^{\lfloor\frac{n-1}{2}\rfloor}(x+\frac{5}{2}+i)_{n-2i}}{\displaystyle\prod_{i=0}^{\lfloor\frac{n-2}{2}\rfloor}(x+2+i)_{n-1-2i}\prod_{i=0}^{\lfloor\frac{n-1}{2}\rfloor}(x+\frac{3}{2}+i)_{n-2i}}\\
    =&\prod_{i=0}^{\lfloor\frac{n-2}{2}\rfloor}\frac{x+n+1-i}{x+2+i}\prod_{i=0}^{\lfloor\frac{n-1}{2}\rfloor}\frac{x+n+\frac{3}{2}-i}{x+\frac{3}{2}+i}\\
    =&\frac{(2x+n+4)_{n}}{(2x+3)_{n}}.
\end{aligned}
\end{equation}
Combining \eqref{eds}--\eqref{edu} and performing some algebraic manipulation, one obtains \eqref{edq}.
\end{proof}


\begin{proof}[Proof of Theorems $\ref{tdb}$ and $\ref{tdc}$]
 Our proof is based on Theorem \ref{tbb} and Lemma \ref{ldd}.
 The details of the arguments depend on the parities of $k$ and $a$, as well as on whether $l_1=1$ or $l_1>1$. 
 We only present the proof in one of these cases,
 as the proofs of the remaining cases are very similar.

Suppose that $k$ is even, $a$ is odd, and $l_{1}=1$.

Apply the semi-factorization theorem (Theorem \ref{tbb}) to the region $H=H'_{\mathbf{l}}(a,b,k)$. Note that in the case that we are assuming we have (1) $m=b$ and (2) $l_{m}=l_{b}=b+\frac{k}{2}$. These equalities can be obtained by using the fact that the number of up-pointing and down-pointing unit triangles in the region are the same (a necessary condition for the existence of a tiling, as each lozenge covers one unit triangle of each type) or by expressing the same length in two different ways.

This application of Theorem \ref{tbb} involves two different zigzag cuts (as described before the statement of Theorem \ref{tbb}) and produces four half-regions $H^+$, $H^+$, $\widehat{H}^+$ and  $\widehat{H}^-$. It is not hard to see that all these four belong to the two families defined in subsection \ref{subsection4.2}. More precisely, we have
\begin{align*}
H^+&=R_{\emptyset,{\mathbf{l}}}((a+k-1)/2),\\
H^-&=\bar{R}_{{\mathbf{l}}-1,\emptyset}((a-1)/2),\\
\widehat{H}^+&=R_{\emptyset,{\mathbf{l}}}((a+k-3)/2),\\
\widehat{H}^-&=\bar{R}_{{\mathbf{l}}-1,\emptyset}((a+1)/2).
\end{align*}
%

\begin{figure}
\vskip0.2in
\centerline{
\includegraphics[width=1\textwidth]{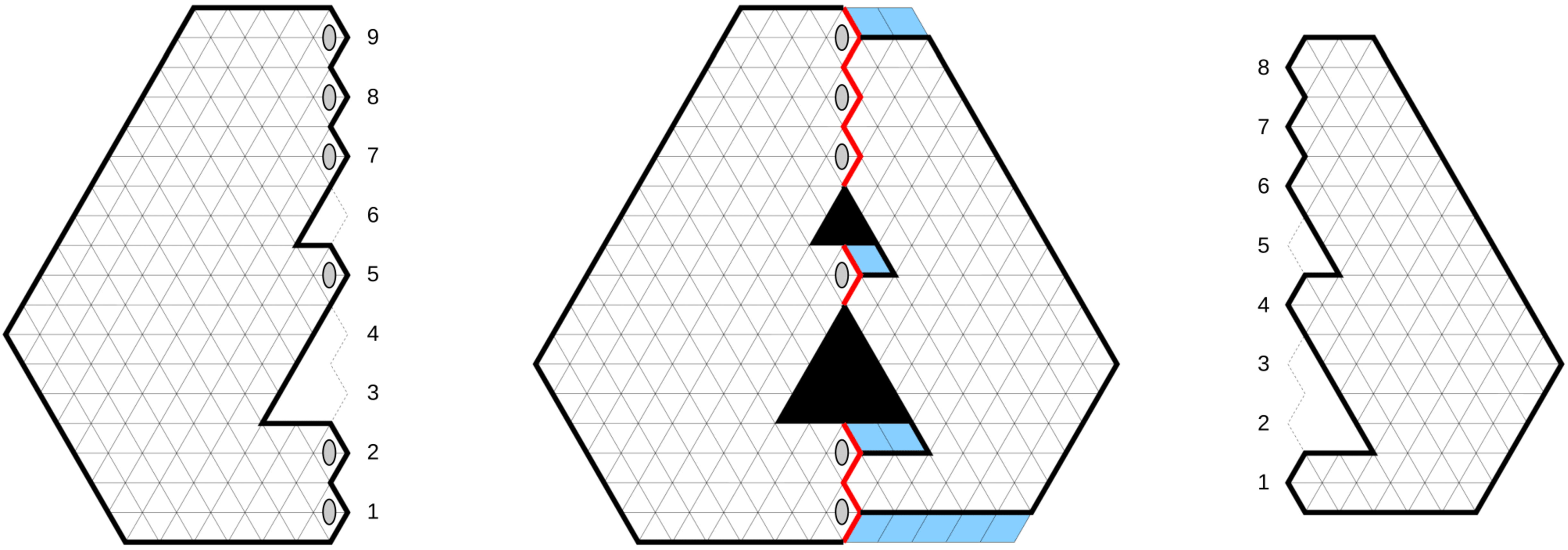}
}
\caption{\label{fdi} Decomposition of $H=H'_{\mathbf{l}}(a,b,k)$ for $a=5$, $b=6$, $k=6$ and ${\mathbf{l}}=(1,2,5,7,8,9)$
  into the half-regions $H^+$ and $H^-$ (see the notation in Theorem \ref{tbb}).
  The left half, $H^+$, is precisely our region $R_{\emptyset,{\mathbf{l}}}((a+k-1)/2)$ (see subsection \ref{subsection4.2}), while the one on the right, $H^-$, is congruent to our region  $\bar{R}_{{\mathbf{l}}-1,\emptyset}((a-1)/2)$.}
\end{figure}

\begin{figure}
\vskip0.2in
\centerline{
\includegraphics[width=1\textwidth]{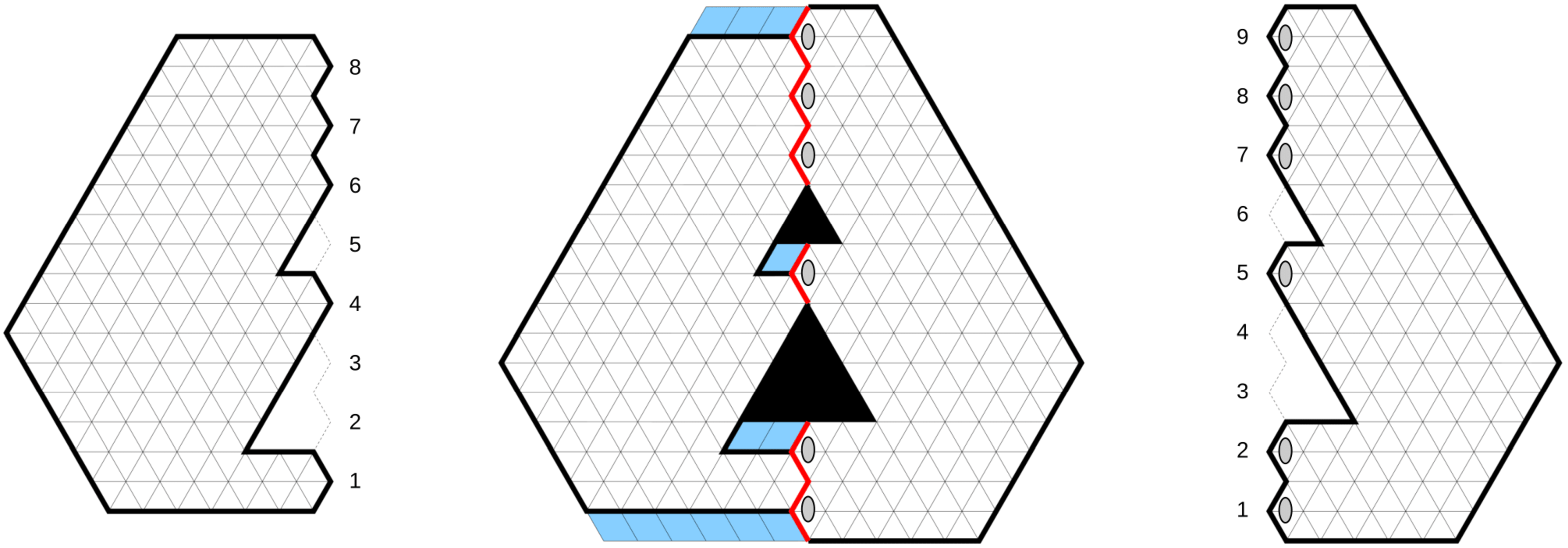}
}
\caption{\label{fdj} Decomposition of $H'_{\mathbf{l}}(a,b,k)$ for $a=5$, $b=6$, $k=6$ and ${\mathbf{l}}=(1,2,5,7,8,9)$ into the half-regions $\widehat{H}^+$ (the left half-region in the center picture) and $\widehat{H}^-$ (the right half-region). We have $\widehat{H}^+=\bar{R}_{{\mathbf{l}}-1,\emptyset}((a+1)/2)$, while $\widehat{H}^-$ is the reflection across the vertical of $R_{\emptyset,{\mathbf{l}}}((a+k-3)/2)$.}
\end{figure}

Thus, by Theorem \ref{tbb}, $\M(H)=\M(H'_{\mathbf{l}}(a,b,k))$ can be expressed as
\begin{equation}
\label{edv}
    2^{\w_{l'}(H)-1}(\M(R_{\emptyset,{\mathbf{l}}}((a+k-1)/2))\M(\bar{R}_{{\mathbf{l}}-1,\emptyset}((a-1)/2))+\M(R_{\emptyset,{\mathbf{l}}}((a+k-3)/2))\M(\bar{R}_{{\mathbf{l}}-1,\emptyset}((a+1)/2))).
\end{equation}
On the other hand, by Lemma \ref{ldd}, we have
\begin{equation}
\begin{aligned}
    \frac{\M(R_{\emptyset,{\mathbf{l}}}((a+k-1)/2))}{\M(R_{\emptyset,{\mathbf{l}}}((a+k-3)/2))}
    =&\frac{P_{\emptyset,{\mathbf{l}}}((a+k-1)/2)}{P_{\emptyset,{\mathbf{l}}}((a+k-3)/2)}\\
    =&\frac{(a+k-1)!(a+2b+k)!}{(a+b+k-1)!(a+b+k)!} \cdot \prod_{i=1}^{b}\Bigg[\frac{(a+k-3)/2+l_{i}+1}{(a+k-3)/2-l_{i}+b+2}\Bigg]\\
    =&\frac{(a+k-1)!(a+2b+k)!}{(a+b+k-1)!(a+b+k)!} \cdot \prod_{i=1}^{b}\Bigg[\frac{a+k+2l_{i}-1}{a+2b+k-2l_{i}+1}\Bigg]
\end{aligned}
\end{equation}
and
\begin{equation}
\begin{aligned}
    \frac{\M(\bar{R}_{{\mathbf{l}}-1,\emptyset}((a+1)/2))}{\M(\bar{R}_{{\mathbf{l}}-1,\emptyset}((a-1)/2)))}=&\frac{\bar{P}_{{\mathbf{l}}-1,\emptyset}((a+1)/2)}{\bar{P}_{{\mathbf{l}}-1,\emptyset}((a-1)/2))}\\
    =&\frac{(a+2l_{b}-2)!(a+2l_{b}-2b+1)!}{(a+2l_{b}-b-1)!(a+2l_{b}-b)!}\prod_{i=1}^{b-1}\Bigg[\frac{(a-1)/2+l_{b}+l_{i+1}-b}{(a-1)/2+l_{b}-l_{i+1}+1}\Bigg]\\
    =&\frac{(a+2l_{b}-2)!(a+2l_{b}-2b+1)!}{(a+2l_{b}-b-1)!(a+2l_{b}-b)!}\prod_{i=1}^{b-1}\Bigg[\frac{a+2l_{b}+2l_{i+1}-2b-1}{a+2l_{b}-2l_{i+1}+1}\Bigg]\\
    =&\frac{(a+2b+k-2)!(a+k+1)!}{(a+b+k-1)!(a+b+k)!}\prod_{i=1}^{b-1}\Bigg[\frac{a+k+2l_{i+1}-1}{a+2b+k-2l_{i+1}+1}\Bigg]
\end{aligned}
\end{equation}
Therefore, if we use the two equalities above and the fact that $l_{1}=1$, we obtain
\begin{equation}
\begin{aligned}
\frac{\M(R_{\emptyset,{\mathbf{l}}}((a+k-1)/2))\M(\bar{R}_{{\mathbf{l}}-1,\emptyset}((a-1)/2))}{\M(R_{\emptyset,{\mathbf{l}}}((a+k-3)/2))\M(\bar{R}_{{\mathbf{l}}-1,\emptyset}((a+1)/2))}&=\frac{(a+k-1)!}{(a+k+1)!}\cdot\frac{(a+2b+k)!}{(a+2b+k-2)!}\cdot\frac{a+k+1}{a+2b+k-1}\\
&=\frac{a+2b+k}{a+k}.
\end{aligned}
\end{equation}
This implies that, if we factor out $\M(R_{\emptyset,{\mathbf{l}}}((a+k-3)/2))\M(\bar{R}_{{\mathbf{l}}-1,\emptyset}((a+1)/2))$ from \eqref{edv}, we obtain
\begin{equation}
\begin{aligned}
    \M(H)
    =\M(H'_{\mathbf{l}}(a,b,k))
    &=2^{w_{l'}(H)-1}\Bigg[\frac{a+2b+k}{a+k}+1\Bigg]\M(R_{\emptyset,{\mathbf{l}}}((a+k-3)/2))\M(\bar{R}_{{\mathbf{l}}-1,\emptyset}((a+1)/2))\\
    &=\frac{2a+2b+2k}{a+k}\cdot2^{w_{l'}(H)-1}\M(R_{\emptyset,{\mathbf{l}}}((a+k-3)/2))\M(\bar{R}_{{\mathbf{l}}-1,\emptyset}((a+1)/2))\\
    &=\frac{a+b+k}{a+k}\cdot2^{w_{l'}(H)}\M(R_{\emptyset,{\mathbf{l}}}((a+k-3)/2))\M(\bar{R}_{{\mathbf{l}}-1,\emptyset}((a+1)/2)).
\end{aligned}
\end{equation}
On the other hand, if we factor out $\M(R_{\emptyset,{\mathbf{l}}}((a+k-1)/2))\M(\bar{R}_{{\mathbf{l}}-1,\emptyset}((a-1)/2))$ from \eqref{edv}, we get
\begin{equation}
\begin{aligned}
    \M(H)
    =\M(H'_{\mathbf{l}}(a,b,k))
    &=2^{w_{l'}(H)-1}\Bigg[\frac{a+k}{a+2b+k}+1\Bigg]\M(R_{\emptyset,{\mathbf{l}}}((a+k-1)/2))\M(\bar{R}_{{\mathbf{l}}-1,\emptyset}((a-1)/2))\\
    &=\frac{2a+2b+2k}{a+2b+k}\cdot2^{w_{l'}(H)-1}\M(R_{\emptyset,{\mathbf{l}}}((a+k-1)/2))\M(\bar{R}_{{\mathbf{l}}-1,\emptyset}((a-1)/2))\\
    &=\frac{a+b+k}{a+2b+k}\cdot2^{w_{l'}(H)}\M(R_{\emptyset,{\mathbf{l}}}((a+k-1)/2))\M(\bar{R}_{{\mathbf{l}}-1,\emptyset}((a-1)/2)).
\end{aligned}
\end{equation}
This completes the proof in the case when $k$ even, $a$ odd, and $l_{1}=1$. The proof of the remaining cases works in exactly the same manner: we apply Theorem \ref{tbb} to $\M(H)$, use Lemma \ref{ldd}, and finally use some identities that the indices satisfy (which we can obtain by using the fact that the region consists of the same number of up- and down-pointing unit triangles or by expressing the same length in two different ways; for example, when $k$ even, $a$ odd, and $l_{1}=1$, we have $m=b$ and $l_{m}=l_{b}=b+\frac{k}{2}$).
\end{proof}

\vskip0.1in




\begin{proof}[Proof of Theorem $\ref{tdd}$]
Let $H$ be one of the regions $H'_{\mathbf{l}}(a,b,k)$, $H'_{{\mathbf{l}},{\mathbf{q}}}(a,b,k)$ or $\bar{H}'_{{\mathbf{l}},{\mathbf{q}}}(a,b,k)$. Note that $H^+$ and $\widehat{H}^-$ are the same $R$ or $\bar{R}$ region, just the $x$-argument in the former is one more that in the latter. The same holds for $\widehat{H}^+$ and $H^-$. This guarantees that $\widehat{H}^+$ fits to the right of $H^+$ to allow a reverse application of the factorization theorem, and $H^-$ fits to the left of $\widehat{H}^-$ to do the same. The proof follows then by multiplying together equations \eqref{edi} and \eqref{edm}, pairing together the factors $\M(H^+)$ and $\M(\widehat{H}^+)$, pairing also $\M(H^-)$ and $\M(\widehat{H}^-)$, and applying the factorization theorem in reverse twice.          
\end{proof}


\section{Nearly symmetric hexagons with a shamrock or fern hole}

An {\it $S$-cored hexagon} is a hexagon from which a {\it shamrock} --- the union of an up-pointing triangular core and three down-pointing triangular lobes touching the core at its three vertices --- is removed (see \cite{CiucuDual}).
Simple product formulas for the number of lozenge tilings of a symmetric $S$-cored hexagon were given in~\cite{CiucuAxialShamrock} and~\cite{LaiAxialShamrock}.

In this section we solve another open problem of Lai (namely, \cite[Problem 27]{LaiOpenProblems}), which asks to find and prove a simple product formula for the number of lozenge tilings of the region obtained from a symmetric $S$-cored hexagon by translating the shamrock hole one unit southeast; we call such a region a {\it nearly symmetric $S$-cored hexagon} (see Theorem \ref{tea}). In addition, we prove a similar formula for {\it $F$-cored hexagons} --- regions obtained from hexagons by removing a {\it fern}\footnote{ Ferns were introduced in \cite{CiucutheOtherDual}, where a simple product formula was proved for the number of tilings of a hexagon with a fern removed from its center. The number of tilings of an $F$-cored hexagon which is symmetric about a vertical axis was determined in \cite{LaiAxialFern}.} --- the union of contiguous triangular lobes of alternating orientation lined up along a horizontal lattice line --- instead of a shamrock (such a region will be called a {\it nearly symmetric $F$-cored hexagon}; see Theorem \ref{tec}).

\begin{thm}
\label{tea}
Let $H'$ be a nearly symmetric $S$-cored hexagon in which the side-lengths of the triangular lobes are $\al$ for the top one, $\be$ for the two bottom ones, and $\mu$ for the central one; let the side-lengths of the outer hexagon be $x+\al+2\be$, $y+\mu$, $y+\al+2\be$, $x+\mu$, $y+\al+2\be$, $y+\mu$ $($clockwise from top; see the picture on the left in Figure $\ref{fea}$ for an example$)$. Let $\ell'$ be the vertical symmetry axis of the removed shamrock.

Denote by $H_1$ the region consisting of the union of the portion of $H'$ to the right of $\ell'$ with its reflection across $\ell'$ $($i.e., the region seen if $\ell'$ was a mirror and one looks in it from the right; see the picture on the right in Figure $\ref{fea}$ for an illustration$)$; let~$H_2$ be the analogous region determined by the portion of $H'$ to the left of $\ell'$. Then we have
\begin{equation}
\M(H')^2=f\M(H_1)\M(H_2),
\label{eea}
\end{equation}
where
\begin{equation}
f=
\begin{dcases}
\frac{(x+y+\al+2\be+\mu)^2}{(x+\al+2\be+\mu)(x+2y+\al+2\be+\mu)}, & \text{$x$ odd},\\
\frac{(x+y+\al+2\be+\mu)^2}{x(x+2y+2\al+4\be+2\mu)}, & \text{$x$ even}.\\
\end{dcases} 
\label{eeb}
\end{equation}

\end{thm}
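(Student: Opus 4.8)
The plan is to derive Theorem~\ref{tea} as a direct consequence of Theorem~\ref{tbb}, exactly paralleling the proof of Theorem~\ref{tdd}. First I would observe that a nearly symmetric $S$-cored hexagon $H'$ is a nearly symmetric hexagon with holes in the sense of Section~2: the shamrock hole is symmetric about a vertical axis $\ell'$ which sits half a unit to the right of the symmetry axis $\ell$ of the outer hexagon (translating the shamrock one unit northwest recovers a genuinely symmetric $S$-cored hexagon $H_0$, whose tileability — needed to invoke Theorem~\ref{tbb} — follows from the product formulas of \cite{CiucuAxialShamrock}\cite{LaiAxialShamrock} being nonzero for the relevant parameter ranges). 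Hence Theorem~\ref{tbb} applies and gives
\begin{equation*}
\M(H')=2^{\w_{\ell'}(H')-1}\bigl(\M(H'^{+})\M(H'^{-})+\M(\widehat{H'}^{+})\M(\widehat{H'}^{-})\bigr),
\end{equation*}
where $H'^{\pm}$ and $\widehat{H'}^{\pm}$ are the four half-regions produced by the two zigzag cuts along $\ell'$.

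The next step is the combinatorial identification of these four half-regions. Cutting $H'$ along the zigzag line through $\ell'$ and reflecting produces, up to forced lozenges, exactly the regions $H_1$ (the ``mirror seen from the right'') and $H_2$ (the ``mirror seen from the left'') — this is the same phenomenon recorded in Remark~4 and used in the proof of Theorem~\ref{tdd}. More precisely, I would show that $H'^{+}$ and $\widehat{H'}^{-}$ are the same half-region except that one relevant side length is one unit larger in the former, and likewise $\widehat{H'}^{+}$ and $H'^{-}$ differ by one unit in the opposite direction; these half-regions are themselves ``$S$-cored'' trapezoidal regions with a boundary defect whose tiling numbers are known (e.g.\ via the results feeding into \cite{CiucuAxialShamrock}\cite{LaiAxialShamrock}, or via the $R$/$\bar R$-type formulas of \cite{CiucuPP1} after the usual forced-lozenge reductions). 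Writing $p:=\M(H'^{+})$, $q:=\M(H'^{-})$, $p':=\M(\widehat{H'}^{+})$, $q':=\M(\widehat{H'}^{-})$, the geometric matching of the cuts means $p$ and $q'$ are ``consecutive'' values of one product formula and $q$ and $p'$ consecutive values of another, so the ratios $p/p'$ and $q'/q$ are explicit simple rational functions of $x,y,\al,\be,\mu$.

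Given these ratios, the algebra is routine: factoring $p'q'$ out of the braced sum yields $\M(H')=2^{\w_{\ell'}(H')-1}(1+\tfrac{pq}{p'q'})\,p'q'$, and the identification $H_1=$ (the region tiled by) $p'q'$ up to forced lozenges (equivalently the reverse factorization $2^{\w}\M(H'^{+})\M(\widehat{H'}^{+})=\M(H_1)$ and similarly for $H_2$) turns the product $\M(H_1)\M(H_2)$ into $2^{2\w-2}(pq'+p'q\cdot\text{stuff})\cdots$; more cleanly, I would mimic Theorem~\ref{tdd} verbatim: derive the two ``one-sided'' factorizations
\begin{equation*}
\M(H')=c_{\mathrm{left}}\cdot 2^{\w_{\ell'}(H')}\M(H'^{+})\M(H'^{-}),\qquad
\M(H')=c_{\mathrm{right}}\cdot 2^{\w_{\ell'}(H')}\M(\widehat{H'}^{+})\M(\widehat{H'}^{-}),
\end{equation*}
with $c_{\mathrm{left}},c_{\mathrm{right}}$ the explicit fractions obtained from $1+(\text{ratio})$, then multiply them, pair $\M(H'^{+})$ with $\M(\widehat{H'}^{+})$ and $\M(H'^{-})$ with $\M(\widehat{H'}^{-})$, and apply the factorization theorem \eqref{eba} in reverse to recognize $2^{\w}\M(H'^{+})\M(\widehat{H'}^{+})=\M(H_1)$ and $2^{\w}\M(H'^{-})\M(\widehat{H'}^{-})=\M(H_2)$. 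This gives $\M(H')^{2}=c_{\mathrm{left}}c_{\mathrm{right}}\,\M(H_1)\M(H_2)$, and the last task is the purely mechanical verification that $c_{\mathrm{left}}c_{\mathrm{right}}$ equals the claimed $f$ in \eqref{eeb}, split according to the parity of $x$ (the parameter playing the role of ``$a$'' in Section~4; the parity enters because the zigzag cut lands differently on $\ell'$).

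The main obstacle is the second step — pinning down precisely which $R$/$\bar R$-type (or dented-trapezoid) regions the four half-regions $H'^{\pm},\widehat{H'}^{\pm}$ are, keeping careful track of forced lozenges and of how the shamrock lobes split across $\ell'$, so that the ``off by one in $x$'' relationship between the two zigzag decompositions is made rigorous and the ratios $p/p'$, $q'/q$ come out as the clean linear-over-linear expressions needed. Once those ratios are in hand, everything else is bookkeeping with factorials of the side-length parameters, entirely analogous to the displayed computations in the proof of Theorems~\ref{tdb} and~\ref{tdc}; I expect no genuine difficulty there, only care with the parity cases and with the normalization of the weights $1/2$ on the cut edges (which is what produces the $2^{\w_{\ell'}(H')-1}$ versus $2^{\w_{\ell'}(H')}$ discrepancy that ultimately makes $f$ a ratio of products rather than a power of two).
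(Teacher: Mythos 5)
Your route is genuinely different from the paper's. You propose to apply Theorem~\ref{tbb} directly to $H'$ (which is legitimate: $H'$ is indeed a nearly symmetric hexagon with holes in the sense of Section~2, and the tileability of the symmetrized region follows from the known product formulas), identify the four half-regions produced by the two zigzag cuts, compute the ratios of their tiling numbers, and then replay the proof of Theorem~\ref{tdd} --- two one-sided factorizations, multiply, reverse-factor into $\M(H_1)\M(H_2)$. The paper instead never touches the half-regions of $H'$ at all: it invokes the bowtie squeezing theorem of \cite{ciucuBowtieSqueezing} to compare $H'$ with the region $T'$ obtained by squeezing the central lobe of the shamrock into the top lobe (so that the hole becomes a single down-pointing triangle of side $\al+2\be+\mu$), does the same for $H_1,H_2$ versus $T_1,T_2$, and shows that all the point- and line-couples cancel, yielding $\M(H')^2/(\M(H_1)\M(H_2))=\M(T')^2/(\M(T_1)\M(T_2))$. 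The right-hand side is then evaluated by the already-proved Theorem~\ref{tdd} with $a=x$, $b=y$, $k=\al+2\be+\mu$, which is exactly where the two parity cases of $f$ come from. The payoff of the paper's approach is that it requires no new product-formula manipulations whatsoever; yours would establish the shamrock case from scratch and, as a byproduct, exhibit the half-region decomposition explicitly.

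The gap in your proposal is precisely the step you flag as ``the main obstacle,'' and it is not mere bookkeeping: the four half-regions $H'^{\pm},\widehat{H'}^{\pm}$ are \emph{not} among the $R_{\mathbf{l},\mathbf{q}}/\bar R_{\mathbf{l},\mathbf{q}}$ regions of Section~4 (those encode collinear side-two triangular holes, not a halved shamrock with lobes of sides $\al$, $\be$, $\mu$), so Lemma~\ref{ldd} does not apply to them. You would need to identify them as the halved $S$-cored regions appearing in \cite{CiucuAxialShamrock}/\cite{LaiAxialShamrock}, prove a new ratio lemma (the analogue of \eqref{edq}--\eqref{edr}) for \emph{those} product formulas, and verify that $c_{\mathrm{left}}c_{\mathrm{right}}$ collapses to the specific $f$ in \eqref{eeb}. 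Until that computation is carried out, the claimed form of $f$ is unverified; the rest of your skeleton (applicability of Theorem~\ref{tbb}, the off-by-one relation between the two cuts, the reverse factorization into $H_1$ and $H_2$) is sound and mirrors the proof of Theorem~\ref{tdd} correctly.
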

With the formulas for the number of lozenge tilings of $H_1$ and $H_2$ --- which are, by construction, symmetric $S$-cored hexagons --- given in \cite{CiucuAxialShamrock} and~\cite{LaiAxialShamrock}, this provides an explicit product formula for $\M(H')$, and thus solves Lai's Problem 27 in the list of open problems \cite{LaiOpenProblems}.

\begin{figure}[t]
\centerline{
\hfill
{\includegraphics[width=0.45\textwidth]{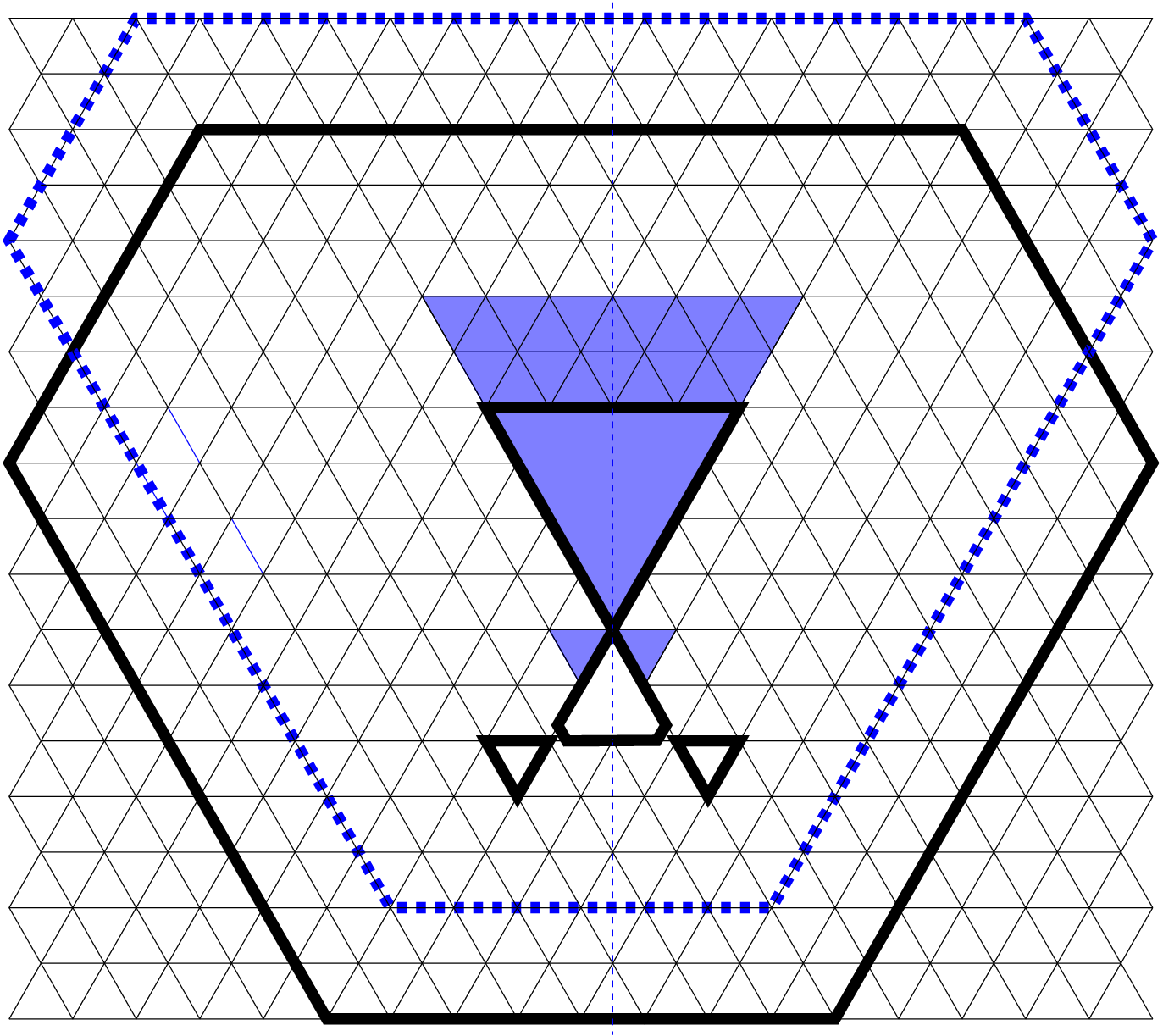}}
\hfill
{\includegraphics[width=0.47\textwidth]{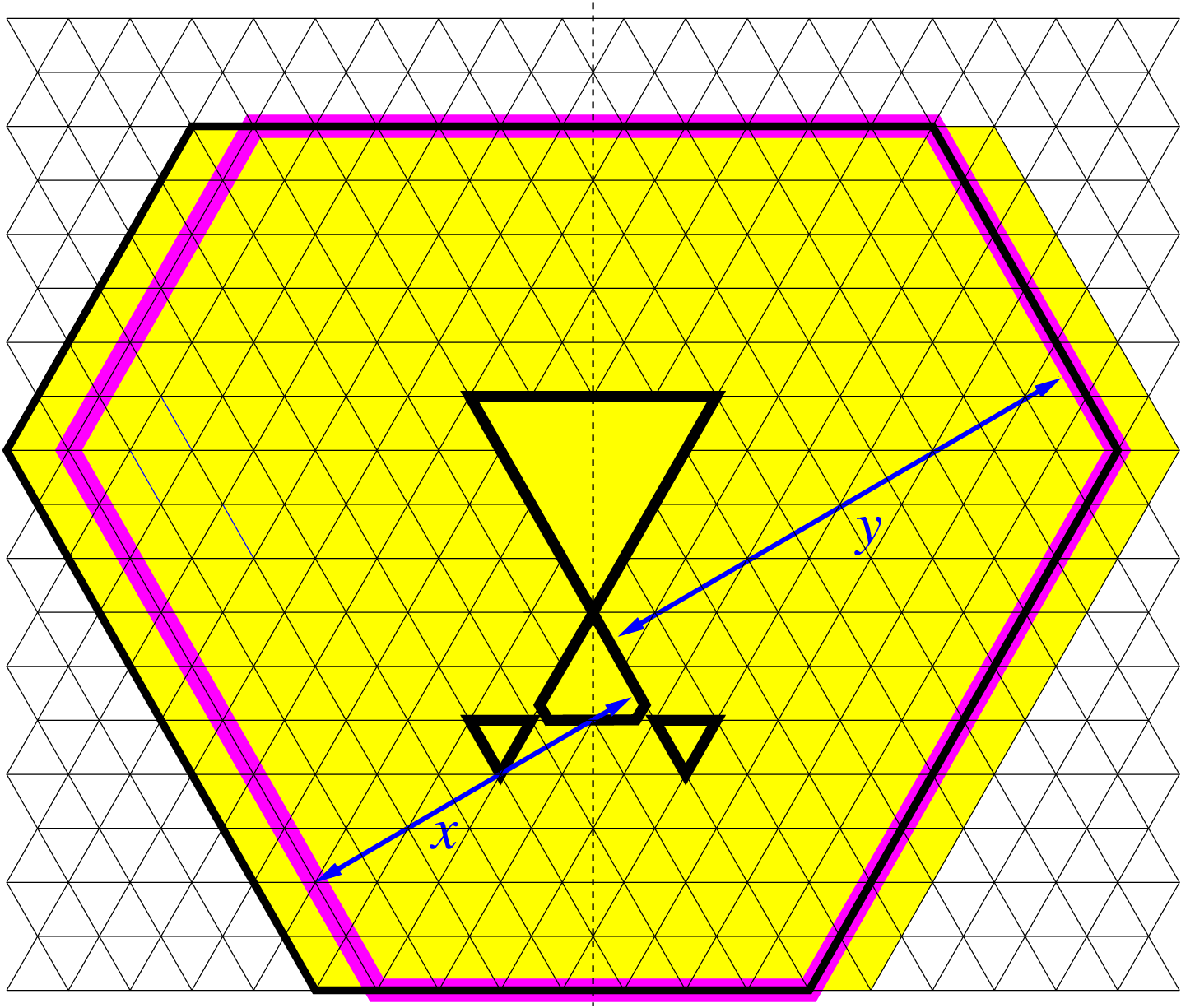}}
\hfill
}
\vskip0.1in
\caption{\label{fea} {\it Left.} Squeezing out the central lobe of the shamrock in a nearly symmetric $S$-cored hexagon; the original region is shown in black, while the region obtained from it after the squeezing is indicated in blue. {\it Right.} The distances $x$ and $y$ between a support line and the opposite sides of the outer hexagon in the thin symmetrization $H_1$ (shown inside the magenta contour) of $H'$; the thick symmetrization $H_2$ is shaded in yellow.}
\vskip-0.2in
\end{figure}

\begin{proof}
 We recall the ``bowtie squeezing theorem'' \cite[Theorem 1]{ciucuBowtieSqueezing}. A triad hexagon is a region obtained from a hexagon by removing three bowtie-shaped holes positioned as illustrated in \cite[Figure 1]{ciucuBowtieSqueezing}. Bowtie squeezing is an operation that turns a triad hexagon into another triad hexagon by squeezing part (or all) of a lobe of a bowtie into the other lobe; the position of the node of the squeezed bowtie as well as the shape of the other two bowties remain the same, but the relative position of the bowties and the boundary hexagon change (an illustrative example is shown \cite[Figure 1]{ciucuBowtieSqueezing} --- see that paper for the details; the picture on the left in Figure \ref{fea} shows a special case). Then \cite[Theorem 1]{ciucuBowtieSqueezing} states that if the region $Q$ is obtained from the triad hexagon $R$ by a sequence of bowtie squeezing operations, then we have\footnote{ Here $A$, $B$ and $C$ (resp., $A_1$, $B_1$ and $C_1$) are the nodes --- i.e., the points where the two lobes meet --- of the bowties in the triad hexagon $R$ (resp., $Q$), counterclockwise from top.}
\begin{equation}
\frac{\M(R)}{\M(Q)}=\dfrac
{\w^{(R)}\dfrac{\ka_A^{(R)}\ka_B^{(R)}\ka_C^{(R)}}{\ka_{BC}^{(R)}\ka_{AC}^{(R)}\ka_{AB}^{(R)}}}
{\w^{(Q)}\dfrac{\ka_{A_1}^{(Q)}\ka_{B_1}^{(Q)}\ka_{C_1}^{(Q)}}{\ka_{B_1C_1}^{(Q)}\ka_{A_1C_1}^{(Q)}\ka_{A_1B_1}^{(Q)}}},
\label{eec}
\end{equation}
where the weight $\w^{(R)}$ depends only on the shapes of the three bowties and their relative positions (the precise expression is given in \cite[(2.2)]{ciucuBowtieSqueezing}, but we don't need it here), and the point couples like $\ka_A^{(R)}$ (resp., the line couples like $\ka_{BC}^{(R)}$) have the following common definition: if $\h(n)$ is the hyperfactorial $\h(n)=0!\,1!\,\cdots\,(n-1)!$, the point couple $\ka_A^{(R)}$ is defined to be $\h(u)\h(d)$, where $u$ and $d$ are the distances from $A$ to the pair of opposite sides of the outer hexagon towards which the $60^\circ$ degree double cone at $A$ contained in the bowtie points (for line couples, $u$ and $d$ are the distances from the {\it line} to the sides of the hexagon {\it parallel to it}).

Consider our nearly symmetric $S$-cored hexagon $H'$ (indicated in black in Figure \ref{fea}). Its shamrock-shaped hole can be regarded as a triad of bowties: the top bowtie has upper lobe of side $\al$ and lower lobe of side $\mu$, while the other two bowties consist of just their lower lobes, both of side $\be$. Let $T'$ be the region obtained from $H'$ by completely squeezing out the bottom lobe of the top bowtie ($T'$ is indicated in blue in Figure \ref{fea}). Applying equation \eqref{eec} for $R=H'$ and $Q=T'$ we get
\begin{equation}
\frac{\M(H')}{\M(T')}=\dfrac
{\w^{(H')}\dfrac{\ka_A^{(H')}\ka_B^{(H')}\ka_C^{(H')}}{\ka_{BC}^{(H')}\ka_{AC}^{(H')}\ka_{AB}^{(H')}}}
{\w^{(T')}\dfrac{\ka_{A_1}^{(T')}\ka_{B_1}^{(T')}\ka_{C_1}^{(T')}}{\ka_{B_1C_1}^{(T')}\ka_{A_1C_1}^{(T')}\ka_{A_1B_1}^{(T')}}}
=\dfrac
{\w^{(H')}\dfrac{\ka_A^{(H')}}{\ka_{AC}^{(H')}\ka_{AB}^{(H')}}}
{\w^{(T')}\dfrac{\ka_{A_1}^{(T')}}{\ka_{A_1C_1}^{(T')}\ka_{A_1B_1}^{(T')}}},
\label{eed}
\end{equation}
where at the last equality we used that $\ka_B^{(H')}=\ka_{B_1}^{(T')}$, $\ka_C^{(H')}=\ka_{C_1}^{(T')}$ and $\ka_{BC}^{(H')}=\ka_{B_1C_1}^{(T')}$ (these equalities hold because the pairs of distances involved in the couples claimed to be equal are identical).

Consider now the ``thin symmetrization'' $H_1$ described in the statement of the theorem, and let~$T_1$ be the region obtained from it by completely squeezing out the bottom lobe of the top bowtie. Equation \eqref{eec} then similarly gives
\begin{equation}
\frac{\M(H_1)}{\M(T_1)}=\dfrac
     {\w^{(H_1)}\dfrac{\ka_A^{(H_1)}}{\ka_{AC}^{(H_1)}\ka_{AB}^{(H_1)}}}
{\w^{(T_1)}\dfrac{\ka_{A_1}^{(T_1)}}{\ka_{A_1C_1}^{(T_1)}\ka_{A_1B_1}^{(T_1)}}}.
\label{eee}
\end{equation}
Note that the point couples $\ka_A^{(H')}$ are $\ka_A^{(H_1)}$ equal, since the distances involved in them are identical; similarly, $\ka_{A_1}^{(T')}=\ka_{A_1}^{(T_1)}$. Furthermore, we have $\w^{(H')}=\w^{(H_1)}$, because $\w^{(R)}$ depends only on the shapes and relative positions of the bowties in the triad hexagon; similarly, $\w^{(T')}=\w^{(T_1)}$.  Therefore, combining \eqref{eed} and \eqref{eee} we obtain 
\begin{equation}
\frac{\M(H')/\M(H_1)}{\M(T')/\M(T_1)}=
\dfrac{\ka_{AC}^{(H_1)}\ka_{AB}^{(H_1)}}
{\ka_{AC}^{(H')}\ka_{AB}^{(H')}}
\,
\dfrac{\ka_{A_1C_1}^{(T')}\ka_{A_1B_1}^{(T')}}
{\ka_{A_1C_1}^{(T_1)}\ka_{A_1B_1}^{(T_1)}}.
\label{eef}
\end{equation}
Repeating the argument with $H_1$ replaced by the thick symmetrization $H_2$ of $H'$ yields
\begin{equation}
\frac{\M(H')/\M(H_2)}{\M(T')/\M(T_2)}=
\dfrac{\ka_{AC}^{(H_2)}\ka_{AB}^{(H_2)}}
{\ka_{AC}^{(H')}\ka_{AB}^{(H')}}
\,
\dfrac{\ka_{A_1C_1}^{(T')}\ka_{A_1B_1}^{(T')}}
{\ka_{A_1C_1}^{(T_2)}\ka_{A_1B_1}^{(T_2)}}.
\label{eeg}
\end{equation}
Multiply the last two equations side by side to obtain
\begin{equation}
\frac{\M(H')^2/(\M(H_1)\M(H_2))}{\M(T')^2/(\M(T_1)\M(T_2))}=
\,
\dfrac{\ka_{AC}^{(H_1)}\ka_{AB}^{(H_1)}}
{\ka_{AC}^{(H')}\ka_{AB}^{(H')}}
\dfrac{\ka_{A_1C_1}^{(T')}\ka_{A_1B_1}^{(T')}}
{\ka_{A_1C_1}^{(T_1)}\ka_{A_1B_1}^{(T_1)}}
\,
\dfrac{\ka_{AC}^{(H_2)}\ka_{AB}^{(H_2)}}
{\ka_{AC}^{(H')}\ka_{AB}^{(H')}}
\,
\dfrac{\ka_{A_1C_1}^{(T')}\ka_{A_1B_1}^{(T')}}
{\ka_{A_1C_1}^{(T_2)}\ka_{A_1B_1}^{(T_2)}}.
\label{eeh}
\end{equation}
Denoting by $x$ and $y$ the distances from the line supporting the top and right lobes in $H_1$ to the northeast and southwest sides of the boundary (see the picture on the right in Figure \ref{fea}), the factors involving $H$-regions on the right hand side above combine to
\begin{equation}
\dfrac{\h(x)\h(y)}
{\h(x+1)\h(y)}
\,
\dfrac{\h(x)\h(y)}
{\h(x)\h(y+1)}
\,
\dfrac{\h(x+1)\h(y+1)}
{\h(x+1)\h(y)}
\,
\dfrac{\h(x+1)\h(y+1)}
{\h(x)\h(y+1)}
=1.
\label{eei}
\end{equation}
The same calculation shows that the factors involving $T$-regions combine to 1 as well. Therefore, the entire right hand side of \eqref{eeh} is 1, and we get
\begin{equation}
\frac{\M(H')^2}{\M(H_1)\M(H_2)}=
\frac{\M(T')^2}{\M(T_1)\M(T_2)}.
\label{eej}
\end{equation}
Since Theorem \ref{tdd} applies to $T'$ (indeed, due to forced lozenges, the removal of the three down-pointing lobes that meet at a point is equivalent, from the point of view of counting lozenge tilings, with the removal of a single down-pointing triangle of side $\al+2\be+\mu$), the statement follows from \eqref{eej} by noticing that the $a$-, $b$- and $k$-values of the parameters in Theorem \ref{tdd} corresponding to $T'$ are $x$, $y$ and $\al+2\be+\mu$, respectively. 
\end{proof} 


The operation of lobe squeezing can also be defined for a hexagon from which a collection of disjoint collinear ferns has been removed. The definition is analogous to the bowtie squeezing operation (which really squeezes one lobe of a given bowtie into the other) described in \cite{ciucuBowtieSqueezing}.

For simplicity, suppose that only one fern has been removed from the hexagon. The picture on the left in Figure \ref{feb} illustrates such an example: the black solid lines show the original region, while shown in blue is the region that results by completely squeezing the central lobe into the one to its right. In general, let $R$ be a hexagon with a fern removed, and let $A$ be a node of the fern (i.e., a point where two lobes meet). Then the region obtained from $R$ by squeezing the lobe to the left of $A$ into the lobe to the right of $A$ by $d$ units is the region $Q$ described by: (1) $Q$ is a hexagon with a fern removed (2) the boundary hexagon of $Q$ is obtained from that of $R$ by pushing in three of its sides by $d$ units, and pushing out the other three by $d$ units (which three are pushed in and which are pushed out is determined by the direction of the squeezing, which pulls in the three sides in the back and pushes out the three sides in the front), and (3) the removed fern of $Q$ is obtained from that of $R$ by keeping the node $A$ fixed, shrinking the lobe to its left by $d$ units (while ``dragging along'' the portion of the fern to the left of it), and inflating the lobe to the right of $A$ by $d$ units (while ``pushing away'' the portion of the fern to the right of this lobe).

The definition works in the same way for the case when a collection of ferns lined up along a common horizontal line has been removed from the hexagon --- the only change is that in (3) above when we shrink and correspondingly inflate the two lobes at $A$, we translate to the right by $d$ units the entire remaining portion of the system of ferns.

\begin{figure}[t]
\centerline{
\hfill
{\includegraphics[width=0.45\textwidth]{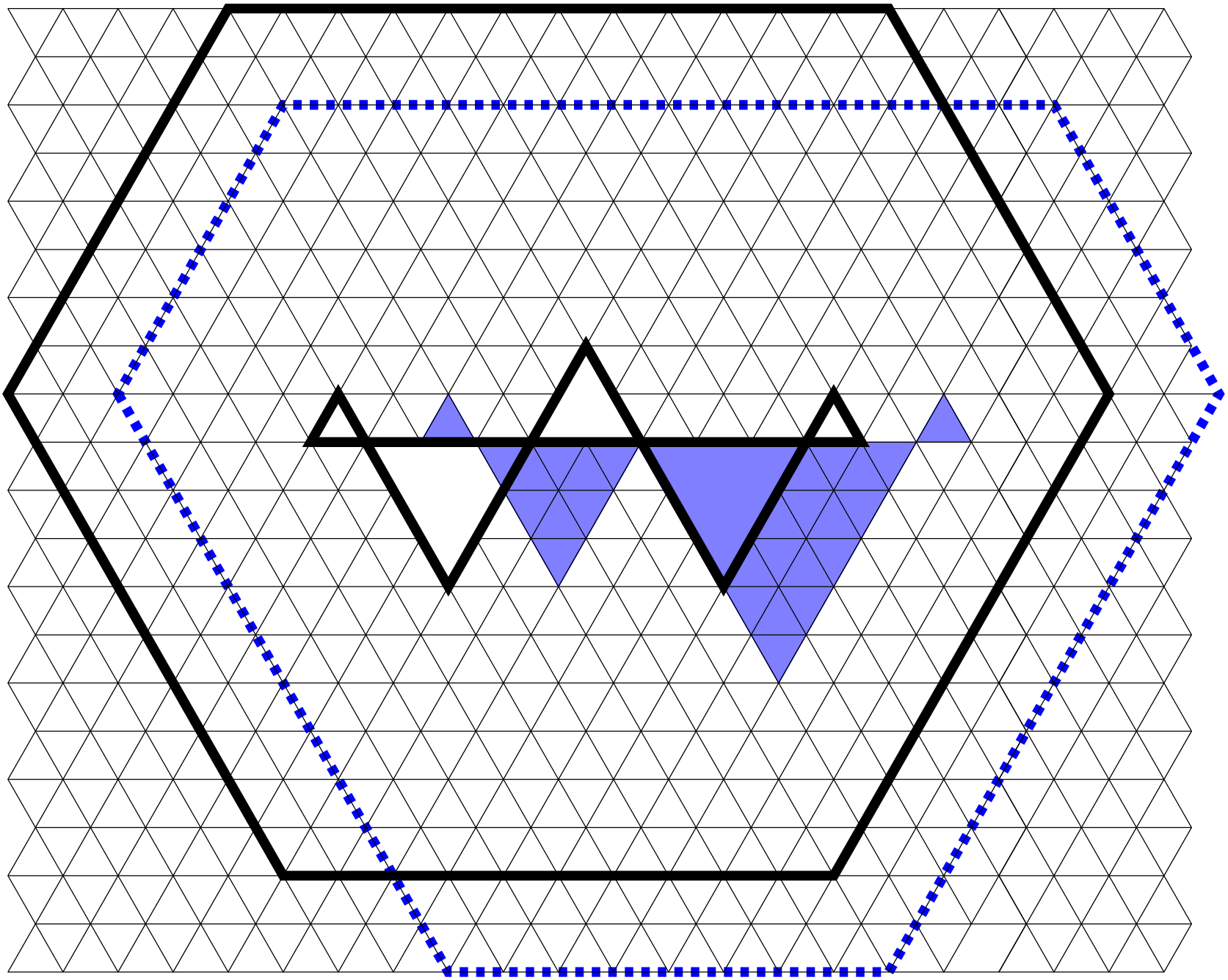}}
\hfill
{\includegraphics[width=0.45\textwidth]{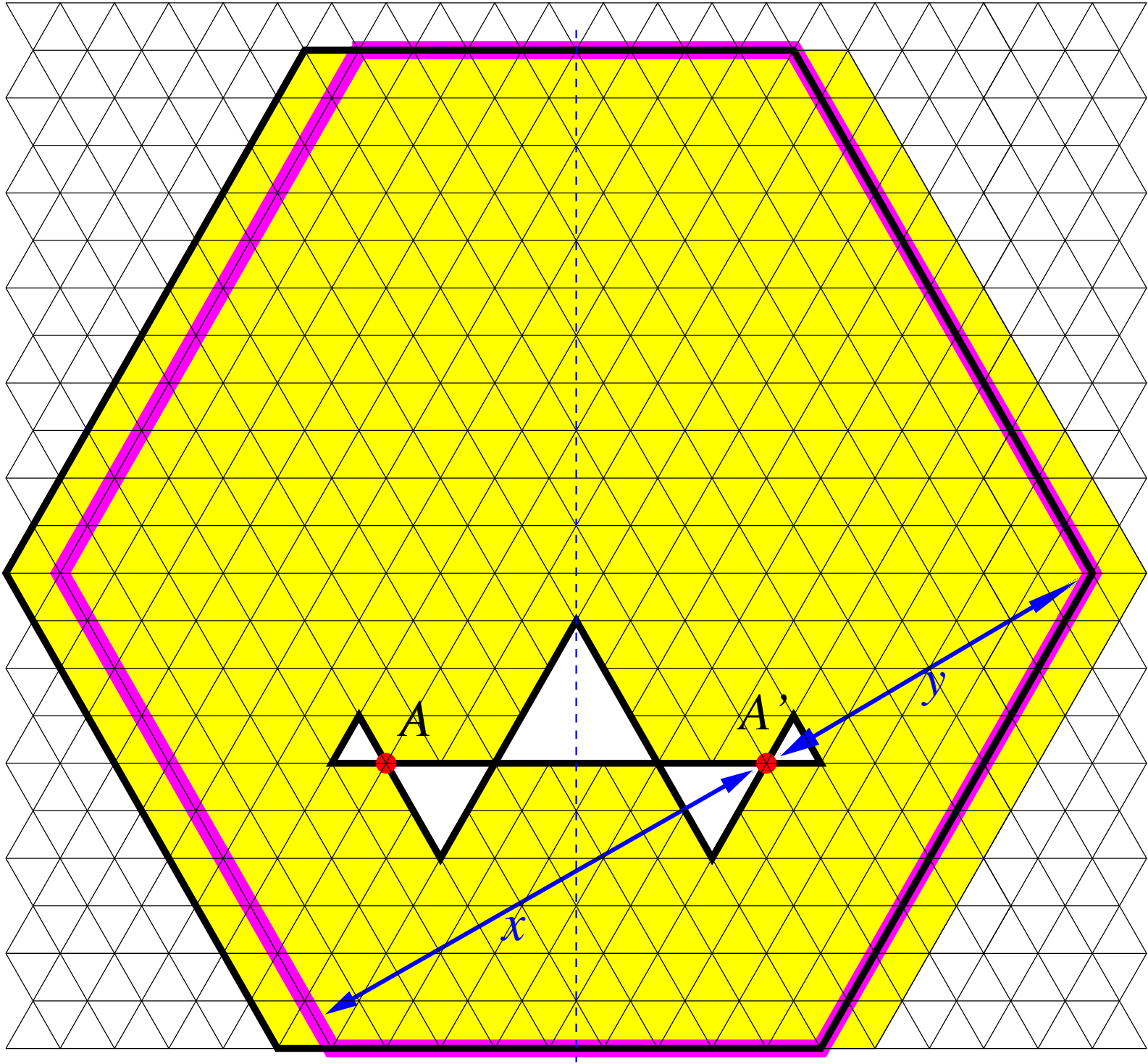}}
\hfill
}
\vskip0.1in
\caption{\label{feb} {\it Left.} Squeezing out the central lobe of a nearly symmetric $F$-cored hexagon. {\it Right.} The distances $x$ and $y$ between the node $A'$ and the opposite sides of the outer hexagon in the thin symmetrization $H_1$ (inside the magenta contour) of $H'$; the yellow shading indicates $H_2$.}
\vskip-0.2in
\end{figure}

Consider a region $R$ obtained from a hexagon by removing a collection of collinear ferns. A {\it node of $R$} is a point $A$ on one of the removed ferns where two lobes meet. A {\it support line of $R$} is a lattice line $L$ that contains an edge of at least two lobes (with the exception of the common axis of the ferns, all support lines support precisely two lobes). The point couples $\ka_A^{(R)}$ and line couples $\ka_L^{(R)}$ are defined exactly like in \cite{ciucuBowtieSqueezing} (these were also recalled at the beginning of the proof of Theorem~\ref{tea}). To define the {\it weight} $\w^{(R)}$, label the unit lattice segments inside $R$ which are on the common axis of the removed ferns by $1,2,3,\dotsc$, from left to right. If $I$ is the set of labels corresponding to the bases of the down-pointing lobes of the ferns, and $J$ consists of the labels on the bases of the up-pointing lobes, then the weight $\w^{(R)}$ is defined to be
\begin{equation}
\w^{(R)}=\Delta(I)\Delta(J),
\label{eek}
\end{equation}
where for a set $I=\{s_1,\dotsc,s_k\}$ with elements listed in increasing order we define
\begin{equation}
\Delta(I):=\prod_{1\leq i<j\leq k}(s_j-s_i).
\label{eekk}
\end{equation}
%



We have the following analog of the bowtie squeezing theorem.

\begin{thm}
\label{teb}
Let $R$ be a hexagonal region from which a system of co-axial ferns has been removed. Let $Q$ be a region obtained from $R$ by a sequence of lobe squeezings. Then we have
\begin{equation}
\frac{\M(R)}{\M(Q)}=\dfrac
{\w^{(R)}\dfrac{\prod_{\text{$A$ node of $R$}}\,\ka_A^{(R)}}{\prod_{\text{$L$ support line of $R$}}\,\ka_{L}^{(R)}}}
{\w^{(Q)}\dfrac{\prod_{\text{$A$ node of $Q$}}\,\ka_A^{(Q)}}{\prod_{\text{$L$ support line of $Q$}}\,\ka_{L}^{(Q)}}}.
\label{eel}
\end{equation}

\end{thm}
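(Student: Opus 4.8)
The plan is to follow the method used to prove the bowtie squeezing theorem \cite[Theorem 1]{ciucuBowtieSqueezing}. The heart of it will be to show that the number of lozenge tilings of a hexagon with a system of co-axial ferns removed factors as $\M(R)=\Phi(R)\,\Psi(R)$, where
\[
\Phi(R):=\w^{(R)}\,\frac{\prod_{\text{$A$ a node of $R$}}\ka_A^{(R)}}{\prod_{\text{$L$ a support line of $R$}}\ka_L^{(R)}}
\]
is the combinatorial expression on the right side of \eqref{eel}, and $\Psi(R)$ is a product of hyperfactorials of quantities left unchanged by every lobe squeezing. Granting this, \eqref{eel} is immediate: a lobe squeeze sends $R$ to $Q$ with $\Psi(Q)=\Psi(R)$, so $\M(R)/\M(Q)=\Phi(R)/\Phi(Q)$, which is precisely the asserted identity. (An equivalent, slightly lighter route is to note first that both sides of \eqref{eel} are multiplicative along a chain of squeezings---the left side telescopes as a product of ratios $\M(R_i)/\M(R_{i+1})$, the right side as $\Phi(R)/\Phi(Q)$---so that it suffices to treat a single unit squeeze at a single node, and then to verify that case by the computation below.)

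The main input for either route is an explicit product formula for $\M(R)$. I would obtain it from the Lindstr\"om--Gessel--Viennot lemma: lozenge tilings of a hexagon with triangular holes lined up along a horizontal lattice line correspond to families of non-intersecting lattice paths, so $\M(R)$ is a determinant whose entries are binomial coefficients recording the forced detours of the paths around the holes. Because the holes are organized into ferns---maximal contiguous runs of alternating up- and down-pointing triangles along the common axis---this determinant is of Cauchy--Vandermonde type and evaluates, by standard determinant evaluations, to a product. (For a single fern the resulting formula is due to Ciucu \cite{CiucutheOtherDual}; the co-axial multi-fern case is of the same type and is handled by the same evaluation.) I would then reorganize the product: for each node $A$, split off the hyperfactorials $\h(u)\h(d)$ in the distances $u,d$ from $A$ to the pair of opposite hexagon sides that the $60^{\circ}$ double cone at $A$ faces---these are exactly the point couples $\ka_A^{(R)}$---and likewise the line couples $\ka_L^{(R)}$ for each support line $L$; the Vandermonde factors in the axial positions of the up-pointing, respectively down-pointing, lobe bases combine to $\Delta(J)$, respectively $\Delta(I)$, yielding $\w^{(R)}$; everything left over is collected into $\Psi(R)$.

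The final step is to verify that $\Psi(R)$ is unchanged by a lobe squeeze. A unit squeeze at $A$ shrinks one lobe at $A$ by a unit, inflates the other lobe at $A$ by a unit, pushes three alternate sides of the outer hexagon in by a unit and the remaining three out by a unit, and translates the rest of the fern system. Each argument appearing in $\Psi(R)$ should be a ``combined'' dimension---a boundary side length plus a run of lobe lengths, or the like---built so that the $+1$ and $-1$ from the inflated and shrunk lobes, together with the $\pm1$ moves of the hexagon sides, cancel, forcing $\Psi(Q)=\Psi(R)$. I expect the main obstacle to be exactly this bookkeeping: putting the product formula into the precise form $\Phi(R)\,\Psi(R)$---in particular matching each hyperfactorial to the correct point or line couple, which requires tracking, node by node, which pair of opposite hexagon sides the relevant double cone faces, and tracking how the label sets $I$ and $J$ change under a squeeze---and then confirming term by term that $\Psi(R)$ is squeeze-invariant. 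A short separate check is also needed for the degenerate case in which a lobe is squeezed down to length zero: its node then disappears and the two flanking lobes, which have equal orientation, merge (up to forced lozenges) into a single lobe, and one must check that $\Phi$ and $\Psi$ behave continuously across this degeneration.
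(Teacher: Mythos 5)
There is a genuine gap at the foundation of your argument: the explicit product formula for $\M(R)$ on which everything else rests does not exist in the generality of Theorem \ref{teb}. The Lindstr\"om--Gessel--Viennot determinant for a hexagon with co-axial ferns removed evaluates to a product only in special positions --- \cite{CiucutheOtherDual} treats a fern removed from the \emph{center} of the hexagon, and \cite{LaiAxialFern} a fern on a symmetry axis; for the arbitrary positions allowed here the determinant is not of Cauchy--Vandermonde type and $\M(R)$ is in general a sum of products rather than a product. This is visible inside the present paper: the nearly symmetric $F$-cored hexagon $H'$ of Theorem \ref{tec} is itself a hexagon with a single fern removed, and its tiling number is only accessible through the relation $\M(H')^2=f\,\M(H_1)\M(H_2)$; if a direct product evaluation of $\M(H')$ were available, Theorems \ref{tea} and \ref{tec} would be trivial corollaries of it. More generally, the point of squeezing theorems such as \cite[Theorem 1]{ciucuBowtieSqueezing} is precisely that the \emph{ratio} is a product of simple factors even though the individual counts are not. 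Consequently your decomposition $\M(R)=\Phi(R)\Psi(R)$ with $\Psi$ a product of hyperfactorials of squeeze-invariant quantities cannot be obtained as proposed, and the squeeze-invariance of $\Psi$ --- which, once one simply defines $\Psi=\M(R)/\Phi(R)$, is equivalent to the theorem itself --- is left unproved.

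The repair is to work with ratios from the outset, which is what the paper does. After reducing to a single squeeze (your telescoping observation is fine), replace each fern by the set of unit triangles of its lobes whose bases rest on the common axis $l$ (this removes only forced lozenges), embed the resulting region in a vertically symmetric hexagon $\overline{H}$ having $l$ as its horizontal diagonal, and pad the part of $\overline{H}$ outside the original hexagon with further removed unit triangles along $l$ so that this extra part tiles uniquely; the resulting region $\overline{R}$ satisfies $\M(\overline{R})=\M(R)$. In this flattened picture a lobe squeeze at a node $A$ is exactly a ``flip'' across $l$ of a run of contiguous unit triangles adjacent to $A$, so Lai's shuffling theorem \cite{Laishuffling} applies and gives $\M(\overline{R})/\M(\overline{Q})$ directly as a ratio of line couples times a ratio of $\Delta$-functions of the label sets of the removed up- and down-pointing unit triangles --- no evaluation of either count separately is needed. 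Rearranging those factors into the point couples $\ka_A$, the support-line couples $\ka_L$, and the weights $\w^{(R)}$ and $\w^{(Q)}$ then yields \eqref{eel}.
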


\begin{proof}

We can assume without loss of generality that $Q$ is obtained from $R$ by a single operation of lobe squeezing.
  
Let $\overline{R}$ be the region obtained from $R$ by the following procedure. Denote by $R_1$ the region obtained by starting with the outer hexagon $H$ of $R$, but removing from it, instead of the system of ferns, just those unit triangles contained in the system of ferns whose bases rest on their common axis $l$. Note that, due to forced lozenges, the region $R_1$ has the same number of lozenge tilings as $R$. 
Next, construct a vertically symmetric hexagon $\overline{H}$ so that $l$ is its horizontal diagonal, and four of its sides --- including the top and bottom ones --- are one the same lattice lines as the corresponding sides of $H$. Finally, remove from $\overline{H}$ the unit triangles that were removed to construct $R_1$, and in addition also runs of unit triangles along the portions of $l$ outside $H$, chosen so as to force a unique tiling of the remaining portion of $\overline{H}\setminus H$; the resulting region is what we define to be $\overline{R}$. Then we have $\M(\overline{R})=\M(R_1)=\M(R)$, and $\overline{R}$ is precisely the kind of region to which the shuffling theorem of \cite{Laishuffling} applies.

This theorem concerns pairs of regions $S$ and $T$ which can be described as follows. The region $S$ is a vertically symmetric hexagon with a set of unit triangles (both up-pointing and down-pointing being allowed) removed from along its horizontal diagonal $l$. The region $T$ is obtained from $S$ by ``flipping'' an arbitrary subset of the removed unit triangles, i.e.\ moving them to the position of their mirror image across $l$, and changing the boundary of the outer hexagon appropriately (the latter is needed in order to end up with a region with the same number of up- and down-pointing unit triangles, a necessary condition for the existence of lozenge tilings). Then the theorem states that $\M(S)/\M(T)$ has a simple expression in terms of $\Delta$ functions \eqref{eekk} of sets which record the positions of the removed unit triangles.

Let $A$ be a node of the system of ferns in the given region $R$, and assume that $Q$ was obtained from $R$ by squeezing out $d$ units of the lobe left of $A$ into the lobe right of $A$. Consider the region $\overline{R}$ defined above, and also the region $\overline{Q}$ obtained by applying the same procedure to $Q$.
Apply the shuffling theorem to the region $\overline{R}$, by flipping the run of $d$ contiguous unit triangles just to the left of $A$. It is not hard to verify that the resulting region is precisely $\overline{Q}$.

Therefore, the shuffling theorem of \cite{Laishuffling} applies to the regions $\overline{R}$ and $\overline{Q}$, and gives
\begin{equation}
\frac{\M(\overline{R})}{\M(\overline{Q})}=
\frac{1/\ka_{l}^{(\overline{R})} }{1/\ka_{l}^{(\overline{Q})} }
\frac{\Delta(X)\Delta(Y)}{\Delta(X')\Delta(Y')},
\label{eell}
\end{equation}
where $X$ (resp., $Y$) is the set of labels\footnote{ The unit segments on $l$ inside $\overline{R}$ are labeled $1,2,3,\dotsc$ from left to right.} of the up-pointing (resp., down-pointing) unit triangles with bases on $L$ which are removed from the outer hexagon of $\overline{R}$, while $X'$ and $Y'$ are the corresponding sets of labels for $\overline{Q}$.

After a straightforward (if lengthy) rearranging of the factors on the right hand side of \eqref{eell}, one can bring the expression on the right hand side of \eqref{eell} to the form shown on the right hand side of \eqref{eel}. Since $\frac{\M(R)}{\M(Q)}=\frac{\M(\overline{R})}{\M(\overline{Q})}$, this completes the proof.
\end{proof}

{\it Remark $6$.} As pointed out in \cite{CiucutheOtherDual}, it seems that shamrocks and ferns are the only shapes of holes that lead to simple product formulas when considering the number of lozenge tilings of a hexagon with a hole removed from its center. The above theorem and the statement \eqref{eec} of \cite[Theorem~1]{ciucuBowtieSqueezing} take the common features of these shapes one step further, by providing a pleasingly unified form for the quantities that govern the number of lozenge tilings in regions related by lobe squeezing operations --- up to the weights $\w^{(R)}$ and  $\w^{(Q)}$, the right hand side of \eqref{eec} is precisely the same expression as that of \eqref{eel}!

\vskip0.1in
As an application of Theorem \ref{teb}, we show that nearly symmetric hexagons with a fern-shaped hole are related to their symmetric counterparts by essentially the same identity as in Theorems \ref{tdd} and \ref{tea}.

\begin{thm}
\label{tec}
Let $H'$ be a nearly symmetric $F$-cored hexagon in which the side-lengths of the triangular lobes of the removed ferns are $\al_1,\al_2,\cdots$ for the down-pointing ones, and $\be_1,\be_2,\cdots$ for the up-pointing ones. Set $\al=\al_1+\al_2+\cdots$ and $\be=\be_1+\be_2+\cdots$, and let the side-lengths of the outer hexagon be $x+\al$, $y+\be$, $y+\al$, $x+\be$, $y+\al$, $y+\be$ $($clockwise from top$)$. Let $\ell'$ be the vertical symmetry axis of the removed fern.

Let the thin and thick symmetrizations of $H'$, $H_1$ and $H_2$, be defined as in the statement of Theorem $\ref{tea}$. Then we have

%
\begin{equation}
\M(H')^2=f\M(H_1)\M(H_2),
\label{eeu}
\end{equation}
where
\begin{equation}
f=
\begin{dcases}
\frac{(x+y+\al+\be)^2}{(x+\al+\be)(x+2y+\al+\be)}, & \text{$x$ odd},\\
\frac{(x+y+\al+\be)^2}{x(x+2y+2\al+2\be)}, & \text{$x$ even}.\\
\end{dcases} 
\label{eev}
\end{equation}

\end{thm}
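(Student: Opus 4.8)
The plan is to run the argument of the proof of Theorem~\ref{tea} almost verbatim, with the bowtie squeezing theorem replaced by its fern analogue Theorem~\ref{teb}, and the shamrock replaced by the fern. First I would let $T'$ be the region obtained from $H'$ by a sequence of lobe squeezings collapsing the entire removed fern into a single triangular hole: squeeze each lobe completely into a neighbouring one, and whenever two like-oriented triangular holes become contiguous merge them into a single larger one, using the forced-lozenge convention of Section~4. When $\al\ge\be$ this hole is a down-pointing triangle of side $\al-\be$; when $\al\le\be$ an up-pointing triangle of side $\be-\al$; when $\al=\be$ the fern disappears and $T'$ is a plain hexagon, which we regard for uniformity as the empty-list, $k=0$ instance of the primed family to which Theorem~\ref{tdd} applies. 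Applying the same sequence of squeezings to the thin symmetrization $H_1$ and the thick symmetrization $H_2$ produces $T_1$ and $T_2$; since the symmetrizations leave the internal combinatorics of the fern untouched, $T_1$ and $T_2$ are precisely the thin and thick symmetrizations of $T'$.

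Next I would apply Theorem~\ref{teb} to each of the pairs $(H',T')$, $(H_1,T_1)$ and $(H_2,T_2)$, and combine the three resulting identities just as \eqref{eed}--\eqref{eeh} combines the bowtie-squeezing identities in the proof of Theorem~\ref{tea}, forming the ratio of $\M(H')^2/(\M(H_1)\M(H_2))$ to $\M(T')^2/(\M(T_1)\M(T_2))$. The weight factors cancel out of this ratio, because $\w^{(R)}$ depends only on the arrangement of the removed lobes along their common axis, which is the same in $H'$, $H_1$, $H_2$ and the same in $T'$, $T_1$, $T_2$. What remains is a product of hyperfactorials $\h$ of distances from the nodes and support lines of the fern to the sides of the outer hexagon; each such distance changes by $0$ or $\pm1$ as one passes from $H'$ to $H_1$ or $H_2$, and combining the three applications of Theorem~\ref{teb} makes these hyperfactorials telescope to $1$, exactly as in \eqref{eei} (the single-triangle regions $T',T_1,T_2$ have no nodes and no support lines, so Theorem~\ref{teb} contributes only the --- already cancelled --- weights for them). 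Hence
\begin{equation*}
\frac{\M(H')^2}{\M(H_1)\M(H_2)}=\frac{\M(T')^2}{\M(T_1)\M(T_2)}.
\end{equation*}

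Finally, $T'$ is a nearly symmetric hexagon with a single triangular hole, hence one of the regions $H'_{\mathbf{l}}(a,b,k)$, $H'_{{\mathbf{l}},{\mathbf{q}}}(a,b,k)$ or $\bar{H}'_{{\mathbf{l}},{\mathbf{q}}}(a,b,k)$ (which one, depending on the parity of the side of the collapsed triangle), with $T_1,T_2$ its thin and thick symmetrizations. Theorem~\ref{tdd} then gives $\M(T')^2=f_{T'}\,\M(T_1)\M(T_2)$, where $f_{T'}$ is the square of the prefactor in \eqref{edpp}. Reading off the $a$-, $b$- and $k$-parameters of $T'$ in terms of $x,y,\al,\be$ --- taking into account that each lobe squeeze displaces the outer hexagon while the node of the collapsed triangle stays put --- and checking in particular that the parity of this $a$ matches the parity of $x$, one verifies that $f_{T'}$ equals the $f$ of \eqref{eev}. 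Together with the identity of the previous paragraph, this proves \eqref{eeu}.

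I expect the bookkeeping in the last two paragraphs to be the main obstacle. Because a fern may have arbitrarily many lobes, there are more nodes and support lines than in the shamrock case, and one must check that the associated hyperfactorials organise into the right telescoping pattern under thin and thick symmetrization; and the parameters of $T'$ are not simply the side-lengths of $H'$, since each squeezing moves the boundary hexagon, so matching $f_{T'}$ to \eqref{eev} --- parity included --- takes some care. Both are of the same lengthy-but-routine character as the corresponding computations in the proof of Theorem~\ref{tea}.
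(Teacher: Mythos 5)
Your overall strategy is exactly the paper's: collapse the fern by lobe squeezings, apply Theorem \ref{teb} three times to obtain $\M(H')^2/(\M(H_1)\M(H_2))=\M(T')^2/(\M(T_1)\M(T_2))$, and finish with Theorem \ref{tdd}. However, your identification of $T'$ is wrong, and the error propagates into the final prefactor. Lobe squeezing is additive, not a cancellation: by its definition (point (3) of the description in Section 5), squeezing a lobe of side $s$ completely into its neighbour of side $t$ \emph{inflates} the neighbour to side $s+t$; the opposite orientations of the two lobes do not make any part of the hole disappear. So completely squeezing all up-pointing lobes into the down-pointing ones --- which is what the paper does --- leaves a single down-pointing triangular hole of side $\al+\be$, never a triangle of side $|\al-\be|$, and in particular the hole does not vanish when $\al=\be$. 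Your proposed $T'$ is not obtainable from $H'$ by any sequence of lobe squeezings, so Theorem \ref{teb} does not relate $\M(H')$ to it. (The shamrock case already shows this: there the squeezed region carries a down-pointing triangle of side $\al+2\be+\mu$, the \emph{sum} of all lobe sizes.)

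This matters quantitatively. The prefactor $f$ in \eqref{eev} is the square of the prefactor in Theorem \ref{tdd} evaluated at $a=x$, $b=y$, $k=\al+\be$; this is the only value of $k$ that produces the denominators $(x+\al+\be)(x+2y+\al+\be)$ and $x(x+2y+2\al+2\be)$. With your $T'$ the relevant $k$ would be $|\al-\be|$ (or the region would be a plain hexagon), and the verification you defer to the end would fail rather than confirm \eqref{eev}. The remainder of your argument --- cancellation of the weights $\w$, telescoping of the hyperfactorial couples as in \eqref{eei} and \eqref{eem}, and the reduction to Theorem \ref{tdd} --- is sound and matches the paper, with one small caveat: a one-lobe fern still has its axis as a support line, so the couples attached to $T'$, $T_1$, $T_2$ are not vacuously absent; one must check (as the paper does) that they also contribute $1$.
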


\begin{proof} The same approach as in the proof of Theorem \ref{tea} works. Let $T'$ be the region obtained from $H'$ by completely squeezing out the up-pointing lobes of the fern into the down-pointing lobes. Let $T_1$ and $T_2$ be the regions obtained in an analogous fashion from the symmetrized regions $H_1$ and $H_2$.
 
  Write down equation \eqref{eel} for the pairs of regions $(H',T')$ and $(H_1,T_1)$, and divide the resulting equations side by side to get an expression for $\frac{\M(H')/\M(T')}{\M(H_1)/\M(T_1)}$. Do the same for the pairs of regions $(H',T')$ and $(H_2,T_2)$ to get an expression for $\frac{\M(H')/\M(T')}{\M(H_2)/\M(T_2)}$. Finally, multiply these two equations to get an expression for $\frac{\M(H')^2/(\M(H_1)\M(H_2))}{\M(T')^2/(\M(T_1)\M(T_2))}$; this will involve only point couples and line couples, as the weights $\w$ cancel out (since they are the same for $H'$, $H_1$ and $H_2$, and also for $T'$, $T_1$ and $T_2$). If the distances $x$ and $y$ are as indicated in the picture on the right in Figure \ref{feb}, the node couples pertaining to the regions $H'$, $H_1$ and $H_2$ contribute
\begin{equation}  
\prod_A\frac{\ka_A^{(H')}\ka_{A'}^{(H')}}{\ka_A^{(H_1)}\ka_{A'}^{(H_1)}}  
 \frac{\ka_A^{(H')}\ka_{A'}^{(H')}}{\ka_A^{(H_2)}\ka_{A'}^{(H_2)}}  
=
\prod_A
\frac{\h(y+1)\h(x)\, \h(x+1)\h(y) }{\h(y)\h(x)\, \h(y)\h(x) }
\frac{\h(y+1)\h(x)\, \h(x+1)\h(y) }{\h(y+1)\h(x+1)\, \h(y+1)\h(x+1) }
=1,
\label{eem}
\end{equation}
where the products are over the nodes of the fern to the left of its symmetry axis, and $A'$ is the mirror image of $A$ across this axis.

A similar calculation shows that the line couples pertaining to the regions $H'$, $H_1$ and $H_2$ also contribute 1. The same calculations show that the factors pertaining to the regions $T'$, $T_1$ and $T_2$ have an overall contribution of 1 as well. This shows that we have $\M(H')^2/(\M(H_1)\M(H_2))=\M(T')^2/(\M(T_1)\M(T_2))$. The statement follows now from Theorem \ref{tdd}, upon observing that for the outer hexagon of $T'$, the $a$-, $b$- and $k$-parameters in that theorem are $x$, $y$ and $\al+\be$, respectively.
\end{proof}









\section{Nearly symmetric hexagons with a lozenge hole}

Using Theorem \ref{tbb}, we can also obtain a counterpart of a result of Fulmek and Krattenthaler which was presented in \cite{FulmekKrattenthaler1}. In \cite{FulmekKrattenthaler1} and \cite{FulmekKrattenthaler2}, these authors considered the problem of enumerating lozenge tilings of a hexagon that has both vertical and horizontal symmetry --- in this section, we will call this a {\it symmetric hexagon}\footnote{ Doubly-symmetric would be more descriptive, but then naming the region obtained by translating the lozenge-shaped hole off the symmetry axis would become cumbersome.} --- which contain a fixed lozenge (or equivalently, tilings of a hexagon with a lozenge-shaped hole). In \cite{FulmekKrattenthaler1}, they considered a symmetric hexagon, fixed a unit lozenge on the vertical\footnote{Unlike in the current paper, in \cite{FulmekKrattenthaler1}, the authors use a triangular lattice such that one family of lattice line is vertical. Thus, a horizontal symmetry axis in \cite{FulmekKrattenthaler1} corresponds to a vertical symmetry axis in the current paper.} symmetry axis in an arbitrary position, and proved the following two results
(Figure \ref{ffa} illustrates the two types of regions considered).

\begin{figure}
    \centering
    \includegraphics[width=.6\textwidth]{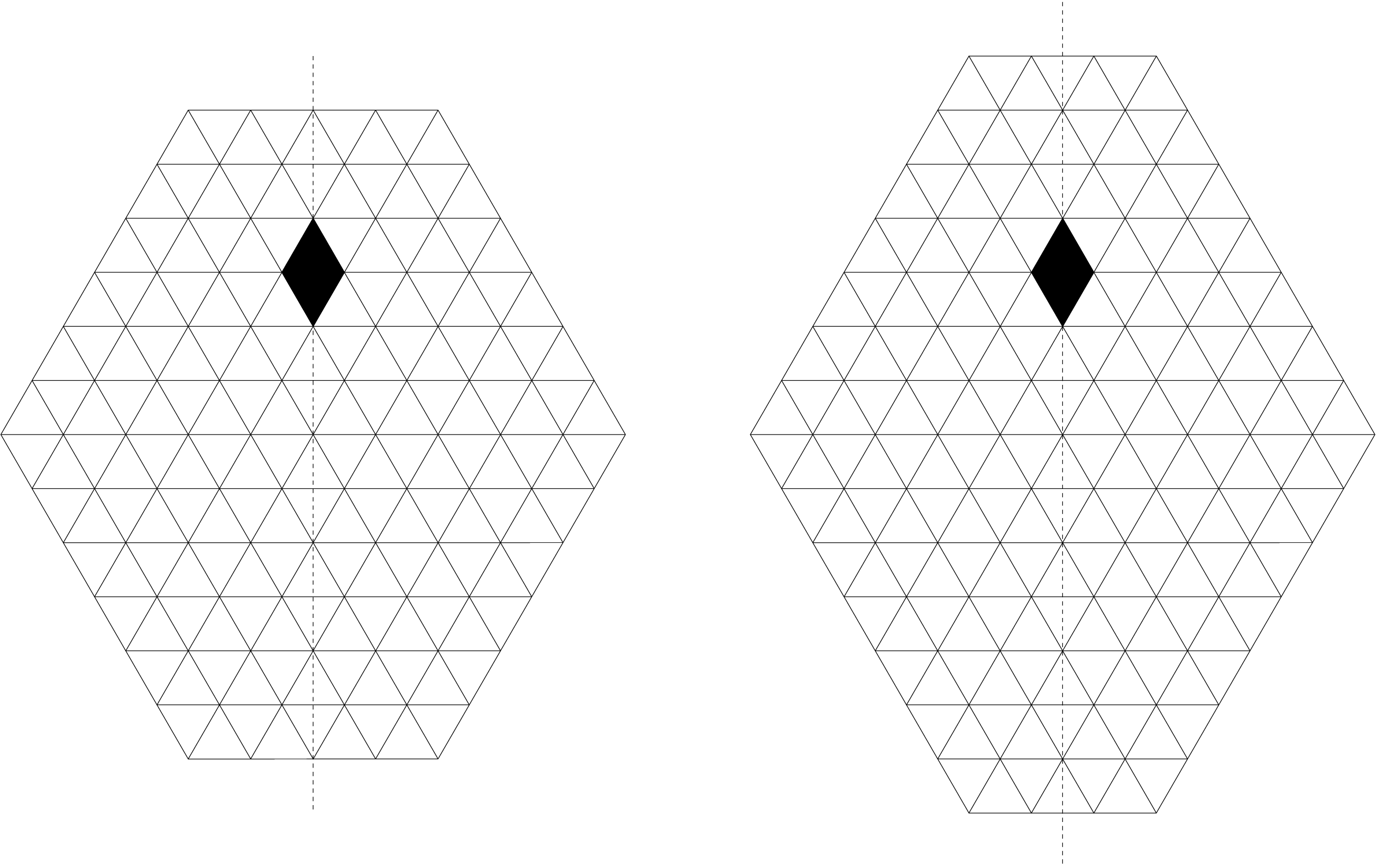}
    \caption{The regions considered in Theorem \ref{tfa} (left) and Theorem \ref{tfb} (right). These show the case when $m=2$, $N=6$, and $l=2$.}
    \label{ffa}
\end{figure}

\begin{thm}[Theorem 1 in \cite{FulmekKrattenthaler1}]
    Let $m$ be a non-negative integer and $N$ a positive integer. The number of lozenge tilings of a hexagon with sides $N,2m,N,N,2m,N$, which contain the $l$-th lozenge on the symmetry axis which cuts through the sides of length $2m$, equals\footnote{ The reason the formula is written as a prefactor times the shown triple product is that, by a classical result of MacMahon \cite{MacM}, the number of lozenge tilings of a hexagon of side-lengths $a$, $b$, $c$, $a$, $b$, $c$ equals $\prod_{i=1}^{a}\prod_{j=1}^{b}\prod_{k=1}^{c}\frac{i+j+k-1}{i+j+k-2}$.} 
    \begin{equation}
        \frac{m\binom{m+N}{m}\binom{m+N-1}{m}}{\binom{2m+2N-1}{2m}}\Bigg(\sum_{e=0}^{l-1}(-1)^{e}\binom{N}{e}\frac{(N-2e)(\frac{1}{2})_{e}}{(m+e)(m+N-e)(\frac{1}{2}-N)_{e}}\Bigg)\cdot\prod_{i=1}^{N}\prod_{j=1}^{N}\prod_{k=1}^{2m}\frac{i+j+k-1}{i+j+k-2}.
    \end{equation}
\label{tfa}
\end{thm}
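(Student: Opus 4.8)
The plan is to reduce the count to a non-intersecting lattice path computation. A lozenge tiling of the hexagon $H$ with sides $N,2m,N,N,2m,N$ that uses a prescribed lozenge $\lambda$ is the same as a lozenge tiling of the region $H\setminus\lambda$; and via the standard correspondence between lozenge tilings and families of non-intersecting lattice paths, an unconstrained tiling of $H$ is encoded by a system of $N$ non-intersecting paths whose enumeration is a Lindstr\"om--Gessel--Viennot determinant evaluating, by MacMahon's box formula, to $\prod_{i=1}^{N}\prod_{j=1}^{N}\prod_{k=1}^{2m}\frac{i+j+k-1}{i+j+k-2}$. Requiring $\lambda$ to appear means that one prescribed unit segment on the symmetry axis belongs to exactly one of these paths. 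First I would split the count according to which path (say the $e$-th, counted from the appropriate end) passes through that segment: for each admissible value --- these turn out to be $e=0,1,\dots,l-1$ --- the configuration factors as a system of non-intersecting paths on the part of the hexagon ``before'' $\lambda$ times one on the part ``after'' $\lambda$, each of which is again a Lindstr\"om determinant evaluating to a MacMahon/Vandermonde-type product. This yields $\M(H\setminus\lambda)$ as a sum of $l$ products, which is the source of the single sum in the statement.

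As an alternative organization that exploits the symmetry and the machinery of this paper, I would instead note that since $\lambda$ straddles the symmetry axis $\ell$ symmetrically, $H\setminus\lambda$ is symmetric about $\ell$; after deleting forced lozenges and rotating, Theorem~\ref{tba} (part (a) or (b), depending on the parity of the number of dual vertices on $\ell$, which is dictated by $N$, $m$ and $l$) gives $\M(H\setminus\lambda)=2^{\w}\M(R^{+})\M(R^{-})$. Choosing the color of the leftmost axial vertex so that the prescribed zigzag cut runs to one side of $\lambda$, one half (say $R^{-}$) is a fixed dented semihexagon independent of $l$, whose tiling number is a MacMahon-type product supplying the prefactor $\frac{m\binom{m+N}{m}\binom{m+N-1}{m}}{\binom{2m+2N-1}{2m}}$ and part of the triple product, while the other half $R^{+}$ is a trapezoidal region with a single unit lozenge hole at height $l$. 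One then computes $\M(R^{+})$ either directly by the path argument of the previous paragraph on a smaller region, or by establishing a first-order recurrence in $l$ via Kuo's graphical condensation (Theorem~\ref{tbc}), with the four distinguished triangles placed around the lozenge hole, and telescoping from the base case $l=1$.

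The step I expect to be the main obstacle is, in either route, matching the resulting sum term-by-term with $(-1)^{e}\binom{N}{e}\dfrac{(N-2e)(\tfrac12)_{e}}{(m+e)(m+N-e)(\tfrac12-N)_{e}}$: this amounts to taking the two MacMahon/Vandermonde products coming out of each term and simplifying their ratio to the MacMahon prefactor into that hypergeometric-type expression --- essentially re-deriving the Fulmek--Krattenthaler identity --- and care is needed to handle the two parity cases (Theorem~\ref{tba}(a) versus (b)) uniformly, and to pin down the correct range $e=0,\dots,l-1$ and the signs $(-1)^{e}$ that arise from the Laplace expansion of the near-MacMahon determinant.
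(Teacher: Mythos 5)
This statement is not proved in the paper at all: Theorem~\ref{tfa} is quoted verbatim from Fulmek--Krattenthaler \cite{FulmekKrattenthaler1} and is used only as background for Section~6, so there is no in-paper proof to compare your attempt against. Judged on its own, your second route is essentially the proof that exists in the literature and that the paper's Section~6 machinery reflects: since the deleted lozenge straddles the symmetry axis, $H\setminus\lambda$ is symmetric, and Theorem~\ref{tba} splits it into a hole-free dented half (the region $S(N,m)$ of Lemma~\ref{lfg}, which yields the MacMahon-type prefactor) and a half carrying the unit hole at position $l$ with the remaining axial lozenges weighted $1/2$ (the region $C(N,m,l)$ of Lemma~\ref{lfh}). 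Two corrections to that route: the relevant case is Theorem~\ref{tba}(a), not (b) --- the hole is on the axis, the region is symmetric with an even number of cells, and the parity does not depend on $l$; and your description of the second half omits the $1/2$ weights on the $N-1$ surviving axial lozenges, which are essential for the factorization to be exact.

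The genuine gap is that the entire analytic content of the theorem --- the sum $\sum_{e=0}^{l-1}(-1)^{e}\binom{N}{e}\frac{(N-2e)(\frac12)_{e}}{(m+e)(m+N-e)(\frac12-N)_{e}}$ --- lives in the evaluation of the weighted half-region with the hole (equation \eqref{efj}), and neither of your routes actually produces it; you candidly flag this as ``essentially re-deriving the Fulmek--Krattenthaler identity,'' which is to say the proof is not done. Moreover, the mechanism you propose in Route~1 for generating the sum is unsound as stated: conditioning on which of the $N$ non-intersecting paths uses the marked edge does \emph{not} factor the count into a product of two independent path systems, since non-intersection is a global constraint; what works instead is a minor/Laplace expansion of the Lindstr\"om--Gessel--Viennot determinant along the axis (this is where the signs $(-1)^e$ and the range $e=0,\dots,l-1$ actually come from), and that determinant evaluation is precisely the step you have not supplied.
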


\begin{thm}[Theorem 2 in \cite{FulmekKrattenthaler1}]
    Let $m$ and $N$ be positive integers. The number of lozenge tilings of a hexagon with sides $N+1,2m-1,N+1,N+1,2m-1,N+1$, which contain the $l$-th lozenge on the symmetry axis which cuts through the sides of length $2m-1$, equals
    \begin{equation}
        \frac{m\binom{m+N}{m}\binom{m+N-1}{m}}{\binom{2m+2N-1}{2m}}\Bigg(\sum_{e=0}^{l-1}(-1)^{e}\binom{N}{e}\frac{(N-2e)(\frac{1}{2})_{e}}{(m+e)(m+N-e)(\frac{1}{2}-N)_{e}}\Bigg)\cdot\prod_{i=1}^{N+1}\prod_{j=1}^{N+1}\prod_{k=1}^{2m-1}\frac{i+j+k-1}{i+j+k-2}.
    \end{equation}    
\label{tfb}
\end{thm}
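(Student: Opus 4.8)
The plan is to obtain Theorem~\ref{tfb} from the factorization theorem (Theorem~\ref{tba}), in the spirit of the rest of the paper. Fix $m$, $N$ and $l$, and let $R_{l}$ be the region obtained from the hexagon $H$ with sides $N+1,2m-1,N+1,N+1,2m-1,N+1$ by deleting the $l$-th lozenge along the symmetry axis $\ell$ that cuts through the two sides of length $2m-1$; the number to be computed is $\M(R_{l})$. Since that lozenge straddles $\ell$ symmetrically, $R_{l}$ is itself a plane bipartite symmetric region, and we may assume it is tileable (else there is nothing to prove), so its dual has an even number of vertices. The first step is therefore to apply Theorem~\ref{tba}(a) to the planar dual of $R_{l}$, obtaining $\M(R_{l})=2^{\w_{\ell}(R_{l})}\M(R_{l}^{+})\M(R_{l}^{-})$.

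The second step is to identify the two ``half''-regions. Cutting $H$ along $\ell$ produces two congruent trapezoidal half-hexagons with a staircase edge; carrying the deleted lozenge through the cut turns one of them, say $R_{l}^{-}$, into a plain half-hexagon whose number of (weighted) lozenge tilings is a classical product of MacMahon type (a Lindström--Gessel--Viennot determinant that evaluates in closed product form), while the other, $R_{l}^{+}$, becomes a half-hexagon carrying one extra removed unit triangle whose position is governed by $l$. The tiling number of the latter is again an LGV determinant, but expanding it along the row indexed by the $l$-dependent defect yields a \emph{terminating single sum}; this is exactly the mechanism that forces a genuine sum rather than a pure product into the answer, and after normalizing the summation index it becomes the sum $\sum_{e=0}^{l-1}(-1)^{e}\binom{N}{e}\frac{(N-2e)(\frac{1}{2})_{e}}{(m+e)(m+N-e)(\frac{1}{2}-N)_{e}}$ of the statement. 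It then remains to assemble $2^{\w_{\ell}(R_{l})}$, the MacMahon product for $R_{l}^{-}$, and the prefactor of the sum coming from $R_{l}^{+}$, and to match the total with $\frac{m\binom{m+N}{m}\binom{m+N-1}{m}}{\binom{2m+2N-1}{2m}}\cdot\prod_{i=1}^{N+1}\prod_{j=1}^{N+1}\prod_{k=1}^{2m-1}\frac{i+j+k-1}{i+j+k-2}$, using that the triple product is, by MacMahon, $\M(H)$; this last part is lengthy but mechanical Pochhammer/factorial algebra.

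The step I expect to be the real obstacle is pinning down the LGV-determinant expansion for $R_{l}^{+}$ in \emph{exactly} the displayed hypergeometric form: the determinant manifestly gives \emph{some} terminating sum, but showing it equals the particular balanced sum above is a genuine hypergeometric summation, not a routine rearrangement. A second route, which avoids this at the cost of an induction on $l$, is to apply Kuo's condensation (Theorem~\ref{tbc}) to the dual of $H$ with four suitably placed boundary unit triangles so that, after deleting forced lozenges, Kuo's identity collapses to a first-order recurrence $\M(R_{l})=\lambda_{l}\M(R_{l-1})+\mu_{l}$ with $\lambda_{l},\mu_{l}$ explicit products (tiling numbers of dented hexagons, all MacMahon-computable); one then checks the base case $\M(R_{1})$ (a pure product, directly computable from Theorem~\ref{tba}) and verifies that the claimed formula satisfies the recurrence, i.e.\ that its forward difference in $l$ is precisely the $e=l-1$ term of the sum, so that iterating produces the full sum. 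Here the obstacle shifts to locating a Kuo configuration that yields a \emph{clean} first-order recurrence in $l$, rather than one entangling several families of regions. Theorem~\ref{tfa} is handled identically, the only change being the parities of the objects involved; and combining Theorem~\ref{tfb} with the semi-factorization theorem (Theorem~\ref{tbb}) is exactly what is needed for the off-axis ``counterpart'' mentioned at the start of this section.
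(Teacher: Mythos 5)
The first thing to note is that the paper does not prove this statement at all: Theorem~\ref{tfb} is quoted verbatim, with attribution, as Theorem~2 of \cite{FulmekKrattenthaler1}, and serves only as background for the new results \ref{tfe} and \ref{tff}. So there is no in-paper proof to measure your proposal against; what can be assessed is whether your sketch would actually constitute a proof.

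Your first route is essentially the strategy of the original reference, and it is the right one: the region is symmetric about $\ell$, Theorem~\ref{tba}(a) applies to its dual, and the two halves are exactly the regions the paper recalls in Section~6 --- an $S(\cdot,\cdot)$-type pentagon (Lemma~\ref{lfg}, formula \eqref{efi}) and a $C(\cdot,\cdot,l)$-type pentagon with one vertical lozenge deleted and the remaining ones weighted by $\tfrac12$ (Lemma~\ref{lfh}, formula \eqref{efj}); the sum over $e$ in the statement is precisely the sum in \eqref{efj}. However, as a proof your proposal has a genuine gap, and you have correctly located it yourself: the entire difficulty is the closed-form evaluation of the defected, $\tfrac12$-weighted half-region ``in exactly the displayed hypergeometric form.'' Asserting that the LGV determinant ``yields a terminating single sum'' which ``after normalizing the summation index becomes'' the stated balanced sum is not an argument --- that evaluation is the substance of Fulmek and Krattenthaler's paper and requires a nontrivial determinant/hypergeometric computation. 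Within the ecosystem of the present paper you could legitimately close the gap by citing \eqref{efj} (which is how the authors themselves use these objects in proving Theorems~\ref{tfe} and \ref{tff}), but then the proof is a two-line application of Theorem~\ref{tba}(a) plus bookkeeping, and the ``obstacle'' paragraph is doing no work. Two smaller imprecisions: the defect that lands in one half is a removed slot (a lozenge position on the axis), not ``one extra removed unit triangle,'' and the non-defected half is not a plain MacMahon hexagon but the dented pentagon $S(N,m)$ whose product formula is \eqref{efi}. Your alternative Kuo-condensation route is plausible in outline but is likewise only a plan; no explicit choice of $a,b,c,d$ is given, and the claim that the resulting recurrence in $l$ is clean is exactly the point that would need to be checked.
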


In \cite{FulmekKrattenthaler2} Fulmek and Krattenthaler considered the problem of enumerating lozenge tilings of a symmetric hexagon with a lozenge hole on the {\it horizontal} symmetry axis. Formulas were provided for three cases:
when the lozenge hole is (1) a half unit, (2) one unit, and (3) one and a half units away from the vertical symmetry axis. We recall the two theorems in \cite{FulmekKrattenthaler2} that correspond to the first case (the two pictures in Figure \ref{ffb} illustrate these regions).

\begin{figure}
    \centering
    \includegraphics[width=.6\textwidth]{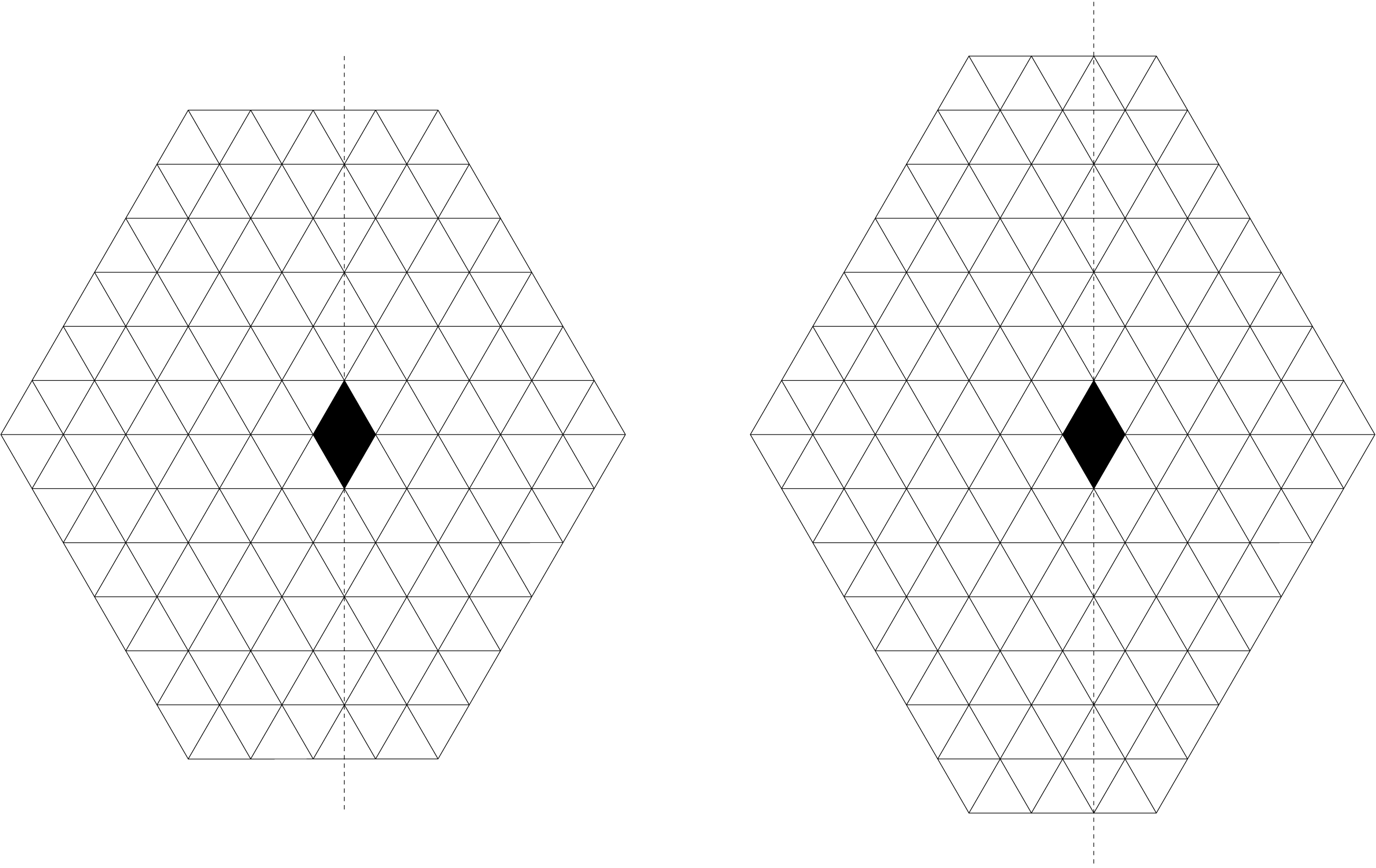}
    \caption{The regions considered in Theorem \ref{tfc} (left) and Theorem \ref{tfd} (right). These pictures illustrate the case when $m=2$ and $n=3$.}
    \label{ffb}
\end{figure}

\begin{thm} [Theorem 1 in \cite{FulmekKrattenthaler2}]
    Let $n$ and $m$ be positive integers. The number of lozenge tilings of a hexagon with side lengths $2n,2m,2n,2n,2m,2n$ $($clockwise from top left$)$, which contain the lozenge just to the right of the center of the hexagon, equals
    \begin{equation}
    \begin{aligned}
        \frac{nm\binom{2n}{n}\binom{2n-1}{n}\binom{2m}{m}}{\binom{4n+2m-1}{2n+m}}\Bigg(-\frac{1}{(n+m)^2}&+\frac{4n+2}{(n+1)(2n-1)(n+m-1)(n+m+1)}\\
        &\cdot\sum_{h=0}^{n-1}\frac{(2)_{h}(1-n)_{h}(\frac{3}{2}+n)_{h}(1-n-m)_{h}(1+n+m)_{h}}{(1)_{h}(2+n)_{h}(\frac{3}{2}-n)_{h}(2+n+m)_{h}(2-n-m)_{h}}\Bigg)\\
        &\cdot\prod_{i=1}^{2n}\prod_{j=1}^{2m}\prod_{k=1}^{2n}\frac{i+j+k-1}{i+j+k-2}.            
    \end{aligned}
    \end{equation}
\label{tfc}
\end{thm}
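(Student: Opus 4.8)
The plan is to deduce Theorem \ref{tfc} from the semi-factorization result Theorem \ref{tbb}. First I would observe that a symmetric hexagon of sides $2n,2m,2n,2n,2m,2n$ with the lozenge just to the right of its center deleted is precisely a nearly symmetric hexagon with holes in the sense of Section 2: the deleted lozenge — taken to be the one whose axis of symmetry is vertical — is the union of an up‑pointing and a down‑pointing unit triangle, each of which is itself symmetric about that vertical line, so the pair $\mathcal C$ consisting of these two triangles is symmetric about an axis $\ell'$ lying one half unit to the right of the hexagon's symmetry axis $\ell$. The corresponding "untranslated" region $R_0$, in which the lozenge straddles $\ell$, is a hexagon with a single bulk lozenge removed and is therefore tileable (one may simply exhibit a tiling), so all hypotheses of Theorem \ref{tbb} are satisfied, and it yields
\[
\M(R)=2^{\w_{\ell'}(R)-1}\bigl(\M(R^+)\M(R^-)+\M(\widehat R^+)\M(\widehat R^-)\bigr).
\]

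The next step is to identify the four half‑regions produced by the two admissible zigzag cuts along $\ell'$, exactly as was done in Section 4 for the regions $H'_{\mathbf l}(a,b,k)$. Cutting through the lozenge hole separates its two unit triangles, so each of $R^{\pm}$ and $\widehat R^{\pm}$ is a half‑hexagon carrying at most one additional unit defect along its zigzag side. I would show concretely that two of them are plain half‑hexagons (whose lozenge tilings are counted by a classical product formula, e.g.\ via the Lindström–Gessel–Viennot lemma), while the other two are half‑hexagons with a single unit triangular dent just off the long zigzag side — equivalently, one‑element‑list specializations of the $R$‑ and $\bar R$‑regions of subsection \ref{subsection4.2}, whose generating functions $P_{(l),\emptyset}$ and $\bar P_{(l),\emptyset}$ from Theorem \ref{propositionpp1} are explicit products of factorials. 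Because $\ell'$ is half a unit off the axis of the outer hexagon, the "left" and "right" halves differ in size by one unit, which is exactly what keeps the two terms of the semi‑factorization formula distinct. Substituting these four closed forms together with the power of two, one obtains $\M(R)$ as a prefactor times MacMahon's triple product $\prod_{i=1}^{2n}\prod_{j=1}^{2m}\prod_{k=1}^{2n}\frac{i+j+k-1}{i+j+k-2}$ times an explicit combination of ratios of factorials.

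It then remains to reconcile this expression with the one in the statement of Theorem \ref{tfc}, i.e.\ to prove that the terminating hypergeometric sum appearing in the Fulmek–Krattenthaler formula equals the explicit factorial combination coming out of the semi‑factorization argument. This evaluation is the step I expect to be the main obstacle; the geometric identification of the half‑regions requires care but is routine, whereas the matching is a genuine hypergeometric identity. I see two routes. The conceptual one is to recognize one of the two dented half‑regions not as a bare product but, up to forced lozenges, as (an image of) a symmetric hexagon carrying a lozenge on its symmetry axis, so that its tiling number is given directly by the single sums of Theorem \ref{tfa} or Theorem \ref{tfb}; inserting that into the semi‑factorization formula then reproduces Fulmek–Krattenthaler's closed form verbatim, with the sum inherited rather than resummed. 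The computational fallback is to prove the required summation identity directly — e.g.\ by Gosper/Zeilberger creative telescoping, which is precisely the mechanism that turns a terminating balanced sum into a combination of boundary terms — thereby also exhibiting a (possibly simpler) two‑product form of the answer. Route one is the one I would try first.

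Finally, the companion statement for hexagons of odd side length (Theorem \ref{tfd}), as well as the other parity configurations of $n$ and $m$, are handled by the identical three‑step argument with only the evident bookkeeping changes; I would present the main case in full detail and indicate the remaining ones briefly, as in the proof of Theorems \ref{tdb}–\ref{tdc}.
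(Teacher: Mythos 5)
First, be aware that the paper does not prove Theorem \ref{tfc} at all: it is quoted verbatim from \cite{FulmekKrattenthaler2} as background, and the paper's own contribution in Section 6 is the pair of generalizations, Theorems \ref{tfe} and \ref{tff}, which are proved by exactly the strategy you outline (apply Theorem \ref{tbb} to the hexagon with the off-axis lozenge hole, identify the four half-regions produced by the two zigzag cuts, and quote known formulas for them). So your overall plan is sound and mirrors the paper's treatment of the more general statement. The execution of your middle step, however, is wrong in a way that changes what the final step has to accomplish. The deleted lozenge is a slot straddling $\ell'$, i.e.\ an edge of the dual graph lying \emph{on} the axis; the zigzag cuts of the factorization procedure do not separate the two unit triangles of a slot --- each slot is assigned, intact and with weight $1/2$, wholly to one of the two halves. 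Consequently, in each of the two decompositions one half is a clean pentagonal region $S(N,m)$ with a pure product formula (Lemma \ref{lfg}), while the other half is a region $C(N-1,\cdot,l)$ carrying the \emph{entire} lozenge hole together with $1/2$-weights on all remaining boundary slots. These $C$-regions are not one-element-list $R$- or $\bar R$-regions, and their generating functions are not products of factorials: by Lemma \ref{lfh} each is a product times a terminating alternating sum. Your claim that ``two of them are plain half-hexagons \dots\ while the other two are half-hexagons with a single unit triangular dent \dots\ whose generating functions \dots\ are explicit products of factorials'' is therefore false, and the first version of your plan (a sum of two products, to be matched against Fulmek--Krattenthaler's sum by one hypergeometric evaluation) cannot be carried out as stated.

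Your ``route one'' is closer to the truth --- the sums attached to the $C$-regions are indeed the same single sums that appear in Theorems \ref{tfa} and \ref{tfb} --- but it still does not close the argument. What the semi-factorization actually yields is the paper's Theorem \ref{tfe}: a prefactor times $\sigma_1(N,m,l)$, which is a linear combination of \emph{two} such terminating sums, one from each term of \eqref{ebd}, evaluated at the central value of $l$. This is a genuinely different closed form from the single $\sum_{h=0}^{n-1}$ sum plus the rational term $-1/(n+m)^2$ appearing in Theorem \ref{tfc}; the paper deliberately presents its result only as an ``extension in the style of'' Theorems \ref{tfa}--\ref{tfb} and never verifies that the two forms agree. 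To actually prove Theorem \ref{tfc} by this route you must still establish that transformation, which is an identity between two different terminating sums rather than between a product and a sum as your write-up suggests. Your Zeilberger fallback could plausibly dispose of it, but as written the proposal neither identifies the correct identity to be proved nor proves it, so there is a real gap at the final step.
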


\begin{thm} [Theorem 2 in \cite{FulmekKrattenthaler2}]
    Let $n$ be a non-negative integer and $m$ a positive integer. The number of lozenge tilings of a hexagon with side lengths $2n+1,2m-1,2n+1,2n+1,2m-1,2n+1$ $($clockwise from top left$)$, which contain the lozenge just to the right of the center of the hexagon, equals
    \begin{equation}
    \begin{aligned}
        \frac{(n+1)m\binom{2n}{n}\binom{2n+1}{n}\binom{2m-1}{m}}{\binom{4n+2m}{2n+m}}\Bigg(\frac{1}{(n+m)^2}&+\frac{4n}{(n+1)(2n-1)(n+m-1)(n+m+1)}\\
        &\cdot\sum_{h=0}^{n-1}\frac{(2)_{h}(1-n)_{h}(\frac{3}{2}+n)_{h}(1-n-m)_{h}(1+n+m)_{h}}{(1)_{h}(2+n)_{h}(\frac{3}{2}-n)_{h}(2+n+m)_{h}(2-n-m)_{h}}\Bigg)\\
        &\cdot\prod_{i=1}^{2n+1}\prod_{j=1}^{2m-1}\prod_{k=1}^{2n+1}\frac{i+j+k-1}{i+j+k-2}.            
    \end{aligned}
    \end{equation}
\label{tfd}
\end{thm}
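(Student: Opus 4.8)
The plan is to derive the formula directly from the semi-factorization theorem (Theorem~\ref{tbb}): a lozenge hole half a unit off the symmetry axis is precisely the situation that theorem was built to handle. Counting the lozenge tilings of the hexagon $H$ with sides $2n+1,2m-1,2n+1,2n+1,2m-1,2n+1$ that contain the fixed lozenge $\lambda$ is the same as counting the lozenge tilings of $H':=H\setminus\lambda$. Since $\lambda$ lies just off the vertical symmetry axis $\ell$ of $H$, the hole of $H'$ is symmetric about the half-unit translate $\ell'$ of $\ell$; in fact $H'$ is obtained from the doubly symmetric hexagon with a single central slot removed --- call this symmetric region $R_0$ --- by translating that slot one unit in the southeast lattice direction. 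Thus $H'$ is a nearly symmetric hexagon with holes in the sense of Section~2, $R_0$ is visibly tileable for $m,n\ge 1$, and Theorem~\ref{tbb} gives
\begin{equation*}
\M(H')=2^{\w_{\ell'}(H')-1}\bigl(\M(H'^{+})\M(H'^{-})+\M(\widehat{H'}^{+})\M(\widehat{H'}^{-})\bigr),
\end{equation*}
where, after reorienting so that $\ell$ is vertical, the unprimed hexagon $H(a,b,k)$ underlying $R_0$ has $a=2m-1$, $b=2n+1$, $k=0$.

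The next step is to identify the four half-regions. Cutting $H'$ along either of the two zigzag paths along $\ell'$ prescribed by the factorization theorem and removing the forced lozenges leaves a semihexagonal (trapezoidal) region; because the slot straddles $\ell'$, in each pair one half carries a single triangular hole while the other is a plain semihexagon. The number of lozenge tilings of a plain semihexagon is the classical Gelfand--Tsetlin / MacMahon-type product, while that of a semihexagon with one triangular hole is a classical single-sum expression obtainable from the Lindstr\"om--Gessel--Viennot lemma (equivalently, from the Cohn--Larsen--Propp-type evaluation for such regions); this single sum is the origin of the $\sum_{h=0}^{n-1}$ in the statement. Substituting these four evaluations into the displayed identity, pulling out the full-hexagon product $\prod_{i=1}^{2n+1}\prod_{j=1}^{2m-1}\prod_{k=1}^{2n+1}\frac{i+j+k-1}{i+j+k-2}$ (using Theorem~\ref{tba}(a) to rewrite the products of semihexagon counts in terms of $\M(H)$), and collecting the rational prefactor, one is left with an expression of the form (rational prefactor)$\times$(single sum)$\times$(MacMahon product).

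The last step --- which I expect to be the main obstacle --- is the hypergeometric simplification matching that expression with the stated one: reconciling the combination of the Lindstr\"om--Gessel--Viennot sum for the notched half with the ``$+$'' of the two terms of Theorem~\ref{tbb} against the specific form $\tfrac{1}{(n+m)^2}+\frac{4n}{(n+1)(2n-1)(n+m-1)(n+m+1)}\sum_{h}(\cdots)$. This amounts to a contiguous-relation / Pfaff--Saalsch\"utz-type identity for a terminating ${}_5F_4$, and is most safely verified by writing both sides as sums and running Gosper/Zeilberger-style certification. Along the way one must track the parity bookkeeping for the reoriented top and bottom sides of $H'$ (the proof of Theorem~\ref{tbb} carries out one such case in detail, the others being analogous), compute $\w_{\ell'}(H')$ exactly, and treat the degenerate small cases (e.g.\ $n=0$) by direct inspection. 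The companion statement Theorem~\ref{tfc} (even side lengths, lozenge just right of the center) follows by the same route, with the two branches of Theorem~\ref{tba}(b) interchanged.
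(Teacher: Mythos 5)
First, a point of orientation: the paper does not prove Theorem \ref{tfd} at all --- it is quoted verbatim from Fulmek and Krattenthaler, and the paper's own contribution in Section 6 is the pair of generalizations, Theorems \ref{tfe} and \ref{tff}, of which \ref{tfd} is the special case where the deleted lozenge sits at the central position $l$. Your plan is essentially the paper's proof of Theorem \ref{tff}: pass to the region with the lozenge removed, apply the semi-factorization theorem (Theorem \ref{tbb}) along the two zigzag cuts through $\ell'$, evaluate the four half-regions, and recombine. Two corrections to the execution, though. The half-regions are not ``a plain semihexagon'' and ``a semihexagon with a single triangular hole'': they are the pentagonal regions $S(\cdot,\cdot)$ and $C(\cdot,\cdot,\cdot)$ of Figure \ref{ffd}. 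In the notched half the removed object is a unit \emph{lozenge} (two unit triangles), and --- crucially --- the remaining lozenges in that boundary column carry weight $\tfrac12$, inherited from the halving of the on-axis edge weights in the cutting procedure of Theorem \ref{tba}. Its evaluation is therefore a \emph{weighted} count, given by Lemma \ref{lfh} (itself quoted from Fulmek--Krattenthaler), not a bare Lindstr\"om--Gessel--Viennot semihexagon-with-a-triangular-hole count; this is where the single sums actually enter.

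The genuine gap is the last step, which you flag but do not carry out. Substituting Lemmas \ref{lfg} and \ref{lfh} into the identity \eqref{efl} yields the answer in the form (prefactor) $\times\,\sigma_2(N,m,l)$, where $\sigma_2$ is a combination of \emph{two} sums over $e$ running from $0$ to $l-1$, in the style of Theorems \ref{tfa}--\ref{tfb}. The statement to be proved instead has the shape $\frac{1}{(n+m)^2}+\frac{4n}{(n+1)(2n-1)(n+m-1)(n+m+1)}\sum_{h=0}^{n-1}(\cdots)$, a single terminating sum of ${}_5F_4$ type over $h$ up to $n-1$. Reconciling these two closed forms at the central value of $l$ is a nontrivial hypergeometric transformation; ``write both sides as sums and run Gosper/Zeilberger'' is a verification plan, not a proof, and without that identity the argument does not establish the stated formula (this is precisely why the paper presents its Theorem \ref{tff} as a new result in a different form and simply cites \ref{tfd} rather than rederiving it). You would also need to treat $l=1$ separately --- Theorem \ref{tbb} is inapplicable there because $R_0$ is undefined --- which the paper handles in the proof of Theorem \ref{tff} via the trapezoid-with-dents formula of Cohn, Larsen and Propp.
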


Note that given a region $R$ and a lozenge $L$ inside the region, the lozenge tilings of $R$ containing $L$ can be identified with the lozenge tilings of the region obtained from $R$ by creating a lozenge-shaped hole at location $L$.  
Thus, the four theorems above can be regarded as lozenge tiling enumerations of symmetric hexagons with unit lozenges removed.

The result we present in this section is an extension of Theorems \ref{tfc} and \ref{tfd} in the style of Theorems \ref{tfa} and \ref{tfb}. For a non-negative integer $m$ and a positive integer $N$, let $V(N,2m,l)$ be the region obtained from the symmetric hexagon of side-lengths $2m,N,N,2m,N,N$ (clockwise from top) by deleting the $l$-th lozenge on the vertical symmetry axis. Similarly, let $V'(N,2m,l)$ be the region obtained from the same hexagon by deleting the $l$-th lozenge on the vertical axis that is half a unit to the right of the vertical symmetry axis.

Similarly, for positive integers $m$ and $N$, let $V(N+1,2m-1,l)$ be the region obtained from the symmetric hexagon of side-lengths $2m-1,N+1,N+1,2m-1,N+1,N+1$ (clockwise from top) by deleting the $l$-th lozenge on the vertical symmetry axis. Similarly, let $V'(N+1,2m-1,l)$ be the region obtained from the same hexagon by deleting the $l$-th lozenge on the vertical axis that is half a unit to the right of the vertical symmetry axis.

The first two results of Fulmek and Krattenthaler stated above (Theorem \ref{tfa} and \ref{tfb}) are about the number of lozenge tilings of the regions $V(N,2m,l)$ and $V(N+1,2m-1,l)$.
The  main results of this section (see Theorems \ref{tfe} and \ref{tff} below) give formulas for the number of lozenge tilings of the regions $V'(N,2m,l)$ and $V'(N+1,2m-1,l)$, and thus extends the results of Fulmek and Krattenthaler quoted as Theorems \ref{tfc} and \ref{tfd} by lending them the generality of the context of their results quoted as Theorems \ref{tfa} and \ref{tfb}.




\begin{figure}
    \centering
    \includegraphics[width=.6\textwidth]{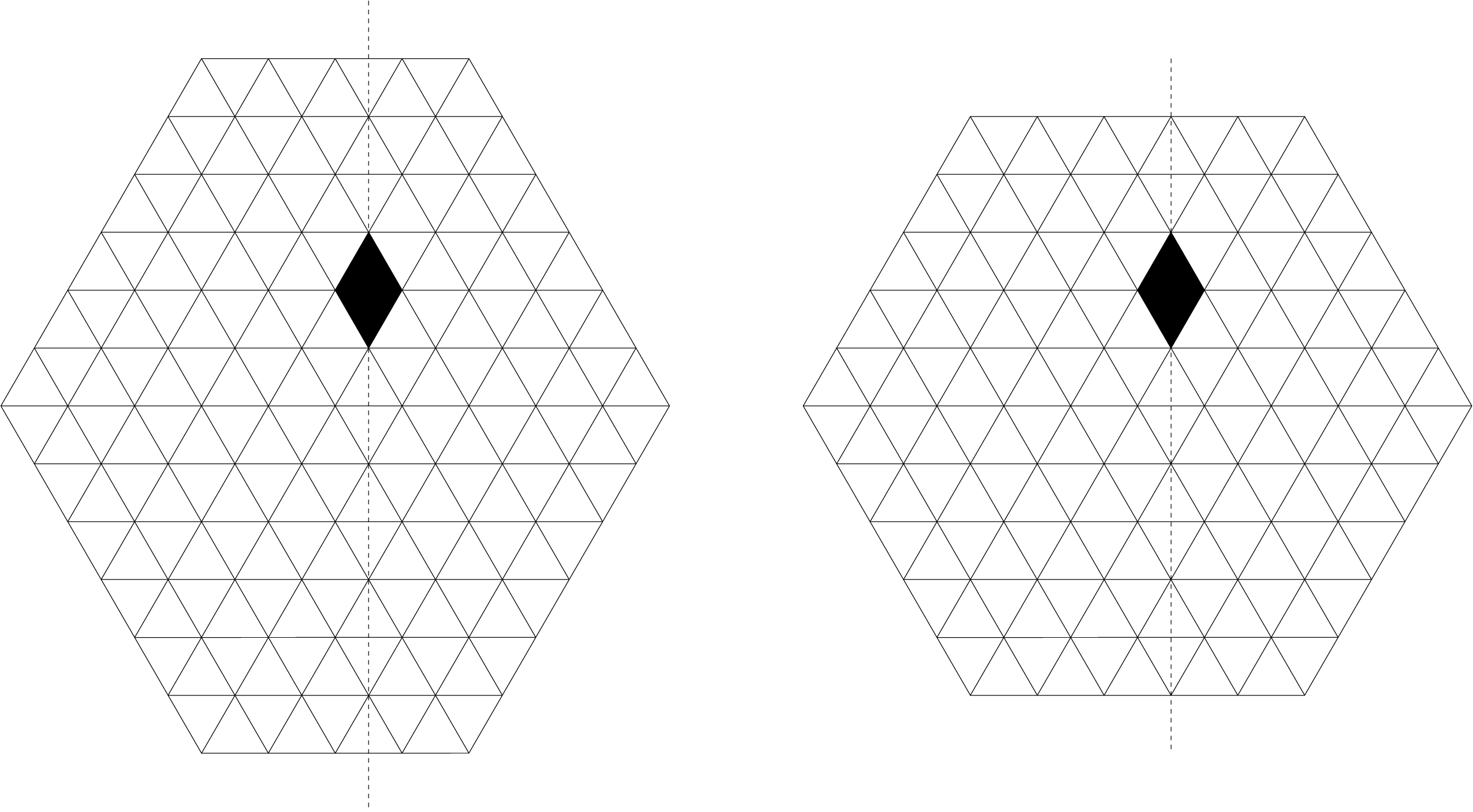}
    \caption{The regions considered in the current section. These two pictures illustrate $V'(N,2m,l)$ and $V'(N-1,2m+1,l)$ when $m=2$, $N=6$, and $l=2$.}
    \label{ffc}
\end{figure}

\begin{thm}
    Let $m$ be a positive integer and $N$ be a positive integer greater than $1$. The number of lozenge tilings of the region  $V'(N,2m,l)$ obtained from the hexagon with sides $2m,N,N,2m,N,N$ $($clockwise from top$)$ by deleting the $l$-th lozenge from the top on the vertical axis half a unit away from the vertical symmetry axis, is given by
    \begin{equation}
        \M(V'(N,2m,l))=\frac{(m+1)_{N}(m)_{N}(N)_{N-1}}{(2m+1)_{N}(2m+N)_{N}(N-1)!}\cdot\sigma_{1}(N,m,l)\cdot\prod_{i=1}^{N}\prod_{j=1}^{N}\prod_{k=1}^{2m}\frac{i+j+k-1}{i+j+k-2}
    \end{equation}
where
\begin{equation}
\begin{aligned}
    \sigma_{1}(N,m,l)=&(2m+2N-1)\cdot\sum_{e=0}^{l-1}(-1)^{e}\binom{N-1}{e}\frac{(N-2e-1)(\frac{1}{2})_{e}}{(m+e)(m+N-e-1)(\frac{3}{2}-N)_{e}}\\
    +&(2m+1)\cdot\sum_{e=0}^{l-1}(-1)^{e}\binom{N-1}{e}\frac{(N-2e-1)(\frac{1}{2})_{e}}{(m+e+1)(m+N-e)(\frac{3}{2}-N)_{e}}.
\end{aligned}
\end{equation}
\label{tfe}
\end{thm}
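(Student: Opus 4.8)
The plan is to apply the semi-factorization theorem (Theorem \ref{tbb}) to $V'(N,2m,l)$, which is exactly a nearly symmetric hexagon with a single lozenge-shaped hole: its outer boundary is the symmetric hexagon of side-lengths $2m,N,N,2m,N,N$, and its hole is a unit lozenge straddling the vertical axis $\ell'$ that lies half a unit to the right of the symmetry axis $\ell$. The ``un-translated'' region $R_0$ obtained by shifting this hole one unit in the northwest direction is the symmetric region $V(N,2m,l-1)$ (a hexagon with a unit lozenge removed on the symmetry axis itself), which has at least one lozenge tiling precisely because $N>1$; hence the hypotheses of Theorem \ref{tbb} are met, and
\[
\M(V'(N,2m,l))=2^{\w_{\ell'}(V'(N,2m,l))-1}\bigl(\M(R^{+})\M(R^{-})+\M(\widehat R^{+})\M(\widehat R^{-})\bigr),
\]
where $R^{\pm}$ and $\widehat R^{\pm}$ are the four half-regions produced by the two zigzag cuts along $\ell'$ prescribed by the factorization theorem.

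The first step is to identify these four half-regions. Cutting the symmetric hexagon along a zigzag line running down $\ell'$ produces, up to forced lozenges, half-hexagons (trapezoids), and the lozenge hole, which sits across $\ell'$, contributes a single unit triangular dent to exactly one of the two halves in each decomposition, while the corresponding triangle in the other half is pinched against the zigzag boundary and forces a string of lozenges, leaving a plain trapezoid. Thus one of $R^{+},R^{-}$ is a plain half-hexagon and the other a half-hexagon with a notch at ``height $l$'', and likewise for $\widehat R^{+},\widehat R^{-}$; because the outer hexagon is centered on $\ell$ rather than on $\ell'$, the notched half-hexagon arising from the first decomposition has top parameter $2m+1$ while the one from the second has top parameter $2m-1$ (this is exactly the reason the Fulmek--Krattenthaler results split into Theorems \ref{tfa} and \ref{tfb}). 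The number of lozenge tilings of a plain half-hexagon is a simple product (half of the MacMahon box formula), while the number of lozenge tilings of a half-hexagon with a single notch at height $l$ equals that same product times an alternating sum of the form $\sum_{e=0}^{l-1}(-1)^{e}\binom{N-1}{e}\frac{(N-2e-1)(\frac{1}{2})_{e}}{(m+e)(m+N-e-1)(\frac{3}{2}-N)_{e}}$, with the shift $m\mapsto m+1$ in the denominator for the other notched half-hexagon; this last formula can be obtained either by the Lindström--Gessel--Viennot theorem (the tilings correspond to families of nonintersecting lattice paths, one of which has a free endpoint that produces the sum) or, more expediently, by applying the ordinary factorization theorem (Theorem \ref{tba}) to the Fulmek--Krattenthaler regions of Theorems \ref{tfa} and \ref{tfb}, whose tiling numbers are already known.

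Finally I would substitute and collect. The product $\M(R^{+})\M(R^{-})$ becomes a common prefactor times $(2m+2N-1)$ times the first sum in $\sigma_{1}(N,m,l)$, and $\M(\widehat R^{+})\M(\widehat R^{-})$ becomes the same common prefactor times $(2m+1)$ times the second sum; adding the two, absorbing the power of two, and simplifying the resulting Pochhammer/factorial prefactor against MacMahon's triple product $\prod_{i=1}^{N}\prod_{j=1}^{N}\prod_{k=1}^{2m}\frac{i+j+k-1}{i+j+k-2}$ yields the asserted formula. The main obstacle is the bookkeeping in the middle step: pinning down the exact shapes and parameters of the four half-regions (including the parities, the effect of the northwest shift by one unit, and the forced-lozenge reductions), verifying that the two prefactors indeed collapse to one common factor times $(2m+2N-1)$ and $(2m+1)$ respectively, and confirming that the two notched half-hexagons contribute precisely the two sums appearing in $\sigma_{1}$. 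Once the half-region enumerations are in hand, the remaining manipulations are purely algebraic.
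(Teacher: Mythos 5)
Your overall strategy is exactly the paper's: apply Theorem \ref{tbb} to $V'(N,2m,l)$, identify the four half-regions produced by the two zigzag cuts along $\ell'$, import their tiling counts from Fulmek--Krattenthaler, and do the algebra. The place where your description goes wrong is precisely the middle step you flag as the obstacle. The half-regions are not a plain trapezoid plus a trapezoid with a unit triangular dent: the removed lozenge is one of the slots straddling $\ell'$, and the cutting procedure of the factorization theorem assigns each such slot \emph{whole} to one of the two halves (with weight $1/2$), so the hole never splits into two triangular dents --- it simply deletes one of the weighted slot-lozenges from the half that retains them, while the other half is a pentagonal region with a zigzag (bumped) boundary, not a plain trapezoid. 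Concretely, one cut yields $S(N,m)=\overline{R}_{(1,\ldots,N),\emptyset}(m)$ together with $C(N-1,m,l)$ (i.e.\ $S(N-1,m)$ with its $l$-th rightmost vertical lozenge removed and the remaining $N-2$ weighted by $1/2$), and the other yields $S(N,m-1)$ together with $C(N-1,m+1,l)$ --- note that \emph{both} members of the second pair have shifted $m$-parameters, not just the notched one. With Lemmas \ref{lfg} and \ref{lfh} (these are (2.4)--(2.7) of \cite{FulmekKrattenthaler1}, exactly the counts of $S$ and $C$, so your detour through Theorems \ref{tfa} and \ref{tfb} or through nonintersecting paths is unnecessary), the identity
\[
\M(V'(N,2m,l))=2^{N-2}\Bigl[\M(S(N,m))\M(C(N-1,m,l))+\M(S(N,m-1))\M(C(N-1,m+1,l))\Bigr]
\]
reduces to the stated formula, the two summands supplying the $(2m+2N-1)$- and $(2m+1)$-weighted sums in $\sigma_1$, as you anticipated. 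One further slip: the untranslated region $R_0$ cannot be $V(N,2m,l-1)$ as you claim, since that is undefined for $l=1$ while the theorem (and the paper's proof) covers $l=1$ with no special case; the tileability hypothesis of Theorem \ref{tbb} should be checked directly for the correctly indexed symmetric region.
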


\begin{thm}
    Let $m$ be a non-negative integer and $N$ be a positive integer greater than $1$. The number of lozenge tilings of the region $V'(N-1,2m+1,l)$ obtained from the hexagon of sides $2m+1,N-1,N-1,2m+1,N-1,N-1$ $($clockwise from top$)$ by deleting the $l$-th lozenge from the top on the vertical axis half a unit away from the vertical symmetry axis, is given by
    \begin{equation}
        \M(V'(N-1,2m+1,l))=\frac{(m+1)_{N-1}(m+1)_{N-1}(N-1)_{N-1}}{(2m+2)_{N-1}(2m+N)_{N-1}(N-1)!}\cdot\sigma_{2}(N,m,l)\cdot\prod_{i=1}^{N-1}\prod_{j=1}^{N-1}\prod_{k=1}^{2m+1}\frac{i+j+k-1}{i+j+k-2}
    \end{equation}
where
\begin{equation}
\begin{aligned}
    \sigma_{2}(N,m,l)=&m\cdot\sum_{e=0}^{l-1}(-1)^{e}\binom{N-1}{e}\frac{(N-2e-1)(\frac{1}{2})_{e}}{(m+e)(m+N-e-1)(\frac{3}{2}-N)_{e}}\\
    +&(m+N)\cdot\sum_{e=0}^{l-1}(-1)^{e}\binom{N-1}{e}\frac{(N-2e-1)(\frac{1}{2})_{e}}{(m+e+1)(m+N-e)(\frac{3}{2}-N)_{e}}.
\end{aligned}
\end{equation}
\label{tff}
\end{thm}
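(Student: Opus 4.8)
The plan is to apply the semi-factorization theorem (Theorem~\ref{tbb}) to the nearly symmetric hexagon with a lozenge hole $V'(N-1,2m+1,l)$, to identify the four resulting half-regions with the half-regions obtained by applying the factorization theorem (Theorem~\ref{tba}(a)) to the two \emph{symmetric} regions $V(N-1,2m,l)$ and $V(N-1,2m+2,l)$, and then to substitute the Fulmek--Krattenthaler evaluation (Theorem~\ref{tfa}) into the result and simplify. This is the same strategy used for Theorem~\ref{tfe}, the only difference being which symmetric regions and which of Theorems~\ref{tfa}--\ref{tfb} enter.

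First I would check the hypotheses of Theorem~\ref{tbb}: the outer boundary of $V'(N-1,2m+1,l)$ is the symmetric hexagon with side-lengths $2m+1,N-1,N-1,2m+1,N-1,N-1$, its single lozenge hole is symmetric about the lattice line $\ell'$ obtained by translating the symmetry axis $\ell$ half a unit to the right, and for $N>1$ the associated untranslated region $R_{0}$ (a symmetric hexagon with one lozenge removed along $\ell$) is tileable. Theorem~\ref{tbb} then gives
\begin{equation*}
\M(V'(N-1,2m+1,l)) = 2^{\w_{\ell'}(V')-1}\bigl(\M(R^{+})\M(R^{-})+\M(\widehat R^{+})\M(\widehat R^{-})\bigr),
\end{equation*}
where $R^{\pm}$ come from one of the two zigzag cuts along $\ell'$ prescribed by the factorization theorem, and $\widehat R^{\pm}$ from the other. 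The key geometric claim --- to be verified by drawing the two cuts and removing the forced lozenges along the four slanted sides --- is that, up to reflection, the pair $\{R^{+},R^{-}\}$ coincides with the pair of half-regions produced by applying Theorem~\ref{tba}(a) to $V(N-1,2m,l)$, while $\{\widehat R^{+},\widehat R^{-}\}$ coincides with the pair produced by applying Theorem~\ref{tba}(a) to $V(N-1,2m+2,l)$; the lozenge index $l$ is preserved because $\ell'$ slices the hole in $V'$ into exactly the triangular notch that the symmetry axis of $V(N-1,2m,l)$ (resp.\ of $V(N-1,2m+2,l)$) slices its $l$-th axial lozenge into. Running the factorization theorem in reverse, $\M(R^{+})\M(R^{-}) = 2^{-\w_{\ell}(V(N-1,2m,l))}\M(V(N-1,2m,l))$ and $\M(\widehat R^{+})\M(\widehat R^{-}) = 2^{-\w_{\ell}(V(N-1,2m+2,l))}\M(V(N-1,2m+2,l))$; since these two widths differ by one and $\w_{\ell'}(V')$ exceeds $\w_{\ell}(V(N-1,2m,l))$ by a fixed amount, the displayed identity collapses to
\begin{equation*}
\M(V'(N-1,2m+1,l)) = c_{1}\,\M(V(N-1,2m,l)) + c_{2}\,\M(V(N-1,2m+2,l))
\end{equation*}
with $c_{1}$ and $c_{2}$ explicit powers of $2$.

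Next I would substitute Theorem~\ref{tfa} for the two terms on the right, with parameters $(N-1,m)$ and $(N-1,m+1)$ in place of the $(N,m)$ there. The two Fulmek--Krattenthaler sums that appear are then precisely the two sums occurring in $\sigma_{2}(N,m,l)$, since $\binom{N-1}{e}$, $(N-1)-2e = N-2e-1$, and $(\tfrac{1}{2}-(N-1))_{e} = (\tfrac{3}{2}-N)_{e}$. It remains to check that the Fulmek--Krattenthaler prefactors for $(N-1,m)$ and $(N-1,m+1)$, multiplied by the MacMahon triple products of the hexagons $(N-1,N-1,2m)$ and $(N-1,N-1,2m+2)$ and by $c_{1}$ and $c_{2}$ respectively, collapse to the coefficients $m$ and $m+N$ of the two sums, times the common prefactor $\dfrac{(m+1)_{N-1}(m+1)_{N-1}(N-1)_{N-1}}{(2m+2)_{N-1}(2m+N)_{N-1}(N-1)!}$ and the triple product $\prod_{i=1}^{N-1}\prod_{j=1}^{N-1}\prod_{k=1}^{2m+1}\frac{i+j+k-1}{i+j+k-2}$. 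This is routine: one rewrites binomials such as $\binom{m+N}{m}=\frac{(m+1)_{N}}{N!}$, writes the ratios of the MacMahon products of the hexagons $(N-1,N-1,2m)$, $(N-1,N-1,2m+1)$, $(N-1,N-1,2m+2)$ as products of simple fractions, and cancels. The value $m=0$ is consistent with this, since then the coefficient $m$ of the first sum vanishes and only the $V(N-1,2,l)$ contribution survives. Theorem~\ref{tfe} is proved in the same way, with $V(N,2m-1,l)$ and $V(N,2m+1,l)$ as the comparison regions and Theorem~\ref{tfb} used in place of Theorem~\ref{tfa}.

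The hard part will be the geometric identification in the second paragraph: one must carefully draw $V'(N-1,2m+1,l)$ together with the two zigzag cuts of $\ell'$ prescribed by the factorization theorem, verify that each of the four resulting pieces --- after deleting forced lozenges --- has exactly the boundary of a half of $V(N-1,2m,l)$ or of $V(N-1,2m+2,l)$ (it is precisely the half-unit offset of $\ell'$ from $\ell$ that makes the left piece inherit the left half of the $2m$-hexagon for one zigzag and of the $2m+2$-hexagon for the other), and confirm that the widths $\w_{\ell'}(V')$, $\w_{\ell}(V(N-1,2m,l))$, $\w_{\ell}(V(N-1,2m+2,l))$ line up as claimed so that the two constants $c_{1},c_{2}$ come out right. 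Everything after that identification is bookkeeping with Pochhammer symbols and binomial coefficients.
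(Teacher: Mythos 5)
Your overall framework (apply Theorem~\ref{tbb} to $V'(N-1,2m+1,l)$ and evaluate the resulting half-regions) is the right one, but the central geometric identification in your second paragraph is wrong, and the linear relation you derive from it is false. The two half-regions produced by a \emph{single} zigzag cut along $\ell'$ are \emph{not} the two halves of a single symmetric region: one cut yields the pair $\{S(N-2,m),\,C(N-1,m+1,l)\}$ and the other yields $\{S(N-2,m+1),\,C(N-1,m,l)\}$ (in the notation of Lemmas~\ref{lfg} and~\ref{lfh}), so each term of the semi-factorization identity mixes a piece belonging to the ``$2m$'' hexagon with a piece belonging to the ``$2m+2$'' hexagon. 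To reassemble the symmetric regions $V(N-1,2m,l)$ and $V(N-1,2m+2,l)$ by reverse factorization you must pair halves \emph{across} the two cuts (this is exactly what is done in the proof of Theorem~\ref{tdd} and explained in Remark~4), which produces the multiplicative relation $\M(V')^{2}=f\,\M(V(N-1,2m,l))\,\M(V(N-1,2m+2,l))$, not an additive one. One can see concretely that your claimed identity $\M(V')=c_{1}\M(V(N-1,2m,l))+c_{2}\M(V(N-1,2m+2,l))$ with $c_{1},c_{2}$ powers of $2$ cannot hold: comparing the coefficient of the $e$-sum coming from $C(N-1,m,l)$ in the target formula ($m$ times the common prefactor) with the corresponding coefficient in Theorem~\ref{tfa} applied to $V(N-1,2m,l)$ forces, e.g.\ for $N=3$, $m=1$, the value $c_{1}=3/4$, while the $C(N-1,m+1,l)$ terms force $c_{2}=1/3$; neither is a power of $2$, and they are unequal, so the bookkeeping in your third paragraph would not reproduce $\sigma_{2}$.

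The paper's proof avoids this entirely: it never reassembles the half-regions into symmetric hexagons, but instead evaluates all four of them directly via the Fulmek--Krattenthaler formulas for $\M(S(N,m))$ and $\M(C(N,m,l))$ (Lemmas~\ref{lfg} and~\ref{lfh}), obtaining $\M(V'(N-1,2m+1,l))=2^{N-3}\bigl[\M(S(N-2,m))\M(C(N-1,m+1,l))+\M(S(N-2,m+1))\M(C(N-1,m,l))\bigr]$ and then simplifying. You would need to replace your second and third paragraphs by this direct evaluation (or else prove the cross-pairing identity and the ratio lemma needed to convert the sum of mixed products into the stated form, as in Lemma~\ref{ldd}). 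Separately, your hypothesis check misses that Theorem~\ref{tbb} is not applicable when $l=1$, since the untranslated region $R_{0}$ is then not well-defined; that case has to be handled by a different argument (the paper uses the Cohn--Larsen--Propp formula for trapezoids with dents).
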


We deduce the above two results from Theorem \ref{tbb}, using also  enumeration results on certain families of regions given in \cite{FulmekKrattenthaler1}. Although one of them is a special case of a region we considered in an earlier section,
for consistency with \cite{FulmekKrattenthaler1} we use the notation from there.

For non-negative integers $m$ and $N$, let $S(N,m)$ be the pentagonal region depicted on the left in Figure \ref{ffd}; note that this region was previously denoted by $\overline{R}_{(1,\ldots,N),\emptyset}(m)$ in subsection \ref{subsection4.2}. Also, for a non-negative integer $m$ and positive integers $l$ and $N$ such that $l\leq N$, we define the region $C(N,m,l)$ as follows: on $S(N,m)$, we label the $N$ rightmost vertical lozenges by $1,\ldots,N$, from top to bottom. Then we remove the vertical lozenge labeled by $l$ and assign the weight $\frac{1}{2}$ to the remaining $N-1$ vertical lozenges (see the picture on the right in Figure \ref{ffd}). In \cite{FulmekKrattenthaler1}, Fulmek and Krattenthaler determined the number of lozenge tilings for $S(N,m)$ and the weighted count of lozenge tilings\footnote{Our formula looks somewhat different compared to the one provided in \cite{FulmekKrattenthaler1} since we made some simplifications in the latter.} of $C(N,m,l)$.

\begin{figure}
    \centering
    \includegraphics[width=.5\textwidth]{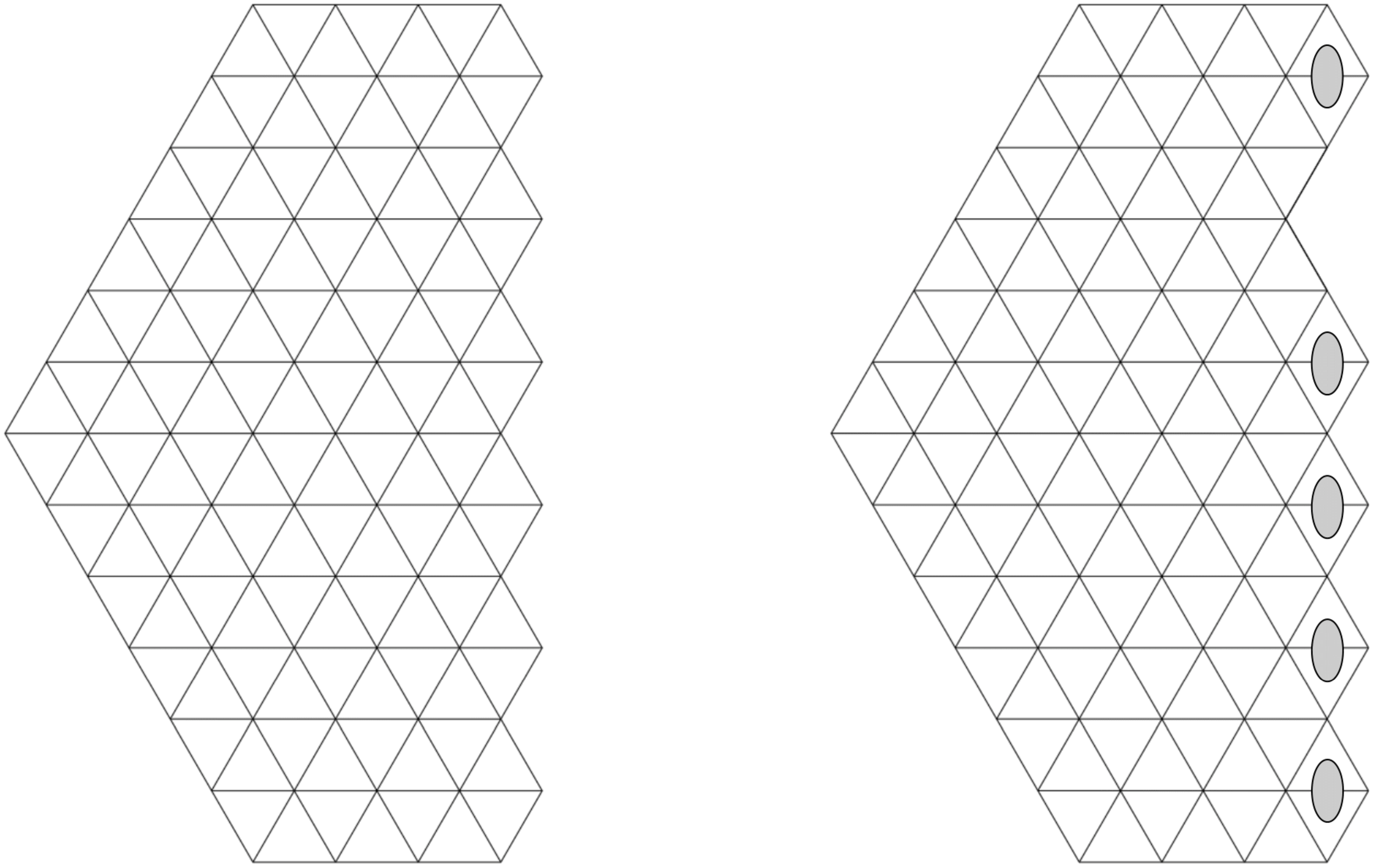}
    \caption{{\it Left.} The region $S(N,m)$ for $N=6$ and $m=3$. {\it Right.}  The region $C(N,m,l)$ for $N=6$, $m=3$ and $l=2$. Lozenges marked by ellipses are weighted by $\frac{1}{2}$, as in the previous sections.}
    \label{ffd}
\end{figure}

\begin{lemma}$((2.4)$ and $(2.6)$ in \cite{FulmekKrattenthaler1}$)$
    For any non-negative integer $m$ and positive integer $N$,
    \begin{equation}
        \M(S(N,m))=\prod_{i=1}^{N}\frac{(N+m-i+1)!(i-1)!(2m+i+1)_{i-1}}{(m+i-1)!(2N-2i+1)!}.
    \label{efi}
    \end{equation}
\label{lfg}
\end{lemma}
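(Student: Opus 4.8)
The plan is to reduce the lemma to Ciucu's evaluation of the $\overline{R}$-regions and then carry out an explicit, elementary simplification. As noted just before the statement, $S(N,m)=\overline{R}_{(1,\ldots,N),\emptyset}(m)$, so Theorem~\ref{propositionpp1} (equation~\eqref{edd}) gives $\M(S(N,m))=\bar{P}_{(1,\ldots,N),\emptyset}(m)$. Everything then comes down to evaluating the polynomial $\bar{P}_{\mathbf l,\mathbf q}(x)$ at the consecutive list $\mathbf l=(1,2,\ldots,N)$, the empty list $\mathbf q=\emptyset$, and $x=m$, and checking that what comes out equals the right-hand side of~\eqref{efi}.

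First I would specialize the definition~\eqref{edb}. With $\mathbf q$ empty the parameter playing the role of ``$n$'' in~\eqref{edb}, in $\bar{B}_{m,n}$ and in $\bar{c}_{\mathbf l,\mathbf q}$ is $0$, so the second double product in~\eqref{edb}, the last product and the Pochhammer-quotient product in $\bar{B}_{m,n}$, and the double product in the denominator of $\bar{c}_{\mathbf l,\mathbf q}$ all disappear. Since the list is consecutive we have $l_i=i$ for all $i$ and $l_m=N$, so $x+l_m-m=x$ and each inner product $\prod_{j=i}^{l_i-1}$ in the first double product of~\eqref{edb} is empty; hence that product equals $1$ too. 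What is left is
\[
\M(S(N,m))=\bar{P}_{(1,\ldots,N),\emptyset}(m)=2^{N(N-1)/2}\Bigl(\prod_{i=1}^{N}\tfrac{1}{(2i-1)!}\Bigr)\Bigl(\prod_{1\le i<j\le N}(j-i)\Bigr)\,\langle m+1,m+N\rangle\,\bigl\langle m+\tfrac{3}{2},\,m+N-\tfrac{1}{2}\bigr\rangle .
\]

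The only real work remaining is the closed-form identity matching this expression to $\prod_{i=1}^{N}\frac{(N+m-i+1)!\,(i-1)!\,(2m+i+1)_{i-1}}{(m+i-1)!\,(2N-2i+1)!}$. I would rewrite the Vandermonde product as $\prod_{1\le i<j\le N}(j-i)=\prod_{k=1}^{N-1}k!$, expand the two $\langle\,\cdot\,,\cdot\,\rangle$-brackets via the Pochhammer form $\langle a,a+k\rangle=\prod_{i=0}^{\lfloor k/2\rfloor}(a+i)_{k+1-2i}$ recalled in subsection~\ref{subsection4.3} (so the integer bracket becomes a product of ratios of ordinary factorials, while the half-integer bracket becomes a product of ratios of double factorials), and then collect terms; the cleanest way to finish is either to bring both sides to a common canonical product of factorials and double factorials and compare exponents, or to verify that both sides obey the same recurrence in $N$ for fixed $m$ together with the base case $N=1$, where each side equals $m+1$. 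This is routine but fiddly bookkeeping, and is the only obstacle: there is no conceptual difficulty. In fact, equation~\eqref{efi} is precisely equation~$(2.4)$ of \cite{FulmekKrattenthaler1} written in the simplified form mentioned in the footnote, so one may alternatively just cite that result and perform only this reconciliation of the two closed forms. Throughout, the one thing to watch is the precise convention for the bracket symbol $\langle\,\cdot\,,\cdot\,\rangle$ --- in particular that its middle exponents are not all equal to $2$ once the span exceeds $3$, and that $\langle a,a+k\rangle=1$ for $k<0$ --- so that the factorial bookkeeping (and, if one uses the recurrence, the small-$N$ base cases) is handled correctly.
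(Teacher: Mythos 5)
The paper does not prove this lemma at all: it is quoted verbatim from Fulmek and Krattenthaler (their equations (2.4) and (2.6)), exactly as the parenthetical in the lemma's header indicates. Your derivation via the identification $S(N,m)=\overline{R}_{(1,\ldots,N),\emptyset}(m)$ and Theorem~\ref{propositionpp1} is therefore a genuinely different route, and a reasonable one: it stays entirely inside machinery the paper has already set up, and it has the pleasant side effect of confirming that the two cited closed forms (Ciucu's $\bar P$-evaluation and Fulmek--Krattenthaler's product) are consistent. Your specialization of \eqref{edb} is carried out correctly: with $\mathbf q=\emptyset$ and $\mathbf l=(1,\dots,N)$ the inner products are empty, $\bar B_{N,0}(m)$ collapses to $\langle m+1,m+N\rangle\langle m+\tfrac32,m+N-\tfrac12\rangle$, and the prefactor $2^{\binom{0-N}{2}-N}=2^{N(N-1)/2}$ comes out right provided $\binom{-N}{2}$ is read as the generalized binomial $\tfrac{N(N+1)}{2}$ --- a convention point worth stating explicitly, since the naive reading would wreck the power of~$2$. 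The remaining reconciliation of your expression with the right-hand side of \eqref{efi} does reduce to the elementary identity
\begin{equation*}
2^{N(N-1)/2}\,\langle m+1,m+N\rangle\,\bigl\langle m+\tfrac32,m+N-\tfrac12\bigr\rangle=(m+1)_N\prod_{i=1}^{N}(2m+i+1)_{i-1},
\end{equation*}
after the factorial products $\prod_i(i-1)!$ and $\prod_i(2N-2i+1)!$ are matched on both sides; this checks out in small cases (e.g.\ $N=1,2,3$) and is indeed routine to establish by induction on $N$. The trade-off is clear: the paper's citation is shorter and attributes the formula to its source, while your argument is self-contained modulo Theorem~\ref{propositionpp1} but requires this final bookkeeping, which you defer rather than execute.
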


\begin{lemma}$((2.5)$ and $(2.7)$ in \cite{FulmekKrattenthaler1}$)$
    For any non-negative integer $m$ and positive integers $l$ and $N$ such that $l\leq N$,
    \begin{equation}
    \begin{aligned}
        \M(C(N,m,l))=\frac{(m)_{N+1}}{2^{N-1}N!}\prod_{i=1}^{\lfloor\frac{N}{2}\rfloor}\frac{(2m+2i)_{2N-4i+1}}{(2i)_{2N-4i+1}}\cdot\sum_{e=0}^{l-1}(-1)^{e}\binom{N}{e}\frac{(N-2e)(\frac{1}{2})_{e}}{(m+e)(m+N-e)(\frac{1}{2}-N)_{e}}.
    \end{aligned}
    \label{efj}
    \end{equation}
\label{lfh}
\end{lemma}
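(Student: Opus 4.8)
The plan is to deduce this essentially verbatim from \cite{FulmekKrattenthaler1}, the only genuine content being a translation of conventions and an elementary consolidation of two parity cases into a single product formula. First I would identify $C(N,m,l)$ with the region whose (weighted) lozenge tilings are enumerated in equations (2.5) and (2.7) of \cite{FulmekKrattenthaler1}: up to a $90^{\circ}$ rotation (recall that \cite{FulmekKrattenthaler1} draws the triangular lattice with a vertical family of lattice lines) and the removal of forced lozenges, $C(N,m,l)$ is precisely the half-region produced by Fulmek and Krattenthaler when they cut a doubly symmetric hexagon carrying an axial lozenge hole along its symmetry axis; the $N-1$ vertical lozenges weighted by $\tfrac12$ are exactly the dual edges straddling the cut, so the weighted count of $C(N,m,l)$ agrees with their quantity on the nose. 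I would spell out this dictionary carefully, since matching the parameters $(N,m,l)$ to theirs (and checking which of (2.5)/(2.7) belongs to which parity of $N$) is where mistakes are easy to make.

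Second, I would quote their formulas. Equation (2.5) treats one parity of $N$ and (2.7) the other, and in both the dependence on $l$ is carried entirely by the alternating sum
\[
\sum_{e=0}^{l-1}(-1)^{e}\binom{N}{e}\frac{(N-2e)(\tfrac12)_{e}}{(m+e)(m+N-e)(\tfrac12-N)_{e}},
\]
which already coincides with the one in the statement. Hence the whole problem reduces to the single identity
\[
\frac{(m)_{N+1}}{2^{N-1}N!}\prod_{i=1}^{\lfloor N/2\rfloor}\frac{(2m+2i)_{2N-4i+1}}{(2i)_{2N-4i+1}}=\bigl(\text{the prefactor in (2.5), resp. (2.7)}\bigr),
\]
to be verified in each parity separately.

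To prove that identity I would rewrite both sides in terms of ordinary factorials, double factorials, and the hyperfactorial-type products that appear in \cite{FulmekKrattenthaler1}. Separating the Pochhammer symbols $(2m+2i)_{2N-4i+1}$ and $(2i)_{2N-4i+1}$ into their even- and odd-index factors makes the $\lfloor N/2\rfloor$ in the upper limit disappear and turns the product into a ratio of (double) factorials that telescopes; the prefactor $2^{-(N-1)}$ then absorbs exactly the powers of $2$ released by the identities $(2k)!=2^{k}k!\,(2k-1)!!$ used along the way. The hypotheses $l\le N$ and $N>1$ are needed only so that the alternating sum and the pentagon $S(N,m)$ are well defined.

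The main obstacle is clerical rather than conceptual: pinning down the correspondence between $C(N,m,l)$ and the region in \cite{FulmekKrattenthaler1} (lattice orientation, the normalization of the $\tfrac12$-weights, and the pairing of (2.5)/(2.7) with the parity of $N$), and then carrying out the two-case Pochhammer bookkeeping carefully enough that the uniform closed form emerges with precisely the stated power of $2$ and floor in the product. Once the dictionary is fixed, no new idea is required.
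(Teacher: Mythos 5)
The paper gives no proof of this lemma at all: it is quoted directly from Fulmek and Krattenthaler's equations (2.5) and (2.7), with only a footnote noting that the stated formula is an algebraic simplification (consolidating the two parity cases) of theirs. Your proposal — match $C(N,m,l)$ to their half-regions, quote (2.5)/(2.7), and verify by Pochhammer/factorial bookkeeping that the single prefactor $\frac{(m)_{N+1}}{2^{N-1}N!}\prod_{i=1}^{\lfloor N/2\rfloor}\frac{(2m+2i)_{2N-4i+1}}{(2i)_{2N-4i+1}}$ agrees with theirs in each parity — is exactly the intended justification, just carried out more explicitly than the paper bothers to.
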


\vspace{3mm}
\begin{proof}[Proof of Theorem \ref{tfe}]
We apply Theorem \ref{tbb} to the region $V'(N,2m,l)$. To do that, we split the region using the two different zigzag lines along the vertical axis through the hole. While one of the zigzag lines splits the region into $S(N,m)$ and $C(N-1,m,l)$, the other splits it into $S(N,m-1)$ and $C(N-1,m+1,l)$ (see the two pictures in Figure \ref{ffe}). Thus, by Theorem \ref{tbb}, we have

\begin{figure}
    \centering
    \includegraphics[width=.6\textwidth]{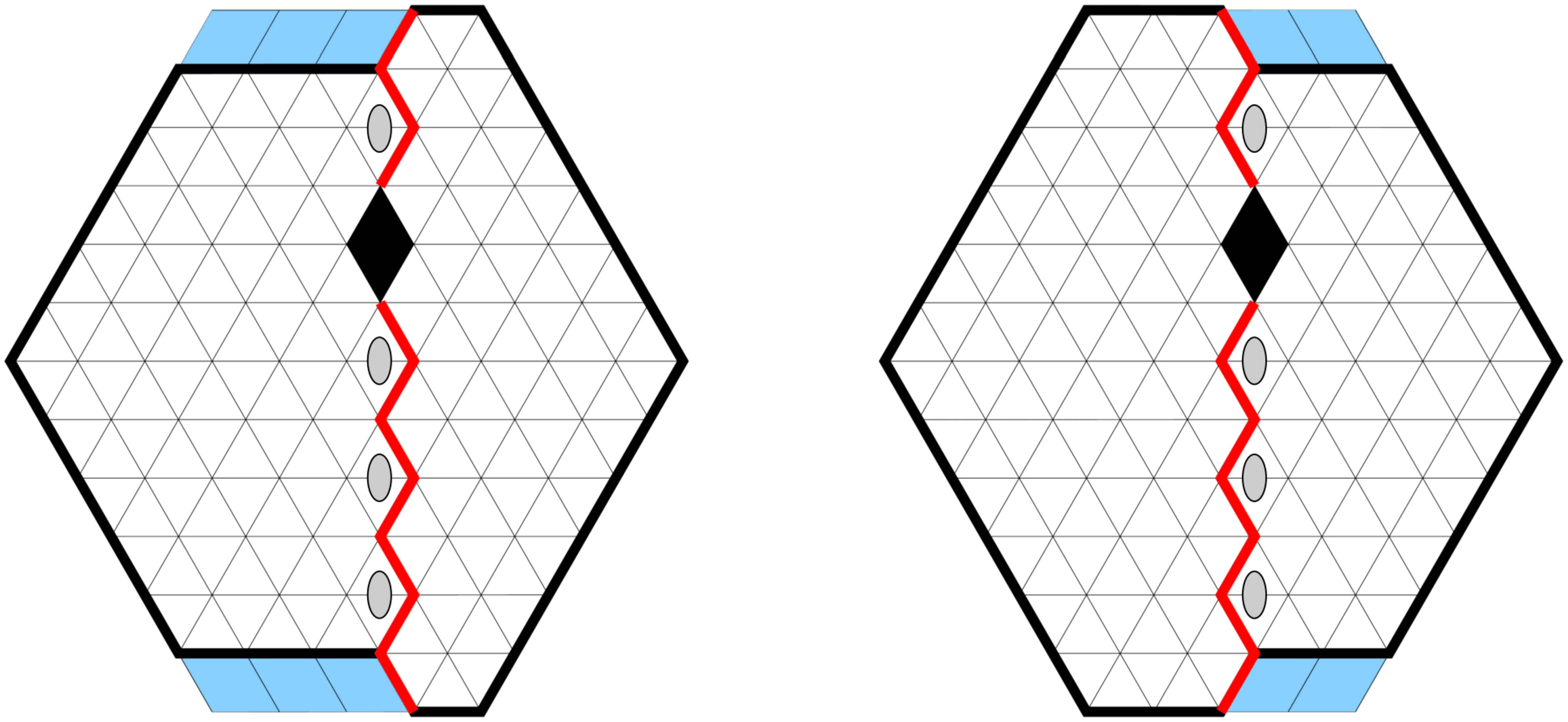}
    \caption{One of the two zigzag lines splits $V'(4,6,2)$ into the subregions $C(5,3,2)$ and $S(6,1)$ (left). The other zigzag line splits it into $C(5,2,2)$ and $S(6,2)$ (right).}
    \label{ffe}
\end{figure}

\begin{equation}
    \M(V'(N,2m,l))=2^{N-2}\Big[\M(S(N,m))\M(C(N-1,m,l))+\M(S(N,m-1))\M(C(N-1,m+1,l))\Big].
\label{efk}
\end{equation}
By combining \eqref{efi}, \eqref{efj}, \eqref{efk} and factoring out some common factors, we get the desired formula for $\M(V'(N,2m,l))$.
\end{proof}

\begin{figure}
    \centering
    \includegraphics[width=.6\textwidth]{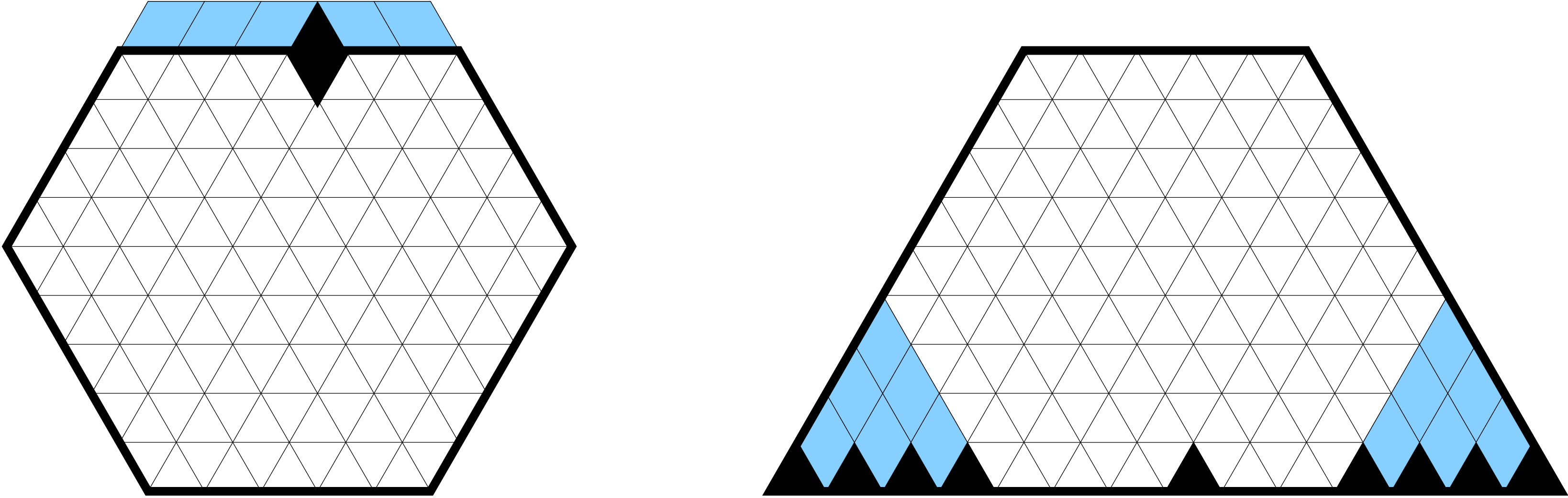}
    \caption{If we remove the forced lozenges from the region $V'(N,m,1)$ on the left (here $N=5$ and $m=5$), and also from the region on the right (which is a special case of the regions whose tilings are enumerated by Proposition 2.1 of \cite{cohn1998shape}), the two resulting regions are congruent.
}
    \label{fff}
\end{figure}

\begin{proof}[Proof of Theorem \ref{tff}]
Theorem \ref{tbb} is not applicable when $l=1$, because the region $R_0$ is not well-defined. Because of this, we consider the case $l=1$ separately. In this case, the region $V'(N-1,2m+1,1)$ can be regarded as a trapezoid region with dents on its base (see Figure \ref{fff} and the explanation in its caption). The number of lozenge tilings of such regions was given in \cite[Proposition 2.1]{cohn1998shape} and one can check that the $l=1$ specialization of our claimed formula agrees with the expression obtained using \cite[Proposition 2.1]{cohn1998shape}. Assume therefore that $l\geq2$, and apply Theorem \ref{tbb}. To do that, we again consider the two zigzag lines along the vertical symmetry axis of the lozenge hole. One of them divides the region into $S(N-2,m)$ and $C(N-1,m+1,l)$, while the other divides it into $S(N-2,m+1)$ and $C(N-1,m,l)$ (see Figure \ref{ffg}. Thus, by Theorem \ref{tbb}, we obtain

\begin{figure}
    \centering
    \includegraphics[width=.6\textwidth]{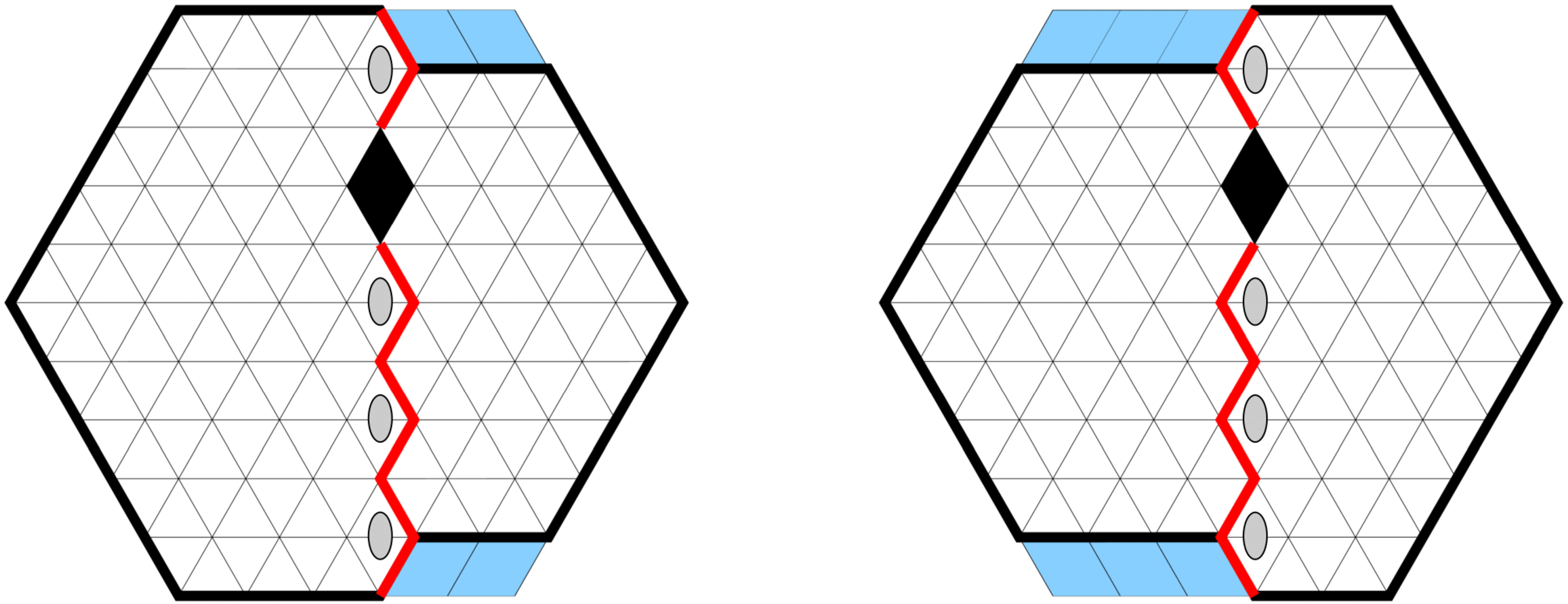}
    \caption{One of the two zigzag lines splits $V'(5,5,2)$ into the subregions $C(5,3,2)$ and $S(4,2)$ (left). The other splits it into $C(5,2,2)$ and $S(4,3)$ (right).}
    \label{ffg}
\end{figure}

\begin{equation}
\begin{aligned}
    &\M(V'(N-1,2m+1,l))\\
    =&2^{N-3}\Big[\M(S(N-2,m))\M(C(N-1,m+1,l))+\M(S(N-2,m+1))\M(C(N-1,m,l))\Big].
\end{aligned}
\label{efl}
\end{equation}
Using \eqref{efi}, \eqref{efj}, \eqref{efl} and factoring out some common factors, we get the claimed formula for $\M(V'(N+1,2m-1,l))$.
\end{proof}

\section{Aztec rectangles with collinear unit holes}

Let $m$ and $n$ be positive integers with $m \leq n$. The \textit{$m\times n$ Aztec rectangle}, denoted by $AR(m,n)$, is the region consisting of all unit squares on $\Z^{2}$ whose centers $(x,y)$ satisfy the inequalities $|x|+|y|\leq n$ and $x+y\geq n-2m$. In this section, we rotate the region $AR(m,n)$ by $45^{\circ}$ in counterclockwise direction (see the pictures in Figure \ref{fga}; at this point, ignore the black unit squares in the pictures).

Unless $m=n$ (when it becomes the Aztec diamond $AD_n$), the Aztec rectangle $AR(m,n)$ does not have any domino tilings: it is not hard to check that if we color the square grid in chessboard fashion, $AR(m,n)$ has $n-m$ more unit squares of one color than the other. This can be fixed as follows.
Label the bottom-most unit squares in $AR(m,n)$ by $1,\ldots,n$ from left to right. For any subset $S\subseteq[n]\coloneqq\{1,\ldots,n\}$ of size $m$, we delete the unit squares labeled with elements of $[n]\setminus S$ from $AR(m,n)$, and denote the resulting region by $AR(m,n;S)$ (see the picture on the left in Figure~\ref{fga} for an illustration). Then the region $AR(m,n;S)$ always admits domino tilings; the number of them is given by (see \cite{CiucuMatchingFactorization}\cite{elkies1992alternating}\cite{mills1983alternating})
\begin{equation}\label{ega}
  \M(AR(m,n;S)) = 2^{m(m+1)/2} \prod_{1 \leq i<j \leq m} \frac{s_j-s_i}{j-i},
\end{equation}
where $S = \{s_1,\ldots,s_m\}$, with $s_1<\cdots<s_m$.
\begin{figure}
    \centering
    \includegraphics[width=.8\textwidth]{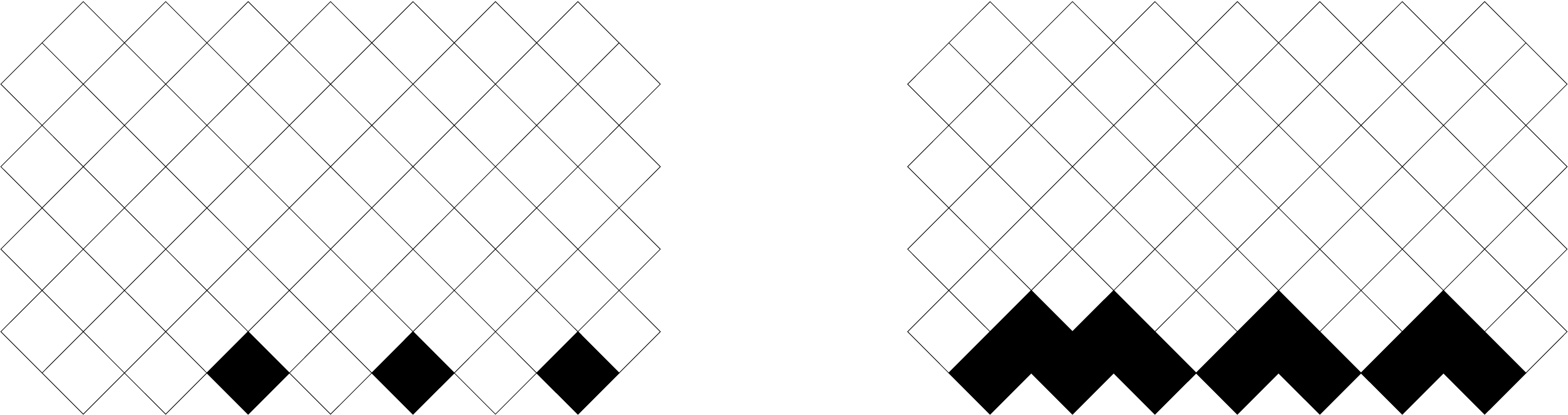}
    \caption{The regions $AR(4,7;\{1,2,4,6\})$ (left) and $\overline{AR}(4,7;\{2,3,5,7\})$ (right). Both regions are obtained from $AR(4,7)$ by deleting some unit squares.}
    \label{fga}
\end{figure}

Next, for any subset $T=\{t_{1},\ldots,t_{m}\}\subseteq\{n+1\}$ of size $m$ (again, with elements of $T$  written in increasing order), consider the region $\overline{AR}(m,n;T)$ defined as follows. From $AR(m,n)$, remove the $n$ unit squares at the bottom. Note that the resulting region has $n+1$ unit squares on the (new) bottom. Label these unit squares by $1,\ldots,n+1$, from left to right. Finally, delete the unit squares labeled by elements of $T$, and denote the resulting region by $\overline{AR}(m,n;T)$. The number of domino tilings for this region was studied in \cite{elkies1992alternating} and \cite{gessel1999enumeration}, and is given by 

\begin{equation}\label{egb}
  \M(\overline{AR}(m,n;T)) = 2^{m(m-1)/2} \prod_{1 \leq i<j \leq m} \frac{t_j-t_i}{j-i},
\end{equation}

In \cite{CiucuMatchingFactorization}, the second author proved that if an arbitrary collection of unit squares is deleted from the horizontal symmetry axis of an Aztec rectangle, the number of domino tilings of the resulting region is given by a simple product formula. More precisely, let $m$ and $N$ be positive integers with $2m\leq N$. Then the number of domino tilings of a $2m\times N$ Aztec rectangle, where all the unit squares on the central horizontal row, except the $t_{1}$-st, the $t_{2}$-nd,$\ldots$, and the $t_{2m}$-th unit square have been removed
equals
\begin{equation}\label{egc}
    \displaystyle\frac{2^{m^{2}+2m}}{\displaystyle\prod_{i=1}^{m}(i-1)!^{2}}\prod_{1\leq i<j\leq m}(t_{2j}-t_{2i})\prod_{1\leq i<j\leq m}(t_{2j-1}-t_{2i-1}).
\end{equation}

This was extended by Krattenthaler \cite{KrattenthalerAztecRectangle}, who considered the following more general question: if we delete collinear unit squares along an {\it arbitrary} horizontal line from an Aztec rectangle, what is the number of domino tilings of the resulting regions\footnote{In fact, Krattenthaler expressed this in terms of perfect matchings of the planar dual graphs. In this paper, in order to be consistent with the previous sections, we state everything in terms of domino tilings.}? Using certain Schur function identities, he answered this question by finding some explicit formulas.

As equation \eqref{egc} shows,
the special case when the horizontal axis containing the deleted unit squares is the symmetry axis, this formula becomes a simple product formula. Furthermore, the special case of Krattenthaler's formula when the line containing the deleted squares is just below the symmetry axis also turns out to be a simple product formula.

\begin{figure}
    \centering
    \includegraphics[width=.6\textwidth]{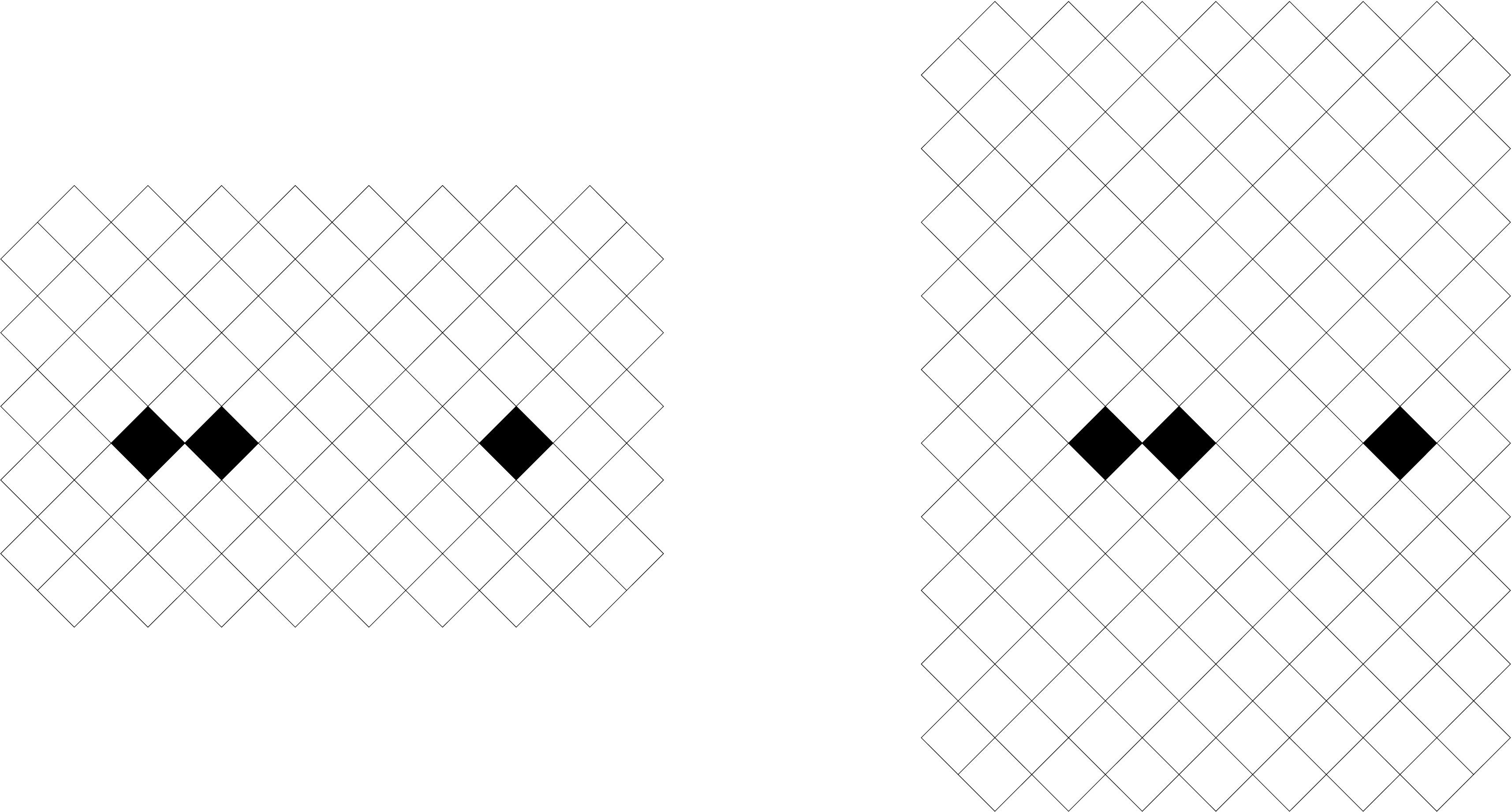}
    \caption{A picture that illustrates Theorem \ref{tgc} with $m=2$, $N=8$, and $(t_{1},t_{2},t_{3},t_{4},t_{5})=(1,4,5,6,8)$ (left) and another picture that illustrates Theorem \ref{tgd} with $m=5$, $N=7$, and $(t_{1},t_{2},t_{3},t_{4},t_{5})=(1,2,5,6,8)$ (left). Later when we prove Theorem \ref{tge}, the picture on the left is denoted by $R_{1}(2m+1,N;(t_{1},\ldots,t_{2m+1}))=R_{1}(5,8;(1,4,5,6,8))$.}
    \label{fgc}
\end{figure}

\begin{thm}[Theorem 9 in \cite{KrattenthalerAztecRectangle}]
\label{tgc}
Let $m$ and $N$ be positive integers with $2m+1\leq N$. Then the number of domino tilings of a $(2m+1)\times N$ Aztec rectangle, where all the unit squares on the horizontal row that is just below the central row, except for the $t_{1}$-st, the $t_{2}$-nd,$\ldots$, and the $t_{2m+1}$-th unit square, have been removed $($see the left picture in Figure $\ref{fgc})$, equals
\begin{equation}\label{ege}
    \displaystyle\frac{2^{m^{2}+3m+1}}{\displaystyle\prod_{i=1}^{m}(i-1)!\prod_{i=1}^{m+1}(i-1)!}\prod_{1\leq i<j\leq m}(t_{2j}-t_{2i})\prod_{1\leq i<j\leq m+1}(t_{2j-1}-t_{2i-1}).
\end{equation}
\end{thm}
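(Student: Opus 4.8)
The plan is to realize the region of the theorem --- call it $R:=R_1(2m+1,N;(t_1,\dots,t_{2m+1}))$, the $(2m+1)\times N$ Aztec rectangle all of whose unit squares in the row one step below the central row $\ell$ have been deleted except those in positions $t_1<\dots<t_{2m+1}$ --- as a square-lattice analogue of a nearly symmetric hexagon with holes, and to mimic the proof of Theorem~\ref{tbb} for it. The outer (Aztec rectangle) boundary of $R$ is symmetric about $\ell$, whereas the $2m+1$ surviving unit squares lie on the parallel line $\ell'$ one lattice unit below $\ell$ and are trivially symmetric about $\ell'$; so the situation is precisely ``boundary symmetric about one axis, holes symmetric about a parallel axis''. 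Following the proof of Theorem~\ref{tbb}, I would pass to an extended region $\widehat R$ obtained by adjoining along the bottom (and bottom corner) of $R$ a staircase strip of unit squares, pick four boundary unit squares $a,b,c,d$ of $\widehat R$ of the appropriate two colours in cyclic order, and apply Kuo's condensation, Theorem~\ref{tbc}. The strip should be chosen so that $\widehat R\setminus\{a,d\}$, after deletion of forced dominoes, is exactly $R$, while $\widehat R\setminus\{b,c\}$ is a genuinely symmetric Aztec-rectangle region with its deleted squares on $\ell$; the remaining four regions $\widehat R$, $\widehat R\setminus\{a,b,c,d\}$, $\widehat R\setminus\{a,c\}$, $\widehat R\setminus\{b,d\}$ then become, up to forced dominoes and (for two of them) a single boundary enlargement, symmetric Aztec-rectangle regions with one unit boundary defect.

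Next I would apply Theorem~\ref{tba}(a) to $\widehat R\setminus\{b,c\}$ and Theorem~\ref{tba}(b) to the other four, exactly as in the proof of Theorem~\ref{tbb}. Cutting along the zigzag path prescribed by the factorization theorem splits the on-axis unit squares alternately between the two halves, so that the $m+1$ squares in the odd positions $\{t_1,t_3,\dots,t_{2m+1}\}$ land in one half and the $m$ squares in the even positions $\{t_2,t_4,\dots,t_{2m}\}$ in the other; after removing forced dominoes these half-regions should be identified with Aztec-rectangle regions of the kind appearing in \eqref{ega}--\eqref{egb}, namely an $AR$- or $\overline{AR}$-region with hole-set $\{t_1,t_3,\dots,t_{2m+1}\}$ and one with hole-set $\{t_2,t_4,\dots,t_{2m}\}$, the two different zigzags of the Kuo recurrence producing the same two families with shifted size parameters. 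Substituting the product formulas \eqref{ega}, \eqref{egb} (and \eqref{egc} for the symmetric region) into the Kuo recurrence, dividing out the factor common to all terms, and then computing the ratio of the two products on the right-hand side --- which is a simple rational function of $N$ and $m$, since consecutive members of the $AR$/$\overline{AR}$ families differ only in a base length --- one adds $1$ to that ratio and the expression collapses to a single product, exactly as in the passage from \eqref{edv} to the final display in the proof of Theorems~\ref{tdb} and~\ref{tdc}. Collecting the powers of $2$ and the factorials should then reproduce the right-hand side of \eqref{ege}, which is a power of $2$ times the two Vandermonde-type products $\prod_{1\le i<j\le m}(t_{2j}-t_{2i})$ and $\prod_{1\le i<j\le m+1}(t_{2j-1}-t_{2i-1})$ attached to the even- and odd-position holes. (An alternative to the Kuo step, when the parities of $N$ and $m$ are favourable, is to symmetrize $R$ directly --- adjoining a forced staircase to make the outer boundary symmetric about $\ell'$ --- and then apply Theorem~\ref{tba}(b) once, just as was done for $TAD'_{2n-1}$ and $TAD'_{2n}$ in Section~3.)

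The routine-but-delicate part is the geometric bookkeeping: specifying the strip to be adjoined and the squares $a,b,c,d$, checking that the five auxiliary regions are exactly the symmetric Aztec-rectangle regions claimed (with the correct value of the size parameter and the correct hole-sets), confirming --- so that one may divide by it --- that the symmetric region $\widehat R\setminus\{b,c\}$ is tileable, and then the algebra needed to see that the accumulated power of $2$ collapses to $2^{m^{2}+3m+1}$ with denominator $\prod_{i=1}^{m}(i-1)!\,\prod_{i=1}^{m+1}(i-1)!$. I expect this to split into a few subcases according to the parities of $N$ and $m$ (and, in the direct-symmetrization variant, according to whether the defect vertex is white or black in Theorem~\ref{tba}(b)), exactly as in the proofs of Theorems~\ref{tbb}, \ref{tca} and~\ref{tdb}; once the first subcase is worked out, none of the others should present a genuine difficulty.
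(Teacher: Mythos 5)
Your parenthetical ``alternative'' at the end of the second paragraph is in fact the paper's entire proof, and it needs no parity hypothesis: because the holes sit on the row \emph{immediately below} the central row, one can adjoin a single zigzag layer of $2N+1$ unit squares along the bottom of the Aztec rectangle and delete the rightmost of them; the added squares are then forced, the resulting region is symmetric about the hole row with one boundary defect $v$, and a single application of Theorem~\ref{tba}(b) gives
$2^{m}\,\M\bigl(AR(m+1,N;\{t_{2i-1}\}_{i\in[m+1]})\bigr)\,\M\bigl(AR(m,N;\{t_{2i}\}_{i\in[m]})\bigr)$,
which together with \eqref{ega} yields \eqref{ege} after a short power-of-two and $\prod(j-i)=\prod(i-1)!$ computation. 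This works for all $m,N$ with $2m+1\le N$ --- the only case distinction is the colour of $v$, which Theorem~\ref{tba}(b) already handles --- so the hedge ``when the parities of $N$ and $m$ are favourable'' is unnecessary. You should promote this aside to the main argument.

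Your primary route, by contrast, has a genuine gap as written: Theorem~\ref{tbb} is stated and proved only for nearly symmetric \emph{hexagons} on the triangular lattice, and the paper explicitly cautions (Remark~1) that the semi-factorization phenomenon is sensitive to the shape of the outer boundary. You cannot cite a ``square-lattice analogue''; you would have to reprove the entire Kuo-condensation argument (choice of the extension strip and of $a,b,c,d$, identification of all six regions with symmetric Aztec rectangles or symmetric Aztec rectangles with a boundary defect, properties $(i)$--$(iv)$, tileability of the $\{b,c\}$-region) in the new setting. That programme is plausible --- and the final collapse of the two-term sum would indeed be easy, since by \eqref{ega}--\eqref{egb} both terms carry the same Vandermonde products and differ only by a power of $2$ --- but it is a substantial detour whose whole purpose (handling a region that \emph{cannot} be directly symmetrized) does not arise here. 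The situation of Theorem~\ref{tgc} is precisely the one where direct symmetrization about the hole row \emph{is} available, which is why the paper dispatches it in a few lines.
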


\begin{thm}[Theorem 10 in \cite{KrattenthalerAztecRectangle}]
\label{tgd}
Let $m$ and $N$ be positive integers with $2m\geq N$. Then the number of domino tilings of a $2m\times N$ Aztec rectangle, where all the unit squares on the horizontal row that is just below the central row, except for the $t_{1}$-st, the $t_{2}$-nd,$\ldots$, and the $t_{2N-2m+1}$-th unit square, have been removed $($see the right picture in Figure $\ref{fgc})$, equals
\begin{equation}\label{egf}
    \displaystyle2^{m^{2}-m+N}\frac{\displaystyle\prod_{i=m+1}^{N+1}(i-1)!\prod_{i=m+2}^{N+1}(i-1)!}{\displaystyle\prod_{i=1}^{2N-2m+1}(t_{i}-1)!(N+1-t_{i})!}\times \prod_{1\leq i<j\leq N-m}(t_{2j}-t_{2i})\prod_{1\leq i<j\leq N-m+1}(t_{2j-1}-t_{2i-1}).
\end{equation}
\end{thm}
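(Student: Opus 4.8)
The plan is to treat the region in Theorem~\ref{tgd} --- the $2m\times N$ Aztec rectangle in which all but the unit squares at positions $t_1,\dots,t_{2N-2m+1}$ have been deleted from the row just below the central row; denote it by $R$ for brevity --- as a nearly symmetric region in the sense of Theorem~\ref{tbb}, after the obvious transcription from the triangular to the square lattice: its outer boundary is a symmetric $2m\times N$ Aztec rectangle (symmetric about a horizontal axis $\ell$), while the retained holes all lie on the lattice line $\ell'$ one unit below $\ell$. The semi-factorization of Theorem~\ref{tbb} was derived purely from Kuo's condensation (Theorem~\ref{tbc}) together with the factorization theorem~\ref{tba}, and the same derivation goes through verbatim on the square lattice; alternatively, one may first pass from domino tilings to lozenge tilings via the classical Schur-function identification underlying formula~\eqref{ega}, and then invoke Theorem~\ref{tbb} literally. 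Either way one obtains $\M(R)=2^{\,\w_{\ell'}(R)-1}\bigl(\M(R^+)\M(R^-)+\M(\widehat{R}^{+})\M(\widehat{R}^{-})\bigr)$, the four half-regions being produced by the two zigzag cuts along $\ell'$.

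The second step is to identify these four half-regions. Cutting a symmetric Aztec rectangle along a zigzag path hugging a near-central row and deleting the resulting forced dominos yields, in each of the four cases, a region of the type appearing in~\eqref{ega}--\eqref{egb} --- or, in the present ``tall'' regime $2m\ge N$, a closely related region whose number of tilings is likewise known in explicit product form --- with hole-set obtained from $\{t_1,\dots,t_{2N-2m+1}\}$ by separating the odd-indexed entries $t_1,t_3,\dots$ from the even-indexed entries $t_2,t_4,\dots$; this separation is already visible in the on-axis formula~\eqref{egc}, and it is what produces the two Vandermonde-type products $\prod_{i<j}(t_{2j}-t_{2i})$ and $\prod_{i<j}(t_{2j-1}-t_{2i-1})$ in~\eqref{egf}. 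I would then substitute those product formulas for the eight factors on the right-hand side, factor out the part common to the two terms so that the bracket becomes a common factor times $1+\rho$ for an explicit scalar $\rho$, and check that $\rho$ reduces to a rational function of $m$ and $N$ alone, independent of the $t_i$ --- the analogue here of Lemma~\ref{ldd} and of the cancellation used in the proofs of Theorems~\ref{tdb} and~\ref{tdc}, though in the present setting it amounts to little more than arithmetic with the known product formulas. Combining $1+\rho$, the prefactor $2^{\,\w_{\ell'}(R)-1}$, and the common factor, elementary manipulation should then recover the right-hand side of~\eqref{egf}.

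I expect two sources of difficulty. The first is bookkeeping: tracking, through the two zigzag cuts and the successive removals of forced dominos, exactly how the index list $(t_1,\dots,t_{2N-2m+1})$ is distributed among the four half-regions, and getting the shifted size-parameters of the resulting Aztec-rectangle-type regions exactly right --- as in the proof of Theorem~\ref{tca}, the parity of $N$ together with the structure of the Aztec-rectangle boundary near $\ell'$ splits the argument into several essentially identical sub-cases in which off-by-one errors are easy to make. The second, more substantive, is verifying that the sum of two four-fold products genuinely collapses to the single product~\eqref{egf}; this is the step that forces the hypothesis $2m\ge N$ to intervene, since in this regime one of the two staircase halves carries $N-m$ of the holes and the other $N-m+1$, and the boundary of the half-regions is positioned so far in that the relevant enumeration acquires the extra factorial factors $\prod_i 1/\bigl((t_i-1)!\,(N+1-t_i)!\bigr)$ --- absent in the complementary regime $2m+1\le N$ of Theorem~\ref{tgc}. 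Arranging that $\rho$ and the common factor combine so that these factorials, the Vandermonde-type products, and the power of two $2^{m^2-m+N}$ all appear exactly as in~\eqref{egf} is where the computation will demand the most care.
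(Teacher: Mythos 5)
Your plan is built on Theorem~\ref{tbb}, but the paper's proof of Theorem~\ref{tgd} does not use the semi-factorization at all, and there are concrete reasons why your route has a gap. First, Theorem~\ref{tbb} is proved only for regions whose outer boundary is a symmetric hexagon on the triangular lattice, and Remark~1 explicitly warns that the semi-factorization phenomenon is ``very sensitive to changing the shape of the outer boundary''; the claim that the Kuo-condensation derivation ``goes through verbatim'' for an Aztec rectangle is asserted, not checked. More decisively, the hypothesis of Theorem~\ref{tbb} that the symmetrized region $R_0$ (holes translated back onto the axis) be tileable cannot even be formulated here: in the $45^\circ$-rotated picture the central row and the row just below it carry opposite colors in the checkerboard coloring, which is why the on-axis case \eqref{egc} retains an \emph{even} number $2m$ of squares while Theorem~\ref{tgd} retains an \emph{odd} number $2N-2m+1$. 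Translating the retained squares up one row produces a color-imbalanced, untileable region, so the ``nearly symmetric with holes shifted off the axis'' framework does not transfer. Finally, even if a semi-factorization identity were established, you would still owe the collapsing step (that the ratio of the two four-fold products is independent of the $t_i$, so that $1+\rho$ times the common factor yields \eqref{egf}); you describe this as ``little more than arithmetic'' but do not carry it out, and nothing guarantees it in advance.

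The idea you are missing is that the holes sitting exactly one row below the central row is precisely the situation Theorem~\ref{tba}(b) was designed for: adding one zigzag layer of $2N+1$ unit squares along the bottom of the Aztec rectangle and deleting the rightmost added square produces a region whose dual graph is honestly symmetric about the horizontal line through the hole row, up to a single vertex removed from the unbounded face off the axis (see Figure~\ref{fge}). Theorem~\ref{tba}(b) then gives a genuine one-term factorization,
\begin{equation*}
2^{N-m}\,\M\bigl(\overline{AR}(m,N;[N+1]\setminus\{t_{2i-1}\}_{i\in[N-m+1]})\bigr)\,\M\bigl(\overline{AR}(m+1,N;[N+1]\setminus\{t_{2i}\}_{i\in[N-m]})\bigr),
\end{equation*}
and substituting \eqref{egb} immediately yields \eqref{egf}. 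No sum of products, no collapsing identity, and no analogue of Lemma~\ref{ldd} is needed. If you want to salvage your approach you would have to first prove an Aztec-rectangle version of Theorem~\ref{tbb} and then verify the cancellation, which is a far longer road to a statement the paper reaches in a few lines.
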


Krattenthaler's proofs of Theorems \ref{tgc}--\ref{tgd} in \cite{KrattenthalerAztecRectangle} were based on certain Schur function identities. In this section, we give a very simple proof of these theorems using Theorem \ref{tba}. 

\begin{proof}[Proof of Theorems \ref{tgc}--\ref{tgd}]

%
%


%
\begin{figure}
    \centering
    \includegraphics[width=.6\textwidth]{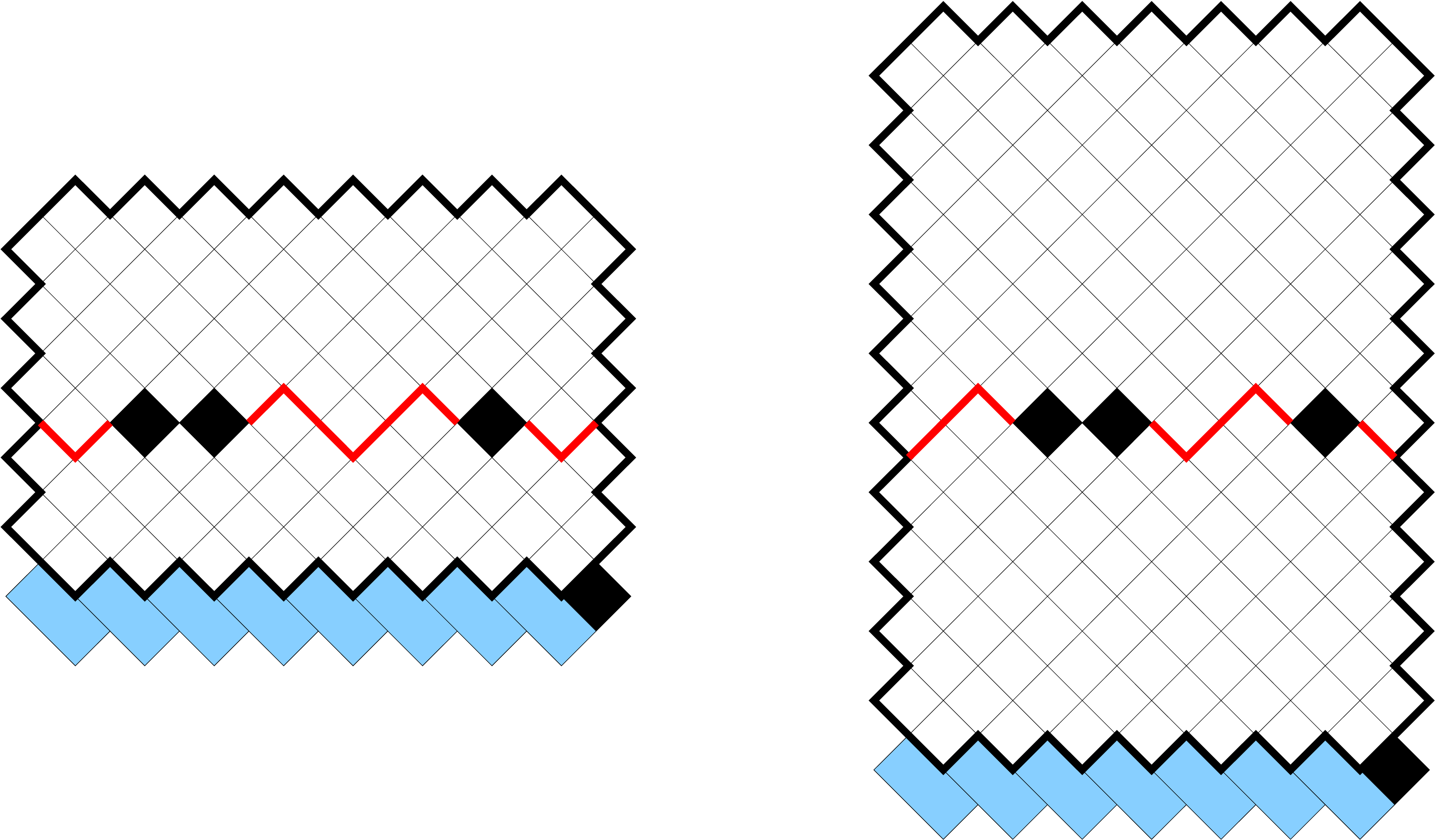}
    \caption{Application of Theorem \ref{tba}(b) to the two regions in Figure \ref{fgc}.}
    \label{fge}
\end{figure}

We first prove Theorem \ref{tgc}. To do that, we add one layer at the bottom of the region by adding $2N+1$ unit squares that form a zigzag path, and then we delete the rightmost newly added unit square (see the left picture in Figure \ref{fge}). Then, the dual graph of the resulting region is a symmetric graph with a vertex removed from its boundary, to which Theorem \ref{tba}(b) can be applied. If we apply Theorem \ref{tba}(b) to this dual graph, take the dual again, and delete the forced dominos, we get that the number of domino tilings of the region in Theorem~\ref{tgc} can be expressed as follows:
\begin{equation}\label{egh}
    2^{m}\M(AR(m+1,N;\{t_{2i-1}\}_{i\in[m+1]}))\M(AR(m,N;\{t_{2i}\}_{i\in[m]})).
\end{equation}
Combining \eqref{ega} and \eqref{egh} one obtains 
\eqref{ege}, which completes the proof of Theorem\ref{tgc}.

The same approach works for proving Theorem \ref{tgd}.
We add one layer at the bottom of the Aztec rectangle by including $2N+1$ unit squares in the same way as in the proof of Theorem \ref{tgc}, and then remove the rightmost of these (see the picture on the right in Figure \ref{fge}). We can again apply Theorem \ref{tba}(b), and after removing the forced forced dominos of the resulting regions, we obtain that the number sought in Theorem \ref{tgd} equals
\begin{equation}\label{egi}
    2^{N-m}\M(\overline{AR}(m,N;[N+1]\setminus{
    \{t_{2i-1}\}}_{i\in[N-m+1]}))\M(\overline{AR}(m+1,N;[N+1]\setminus\{t_{2i}\}_{i\in[N-m]})).
\end{equation}
Combining \eqref{egb} and \eqref{egi} one gets \eqref{egf}, which proves Theorem \ref{tgd}.
\end{proof}

We finish this section by presenting one more new proof, namely of the following theorem, which is another special case of Krattenthaler's general result (Theorem 11 in \cite{KrattenthalerAztecRectangle}). 

\begin{thm}[The case $d=2$ in Theorem 11 in \cite{KrattenthalerAztecRectangle}]
\label{tge}
Let $m$ and $N$ be positive integers with $2m+2\leq N$. Then the number of domino tilings of a $(2m+2)\times N$ Aztec rectangle, where all the unit squares on the horizontal row that is by $2$ below the central row, except for the $t_{1}$-st, the $t_{2}$-nd,$\ldots$, and the $t_{2m+2}$-th unit square, have been removed, equals
\begin{equation}\label{egj}
    \displaystyle\frac{2^{m^{2}+4m+3}}{\displaystyle\prod_{i=1}^{m}(i-1)!\prod_{i=1}^{m+2}(i-1)!}\cdot\Bigg[\prod_{1\leq i<j\leq m+1}(t_{2j}-t_{2i})(t_{2j-1}-t_{2i-1})\Bigg]\cdot\Bigg(\sum_{i=1}^{2m+2}(-1)^{i}t_{i}\Bigg).
\end{equation}
\end{thm}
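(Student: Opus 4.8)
The plan is to deduce Theorem \ref{tge} from the cases of Krattenthaler's result already treated in this section — holes on the symmetry axis (formula \eqref{egc}) and holes one row off it (Theorem \ref{tgc}) — using Kuo's graphical condensation (Theorem \ref{tbc}) as the bridge, in exactly the spirit in which Theorem \ref{tbb} was obtained from Theorem \ref{tba}. The reason a condensation argument is needed here (rather than a direct application of Theorem \ref{tba}, which sufficed for Theorems \ref{tgc}--\ref{tgd}) is that, the hole-row now being two rows off the axis, the answer is no longer a pure product: the extra factor $\sum_{i=1}^{2m+2}(-1)^{i}t_i$ must arise by combining two products.

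First I would embed the region $R$ of Theorem \ref{tge} into a slightly larger region $\widehat R$, obtained by adjoining one zig-zag layer of unit squares along a suitable side of the $(2m+2)\times N$ Aztec rectangle, chosen so that four unit squares $a,b,c,d$ can be marked on the boundary of $\widehat R$ in cyclic order, with $a,b$ in one color class of the dual graph and $c,d$ in the other, and so that $\widehat R\setminus\{a,d\}$, after deletion of forced dominoes, is precisely $R$. Applying Theorem \ref{tbc} to the dual graph of $\widehat R$ and taking planar duals back yields a six-term recurrence
\begin{equation*}
\M(\widehat R\setminus\{a,d\})\,\M(\widehat R\setminus\{b,c\})=\M(\widehat R)\,\M(\widehat R\setminus\{a,b,c,d\})+\M(\widehat R\setminus\{a,c\})\,\M(\widehat R\setminus\{b,d\}).
\end{equation*}
The key combinatorial step is to verify that, after removing forced dominoes (and, where necessary, adjoining one more zig-zag layer, as in the proof of Theorem \ref{tbb}), each of the five regions other than $\widehat R\setminus\{a,d\}$ is an Aztec rectangle with collinear unit holes lying either on its symmetry axis or exactly one row off it, with hole set obtained from $(t_1,\dots,t_{2m+2})$ by deleting one or two of its entries (I expect $t_1$ and/or $t_{2m+2}$) and possibly with the height parameter shifted by one. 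Their numbers of domino tilings are then given by the simple product formulas already available in this section (via Theorem \ref{tba}(a) in the on-axis case and via Theorem \ref{tba}(b), i.e.\ Theorem \ref{tgc}, in the one-off case): in each instance a power of $2$ times a ratio of factorials times a product of two factors $\Delta(\cdot)$, in the sense of \eqref{eekk}, over the odd-indexed and the even-indexed sub-sequences of the surviving $t_i$'s. Substituting these into the recurrence and dividing out the factor $\M(\widehat R\setminus\{b,c\})$, which is non-zero since that region is tileable, expresses $\M(R)$ as a sum of two products of known quantities.

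The last step, which I expect to be the main obstacle, is the algebraic simplification of this sum of two products into the closed form \eqref{egj}. After cancelling the common power of $2$, the common factorials and the common Vandermonde factors, the identity to be established has the shape
\begin{equation*}
\Delta(u)\,\Delta(v)\cdot\sum_{i=1}^{2m+2}(-1)^{i}t_i \;=\; \varphi_1\,\Delta(u)\,\Delta(v\setminus\{t_{2m+2}\})+\varphi_2\,\Delta(u\setminus\{t_1\})\,\Delta(v),
\end{equation*}
where $u=\{t_1,t_3,\dots,t_{2m+1}\}$, $v=\{t_2,t_4,\dots,t_{2m+2}\}$, and $\varphi_1,\varphi_2$ are explicit rational prefactors coming from the factorial ratios; equivalently, it amounts to recognizing $\sum(-1)^{i}t_i$, up to normalization, as a cofactor expansion along the row $(t_1,\dots,t_{2m+2})$ of a Vandermonde-like determinant in $t_1,\dots,t_{2m+2}$. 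I would prove it either through such a determinant identity, or by treating both sides as polynomials in (say) $t_1$ — they are affine-linear in $t_1$ — and matching the leading coefficient together with the value at the specialization $t_1=t_3$, which collapses the claim to the analogous identity in fewer variables and permits an induction. Checking throughout that the powers of $2$ and the factorial prefactors match \eqref{egj} exactly is routine but must be done with some care.
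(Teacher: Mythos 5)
Your overall strategy --- Kuo condensation on an enlarged region, feeding in the already-proved product formulas for holes on the axis (equation \eqref{egc}) and one row off it (Theorem \ref{tgc}) --- is exactly the paper's, but your plan rests on a structural assumption that the paper's construction does not satisfy and that you have not verified: namely, that the four squares $a,b,c,d$ can be placed so that the ``two-rows-off'' region appears \emph{exactly once} among the six regions in the condensation identity, with the other five all of on-axis or one-off type. In the paper's setup (the extended region is $R_1(2m+3,N+1;\{t_1,\dots,t_{2m+2},N+1\})$, with $a,b,c,d$ chosen per Theorem \ref{tgf} rather than Theorem \ref{tbc}), the resulting recurrence \eqref{egk} contains \emph{two} regions of the unknown type, $R_2(2m+2,N;\{t_1,\dots,t_{2m+2}\})$ and $R_2(2m+2,N;\{t_2-1,\dots,t_{2m+2}-1,N\})$, one in each term on the right-hand side. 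Consequently the recurrence cannot simply be solved for $\M(R)$ by dividing; the paper instead \emph{verifies} that the conjectured expression \eqref{egj} (together with \eqref{egc} and \eqref{ege}) satisfies \eqref{egk}, and then closes the argument with a double induction --- outer on the number of holes $N-(2m+2)$ with the Aztec diamond as base case, inner on the smallest element of $[N]\setminus\{t_1,\dots,t_{2m+2}\}$, after first reducing to $t_1=1$, $t_{2m+2}=N$ via forced dominoes. Your proposal contains no fallback of this kind: if (as happens in the natural construction) the unknown type recurs, the ``divide by the known factors'' step is unavailable and an induction becomes essential.

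There is also a concrete problem with the final identity you propose to establish. The two sides of
\begin{equation*}
\Delta(u)\,\Delta(v)\cdot\sum_{i=1}^{2m+2}(-1)^{i}t_i \;=\; \varphi_1\,\Delta(u)\,\Delta(v\setminus\{t_{2m+2}\})+\varphi_2\,\Delta(u\setminus\{t_1\})\,\Delta(v)
\end{equation*}
have different total degrees in $t_1,\dots,t_{2m+2}$ (the left side has degree $m(m+1)+1$, each product on the right degree $m^2$) if $\varphi_1,\varphi_2$ depend only on $m$ and $N$, so the identity cannot have this shape; the actual cancellation pattern, with the hole sets $\{t_1,\dots,t_{2m+2}\}$ and $\{t_2-1,\dots,t_{2m+2}-1,N\}$ intertwined across the two terms of \eqref{egk}, is more involved than a cofactor expansion of a single Vandermonde row. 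The defensible version of your plan is therefore the paper's: derive \eqref{egk}, check algebraically that \eqref{egc}, \eqref{ege} and \eqref{egj} satisfy it, and supply the double induction that turns this verification into a proof. Your idea of specializing $t_1=t_3$ to reduce the number of variables is a reasonable tactic for the algebraic check, but it does not substitute for the induction on the region itself.
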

Note that our formula \eqref{egj} has a simpler form than Krattenthaler's original formula (see (4.5) in \cite{KrattenthalerAztecRectangle}; it is not so straightforward to show that the second line in the $d=2$ specialization of that formula equals $\sum_{i=1}^{2m+2}(-1)^{i}t_{i}$). Krattenthaler's proof of the above theorem used some Schur function identities. By contrast, we prove this theorem using Kuo's graphical condensation method. Although we stated Kuo's condensation theorem in Section 2 (see Theorem \ref{tbc}), we state here a different version of it, because this is the one we need in our proof of Theorem \ref{tge}.

\begin{thm}[Theorem 2.1 in \cite{kuo2004applications}]
\label{tgf}
Let $G=(V_1,V_2,E)$ be a plane bipartite graph in which $|V_1|=|V_2|$. Let vertices $a,b,c,$ and $d$ appear in a cyclic order on the same face of $G$. If $a,c\in V_1$ and $b,d\in V_2$, then
\begin{equation*}
    \M(G)\M(G\setminus\{a,b,c,d\})=\M(G\setminus\{a,c\})\M(G\setminus\{b,d\})+\M(G\setminus\{a,d\})\M(G\setminus\{b,c\}).
\end{equation*}
\end{thm}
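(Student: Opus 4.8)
The plan is to prove the identity by a weight-preserving superposition bijection, which is the standard mechanism behind Kuo-type condensation. Let $\mathcal{A}$ denote the set of pairs $(M,M')$ in which $M$ is a perfect matching of $G$ and $M'$ is a perfect matching of $G\setminus\{a,b,c,d\}$; weighting each pair by the product of the weights of all its edges, the quantity $\M(G)\M(G\setminus\{a,b,c,d\})$ is exactly the total weight of $\mathcal{A}$. Likewise the two products on the right-hand side record the total weights of two families $\mathcal{B}$ and $\mathcal{C}$ of pairs of near-perfect matchings, each missing the prescribed pair of vertices; since the deleted subgraphs must be balanced, each such pair consists of one vertex of $V_1$ and one of $V_2$, so the relevant pairings of $\{a,b,c,d\}$ are into oppositely colored pairs. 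I would first superimpose $M$ and $M'$: the multiset $M\cup M'$ decomposes into doubled edges, alternating even cycles, and alternating paths whose endpoints are precisely the vertices covered by exactly one of $M,M'$. Because $M$ covers every vertex while $M'$ misses exactly $a,b,c,d$, there are four such endpoints, hence exactly two paths.

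Next I would examine the endpoint pairings. At each endpoint the incident path-edge belongs to $M$, so each path begins and ends with an $M$-edge and therefore has odd length; in the bipartite graph $G$ this forces each path to join a vertex of $V_1$ to a vertex of $V_2$. With $a,c\in V_1$ and $b,d\in V_2$, the two paths thus realize one of the two color-compatible pairings — either the pairing $\{a,b\},\{c,d\}$ or the pairing $\{a,d\},\{b,c\}$ — and these are precisely the two cyclically adjacent (non-crossing) matchings of the four boundary vertices, corresponding respectively to the two products on the right. The basic move is then: given a pair of $\mathcal{A}$ of the first type, take the path $P$ through $a$ and exchange the roles of $M$ and $M'$ along $P$, leaving all edges off $P$ untouched. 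A routine coverage check shows the new first matching misses both endpoints of $P$ while the new second matching misses the endpoints of the other path, so the move carries $\mathcal{A}$-pairs of the first type into $\mathcal{B}$ and those of the second type into $\mathcal{C}$. Since it merely reassigns edges already present in $M\cup M'$, it preserves the underlying edge multiset and hence the product of edge weights; reversing it (swapping along the path through $a$ in a right-hand pair) shows it is injective.

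The crux, and the one place where planarity is indispensable, is surjectivity: every pair counted by $\mathcal{B}$ or $\mathcal{C}$ must arise this way. Superimposing such a right-hand pair again yields two paths with endpoints $a,b,c,d$, but now the endpoint memberships permit, a priori, a \emph{third} color-respecting configuration in which two like-colored vertices are joined by even-length paths — the ``crossing'' pairing of $\{a,c\}$ against $\{b,d\}$. I would exclude this using the hypothesis that $a,b,c,d$ lie in this cyclic order on a common face $F$: extending the two vertex-disjoint paths slightly into the open disk bounded by $F$ and invoking the Jordan curve theorem, an $a$--$c$ path and a $b$--$d$ path would be forced to intersect, contradicting vertex-disjointness in a plane embedding. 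Hence for planar $G$ the crossing configuration never occurs, so the superpositions of $\mathcal{B}$- and $\mathcal{C}$-pairs are exactly of the first and second type, the swap is a bijection $\mathcal{A}\leftrightarrow\mathcal{B}\sqcup\mathcal{C}$, and comparing total weights yields the stated identity. This step genuinely uses planarity and the cyclic order: for non-planar bipartite graphs the crossing configuration does occur and the analogous identity fails already for $K_{3,3}$ with the four vertices colored as in the hypothesis.
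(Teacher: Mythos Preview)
The paper does not supply its own proof of this statement; it is quoted verbatim from Kuo's paper \cite{kuo2004applications} and used as a black box in the proof of Theorem~\ref{tge}. Your superposition-and-path-swap argument is precisely Kuo's original mechanism, so there is nothing to compare against within the present paper.

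Your argument is correct, and in fact it silently repairs a typo in the statement as printed: with $a,c\in V_1$ and $b,d\in V_2$ and $|V_1|=|V_2|$, the graph $G\setminus\{a,c\}$ is color-imbalanced, so $\M(G\setminus\{a,c\})=0$ identically, and likewise $\M(G\setminus\{b,d\})=0$; the intended first term on the right is $\M(G\setminus\{a,b\})\M(G\setminus\{c,d\})$. You effectively note this when you observe that ``the deleted subgraphs must be balanced, each such pair consists of one vertex of $V_1$ and one of $V_2$,'' and your bijection lands in exactly the two balanced (non-crossing) pairings $\{a,b\},\{c,d\}$ and $\{a,d\},\{b,c\}$. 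The parity analysis (odd paths in the $\mathcal{A}$-superposition versus the possible even ``crossing'' paths in the $\mathcal{B}$- and $\mathcal{C}$-superpositions) and the Jordan-curve exclusion of the crossing configuration are handled correctly.
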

Let $R_{0}(2m,N;\{t_{1},\ldots,t_{2m}\})$, $R_{1}(2m+1,N;\{t_{1},\ldots,t_{2m+1}\})$, and $R_{2}(2m+2,N;\{t_{1},\ldots,t_{2m+2}\})$ be the regions described 
just before equation \eqref{egc}, in Theorem
\ref{tgc}, and in Theorem \ref{tge}, respectively. Under this notation, we want to show that the number of domino tilings of $R_{2}(2m+2,N;\{t_{1},\ldots,t_{2m+2}\})$ is given by \eqref{egj}. Note that 
the region $R_{2}(2m+2,N;\{t_{1},\ldots,t_{2m+2}\})$
has $N-(2m+2)$ unit square holes in it. This is important because we will use an induction on this quantity.

\begin{proof}[Proof of Theorem \ref{tge}]

\begin{figure}
    \centering
    \includegraphics[width=.8\textwidth]{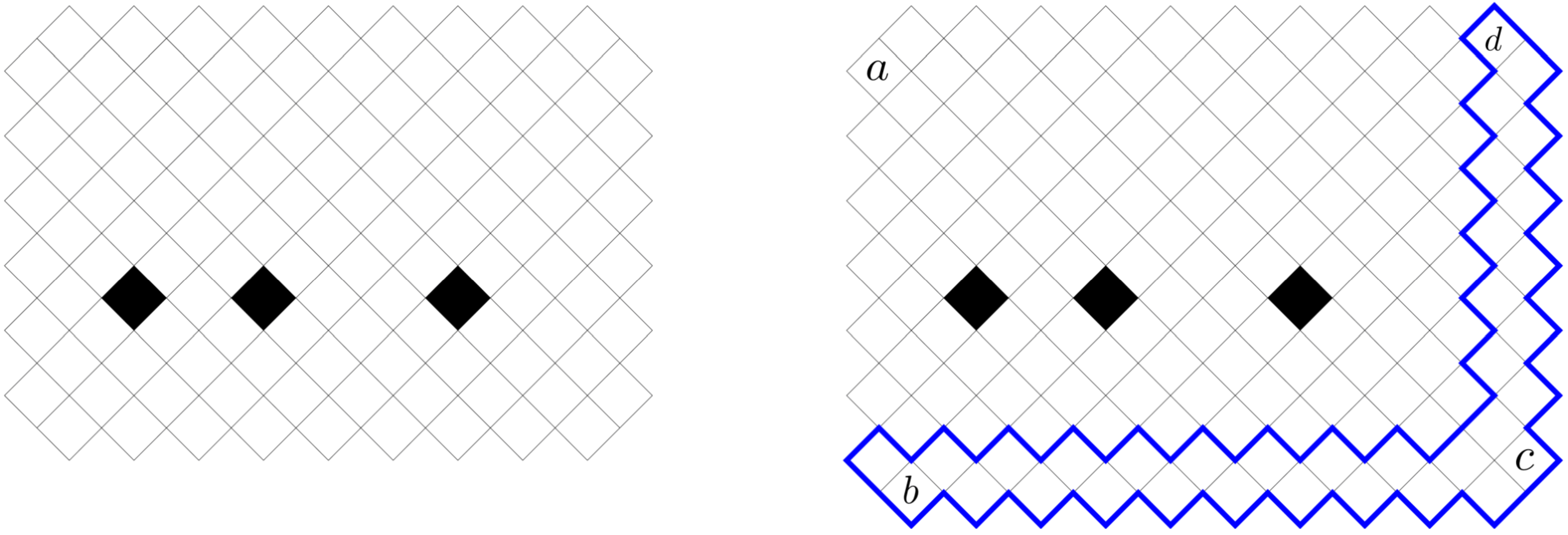}
    \caption{$R_{2}(2m+2,N,\{t_{1},\ldots,t_{2m+2}\})=R_{2}(6,9;\{1,3,5,6,8,9\})$ (left) and $R_{1}(2m+3,N+1;\{t_{1},\ldots,t_{2m+2},N+1\})=R_{1}(7,10,\{1,3,5,6,8,9,10\})$ (right). The region on the right is obtained from the one on the left by adding a layer along the bottom and right sides. The extended part is marked by thick blue lines; four unit squares $a,b,c,$ and $d$ are also marked.}
    \label{fgf}
\end{figure}

Note that it suffices to prove the statement of Theorem \ref{tge} for the regions $R_{2}(2m+2,N;\{t_{1},\ldots,t_{2m+2}\})$ with $t_{1}=1$ and $t_{2m+2}=N$. Indeed, if $t_{1}\neq1$ or $t_{2m+2}\neq N$, then by deleting some forced dominos, one obtains a smaller region with fewer holes in it.

We first construct a recurrence relation using Kuo's graphical condensation (Theorem \ref{tgf}). Consider a region $R_{2}(2m+2,N;\{t_{1},\ldots,t_{2m+2}\})$ with $t_{1}=1$ and $t_{2m+2}=N$. Add a layer on the bottom and right sides of this region, as described in Figure \ref{fgf}. On this extended region, we choose four unit squares $a,b,c,$ and $d$ as shown in the picture on the right in Figure \ref{fgf}. The dual graph of the extended region, together with its four vertices corresponding to the unit squares labeled by $a,b,c,$ and $d$, satisfies the conditions required in Theorem \ref{tgf}. If we apply Kuo's graphical condensation on this dual graph, take the dual again, and remove the forced dominos in the resulting regions, we obtain the following recurrence relation (see the six regions in Figure \ref{fgg}):
\begin{equation}\label{egk}
\begin{aligned}
    &\M(R_{1}(2m+3,N+1;\{t_{1},\ldots,t_{2m+2},N+1\}))\M(R_{1}(2m+1,N-1;\{t_{2}-1,\ldots,t_{2m+2}-1\}))\\
    =&\M(R_{2}(2m+2,N;\{t_{2}-1,\ldots,t_{2m+2}-1,N\}))\M(R_{0}(2m+2,N;\{t_{1},\ldots,t_{2m+2}\}))\\
    +&\M(R_{2}(2m+2,N;\{t_{1},\ldots,t_{2m+2}\}))\M(R_{0}(2m+2,N;\{t_{2}-1,\ldots,t_{2m+2}-1,N\})).
\end{aligned}
\end{equation}

\begin{figure}
    \centering
    \includegraphics[width=1\textwidth]{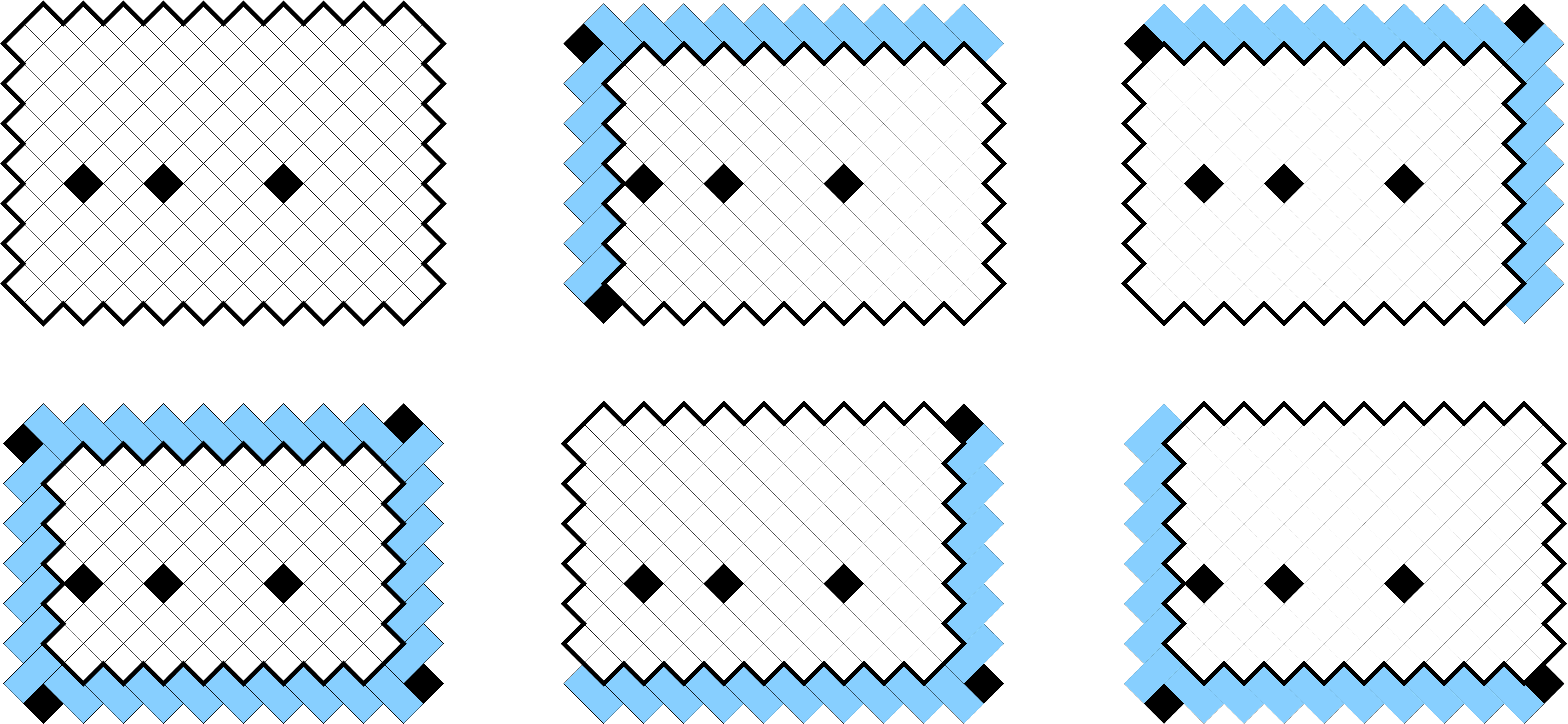}
    \caption{The six regions appearing in the recurrence relation we obtain by applying Kuo's graphical condensation to the region in the picture on the right in Figure \ref{fgf}.}
    \label{fgg}
\end{figure}

One can check that the expressions on the right hand sides of equations \eqref{egc}, \eqref{ege} and~\eqref{egj} satisfy the above recurrence relation \eqref{egk}. Using this, we will give an inductive proof of Theorem \ref{tge}. We start the proof with an induction on the number of holes in the region $R_{2}(2m+2,N;\{t_{1},\ldots,t_{2m+2}\})$, which is $N-(2m+2)$. Let us call this the \textit{outer induction}.

When $N-(2m+2)=0$, $\{t_{1},\ldots,t_{2m+2}\}=[2m+2]$ (where $[n]:=\{1,2,\dotsc,n\}$) and the region $R_{2}(2m+2,N;\{t_{1},\ldots,t_{2m+2}\})=R_{2}(2m+2,2m+2;[2m+2])$ becomes the Aztec diamond of order $2m+2$. One can check that if we set $N=2m+2$ and $\{t_{1},\ldots,t_{2m+2}\}=[2m+2]$ in the expression on the right hand side of \eqref{egj}, we get $2^{\frac{(2m+2)(2m+3)}{2}}$, which is the number of domino tilings of the Aztec diamond of order $2m+2$. Thus, the theorem is verified for the case when $N-(2m+2)=0$.

Suppose Theorem \ref{tge} holds whenever $N-(2m+2)<k$ for some positive integer $k$. Under this assumption, we need to show that the theorem still holds when $N-(2m+2)=k$. To show this, we need another induction. This time we induct on the smallest element of $[N]\setminus\{t_{1},\ldots,t_{2m+2}\}$. We call this the \textit{inner induction}. If the smallest element is $1$, then the unit square labeled $1$ was removed. But this causes several forced dominos on the left side of the region. Due to this, in this case, the region $R_{2}(2m+2,N;\{t_{1},\ldots,t_{2m+2}\})$ can be identified with $R_{2}(2m+2,N-1;\{t_{1}-1,\ldots,t_{2m+2}-1\})$, which is the region whose number of domino tilings is given by \eqref{egj} by the outer induction hypothesis. Thus we have checked the case when the smallest element of $[N]\setminus\{t_{1},\ldots,t_{2m+2}\}$ is $1$.

Now suppose that Theorem \ref{tge} holds when $N-(2m+2)=k$ and the smallest element of $[N]\setminus\{t_{1},\ldots,t_{2m+2}\}$ is less than or equal to a positive integer $l$. Under this assumption, we need to show that the theorem still holds when $N-(2m+2)=k$ and the smallest element of $[N]\setminus\{t_{1},\ldots,t_{2m+2}\}$ is $l+1$. Consider the recurrence relation \eqref{egk}, and focus on the two terms containing an $R_2$-region. It is not difficult to check that when $t_{1}=1$ (which is the case, as we can now assume that the smallest element of $[N]\setminus\{t_{1},\ldots,t_{2m+2}\}$ is $>1$), the smallest element of $N\setminus\{t_{1},\ldots,t_{2m+2}\}$ is greater than that of $N\setminus\{t_{2}-1,\ldots,t_{2m+2}-1,N\}$, and it exceeds the latter by exactly one. Since we already checked that \eqref{egk} is satisfied by \eqref{egc}, \eqref{ege}, and \eqref{egj}, we can conclude that Theorem \ref{tge} also holds when $N-(2m+2)=k$ and the smallest element of $N\setminus\{t_{1},\ldots,t_{2m+2}\}$ is $l+1$. Thus, the inner induction is checked, and it follows that Theorem \ref{tge} always holds whenever $N-(2m+2)=k$. Therefore the induction step is verified for the outer induction, and it follows that Theorem \ref{tge} holds whenever $N-(2m+2)$ is a non-negative integer. This completes the proof.
\end{proof}

\bigskip
{\bf Acknowledgments.} Part of this research was performed while the first author was visiting the Institute for Pure and Applied Mathematics (IPAM), which is supported by the National Science Foundation (Grant No. DMS-1925919). He would like to thank the program organizers and IPAM staff members for their hospitality during his stay in Los Angeles. The second author thanks the Simons Foundation for supporting in part his work by Simons Foundation Collaboration Grant 710477.

\bibliography{bibliography}{}
\bibliographystyle{abbrv}

\end{document}